\newcommand{\sy}{\boldsymbol{\Psi}}
\newcommand{\py}{\boldsymbol{\Phi}}
\newcommand{\N}{\mathbb{N}}									
\newcommand{\R}{\mathbb{R}}
\newcommand{\vertiii}[1]{{\left\vert\kern-0.25ex\left\vert\kern-0.25ex\left\vert #1 
    \right\vert\kern-0.25ex\right\vert\kern-0.25ex\right\vert}}
\newcommand{\inner}[2]{\langle #1, #2 \rangle}
\newcommand{\floor}[1]{\left \lfloor{#1}\right \rfloor}		
\DeclarePairedDelimiter\abs{\lvert}{\rvert}					
\DeclarePairedDelimiter\norm{\lVert}{\rVert}				
\newtheorem{theorem}{Theorem}[section]
\newtheorem{corollary}{Corollary}[theorem]
\newtheorem{lemma}[theorem]{Lemma}
\newtheorem{proposition}[theorem]{Proposition}
\newtheorem*{remark}{Remark}
\newtheorem{definition}[theorem]{Definition}
\newtheorem{assumption}[theorem]{Assumption}
\begin{document}
	\title{Navier-Stokes Equations with Navier Boundary Conditions and Stochastic Lie Transport: Well-Posedness and Inviscid Limit}
	\author{Daniel Goodair}
	\date{\today} 
	\maketitle
\setcitestyle{numbers}	
\thispagestyle{empty}
\begin{abstract}
    We prove the existence and uniqueness of global, probabilistically strong, analytically strong solutions of the 2D Stochastic Navier-Stokes Equation under Navier boundary conditions. The choice of noise includes a large class of additive, multiplicative and transport models. We emphasise that with a transport type noise, the Navier boundary conditions enable direct energy estimates which appear to be prohibited for the usual no-slip condition. The importance of the Stochastic Advection by Lie Transport (SALT) structure, in comparison to a purely transport Stratonovich noise, is also highlighted in these estimates. In the particular cases of SALT noise, the free boundary condition and a domain of non-negative curvature, the inviscid limit exists and is a global, probabilistically weak, analytically weak solution of the corresponding Stochastic Euler Equation. 
\end{abstract}
	
\tableofcontents
\textcolor{white}{Hello}
\thispagestyle{empty}
\newpage

\setcounter{page}{1}
\addcontentsline{toc}{section}{Introduction}

\section*{Introduction}

The theoretical analysis of Stochastic Navier-Stokes Equations dates back to the work of Bensoussan and Temam [\cite{bensoussan1973equations}] in 1973, where the problem of existence of solutions is addressed in the presence of a random forcing term. The well-posedness question for additive and multiplicative noise has since seen significant developments, for example through the works [\cite{bensoussan1995stochastic}, \cite{breit2016stochastic}, \cite{brzezniak1992stochastic} \cite{brzezniak1999strong}, \cite{brzezniak2013existence}, \cite{capinski1991stochastic}, \cite{flandoli1995martingale}, \cite{glatt2009strong}, \cite{langa2003existence}, \cite{liu2015stochastic}, \cite{menaldi2002stochastic}] and references therein. The choice of noise to best encapsulate physical properties of this fluid equation is in constant study, recently yielding a strong argument for transport type stochastic perturbations (where the stochastic integral depends on the gradient of the solution). The paper of Brze\'{z}niak, Capinski and Flandoli [\cite{brzezniak1992stochastic}] in 1992 was one of the first to bring attention to the significance of fluid dynamics equations with transport noise, whilst such ideas have only recently been cemented through the specific stochastic transport schemes of [\cite{holm2015variational}] and [\cite{memin2014fluid}]. In these papers Holm and M\'{e}min establish a new class of stochastic equations driven by transport type noise which serve as fluid dynamics models by adding uncertainty in the transport of fluid parcels to reflect the unresolved scales. Indeed the physical significance of such equations in modelling, numerical analysis and data assimilation continues to be well documented, see [\cite{alonso2020modelling}, \cite{cotter2020data}, \cite{cotter2018modelling}, \cite{cotter2019numerically}, \cite{crisan2021theoretical}, \cite{dufee2022stochastic}, \cite{flandoli20212d}, \cite{flandoli2022additive}, \cite{holm2020stochastic}, \cite{holm2019stochastic}, \cite{lang2022pathwise}, \cite{van2021bayesian}, \cite{street2021semi}]. With this motivation our main object of study is the Navier-Stokes Equation under Stochastic Advection by Lie Transport (SALT) introduced in [\cite{holm2015variational}], given by \begin{equation} \label{number2equationSALT}
    u_t - u_0 + \int_0^t\mathcal{L}_{u_s}u_s\,ds - \nu\int_0^t \Delta u_s\, ds + \int_0^t B(u_s) \circ d\mathcal{W}_s + \nabla \rho_t= 0
\end{equation}
where $u$ represents the fluid velocity, $\rho$ the pressure\footnote{The pressure term is a semimartingale, and an explicit form for the SALT Euler Equation is given in [\cite{street2021semi}] Subsection 3.3}, $\mathcal{W}$ is a Cylindrical Brownian Motion, $\mathcal{L}$ represents the nonlinear term and $B$ is a first order differential operator (the SALT Operator) properly introduced in Subsection \ref{sub salt}. Intrinsic to this stochastic methodology is that $B$ is defined relative to a collection of functions $(\xi_i)$ which physically represent spatial correlations. These $(\xi_i)$ can be determined at coarse-grain resolutions from finely resolved numerical simulations, and mathematically are derived as eigenvectors of a velocity-velocity correlation matrix (see [\cite{cotter2020data}, \cite{cotter2018modelling}, \cite{cotter2019numerically}]). We are also interested in the cases of a more general noise, which could be of It\^{o} or Stratonovich type. That is, we consider the well-posedness of equations
\begin{equation} \label{number2equation}
    u_t - u_0 + \int_0^t\mathcal{L}_{u_s}u_s\,ds - \nu\int_0^t \Delta u_s\, ds + \int_0^t \mathcal{G}(u_s) d\mathcal{W}_s + \nabla \rho_t = 0
\end{equation}
and
\begin{equation} \label{number3equation}
    u_t - u_0 + \int_0^t\mathcal{L}_{u_s}u_s\,ds - \nu\int_0^t \Delta u_s\, ds + \int_0^t \mathcal{G}(u_s) \circ d\mathcal{W}_s + \nabla \rho_t = 0.
\end{equation} 
for a function $\mathcal{G}$ representing a variety of additive, multiplicative and transport type operators. We pose the equations in a two dimensional smooth bounded domain $\mathscr{O}$ and impose the divergence-free constraint on $u$. It remains to enforce boundary conditions, the choice of which breeds another interesting problem. The classical option is the so-called `no-slip' condition, given by $u = 0$ on the boundary $\partial \mathscr{O}$, which is well motivated in [\cite{day1990no}, \cite{richardson1973no}, \cite{ruckenstein1983no}]. Other considerations are far from redundant though, and perhaps the most appreciated of them are the Navier boundary conditions given by \begin{equation} \label{navier boundary conditions}
    u \cdot \mathbf{n} = 0, \qquad 2(Du)\mathbf{n} \cdot \mathbf{\iota} + \alpha u\cdot \mathbf{\iota} = 0
\end{equation} 
where $\mathbf{n}$ is the unit outwards normal vector, $\mathbf{\iota}$ the unit tangent vector, $Du$ is the rate of strain tensor $(Du)^{k,l}:=\frac{1}{2}\left(\partial_ku^l + \partial_lu^k\right)$ and $\alpha \in C^2(\partial \mathscr{O};\R)$ represents a friction coefficient which determines the extent to which the fluid slips on the boundary relative to the tangential stress. These conditions were first proposed by Navier in [\cite{navier1822memoire}, \cite{navier1827lois}], and have been derived in [\cite{maxwell1879vii}] from the kinetic theory of gases and in [\cite{masmoudi2003boltzmann}] as a hydrodynamic limit. Furthermore these conditions have proven viable for modelling rough boundaries as seen in [\cite{basson2008wall}, \cite{gerard2010relevance}, \cite{pare1992existence}]. The theoretical attraction of these conditions lies largely in their relation to vorticity, as if $w$ is the vorticity of the fluid and $\kappa \in C^2(\partial \mathscr{O};\R)$ denotes the curvature of the boundary then the conditions (\ref{navier boundary conditions}) are equivalent to \begin{equation} \label{its a new rep}
    u \cdot \mathbf{n} = 0, \qquad w = (2\kappa - \alpha)u \cdot \iota.
\end{equation}
This connection stands at the forefront of study into the inviscid limit. The effect of viscosity in the presence of a boundary is an extensively studied and physically meaningful phenomenon, which is well summarised in [\cite{maekawa2016inviscid}] and has seen developments across [\cite{arakeri2000ludwig}, \cite{cebeci1977momentum}, \cite{clauser1956turbulent}, \cite{gie2012boundary}, \cite{gorbushin2018asymptotic}, \cite{iftimie2011viscous}, \cite{keller1978numerical}, \cite{lee2013effect}, \cite{malik1990numerical}, \cite{metivier2004small}, \cite{moore1963boundary}, \cite{morris1984boundary}, \cite{schetz2011boundary}, \cite{shaing2008collisional}, \cite{stevenson2002incipient}, \cite{temam2002boundary}, \cite{weinan2000boundary}] to list only a few contributions in the theory, observation and numerics of this analysis. A key observation is the production of vorticity through large gradients of velocity at the boundary ([\cite{lyman1990vorticity}, \cite{morton1984generation}, \cite{serpelloni2013vertical}, \cite{tani1962production}, \cite{wallace2010measurement}]), suggesting that one requires a  control on the vorticity near the boundary to deduce that the zero viscosity limit exists and is a solution of the Euler Equation (where the impermeability boundary condition $u \cdot \mathbf{n} = 0$ is the only reasonable option). In the aforementioned classical case of the no-slip boundary condition, it is completely unclear if such a control is possible outside of very specific cases regarding regularity of initial data or structure of the domain ([\cite{lopes2008vanishing1}, \cite{lopes2008vanishing}, \cite{masmoudi1998euler}, \cite{sammartino998zero}, \cite{sammartino1998zero}]). The problem is in general wide open, with the leading contribution that of Kato in [\cite{kato1984remarks}] who mathematically verifies the physical intuition that it is necessary and sufficient to control gradients of the velocity in a boundary strip of width approaching zero with viscosity. We note the extensions of this result to the stochastic framework in [\cite{goodair2023zero}, \cite{wang2023kato}]. A positive result seems much more reasonable for the Navier boundary conditions given the representation (\ref{its a new rep}), most clearly in the specific case of $\alpha = 2\kappa$ (the `free boundary' condition) shown first in [\cite{lions1969quelques}] though extended to general $\alpha$ in [\cite{clopeau1998vanishing}, \cite{filho2005inviscid}, \cite{kelliher2006navier}, \cite{masmoudi2012uniform}] for example. It is also worth observing that if we take $\alpha$ to infinity, the constraint $u \cdot \iota = 0$ dominates the second identity in (\ref{navier boundary conditions}) and as such gives some approximation to the no-slip condition; the corresponding convergence of solutions is shown in [\cite{kelliher2006navier}], which could be of interest given the contrasting states of the vanishing viscosity problem.\\

We will consider the inviscid limit of (\ref{number2equationSALT}) and its approximation to the corresponding SALT Euler Equation
\begin{equation}
\label{number3equationSALT}
    u_t - u_0 + \int_0^t\mathcal{L}_{u_s}u_s\,ds  + \int_0^t B(u_s) \circ d\mathcal{W}_s + \nabla \rho_t= 0.
\end{equation}
The main contributions of this paper are the following:
\begin{enumerate}
    \item Proving the existence and uniqueness of global, probabilistically strong, analytically strong solutions of (\ref{number2equation}), (\ref{number3equation}) under Navier boundary conditions (\ref{navier boundary conditions}) where $\alpha \geq \kappa$: see Theorem \ref{theorem 2D strong}.
    \item Proving the existence of a global, probabilistically weak, analytically weak solution of (\ref{number3equationSALT}) as a zero viscosity limit of solutions of (\ref{number2equationSALT}) under Navier boundary conditions (\ref{navier boundary conditions}) where $\alpha = 2\kappa$ and $\kappa \geq 0$: see Theorem \ref{theorem for limit of navier stokes is euler}.
\end{enumerate}
The first contribution enters a significant gap in the literature regarding the existence of analytically strong solutions of fluid PDEs perturbed by a transport type noise in a bounded domain. Prior to the author's works of [\cite{goodair2022stochastic}, \cite{goodair2022existence1}] the only successful existence result of which we are aware was given in [\cite{brzezniak2021well}] where the authors assume that the gradient dependency is small relative to the viscosity (which is necessary in the It\^{o} case). Such an assumption requires little to no extensions of the proof in the case of a general non-differential multiplicative noise, as the additional derivative can be directly controlled with the viscosity in energy estimates (see [\cite{glatt2009strong}] for a zeroth order multiplicative noise in the Navier-Stokes Equations with no-slip boundary condition). For a Stratonovich transport noise such an assumption may not be necessary, as conversion to It\^{o} form yields a corrector term which may provide cancellation of the top order derivative arising from the noise in energy estimates. This type of estimate has been well understood since the works [\cite{gyongy1989approximation}, \cite{gyongy1992stochastic}, \cite{gyongy2003splitting}] and was demonstrated for SALT noise in the presence of a boundary in [\cite{goodair2022navier}]. Whilst this control has facilitated the existence of analytically strong solutions of Stratonovich transport equations on the torus or whole space in many settings ([\cite{alonso2020well}, \cite{crisan2019solution}, \cite{crisan2021theoretical}, \cite{crisan2023well}, \cite{lang2023well}, \cite{lang2023analytical}, \cite{tang2023stochastic}]), and also for analytically weak solutions on a bounded domain in [\cite{goodair2023zero}], the situation is very different for analytically strong solutions with a boundary. The difficulty lies in the energy norm for the solutions: for strong solutions this is produced from a $W^{1,2}$ inner product, a space in which the Leray Projector is not an orthogonal projection and indeed does not commute with the derivatives entering into consideration from this inner product. The Leray Projector fails to preserve the zero trace property, and in general destroys our boundary estimates in the tangent direction. The author's successful result in [\cite{goodair2022navier}] was for the vorticity form of the equation and the free boundary condition of (\ref{its a new rep}) with $\alpha = 2\kappa$, as the Leray Projector is absent in this formulation and the zero vorticity condition allows the same style of integration by parts argument in the Laplacian term to deduce the required control. We note that this was a local existence result in 3D. Regardless it remained unclear how to obtain solutions of the velocity form even for these specific boundary conditions, whilst we note that the existence result was far from optimal as an initial vorticity in $W^{1,2}_0$ was required. The key property which we exploit in the present paper is that the space of divergence-free functions satisfying the Navier boundary conditions (\ref{navier boundary conditions}) are dense in the range of the Leray Projector in $W^{1,2}$, a property which is untrue for the no-slip condition. To avoid excessive technical detail here, we defer a more thorough inspection of this difference and its role in our proof to the conclusion, as well as the necessity of the condition $\alpha \geq \kappa$. We note that the assumption is physically expected for small viscosity: indeed, the Navier boundary conditions as originally derived are for a function $a:\partial \mathscr{O} \rightarrow \R$ such that $\alpha = \frac{a}{\nu}$ (see [\cite{kelliher2006navier}] (1.1)). Therefore as viscosity becomes small then physically $\alpha$ should be large, where the assumption that $\alpha$ is non-negative is classical to the study of the problem (e.g. [\cite{clopeau1998vanishing}, \cite{filho2005inviscid}]) and its physical interpretation as a coefficient of friction.
\\

As for the second contribution, our primary comment is that the existence of any solution of the Euler Equation with a transport noise on a bounded domain was completely open. The closest result that we have seen comes from [\cite{brzezniak2001stochastic}], where the authors proved the existence of a global, probabilistically weak, analytically weak solution of the Euler Equation with a somewhat general multiplicative noise, though one which does not allow for gradient dependency. The solution is constructed as an inviscid limit of linearised and regularised Navier-Stokes equations satisfying the free boundary condition. For linear multiplicative noise the vanishing viscosity limit of the full Navier-Stokes system was established in [\cite{bessaih1999martingale}], again with the free boundary condition. The zero viscosity limit for the general Navier boundary conditions has thus far only been determined with additive noise, courtesy of [\cite{cipriano2015inviscid}]. The core property of any approach to this problem lies in showing uniform in time  $W^{1,2}$ estimates on the Navier-Stokes Equations which are independent of the viscosity. The classical deterministic method relies on a maximum principle (see the fundamental works of [\cite{clopeau1998vanishing}, \cite{filho2005inviscid}]), which extends to the case of additive noise as the difference of the solution of the equation with the noise satisfies a deterministic parabolic PDE pathwise. Lacking the ability to do this for multiplicative noise, one instead looks to show an energy estimate of the vorticity directly and in doing so requires the free boundary condition for the Laplacian term. Our method is the same and so we impose that $\alpha = 2\kappa$, and our original requirement that $\alpha \geq \kappa$ implies that one must take $\kappa \geq 0$.\\

We now detail the structure of the paper along with the key ideas at each step:
\begin{itemize}
    \item Section \ref{section preliminaries} is devoted to the setup of the problem in terms of notation, functional framework, assumptions on the general noise and a proper introduction of the Stochastic Advection by Lie Transport case. In Subsection \ref{sub def} we properly define our notions of solution and give the exact statements of the main results.
    
    \item Section \ref{section solutions sns} contains the proof of the well-posedness results for the Stochastic Navier-Stokes Equation (Theorem \ref{theorem 2D strong}). We begin in Subsection \ref{sub weak sols} by proving the existence and uniqueness of analytically weak solutions, which needs only a minor adaptation of the no-slip case proved in [\cite{goodair2023zero}]. The result now boils down to showing that this weak solution has additional regularity, which we show in Subsection \ref{sub strong sols} via uniform boundedness at the level of a Galerkin Approximation up until first hitting times of the processes. The approach was used in [\cite{goodair2022existence1}] and we employ some of the abstract arguments of this paper to reduce the technical details required here. Particular attention should be given to how the solutions are shown to be global; in [\cite{goodair2022existence1}] this method of first hitting times is used to initially deduce the existence of a local strong solution, which after some heavy arguments gives rise to a maximal strong solution where the maximal time can be characterised by the blow-up. The local strong solution is obtained through an abstract result in [\cite{glatt2009strong}] (Lemma 5.1), a paper in which global solutions of the 2D Navier-Stokes Equation with multiplicative (non-differential) noise are then established using this characterisation of the maximal time. The abstract lemma asserts that, under assumptions of a Cauchy property of a sequence of processes  up until their first hitting times and some weak equicontinuity at the \textit{initial} time, then a limiting process and positive stopping time exist (which are then argued to be a local strong solution, as a limit of the Galerkin Approximation). No characterisation of this stopping time is given though, hence completely separate arguments are required to pass to a maximal solution and beyond.\\
    
    To greatly simplify this procedure we extend the lemma of [\cite{glatt2009strong}], demonstrating that if instead one imposes that the processes satisfy a weak equicontinuity assumption at \textit{all} times then the limiting stopping time can be taken as a first hitting time of the limiting process for an arbitrarily large hitting parameter. Application of this result \textit{immediately} yields that solutions exist up until blow-up, pertinently in our 2D case as blow-up \textit{in the energy norm of the weak solution}; we emphasise again the advantage that this has, as the previous approaches discussed would only produce a characterisation of the maximal time in the energy norm of the strong solution. One would have to commit to further analysis to show the equivalence of these blow-ups, which is particularly non-trivial for the Navier boundary conditions; indeed this equivalence demands a priori estimates of solutions in the energy norm of the strong solution, which would be produced by considering a $W^{2,2}$ and $L^2$ duality pairing equivalent to the $W^{1,2}$ inner product, though (unlike the no-slip case) this is not provided by the $L^2$ inner product of the gradient and is unclear in general. Our result demands only such an energy estimate at the level of the Galerkin Approximation, which is clearer as they satisfy the identity in $W^{1,2}$ so this inner product could be used directly. Given the potential significance of this result for future work, we leave the statement as an abstract lemma in the appendix and only apply it in Section \ref{section solutions sns}. In Subsection \ref{sub lie transport} we demonstrate that the SALT noise satisfies the assumptions of the general noise used in this result.

    \item Section \ref{section zero viscy limit} contains the proof of the existence result for the SALT Euler Equation as an inviscid limit (Theorem \ref{theorem for limit of navier stokes is euler}). In Subsection \ref{sub estimates indep viscos} we establish the discussed estimates on the SALT Navier-Stokes Equations uniformly in viscosity. In Subsection \ref{sub tighty} we show that the laws of the sequence of solutions with viscosities approaching zero is tight in the space of probability measures over a uniform in time $L^2$ space, using a very similar tightness criterion to one used in [\cite{goodair2023zero}] which combines the classical Aldous result with a weak criterion from Jakubowski. Passage to the SALT Euler Equation on a new probability space in Subsection \ref{passage to stochy euler} is then straightforwards. 

    \item Section \ref{section conclusion} is a conclusion of the paper in which we discuss with greater technical detail how this work fits into the present and potentially future literature, along with other considerations that arose throughout the paper. Notably we provide precise detail as to how the Navier boundary conditions solve the issues faced by the no sip condition for transport noise, the necessity of $\alpha \geq \kappa$ in Theorem \ref{theorem 2D strong}, as well as types of noise which fit our assumptions and for which Theorem \ref{theorem for limit of navier stokes is euler} remains valid. Here we highlight how the SALT structure is imperative to the solution theory for transport noise. In addition, we ponder the uniqueness of solutions of the SALT Euler Equation and an approximation of the Stochatic Navier Stokes Equations equipped with the no-slip condition by those with Navier boundary conditions. 

    \item Section \ref{section appendix} is our appendix which contains abstract results used in the paper. In Subsection \ref{sub cauchy} we state and prove our extension of [\cite{glatt2009strong}] Lemma 5.1 used in Theorem \ref{theorem 2D strong} (which is ironically again Lemma \ref{amazing cauchy lemma}). Subsection \ref{Appendix III} details the framework of [\cite{goodair2022existence1}] used in Theorem \ref{theorem 2D strong}, whilst Subsection \ref{sub useful results} contains other required results from the literature, including a statement and proof of the tightness criterion used in Theorem \ref{theorem for limit of navier stokes is euler}.

\end{itemize}

\section{Preliminaries} \label{section preliminaries}

\subsection{Elementary Notation} \label{sub elementary notation}

In the following $\mathscr{O} \subset \R^2$ will be a smooth bounded domain equipped with Euclidean norm and Lebesgue measure $\lambda$. We consider Banach Spaces as measure spaces equipped with their corresponding Borel $\sigma$-algebra. Let $(\mathcal{X},\mu)$ denote a general topological measure space, $(\mathcal{Y},\norm{\cdot}_{\mathcal{Y}})$ and $(\mathcal{Z},\norm{\cdot}_{\mathcal{Z}})$ be separable Banach Spaces, and $(\mathcal{U},\inner{\cdot}{\cdot}_{\mathcal{U}})$, $(\mathcal{H},\inner{\cdot}{\cdot}_{\mathcal{H}})$ be general separable Hilbert spaces. We introduce the following spaces of functions. 
\begin{itemize}
    \item $L^p(\mathcal{X};\mathcal{Y})$ is the  class of measurable $p-$integrable functions from $\mathcal{X}$ into $\mathcal{Y}$, $1 \leq p < \infty$, which is a Banach space with norm $$\norm{\phi}_{L^p(\mathcal{X};\mathcal{Y})}^p := \int_{\mathcal{X}}\norm{\phi(x)}^p_{\mathcal{Y}}\mu(dx).$$ In particular $L^2(\mathcal{X}; \mathcal{Y})$ is a Hilbert Space when $\mathcal{Y}$ itself is Hilbert, with the standard inner product $$\inner{\phi}{\psi}_{L^2(\mathcal{X}; \mathcal{Y})} = \int_{\mathcal{X}}\inner{\phi(x)}{\psi(x)}_\mathcal{Y} \mu(dx).$$ In the case $\mathcal{X} = \mathscr{O}$ and $\mathcal{Y} = \R^2$ note that $$\norm{\phi}_{L^2(\mathscr{O};\R^2)}^2 = \sum_{l=1}^2\norm{\phi^l}^2_{L^2(\mathscr{O};\R)}, \qquad \phi = \left(\phi^1, \dots, \phi^2\right), \quad \phi^l: \mathscr{O} \rightarrow \R.$$ We denote $\norm{\cdot}_{L^p(\mathscr{O};\R^2)}$ by $\norm{\cdot}_{L^p}$ and $\norm{\cdot}_{L^2(\mathscr{O};\R^2)}$ by $\norm{\cdot}$.
    
\item $L^{\infty}(\mathcal{X};\mathcal{Y})$ is the class of measurable functions from $\mathcal{X}$ into $\mathcal{Y}$ which are essentially bounded. $L^{\infty}(\mathcal{X};\mathcal{Y})$ is a Banach Space when equipped with the norm $$ \norm{\phi}_{L^{\infty}(\mathcal{X};\mathcal{Y})} := \inf\{C \geq 0: \norm{\phi(x)}_Y \leq C \textnormal{ for $\mu$-$a.e.$ }  x \in \mathcal{X}\}.$$

      \item $C(\mathcal{X};\mathcal{Y})$ is the space of continuous functions from $\mathcal{X}$ into $\mathcal{Y}$.

      \item $C_w(\mathcal{X};\mathcal{Y})$ is the space of `weakly continuous' functions from $\mathcal{X}$ into $\mathcal{Y}$, by which we mean continuous with respect to the given topology on $\mathcal{X}$ and the weak topology on $\mathcal{Y}$. 
      
    \item $C^m(\mathscr{O};\R)$ is the space of $m \in \N$ times continuously differentiable functions from $\mathscr{O}$ to $\R$, that is $\phi \in C^m(\mathscr{O};\R)$ if and only if for every $2$ dimensional multi index $\alpha = \alpha_1, \alpha_2$ with $\abs{\alpha}\leq m$, $D^\alpha \phi \in C(\mathscr{O};\R)$ where $D^\alpha$ is the corresponding classical derivative operator $\partial_{x_1}^{\alpha_1} \partial_{x_2}^{\alpha_2}$.
    
    \item $C^\infty(\mathscr{O};\R)$ is the intersection over all $m \in \N$ of the spaces $C^m(\mathscr{O};\R)$.
    
    \item $C^m_0(\mathscr{O};\R)$ for $m \in \N$ or $m = \infty$ is the subspace of $C^m(\mathscr{O};\R)$ of functions which have compact support.
    
    \item $C^m(\mathscr{O};\R^2), C^m_0(\mathscr{O};\R^2)$ for $m \in \N$ or $m = \infty$ is the space of functions from $\mathscr{O}$ to $\R^2$ whose component mappings each belong to $C^m(\mathscr{O};\R), C^m_0(\mathscr{O};\R)$.
    
        \item $W^{m,p}(\mathscr{O}; \R)$ for $1 \leq p < \infty$ is the sub-class of $L^p(\mathscr{O}, \R)$ which has all weak derivatives up to order $m \in \N$ also of class $L^p(\mathscr{O}, \R)$. This is a Banach space with norm $$\norm{\phi}^p_{W^{m,p}(\mathscr{O}, \R)} := \sum_{\abs{\alpha} \leq m}\norm{D^\alpha \phi}_{L^p(\mathscr{O}; \R)}^p,$$ where $D^\alpha$ is the corresponding weak derivative operator. In the case $p=2$ the space $W^{m,2}(\mathscr{O}, \R)$ is Hilbert with inner product $$\inner{\phi}{\psi}_{W^{m,2}(\mathscr{O}; \R)} := \sum_{\abs{\alpha} \leq m} \inner{D^\alpha \phi}{D^\alpha \psi}_{L^2(\mathscr{O}; \R)}.$$
    
    \item $W^{m,\infty}(\mathscr{O};\R)$ for $m \in \N$ is the sub-class of $L^\infty(\mathscr{O}, \R)$ which has all weak derivatives up to order $m \in \N$ also of class $L^\infty(\mathscr{O}, \R)$. This is a Banach space with norm $$\norm{\phi}_{W^{m,\infty}(\mathscr{O}, \R)} := \sup_{\abs{\alpha} \leq m}\norm{D^{\alpha}\phi}_{L^{\infty}(\mathscr{O};\R^2)}.$$
    
   \item $W^{s,p}(\mathscr{O}; \R)$ for $0<s<1$ and $1 \leq p < \infty$ is the sub-class of functions $\phi \in L^p(\mathscr{O}, \R)$ such that $$\int_{\mathscr{O} \times \mathscr{O}} \frac{\abs{\phi(x)-\phi(y)}^p}{\abs{x-y}^{sp+2}}d\lambda(x,y) < \infty.$$ This is a Banach space with respect to the norm $$\norm{\phi}_{W^{s,p}(\mathscr{O}; \R)}^p= \norm{\phi}_{L^p(U;\R)}^p + \int_{\mathscr{O} \times \mathscr{O}} \frac{\abs{\phi(x)-\phi(y)}^p}{\abs{x-y}^{sp+2}}d\lambda(x,y).$$ For $p=2$ this is a Hilbert space with inner product $$\inner{\phi}{\psi}_{W^{s,2}(\mathscr{O}; \R)} = \inner{\phi}{\psi}_{L^2(\mathscr{O};\R)}+ \int_{\mathscr{O} \times \mathscr{O}} \frac{\big(\phi(x)-\phi(y)\big)\big(\psi(x)-\psi(y)\big)}{\abs{x-y}^{2s+2}}d\lambda(x,y).$$
    
    \item $W^{s,p}(\mathscr{O}; \R)$ for $1 \leq s < \infty,$ $s \not\in \N$ and $1 \leq p < \infty$ is, using the notation $\floor{s}$ to mean the integer part of $s$, the sub-class of $W^{\floor{s},p}(\mathscr{O}; \R)$ such that the distributional derivatives $D^\alpha\phi$ belong to $W^{s-\floor{s},p}(\mathscr{O}; \R)$ for every multi-index $\alpha$ such that $\abs{\alpha}=\floor{s}.$ This is a Banach space with norm $$\norm{\phi}_{W^{s,p}(\mathscr{O}; \R)}^p= \norm{\phi}_{W^{\floor{s},p}(\mathscr{O};\R)}^p + \sum_{\abs{\alpha}=\floor{s}} \int_{\mathscr{O} \times \mathscr{O}} \frac{\abs{D^\alpha\phi(x)-D^\alpha\phi(y)}^p}{\abs{x-y}^{(s-\floor{s})p+2}}dxdy$$ and a Hilbert space in the case $p=2$, with inner product $$\inner{\phi}{\psi}_{W^{s,2}(\mathscr{O}; \R)} = \inner{\phi}{\psi}_{W^{\floor{s},2}(\mathscr{O};\R)}+ \int_{\mathscr{O} \times \mathscr{O}} \frac{\big(D^\alpha\phi(x)-D^\alpha\phi(y)\big)\big(D^\alpha\psi(x)-D^\alpha\psi(y)\big)}{\abs{x-y}^{2(s-\floor{s})+2}}d\lambda(x,y).$$

 \item $W^{s,p}(\mathscr{O}; \R^2)$ for $s > 0$ and $1 \leq p < \infty$ is the Banach space of functions $\phi:\mathscr{O} \rightarrow \R^2$ whose components $(\phi^l)$ are each elements of the space $W^{s,p}(\mathscr{O}; \R).$ The associated norm is $$
    \norm{\phi}_{W^{s,p}}^p= \sum_{l=1}^2\norm{\phi^l}^p_{W^{s,p}(\mathscr{O}; \R)}
    $$ and similarly when $p=2$ this space is Hilbert with inner product $$\inner{\phi}{\psi}_{W^{s,2}} = \sum_{l=1}^2\inner{\phi^l}{\psi^l}_{W^{s,2}(\mathscr{O}, \R)}.$$
    
          \item $W^{m,\infty}(\mathscr{O}; \R^2)$ is the sub-class of $L^\infty(\mathscr{O}, \R^2)$ which has all weak derivatives up to order $m \in \N$ also of class $L^\infty(\mathscr{O}, \R^2)$. This is a Banach space with norm $$\norm{\phi}_{W^{m,\infty}} := \sup_{l \leq N}\norm{\phi^l}_{W^{m,\infty}(\mathscr{O}; \R)}.$$

    \item $W^{m,p}_0(\mathscr{O};\R), W^{m,p}_0(\mathscr{O};\R^2)$ for $m \in \N$ and $1 \leq p \leq \infty$ is the closure of $C^\infty_0(\mathscr{O};\R),C^\infty_0(\mathscr{O};\R^2)$ in $W^{m,p}(\mathscr{O};\R), W^{m,p}(\mathscr{O};\R^2)$.

    \item $\mathscr{L}(\mathcal{Y};\mathcal{Z})$ is the space of bounded linear operators from $\mathcal{Y}$ to $\mathcal{Z}$. This is a Banach Space when equipped with the norm $$\norm{F}_{\mathscr{L}(\mathcal{Y};\mathcal{Z})} = \sup_{\norm{y}_{\mathcal{Y}}=1}\norm{Fy}_{\mathcal{Z}}$$ and is simply the dual space $\mathcal{Y}^*$ when $\mathcal{Z}=\R$, with operator norm $\norm{\cdot}_{\mathcal{Y}^*}.$
    
     \item $\mathscr{L}^2(\mathcal{U};\mathcal{H})$ is the space of Hilbert-Schmidt operators from $\mathcal{U}$ to $\mathcal{H}$, defined as the elements $F \in \mathscr{L}(\mathcal{U};\mathcal{H})$ such that for some basis $(e_i)$ of $\mathcal{U}$, $$\sum_{i=1}^\infty \norm{Fe_i}_{\mathcal{H}}^2 < \infty.$$ This is a Hilbert space with inner product $$\inner{F}{G}_{\mathscr{L}^2(\mathcal{U};\mathcal{H})} = \sum_{i=1}^\infty \inner{Fe_i}{Ge_i}_{\mathcal{H}}$$ which is independent of the choice of basis.

     \item For any $T>0$, $\mathscr{S}_T$ is the subspace of $C\left([0,T];[0,T]\right)$ of strictly increasing functions.

     \item For any $T>0$, $\mathcal{D}\left([0,T];\mathcal{Y}\right)$ is the space of c\'{a}dl\'{a}g functions from $[0,T]$ into $\mathcal{Y}$. It is a complete separable metric space when equipped with the metric $$d(\phi,\psi) := \inf_{\eta \in \mathscr{S}_T}\left[\sup_{t \in [0,T]}\left\vert \eta(t)- t\right\vert \vee \sup_{t \in [0,T]}\left\Vert \phi(t)-\psi(\eta(t)) \right\Vert_{\mathcal{Y}} \right]$$ which induces the so called Skorohod Topology (see [\cite{billingsley2013convergence}] pp124 for details).

\end{itemize}

We next give the probabilistic set up. Let $(\Omega,\mathcal{F},(\mathcal{F}_t), \mathbb{P})$ be a fixed filtered probability space satisfying the usual conditions of completeness and right continuity. We take $\mathcal{W}$ to be a cylindrical Brownian motion over some Hilbert Space $\mathfrak{U}$ with orthonormal basis $(e_i)$. Recall (e.g. [\cite{lototsky2017stochastic}], Definition 3.2.36) that $\mathcal{W}$ admits the representation $\mathcal{W}_t = \sum_{i=1}^\infty e_iW^i_t$ as a limit in $L^2(\Omega;\mathfrak{U}')$ whereby the $(W^i)$ are a collection of i.i.d. standard real valued Brownian Motions and $\mathfrak{U}'$ is an enlargement of the Hilbert Space $\mathfrak{U}$ such that the embedding $J: \mathfrak{U} \rightarrow \mathfrak{U}'$ is Hilbert-Schmidt and $\mathcal{W}$ is a $JJ^*-$cylindrical Brownian Motion over $\mathfrak{U}'$. Given a process $F:[0,T] \times \Omega \rightarrow \mathscr{L}^2(\mathfrak{U};\mathscr{H})$ progressively measurable and such that $F \in L^2\left(\Omega \times [0,T];\mathscr{L}^2(\mathfrak{U};\mathscr{H})\right)$, for any $0 \leq t \leq T$ we define the stochastic integral $$\int_0^tF_sd\mathcal{W}_s:=\sum_{i=1}^\infty \int_0^tF_s(e_i)dW^i_s,$$ where the infinite sum is taken in $L^2(\Omega;\mathscr{H})$. We can extend this notion to processes $F$ which are such that $F(\omega) \in L^2\left( [0,T];\mathscr{L}^2(\mathfrak{U};\mathscr{H})\right)$ for $\mathbb{P}-a.e.$ $\omega$ via the traditional localisation procedure. In this case the stochastic integral is a local martingale in $\mathscr{H}$. \footnote{A complete, direct construction of this integral, a treatment of its properties and the fundamentals of stochastic calculus in infinite dimensions can be found in [\cite{prevot2007concise}] Section 2.} We shall make frequent use of the Burkholder-Davis-Gundy Inequality ([\cite{da2014stochastic}] Theorem 4.36), passage of a bounded linear operator through the stochastic integral ([\cite{prevot2007concise}] Lemma 2.4.1) and the It\^{o} Formula ([\cite{da2014stochastic}] Theorem 4.32, Proposition \ref{rockner prop}). 

\subsection{Functional Framework} \label{functional framework subsection}

We now recap the classical functional framework for the study of the deterministic Navier-Stokes Equation. We formally define the operator $\mathcal{L}$ as well as the divergence-free and Navier boundary conditions. The nonlinear operator $\mathcal{L}$ is defined for sufficiently regular functions $f,g:\mathcal{O} \rightarrow \R^2$ by $\mathcal{L}_fg:= \sum_{j=1}^2f^j\partial_jg.$ Here and throughout the text we make no notational distinction between differential operators acting on a vector valued function or a scalar valued one; that is, we understand $\partial_jg$ by its component mappings $(\partial_lg)^l := \partial_jg^l$. For any $m \geq 1$, the mapping $\mathcal{L}: W^{m+1,2}(\mathscr{O};\R^2) \rightarrow W^{m,2}(\mathscr{O};\R^2)$ defined by $f \mapsto \mathcal{L}_ff$ is continuous (see e.g. [\cite{goodair2022navier}] Lemma 1.2). Some more technical properties of the operator are given at the end of this subsection. For the divergence-free condition we mean a function $f$ such that the property $$\textnormal{div}f := \sum_{j=1}^2 \partial_jf^j = 0$$ holds. We require this property and the boundary condition to hold for our solution $u$ at all times, though there is some ambiguity as to how we understand these conditions for a solution $u$ which need not be defined pointwise everywhere on $\bar{\mathscr{O}}$. We shall understand these conditions in their traditional weak sense, that is for weak derivatives $\partial_j$ so $\sum_{j=1}^2 \partial_jf^j = 0$ holds as an identity in $L^2(\mathscr{O};\R)$ whilst the boundary condition is understood in terms of trace. To be precise we first define the restriction mapping on functions $f \in W^{1,2}(\mathscr{O};\R) \cap C(\bar{\mathscr{O}};\R)$ by the restriction of $f$ to the boundary $\partial \mathscr{O}$, which is then shown to be a bounded operator into $W^{\frac{1}{2},2}(\partial \mathscr{O};\R)$ (see Lemma \ref{bounded trace} and more classical sources of e.g. [\cite{evans2010partial}]). By the density of $C(\bar{\mathscr{O}};\R)$ then the trace operator is well defined on the whole of $W^{1,2}(\mathscr{O};\R)$ as a continuous linear extension of the restriction mapping, and furthermore on $W^{1,2}(\mathscr{O};\R^2)$ by the trace of the components. The rate of strain tensor $D$ appearing in (\ref{navier boundary conditions}) is a mapping $D: W^{1,2}(\mathscr{O};\R^2) \rightarrow L^{2}(\mathscr{O};\R^{2 \times 2})$ defined by $$f \mapsto \begin{bmatrix}
        \partial_1f^1 & \frac{1}{2}\left(\partial_1f^1 + \partial_2f^2\right)\\
        \frac{1}{2}\left(\partial_1f^1 + \partial_2f^2\right) & \partial_2f^2
    \end{bmatrix}
    $$
    or in component form, $(Df)^{k,l}:=\frac{1}{2}\left(\partial_kf^l + \partial_lf^k\right)$. Note that if $f \in W^{2,2}(\mathscr{O};\R^2)$ then $Df \in W^{1,2}(\mathscr{O};\R^{2 \times 2})$ so the trace of its components is well defined and henceforth we understand the boundary condition $$ 2(Df)\mathbf{n} \cdot \iota + \alpha f \cdot \iota = 0$$ on $\partial \mathscr{O}$ to be in this trace sense. The same is true for $f \cdot \mathbf{n} = 0$. We look to impose these conditions by incorporating them into the function spaces where our solution takes value, and will always assume that $\alpha \in C^2(\partial \mathscr{O};\R)$ so as to match the regularity required in [\cite{clopeau1998vanishing}].
\begin{definition}
We define $C^{\infty}_{0,\sigma}(\mathscr{O};\R^2)$ as the subset of $C^{\infty}_0(\mathscr{O};\R^2)$ of functions which are divergence-free. $L^2_\sigma$ is defined as the completion of $C^{\infty}_{0,\sigma}(\mathscr{O};\R^2)$ in $L^2(\mathscr{O};\R^2)$, whilst we introduce $\bar{W}^{1,2}_\sigma$ as the intersection of $W^{1,2}(\mathscr{O};\R^2)$ with $L^2_\sigma$ and $\bar{W}^{2,2}_{\alpha}$ by $$\bar{W}^{2,2}_{\alpha}:= \left\{f \in W^{2,2}(\mathscr{O};\R^2) \cap \bar{W}^{1,2}_{\sigma}: 2(Df)\mathbf{n} \cdot \iota + \alpha f \cdot \iota = 0 \textnormal{ on } \partial \mathscr{O}\right\}.$$
\end{definition}

\begin{remark} \label{new first labelled remark}
    $L^2_{\sigma}$ can be characterised as the subspace of $L^2(\mathscr{O};\R^2)$ of weakly divergence-free functions with normal component weakly zero at the boundary (see [\cite{robinson2016three}] Lemma 2.14). Moreover the complement space of $L^2_{\sigma}$ in $L^2(\mathscr{O};\R^2)$ is characterised as the subspace of $L^2(\mathscr{O};\R^2)$ of functions $f$ such that there exists a $g \in W^{1,2}(\mathscr{O};\R)$ with the property that $f = \nabla g$ (see [\cite{robinson2016three}] Theorem 2.16).
\end{remark}

\begin{remark}  \label{first labelled remark}
    $\bar{W}^{1,2}_{\sigma}$ is precisely the subspace of $W^{1,2}(\mathscr{O};\R^2)$ consisting of divergence-free functions $f$ such that $f \cdot \mathbf{n} = 0$ on $\partial \mathscr{O}$. Moreover as both $D: W^{2,2}(\mathscr{O};\R^2) \rightarrow W^{1,2}(\mathscr{O};\R^{2 \times 2})$ and the trace mapping $W^{1,2}(\mathscr{O};\R) \rightarrow L^2(\partial \mathscr{O} ; \R)$ are continuous, then $\bar{W}^{1,2}_{\sigma}$, $\bar{W}^{2,2}_{\alpha}$ are closed in the $W^{1,2}(\mathscr{O};\R^2)$, $W^{2,2}(\mathscr{O};\R^2)$ norms respectively.
\end{remark}

We note that the Poincar\'{e} Inequality holds for the component mappings of functions in $\bar{W}^{1,2}_{\sigma}$. The inequality (see e.g. [\cite{robinson2016three}] Theorem 1.9) holds for the component mapping $f^j$ of $f \in \bar{W}^{1,2}_{\sigma}$ if $$ \int_{\mathscr{O}}f^j d\lambda = 0,$$ and observe that via an approximation with the set $C^{\infty}_{0,\sigma}(\mathscr{O};\R^2)$ which is dense in $L^2_{\sigma}$, one can conclude that for all $g \in L^2_{\sigma}$ and $\psi \in W^{1,2}(\mathscr{O};\R)$, $$\inner{g}{\nabla \psi} = 0$$ (this is the statement of [\cite{robinson2016three}] Lemma 2.11). Moreover by choosing $\phi$ as the function $\phi(x^1,x^2) := x^j$, then $$\int_{\mathscr{O}}f^j d\lambda = \inner{f}{\nabla \phi} = 0$$ so the Poincar\'{e} Inequality holds, and as such we equip $\bar{W}^{1,2}_{\sigma}$ with the inner product $$\inner{f}{g}_1 := \sum_{j=1}^2 \inner{\partial_j f}{\partial_j g}$$ which is equivalent to the full $W^{1,2}(\mathscr{O};\R^2)$ one. We shall also endow $W^{1,2}_0(\mathscr{O};\R)$ with the corresponding one dimensional inner product, and label it $\inner{\cdot}{\cdot}_{W^{1,2}_0}$. Likewise we look to equip $\bar{W}^{2,2}_{\alpha}$ with a new inner product, and we do so by introducing the Leray Projector $\mathcal{P}$ as the orthogonal projection in $L^2(\mathscr{O};\R)$ onto $L^2_{\sigma}$. It is well known (see e.g. [\cite{temam2001navier}] Remark 1.6.) that for any $m \in \N$, $\mathcal{P}$ is continuous as a mapping $\mathcal{P}: W^{m,2}(\mathscr{O};\R^2) \rightarrow W^{m,2}(\mathscr{O};\R^2)$. Through $\mathcal{P}$ we define the Stokes Operator $A: W^{2,2}(\mathscr{O};\R^2) \rightarrow L^2_{\sigma}$ by $A:= -\mathcal{P}\Delta$. We understand the Laplacian as an operator on vector valued functions through the component mappings, $(\Delta f)^l := \Delta f^l$. From the continuity of $\mathcal{P}$ we have immediately that for $m \in \{0\} \cup \N$, $A: W^{m+2,2}(\mathscr{O};\R^2) \rightarrow W^{m,2}(\mathscr{O};\R^2)$ is continuous.

\begin{lemma}
    The bilinear form $\inner{f}{g}_2:= \inner{Af}{Ag}$ defines an inner product on $\bar{W}^{2,2}_{\alpha}$ equivalent to the standard $W^{2,2}(\mathscr{O};\R^2)$ inner product. 
\end{lemma}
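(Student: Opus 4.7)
Bilinearity and symmetry of $\inner{\cdot}{\cdot}_2$ are immediate, and positive semi-definiteness is automatic from $\inner{f}{f}_2 = \|Af\|^2$. The substance of the lemma is thus (i) positive-definiteness (triviality of $\ker A$ on $\bar{W}^{2,2}_\alpha$) together with (ii) the equivalence of norms. The upper bound is free: since the Leray projector $\mathcal{P}$ has operator norm $1$ on $L^2$ and $\Delta : W^{2,2}(\mathscr{O};\R^2) \to L^2(\mathscr{O};\R^2)$ is bounded, one has $\|Af\| \leq \|\Delta f\| \leq \|f\|_{W^{2,2}}$, and this handles half the equivalence. Everything therefore comes down to the reverse estimate $\|f\|_{W^{2,2}} \leq C\|Af\|$.

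The approach I would take is to identify $A\vert_{\bar{W}^{2,2}_\alpha}$ with the Stokes operator under Navier boundary conditions and appeal to its elliptic regularity theory. Concretely, given $f \in \bar{W}^{2,2}_\alpha$ set $g := Af \in L^2_\sigma$. Then $-\Delta f - g = (\mathcal{I}-\mathcal{P})(-\Delta f)$ lies in $(L^2_\sigma)^\perp$, so by Remark \ref{new first labelled remark} there exists $p \in W^{1,2}(\mathscr{O};\R)$, unique up to additive constants, with $-\Delta f + \nabla p = g$. The pair $(f,p)$ thus solves the stationary Stokes system
\[
-\Delta f + \nabla p = g, \qquad \operatorname{div} f = 0 \text{ in } \mathscr{O}, \qquad f\cdot\mathbf{n}=0,\quad 2(Df)\mathbf{n}\cdot\iota + \alpha f\cdot\iota = 0 \text{ on } \partial\mathscr{O}.
\]
Classical $W^{2,2}$-regularity for this boundary value problem (in the spirit of Solonnikov or Be\'irao da Veiga) delivers a constant $C$, depending only on $\mathscr{O}$ and $\alpha \in C^2(\partial\mathscr{O};\R)$, such that $\|f\|_{W^{2,2}} \leq C(\|g\| + \|f\|)$.

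It remains to absorb the lower-order term $\|f\|$ into $\|Af\|$. The cleanest way is to show $\ker A \cap \bar{W}^{2,2}_\alpha = \{0\}$ and then argue by contradiction: if no uniform estimate $\|f\| \leq C\|Af\|$ held on $\bar{W}^{2,2}_\alpha$, rescaling would produce a sequence $(f_n)$ with $\|f_n\| = 1$, $\|Af_n\| \to 0$, bounded in $W^{2,2}$ by the regularity estimate, and hence (by Rellich--Kondrachov) convergent along a subsequence in $L^2$ to some $f$ with $\|f\|=1$ and $Af=0$, contradicting the triviality of the kernel. For the kernel: if $Af=0$ then testing the associated Stokes system with $f$ and integrating by parts via $-\int\Delta f\cdot f = 2\|Df\|^2 - 2\int_{\partial\mathscr{O}}(Df)\mathbf{n}\cdot f$, combined with $f=(f\cdot\iota)\iota$ on $\partial\mathscr{O}$ and the Navier identity $2(Df)\mathbf{n}\cdot\iota = -\alpha f\cdot\iota$, produces
\[
2\|Df\|^2 + \int_{\partial\mathscr{O}} \alpha (f\cdot\iota)^2 \, dS = 0,
\]
and the standing sign assumption on $\alpha$ (in particular $\alpha \geq \kappa$, as used throughout the paper) together with Korn's inequality for $\bar{W}^{1,2}_\sigma$-fields forces $f\equiv 0$.

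\textbf{Main obstacle.} The single delicate input is the $W^{2,2}$ elliptic regularity for the Stokes system under Navier boundary conditions, which I would simply cite rather than reprove. Conceptually, the most subtle point is the kernel argument: without a suitable sign hypothesis on $\alpha$, rigid rotations on rotationally symmetric domains can obstruct triviality of $\ker A$, and so the identification of $\|Af\|$ as a bona-fide norm relies crucially on the way $\bar{W}^{2,2}_\alpha$ encodes the full Navier condition and on the hypothesis $\alpha \geq \kappa$ operative in the paper.
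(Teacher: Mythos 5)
Your treatment of the easy inequality ($\norm{Af} \leq \norm{\Delta f} \leq \norm{f}_{W^{2,2}}$, with constant $1$) coincides with the paper's. For the hard inequality $\norm{f}_{W^{2,2}} \leq C\norm{Af}$ the paper simply invokes a single estimate for the Stokes system under Navier boundary conditions ([\cite{tapia2021stokes}] Theorem 5.10, applied with data $Af$), whereas you reconstruct it from the weaker and more standard regularity bound $\norm{f}_{W^{2,2}} \leq C\left(\norm{Af} + \norm{f}\right)$ together with a compactness argument that absorbs the lower-order term, which in turn requires $\ker A \cap \bar{W}^{2,2}_{\alpha} = \{0\}$. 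This is a legitimate and more self-contained route: it isolates exactly which qualitative input (triviality of the kernel) underlies the quantitative estimate, and your energy identity $2\norm{Df}^2 + \int_{\partial\mathscr{O}}\alpha\,(f\cdot\iota)^2\,dS = 0$ for kernel elements is correctly derived from $f\cdot\mathbf{n}=0$ and the Navier condition.

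The caveat is the hypothesis under which your kernel argument closes. The lemma is stated, and is used in the weak-solution theory, for arbitrary $\alpha \in C^2(\partial\mathscr{O};\R)$ with no sign condition, while your argument needs $\alpha \geq 0$; note that $\alpha \geq \kappa$ alone does not give $\int_{\partial\mathscr{O}}\alpha\,(f\cdot\iota)^2\,dS \geq 0$ when the boundary has negative curvature somewhere, so you should state the sign hypothesis you actually use rather than appeal to the paper's standing assumption. Your observation about rigid rotations is on point: on a disc with $\alpha \equiv 0$ the field $f(x) = (-x_2,x_1)$ is divergence-free, tangential, has $Df=0$ and $\Delta f = 0$, hence lies in $\bar{W}^{2,2}_{0}$ with $Af=0$, so some restriction on $\alpha$ is genuinely necessary for $\norm{\cdot}_2$ to be a norm at all --- a point the paper's one-line citation glosses over. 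In short, your proof is correct under an explicit non-negativity assumption on $\alpha$, but it does not establish the lemma in the full generality in which it is stated; whether that generality is even attainable is a question for the cited reference rather than a defect of your argument.
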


\begin{proof}
    We must show the existence of constants $c_1$, $c_2$ such that for all $f \in \bar{W}^{2,2}_{\alpha}$, $$c_1\norm{f}_{W^{2,2}}^2 \leq \norm{f}_2^2 \leq c_2\norm{f}_{W^{2,2}}^2.$$
The constant $c_2$ can in fact be taken as $1$, as $$\norm{f}_2^2 \leq \norm{\Delta f}^2 \leq \norm{f}_{W^{2,2}}^2.$$ The existence of such a $c_1$ is much more challenging and relies on estimates of the Stokes equation with the Navier boundary conditions, which has been proved in [\cite{tapia2021stokes}] Theorem 5.10. We use, in their notation, that $\mathbf{f} = A\mathbf{u}$ is a solution of the Stokes problem with $\pi = 0$, which gives the result.
\end{proof}

The following lemma will further facilitate our work in these spaces.

\begin{lemma} \label{eigenfunctions for navier}
    There exists a collection of functions $(\bar{a}_k)$, $\bar{a}_k \in W^{3,2}(\mathscr{O};\R^2) \cap \bar{W}^{2,2}_{\alpha}$, such that the $(a_k)$ are eigenfunctions of $A$, are an orthonormal basis in $L^2_{\sigma}$ and an orthogonal basis in $\bar{W}^{1,2}_{\sigma}$. Moreover the eigenvalues $(\bar{\lambda}_k)$ are strictly positive and apporach infinity as $k \rightarrow \infty$.
\end{lemma}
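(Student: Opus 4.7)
The plan is to obtain the $(\bar{a}_k)$ as eigenfunctions of a symmetric, coercive bilinear form on $\bar{W}^{1,2}_\sigma$ associated with the Stokes operator, then use the Stokes regularity theory of \cite{tapia2021stokes} to promote them into $W^{3,2}(\mathscr{O};\R^2) \cap \bar{W}^{2,2}_\alpha$. Concretely, define
\begin{equation*}
    a(f,g) := 2\inner{Df}{Dg}_{L^2(\mathscr{O};\R^{2\times 2})} + \int_{\partial \mathscr{O}} \alpha \, f \cdot g \, dS, \qquad f,g \in \bar{W}^{1,2}_\sigma.
\end{equation*}
This is the natural form produced by pairing $-\mathcal{P}\Delta f$ with a divergence-free test function satisfying $g \cdot \mathbf{n} = 0$ and integrating by parts, after inserting the Navier identity $2(Df)\mathbf{n}\cdot \iota = -\alpha f\cdot \iota$ on $\partial \mathscr{O}$. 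The symmetry is immediate, the boundedness follows from the trace inequality and $\alpha \in C^2(\partial \mathscr{O};\R)$, and coercivity on $\bar{W}^{1,2}_\sigma$ follows from Korn's inequality together with the assumption on $\alpha$ (using a compactness/contradiction argument if $\alpha$ is not strictly positive).

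Having a symmetric, bounded, coercive form, I would invoke Lax--Milgram to obtain a bounded inverse $S: L^2_\sigma \to \bar{W}^{1,2}_\sigma$ of the associated operator, and compose with the compact embedding $\bar{W}^{1,2}_\sigma \hookrightarrow L^2_\sigma$ (Rellich--Kondrachov together with Remark \ref{new first labelled remark}) to see that $S$ is compact and self-adjoint as an operator on $L^2_\sigma$. The classical spectral theorem then yields an orthonormal basis $(\bar{a}_k) \subset L^2_\sigma$ of eigenfunctions of $S$ with positive eigenvalues $\mu_k \to 0$, hence eigenfunctions of the variational Stokes operator with $\bar{\lambda}_k = 1/\mu_k \to \infty$, and positivity of $\bar{\lambda}_k$ comes from the coercivity of $a$. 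Orthogonality in $\bar{W}^{1,2}_\sigma$ (with respect to $a$, which is equivalent to $\inner{\cdot}{\cdot}_1$) is then automatic since $a(\bar{a}_k,\bar{a}_j) = \bar{\lambda}_k \inner{\bar{a}_k}{\bar{a}_j} = 0$ for $k \neq j$, and density in $\bar{W}^{1,2}_\sigma$ follows from density in $L^2_\sigma$ plus the characterisation via the form.

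The main obstacle is the regularity bootstrap. A priori the eigenfunctions live only in $\bar{W}^{1,2}_\sigma$, and I need to show they lie in $W^{3,2}(\mathscr{O};\R^2) \cap \bar{W}^{2,2}_\alpha$. The eigenfunction relation $a(\bar{a}_k,v) = \bar{\lambda}_k \inner{\bar{a}_k}{v}$ for every $v \in \bar{W}^{1,2}_\sigma$ is equivalent (modulo a pressure obtained from Remark \ref{new first labelled remark}) to the Stokes system $-\Delta \bar{a}_k + \nabla p_k = \bar{\lambda}_k \bar{a}_k$, $\mathrm{div}\, \bar{a}_k = 0$ with Navier boundary data. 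Feeding the right-hand side $\bar{\lambda}_k \bar{a}_k \in L^2_\sigma$ into the Stokes regularity theorem of \cite{tapia2021stokes} (Theorem 5.10), the same result already used to prove the previous lemma, yields $\bar{a}_k \in W^{2,2}(\mathscr{O};\R^2)$; since the Navier boundary conditions are then satisfied in the sense of traces, $\bar{a}_k \in \bar{W}^{2,2}_\alpha$ and hence $\bar{a}_k$ is an eigenfunction of $A$ itself.

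To upgrade to $W^{3,2}$, iterate: now the right-hand side $\bar{\lambda}_k \bar{a}_k$ lies in $W^{1,2}(\mathscr{O};\R^2)$, so a second application of the Stokes regularity estimate (in its higher-order version) produces $\bar{a}_k \in W^{3,2}(\mathscr{O};\R^2)$. The technical subtlety is that the cited theorem is stated at the $W^{2,2}$ level; if a higher-order version is not directly available one would either invoke a more general Stokes regularity result (e.g. in Amrouche--Rejaiba style, available for these boundary data on smooth domains), or work locally with interior $W^{3,2}$ regularity away from the boundary combined with flattening the boundary and applying standard elliptic $W^{3,2}$ estimates for the resulting Stokes system. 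This bootstrap is the only delicate step; the spectral decomposition itself is routine once the variational framework is in place.
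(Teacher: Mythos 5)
Your construction is correct in outline, but it takes a genuinely different route from the paper: the paper's entire proof is a citation of [\cite{clopeau1998vanishing}] Lemma 2.2, where the system $(v_k)$ is built and its regularity established, supplemented only by the observation that applying $\mathcal{P}$ to the relation $-\Delta \bar{a}_k + \nabla \pi_k = \bar{\lambda}_k \bar{a}_k$ and using $\mathcal{P}\nabla\pi_k = 0$ (Remark \ref{new first labelled remark}) exhibits the $(v_k)$ as eigenfunctions of $A = -\mathcal{P}\Delta$. What you write out --- the symmetric form $a(f,g) = 2\inner{Df}{Dg} + \int_{\partial\mathscr{O}}\alpha\, f\cdot g\,dS$, Lax--Milgram plus the compact embedding to obtain a compact self-adjoint resolvent on $L^2_{\sigma}$, the spectral theorem, and the Stokes-regularity bootstrap --- is essentially the content of that cited lemma, so your proposal buys self-containedness at the cost of re-proving the reference, while the paper buys brevity at the cost of outsourcing precisely the regularity bootstrap that you correctly identify as the only delicate step (and which does require a higher-order Stokes estimate beyond the $W^{2,2}$ statement of [\cite{tapia2021stokes}] Theorem 5.10 to reach $W^{3,2}$).

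Two caveats, neither fatal for the regime in which the lemma is used. First, the orthogonality you obtain is with respect to $a(\cdot,\cdot)$, which coincides with the inner product $\inner{\cdot}{\cdot}_H$ the paper later introduces; it is \emph{not} orthogonality in $\inner{\cdot}{\cdot}_1$, since equivalent inner products need not share orthogonal systems (indeed $\inner{\bar{a}_j}{\bar{a}_k}_1 = \inner{(\kappa-\alpha)\bar{a}_j}{\bar{a}_k}_{L^2(\partial\mathscr{O};\R^2)}$ need not vanish for $j \neq k$). This matches what the paper actually uses, but the phrase ``which is equivalent to $\inner{\cdot}{\cdot}_1$'' should not be read as delivering $\inner{\cdot}{\cdot}_1$-orthogonality. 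Second, for a sign-changing $\alpha$ the form $a$ need not be coercive and strict positivity of all eigenvalues can fail; your compactness argument covers $\alpha \geq 0$ (the setting of the cited reference) but not arbitrary $\alpha \in C^2(\partial\mathscr{O};\R)$ --- an imprecision the lemma statement itself inherits.
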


\begin{proof}
    This is the content of [\cite{clopeau1998vanishing}] Lemma 2.2, where $(\bar{a}_k)$ is $(v_k)$ in their notation. The fact that this system consists of eigenfunctions of $A$ follows from (2.10) by taking $\mathcal{P}$ of the top line: that is, $$\bar{\lambda}_k\bar{a}_k = \mathcal{P}\left(\bar{\lambda}_k\bar{a}_k\right) = \mathcal{P}\left(-\Delta \bar{a}_k + \nabla \pi_k  \right) = A \bar{a}_k$$ using that $\mathcal{P}\nabla \pi_k = 0$ from Remark \ref{new first labelled remark}.
\end{proof}

One can see that we are building a framework to parallel that of the classic no-slip case, though a significant difference comes in the presence of a boundary integral for Green's type identities. What we achieve now is the following, recalling $\kappa \in C^2(\partial \mathscr{O};\R)$ to be the curvature of $\partial \mathscr{O}$.

\begin{lemma} \label{greens for navier}
    For $f \in \bar{W}^{2,2}_{\alpha}$, $\phi \in \bar{W}^{1,2}_{\sigma}$, we have that $$\inner{\Delta f}{\phi}  = -\inner{f}{\phi}_1 + \inner{(\kappa - \alpha)f}{\phi}_{L^2(\partial \mathscr{O}; \R^2)}.$$
\end{lemma}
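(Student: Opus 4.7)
The plan is to apply the standard Green identity componentwise and then reduce the resulting boundary integral using the Navier condition together with the Frenet-type formula $\partial_\iota \mathbf{n} = \kappa \iota$ along $\partial \mathscr{O}$. For $f \in W^{2,2}(\mathscr{O};\R^2)$ and $\phi \in W^{1,2}(\mathscr{O};\R^2)$ componentwise integration by parts gives
\[
 \inner{\Delta f}{\phi} = -\inner{f}{\phi}_1 + \int_{\partial \mathscr{O}} \big((\nabla f)\mathbf{n}\big)\cdot \phi\, dS,
\]
where $\big((\nabla f)\mathbf{n}\big)^k = \sum_l n^l \partial_l f^k$ is the directional derivative $\partial_\mathbf{n} f^k$. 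In the trace sense, both $f$ and $\phi$ are tangential on $\partial\mathscr{O}$, so $\phi = (\phi\cdot\iota)\iota$ and $f\cdot\phi = (f\cdot\iota)(\phi\cdot\iota)$ as elements of $L^2(\partial \mathscr{O})$; the claim will therefore follow once we show
\[
 \iota\cdot\partial_\mathbf{n} f = (\kappa-\alpha)(f\cdot\iota) \qquad \text{on } \partial \mathscr{O}.
\]

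To obtain this boundary identity I split the symmetric gradient into its two contributions,
\[
 2(Df)\mathbf{n}\cdot\iota = \sum_{k,l}\partial_l f^k n^l \iota^k + \sum_{k,l}\partial_k f^l n^l \iota^k = \iota\cdot\partial_\mathbf{n} f + \sum_{k,l}\partial_k f^l n^l \iota^k,
\]
and, extending $\mathbf{n}$ to a smooth vector field in a neighbourhood of the boundary, rewrite the second sum as $\partial_\iota(f\cdot\mathbf{n}) - f\cdot\partial_\iota\mathbf{n}$. The first term is the tangential derivative along $\partial\mathscr{O}$ of a function whose trace vanishes, and is therefore zero. For the second, the Frenet relation $\partial_\iota\mathbf{n} = \kappa\iota$ together with tangentiality of $f$ gives $f\cdot\partial_\iota\mathbf{n} = \kappa(f\cdot\iota)$, leaving
\[
 2(Df)\mathbf{n}\cdot\iota = \iota\cdot\partial_\mathbf{n} f - \kappa(f\cdot\iota).
\]
Substituting the Navier boundary condition $2(Df)\mathbf{n}\cdot\iota = -\alpha(f\cdot\iota)$ and rearranging gives exactly $\iota\cdot\partial_\mathbf{n} f = (\kappa-\alpha)(f\cdot\iota)$, completing the proof.

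The main obstacle is really this boundary identity and, within it, pinning down the sign convention for $\kappa$ compatible with the paper; this can be cross-checked against the equivalent representation (\ref{its a new rep}) which already encodes the same arrangement of curvature and friction in the vorticity identity. A minor technical point is that the extension of $\mathbf{n}$ to a tubular neighbourhood of $\partial \mathscr{O}$ is allowed by smoothness of the boundary, while the two quantities actually used on $\partial\mathscr{O}$, namely $\partial_\iota \mathbf{n}$ and $\partial_\iota(f\cdot\mathbf{n})$, are intrinsic (determined by the restriction of $\mathbf{n}$ to $\partial\mathscr{O}$), so the choice of extension is immaterial. All trace-sense manipulations follow standard density arguments, with the eigenfunction system of Lemma \ref{eigenfunctions for navier} providing smooth approximants in $\bar{W}^{2,2}_\alpha$.
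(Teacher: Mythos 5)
Your proof is correct, but it is genuinely different from the paper's: the paper does not prove this lemma at all, it simply cites equation (5.1) of Kelliher's work on Navier boundary conditions, whereas you give a self-contained derivation. Your route — componentwise Green's identity producing the boundary term $\int_{\partial\mathscr{O}}(\partial_{\mathbf{n}}f)\cdot\phi\,dS$, then the splitting $2(Df)\mathbf{n}\cdot\iota = \iota\cdot\partial_{\mathbf{n}}f + \mathbf{n}\cdot\partial_\iota f$ with $\mathbf{n}\cdot\partial_\iota f = \partial_\iota(f\cdot\mathbf{n}) - f\cdot\partial_\iota\mathbf{n} = -\kappa(f\cdot\iota)$ — is exactly the standard mechanism behind Kelliher's identity, and your sign convention $\partial_\iota\mathbf{n}=\kappa\iota$ is consistent with the paper's (on the unit disk, $\mathbf{n}=(\cos\theta,\sin\theta)$, $\iota=(-\sin\theta,\cos\theta)$ gives $\partial_\iota\mathbf{n}=\iota$ and $\kappa=1$, and one checks the final formula directly with $f=\phi=(-y,x)$, $\alpha=0$: both sides vanish since $-2\pi+2\pi=0$). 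What your version buys is transparency about where each of $\kappa$ and $\alpha$ enters, which the paper's citation obscures. Two minor remarks: the closing appeal to the eigenfunctions of Lemma \ref{eigenfunctions for navier} as approximants in $\bar{W}^{2,2}_{\alpha}$ is unnecessary and not quite justified (their density in the $W^{2,2}$ topology is not established in the paper); you do not need approximants that respect the boundary conditions, since the Green identity itself only requires density of $C^\infty(\bar{\mathscr{O}};\R^2)$ in $W^{2,2}$ and $W^{1,2}$, while the pointwise boundary manipulations use only the trace-sense identities $f\cdot\mathbf{n}=0$ and $2(Df)\mathbf{n}\cdot\iota+\alpha f\cdot\iota=0$, which hold in $L^2(\partial\mathscr{O})$ by membership in $\bar{W}^{2,2}_{\alpha}$.
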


\begin{proof}
    This is demonstrated in [\cite{kelliher2006navier}] equation (5.1).
\end{proof}

Due to this boundary integral we will make great use of Lebesgue spaces and fractional Sobolev spaces defined on the boundary $\partial \mathscr{O}$. The spaces $L^p(\partial\mathscr{O};\R^2)$ can be defined precisely as in Subsection \ref{sub elementary notation} for $\partial \mathscr{O}$ equipped with its surface measure, and in fact the same is true for $W^{s,2}(\partial \mathscr{O};\R)$ with $0 < s < 1$ and hence $W^{s,2}(\partial \mathscr{O};\R^2)$ in the same manner. Indeed this definition is given in [\cite{grisvard2011elliptic}] pp.20, where it is shown to be equivalent to the often used definition locally as the space $W^{s,2}(\R;\R)$ via a coordinate transformation and partition of unity. The stability under a change of variables ensures results of H\"{o}lder's Inequality and (one dimensional) Sobolev Embeddings hold on the boundary for these spaces. The relation to the trace of functions in $\mathscr{O}$ is stated now.
\begin{lemma} \label{bounded trace}
    For $\frac{1}{2} <s < \frac{3}{2}$ the trace operator is bounded and linear from $W^{s,2}(\mathscr{O};\R)$ into $W^{s- \frac{1}{2},2}(\partial \mathscr{O};\R)$. 
\end{lemma}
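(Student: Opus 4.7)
The plan is to follow the classical reduction of trace estimates on a smooth bounded domain to the corresponding estimate on the half-space $\R^2_+ := \{(x_1,x_2)\in\R^2 : x_2>0\}$, exploiting the smoothness of $\partial \mathscr{O}$. Because the mapping is linear, it is enough to establish the norm inequality $\norm{\gamma f}_{W^{s-1/2,2}(\partial \mathscr{O};\R)} \leq C\norm{f}_{W^{s,2}(\mathscr{O};\R)}$ on the dense subset $C^\infty(\overline{\mathscr{O}};\R) \cap W^{s,2}(\mathscr{O};\R)$, on which the restriction to $\partial \mathscr{O}$ is pointwise well defined, and then extend by continuity. Density here is standard since $\partial \mathscr{O}$ is smooth; this is where one uses that we are working in $W^{s,2}(\mathscr{O};\R)$ rather than some larger space.

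First I would pick a finite open cover $(U_i)_{i=1}^M$ of a neighbourhood of $\partial \mathscr{O}$ and $C^\infty$-diffeomorphisms $\psi_i:U_i \to V_i\subset \R^2$ sending $U_i\cap \mathscr{O}$ to $V_i\cap\R^2_+$ and $U_i\cap\partial \mathscr{O}$ to $V_i\cap\partial\R^2_+$, together with a subordinate smooth partition of unity $(\chi_i)$. Since $W^{s,2}$ norms are preserved, up to multiplicative constants depending on $s$ and the $\psi_i$, under smooth changes of coordinates (for both $\mathscr{O}$ and $\partial\mathscr{O}$, the latter using the boundary definition of the fractional Sobolev norm recalled after Remark \ref{first labelled remark} and the equivalence to the Fourier-based definition on $\R$), the task reduces to showing the half-space trace estimate: for every Schwartz function $g$ on $\overline{\R^2_+}$ supported in a fixed compact set,
\[
\norm{g(\cdot,0)}_{W^{s-1/2,2}(\R;\R)} \leq C(s)\,\norm{g}_{W^{s,2}(\R^2_+;\R)}.
\]
This is obtained by taking the partial Fourier transform $\widehat g(\xi_1,x_2)$ in the tangential variable, writing $|\widehat g(\xi_1,0)|^2 = -\int_0^\infty \partial_{x_2}|\widehat g(\xi_1,x_2)|^2\,dx_2$, and applying the Cauchy--Schwarz inequality to obtain
\[
(1+\xi_1^2)^{s-1/2}|\widehat g(\xi_1,0)|^2 \leq 2\Bigl(\int_0^\infty (1+\xi_1^2)^s|\widehat g|^2\,dx_2\Bigr)^{1/2}\Bigl(\int_0^\infty |\partial_{x_2}\widehat g|^2\,dx_2\Bigr)^{1/2}.
\]
Integrating in $\xi_1$ and invoking Plancherel and the Fourier characterisation of $W^{s,2}(\R^2_+)$ via extension then yields the claim. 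The condition $\tfrac12<s$ is what makes the right-hand side finite via this weighting, while $s<\tfrac32$ ensures that no trace of $\partial_{x_2}g$ enters.

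Finally I would assemble the global estimate by writing $f = \sum_i \chi_i f$ (modulo a term supported away from $\partial \mathscr{O}$, which has zero trace), pulling each $\chi_i f$ to the half-space by $\psi_i$, applying the half-space bound, and summing; the interior piece contributes nothing to $\gamma f$. I expect the tangential Fourier estimate on the half-space to be the technical heart of the argument; the partition of unity, coordinate equivalence of fractional norms, and density arguments are routine once the smoothness of $\partial \mathscr{O}$ and the intrinsic definition of $W^{s,2}(\partial \mathscr{O};\R)$ recalled in the excerpt are in hand. Full details of this reduction, as noted in the paper, can be found in [\cite{grisvard2011elliptic}] and [\cite{evans2010partial}].
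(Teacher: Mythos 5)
The paper disposes of this lemma with a one-line citation to [\cite{ding1996proof}] Theorem 1, so your attempt at a self-contained proof is already a different route; the overall strategy (partition of unity, boundary flattening, a tangential-Fourier estimate on the half-space) is the standard one for smooth domains and is sound in outline. The problem is that the displayed half-space inequality, which you correctly identify as the technical heart, only establishes the case $s=1$. Writing $|\widehat g(\xi_1,0)|^2 \leq 2\bigl(\int_0^\infty |\widehat g|^2\,dx_2\bigr)^{1/2}\bigl(\int_0^\infty |\partial_{x_2}\widehat g|^2\,dx_2\bigr)^{1/2}$ and multiplying by $(1+\xi_1^2)^{s-1/2}$, the weight that Cauchy--Schwarz lets you absorb into the first factor is $(1+\xi_1^2)^{2s-1}$, not $(1+\xi_1^2)^{s}$; your inequality as stated therefore requires $2s-1\leq s$, i.e.\ $s\leq 1$, and for $1<s<\frac32$ it is simply false pointwise in $\xi_1$ (test it on $\widehat g(\xi_1,x_2)=\phi(\xi_1)e^{-x_2}$ with $\phi$ concentrated at large frequency). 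Conversely, for $\frac12<s<1$ the inequality does hold but is useless, because the factor $\int_0^\infty |\partial_{x_2}\widehat g|^2\,dx_2$ demands a full $L^2$ normal derivative, which $\norm{g}_{W^{s,2}(\R^2_+;\R)}$ does not control when $s<1$. So neither end of the range $(\tfrac12,\tfrac32)\setminus\{1\}$ is actually covered by the computation as written.

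The clean repair, valid for every $s>\frac12$, is to extend $g$ to an element of $W^{s,2}(\R^2;\R)$ with comparable norm, take the full Fourier transform, and use the identity $\widehat{g(\cdot,0)}(\xi_1)=(2\pi)^{-1}\int_{\R}\widehat g(\xi_1,\xi_2)\,d\xi_2$ together with Cauchy--Schwarz against the weight $(1+|\xi|^2)^{-s}$: since $\int_{\R}(1+\xi_1^2+\xi_2^2)^{-s}\,d\xi_2 = C_s(1+\xi_1^2)^{\frac12-s}$ precisely when $s>\frac12$, one obtains $(1+\xi_1^2)^{s-\frac12}|\widehat{g(\cdot,0)}(\xi_1)|^2 \leq C_s\int_{\R}(1+|\xi|^2)^{s}|\widehat g(\xi)|^2\,d\xi_2$, which integrates to the desired bound. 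Two smaller points: your claim that ``$s<\frac32$ ensures that no trace of $\partial_{x_2}g$ enters'' is not what the upper bound is for --- boundedness of the trace into $W^{s-\frac12,2}$ holds for all $s>\frac12$ on a smooth domain; the restriction $s<\frac32$ keeps the target exponent in $(0,1)$, matching the paper's definition of the boundary spaces, and is the genuine threshold in the Lipschitz-domain theorem of [\cite{ding1996proof}] that the paper actually invokes. Also, [\cite{grisvard2011elliptic}] and [\cite{evans2010partial}] are cited in the paper for the definition of the boundary spaces and the $L^2$ trace respectively, not for this fractional reduction, so the final appeal to them should be replaced by the Fourier argument above or by the citation the paper itself uses.
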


\begin{proof}
    See [\cite{ding1996proof}] Theorem 1. 
\end{proof}

We shall make use of this result for the characterisation of the fractional Sobolev spaces as interpolation spaces. In fact the renowned book of Adams [\cite{adams2003sobolev}] defines these spaces in this way (7.57, pp.250), and their equivalence is well understood (see for example [\cite{tartar2007introduction}] pp.83). A proof of this equivalence relies on a norm preserving extension operator for the space as defined by interpolation (which follows from the classical integer valued case, see [\cite{adams2003sobolev}] Theorem 5.24 and [\cite{stein1970singular}] chapter 6), the equivalence of the interpolation space on $\R^2$ with that defined by Fourier transformations (see [\cite{adams2003sobolev}] 7.63 pp.252), the further equivalence of this space with our definition on $\R^2$ ([\cite{demengel2012functional}] Proposition 4.17), and finally a norm preserving extension for this fractional Sobolev space ([\cite{di2012hitchhikers}] Theroem 5.4). Therefore there exists a constant $c$ such that for $f \in W^{1,2}(\mathscr{O};\R^2)$ and $0 < s < 1$, we have that \begin{equation} \label{interpolation}
    \norm{f}_{W^{s,2}} \leq c\norm{f}^{1-s}\norm{f}_{W^{1,2}}^s.
\end{equation}
In particular for $s=\frac{1}{2}$, if the result of Lemma \ref{bounded trace} were true in this limiting case (that is, an embedding of $W^{\frac{1}{2},2}(\mathscr{O};\R^2)$ into $L^2(\partial \mathscr{O};\R^2)$) then combined with (\ref{interpolation}) we would obtain \begin{equation}
    \label{inequality from Lions}
    \norm{f}_{L^2(\partial \mathscr{O};\R^2)}^2 \leq c\norm{f}\norm{f}_{W^{1,2}}.
\end{equation}
This inequality is in fact true and is classical in the study of our problem (see for example [\cite{lions1996mathematical}] pp.130, [\cite{kelliher2006navier}] equation (2.5)) though some additional machinery is required to prove it. In short the result can be achieved by showing that the trace operator is a continuous linear operator from an appropriate Besov space which similarly interpolates between $L^2(\mathscr{O};\R^2)$ and $W^{1,2}(\mathscr{O};\R^2)$ (see [\cite{adams2003sobolev}] Theorem 7.43 and Remark 7.45 for the trace embedding, and the Besov Spaces subchapter for the interpolation). Moving on we introduce the finite dimensional projections $(\bar{\mathcal{P}}_n)$, where $\bar{\mathcal{P}}_n$ is the orthogonal projection onto $\bar{V}_n:=\textnormal{span}\{\bar{a}_1, \dots, \bar{a}_n\}$ in $L^2(\mathscr{O};\R^2)$ (note that $\bar{V}_n$ is a Hilbert Space equipped with any $W^{k,2}(\mathscr{O};\R^2)$ for $k = 0, 1, 2, 3$). That is, $\bar{\mathcal{P}}_n$ is given by $$\bar{\mathcal{P}}_n : f \mapsto \sum_{k=1}^n \inner{f}{\bar{a}_k}\bar{a}_k.$$ Note that $\bar{\mathcal{P}}_n$ is also self-adjoint in $\bar{W}^{1,2}_{\sigma}$ as the system $(\bar{a}_k)$ forms an orthogonal basis in this space. It will also be of use to us to consider the vorticity in this context, which we do by introducing the operator $\textnormal{curl}: W^{1,2}(\mathscr{O};\R^2) \rightarrow L^2(\mathscr{O};\R)$ by $$\textnormal{curl}: f \rightarrow \partial_1 f^2 - \partial_2 f^1.$$
A significant property in the study of vorticity is the following.
\begin{lemma} \label{lemma for curl and P}
    For all $f \in W^{1,2}(\mathscr{O};\R^2)$, $\textnormal{curl}(\mathcal{P}f) = \textnormal{curl}f$.
\end{lemma}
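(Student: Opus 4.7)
The plan is to use the Helmholtz decomposition of $L^2(\mathscr{O};\R^2)$ together with the continuity of the Leray projector on $W^{1,2}$, and then conclude by the symmetry of mixed weak partial derivatives.

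First, fix $f \in W^{1,2}(\mathscr{O};\R^2)$. By Remark \ref{new first labelled remark}, the orthogonal complement of $L^2_{\sigma}$ in $L^2(\mathscr{O};\R^2)$ consists exactly of gradients $\nabla g$ with $g \in W^{1,2}(\mathscr{O};\R)$. Applying the Leray projector we may therefore write
\[
f = \mathcal{P}f + \nabla g
\]
for some $g \in W^{1,2}(\mathscr{O};\R)$. The key point is that this identity, which a priori holds only in $L^2$, actually upgrades to $W^{1,2}$: indeed, $\mathcal{P}$ was recalled to be continuous from $W^{1,2}(\mathscr{O};\R^2)$ into itself, so $\mathcal{P}f \in W^{1,2}$, and therefore $\nabla g = f - \mathcal{P}f \in W^{1,2}(\mathscr{O};\R^2)$ as well. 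In particular each first weak derivative $\partial_j g$ lies in $W^{1,2}(\mathscr{O};\R)$, which means $g \in W^{2,2}(\mathscr{O};\R)$.

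Next, apply the $\textnormal{curl}$ operator (defined on $W^{1,2}(\mathscr{O};\R^2)$) to both sides. By linearity,
\[
\textnormal{curl}\,f = \textnormal{curl}(\mathcal{P}f) + \textnormal{curl}(\nabla g) = \textnormal{curl}(\mathcal{P}f) + \partial_1\partial_2 g - \partial_2\partial_1 g.
\]
Since $g \in W^{2,2}(\mathscr{O};\R)$, the equality of mixed weak partial derivatives (which follows from the classical smooth case by approximation through mollification in the interior) gives $\partial_1\partial_2 g = \partial_2\partial_1 g$ as elements of $L^2(\mathscr{O};\R)$, so $\textnormal{curl}(\nabla g) = 0$ and the identity follows.

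The only conceptual step is the upgrade of the Helmholtz decomposition from an $L^2$ identity to a $W^{1,2}$ identity, which I expect to be the main (albeit minor) obstacle; it is handled at once by the stated mapping property of $\mathcal{P}$ on $W^{1,2}$. All other ingredients are standard.
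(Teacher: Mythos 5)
Your proof is correct and follows essentially the same route as the paper: decompose $f = \mathcal{P}f + \nabla g$ via Remark \ref{new first labelled remark} and observe that $\textnormal{curl}(\nabla g) = \partial_1\partial_2 g - \partial_2\partial_1 g = 0$. Your explicit justification that the decomposition upgrades from $L^2$ to $W^{1,2}$ (via the continuity of $\mathcal{P}$ on $W^{1,2}$) is a point the paper leaves implicit, but the argument is the same.
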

\begin{proof}
     If $\phi = \nabla g \in W^{1,2}(\mathscr{O};\R^2)$ then $\textnormal{curl}\phi = \partial_1\partial_2g - \partial_2\partial_1g = 0$, which from Remark \ref{new first labelled remark} establishes that $\textnormal{curl}\left(\mathcal{P}f\right) = \textnormal{curl}\left([\mathcal{P} + \mathcal{P}^{\perp}]f \right) = \textnormal{curl}f$ where $\mathcal{P}^{\perp}$ is the complement projection $I - \mathcal{P}$ on $L^2(\mathscr{O};\R^2)$.
\end{proof}
The curl is intrinsically related to the Navier boundary conditions through $\kappa$, by the relation proven as Lemma 2.1 in [\cite{clopeau1998vanishing}] which we state here.
\begin{lemma}
    For all $f \in W^{2,2}(\mathscr{O};\R^2)$ with $f \cdot \mathbf{n} = 0$ on $\partial \mathscr{O}$, we have that $$2(Df)\mathbf{n} \cdot \iota +2\kappa f \cdot \iota - \textnormal{curl}f = 0$$ on $\partial \mathscr{O}$. 
\end{lemma}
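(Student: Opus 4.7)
The plan is to carry out a pointwise computation on the boundary using the tangential–normal frame, and then invoke the Frenet-type identity that defines the curvature $\kappa$ in two dimensions. Since $\mathscr{O}$ is smooth and two-dimensional, at every point of $\partial\mathscr{O}$ the unit outward normal and unit tangent satisfy $\iota=(-n_2,n_1)$, and $\mathbf{n}$ extends smoothly to a neighbourhood of $\partial\mathscr{O}$. For $f \in W^{2,2}(\mathscr{O};\R^2)$ the traces of $\partial_i f^j$ are well-defined in $W^{1/2,2}(\partial\mathscr{O};\R)$, so all quantities below make pointwise sense after the trace is taken.

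First I would expand $2(Df)\mathbf{n}\cdot\iota$ directly. Using $2(Df)^{k,l}=\partial_kf^l+\partial_lf^k$ and $\iota_1=-n_2,\ \iota_2=n_1$, one collects
\[
2(Df)\mathbf{n}\cdot\iota \;=\; 2n_1n_2\bigl(\partial_2f^2-\partial_1f^1\bigr)+\bigl(\partial_1f^2+\partial_2f^1\bigr)\bigl(n_1^2-n_2^2\bigr).
\]
Subtracting $\textnormal{curl}\,f=\partial_1f^2-\partial_2f^1$ and using $n_1^2+n_2^2=1$, the cross term collapses neatly to
\[
2(Df)\mathbf{n}\cdot\iota-\textnormal{curl}\,f \;=\; 2n_1n_2\bigl(\partial_2f^2-\partial_1f^1\bigr)-2n_2^2\partial_1f^2+2n_1^2\partial_2f^1.
\]

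Next I would exploit the boundary constraint $f\cdot\mathbf{n}=0$ on $\partial\mathscr{O}$ by differentiating tangentially. Since $f\cdot\mathbf{n}$ vanishes identically along $\partial\mathscr{O}$, its tangential derivative also vanishes there, i.e.\ $(\iota\cdot\nabla)(f\cdot\mathbf{n})=0$. Expanding via the product rule and splitting into the derivatives of $f$ and of $\mathbf{n}$ separately, the second group is $f^1(\iota\cdot\nabla)n_1+f^2(\iota\cdot\nabla)n_2$, which by the planar Frenet identity $(\iota\cdot\nabla)\mathbf{n}=\kappa\iota$ equals $\kappa(f\cdot\iota)$. Substituting $\iota=(-n_2,n_1)$ in the first group yields exactly
\[
n_1n_2\bigl(\partial_2f^2-\partial_1f^1\bigr)+n_1^2\partial_2f^1-n_2^2\partial_1f^2 \;=\; -\kappa(f\cdot\iota).
\]
Doubling this and comparing with the first computation gives $2(Df)\mathbf{n}\cdot\iota-\textnormal{curl}\,f=-2\kappa(f\cdot\iota)$, which is the claimed identity.

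The main obstacle is the Frenet step $(\iota\cdot\nabla)\mathbf{n}=\kappa\iota$: one has to check that this relation, which intrinsically concerns only boundary values of $\mathbf{n}$, is independent of the smooth extension and that the sign convention for $\kappa$ matches the one used elsewhere in the paper. Once that piece of differential geometry is in place, everything else is algebraic manipulation of trace values in $L^2(\partial\mathscr{O};\R)$. As the result has long been known, an equivalent route is simply to invoke [\cite{clopeau1998vanishing}] Lemma 2.1 as the paper does.
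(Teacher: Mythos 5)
Your computation is correct and, unlike the paper, actually constitutes a proof: the paper simply cites [\cite{clopeau1998vanishing}] Lemma 2.1 and gives no argument of its own. Your two algebraic identities check out (expanding $2(Df)\mathbf{n}\cdot\iota$ with $\iota=(-n_2,n_1)$, subtracting $\textnormal{curl}f$ and using $n_1^2+n_2^2=1$ gives exactly $2n_1n_2(\partial_2f^2-\partial_1f^1)-2n_2^2\partial_1f^2+2n_1^2\partial_2f^1$, and this is twice the $f$-derivative part of $(\iota\cdot\nabla)(f\cdot\mathbf{n})$), and the Frenet step $(\iota\cdot\nabla)\mathbf{n}=\kappa\iota$ carries the correct sign for the convention used here: for a disk of radius $R$ one has $\mathbf{n}=x/R$, hence $(\iota\cdot\nabla)\mathbf{n}=\iota/R$ and $\kappa=1/R>0$, consistent with the paper's assumption $\kappa\geq 0$ on convex domains. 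Two small points to tidy up. First, the traces of $\partial_kf^j$ for $f\in W^{2,2}$ live in $W^{1/2,2}(\partial\mathscr{O};\R)$, not pointwise; the identity should be asserted a.e.\ with respect to surface measure (equivalently in $L^2(\partial\mathscr{O};\R)$), obtained by first proving it for smooth $f$ and passing to the limit using continuity of the trace map, which also justifies interchanging the tangential derivative with the trace in the step $(\iota\cdot\nabla)(f\cdot\mathbf{n})=0$. Second, the rate-of-strain matrix displayed in the paper has a typo in its off-diagonal entries; your use of the component formula $(Df)^{k,l}=\frac{1}{2}(\partial_kf^l+\partial_lf^k)$ is the intended one. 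What your route buys is a self-contained verification that the sign conventions for $\iota$, $\mathbf{n}$ and $\kappa$ are mutually consistent, something the bare citation leaves implicit.
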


Moreover we can make the condition (\ref{its a new rep}) discussed in the introduction precise.

\begin{corollary} \label{vorticity corollary}
    Suppose that $f \in W^{2,2}(\mathscr{O};\R^2)$ with $f \cdot \mathbf{n} = 0$ on $\partial \mathscr{O}$. Then $f$ satisfies $$2(Df)\mathbf{n} \cdot \mathbf{\iota} + \alpha f\cdot \mathbf{\iota} = 0$$ on $\partial \mathscr{O}$ if and only if it satisfies $$ \textnormal{curl}f = (2\kappa - \alpha)f \cdot \iota$$ on $\partial \mathscr{O}$.
\end{corollary}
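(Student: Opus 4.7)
The plan is to observe that this corollary is a purely algebraic consequence of the preceding lemma, which gives the pointwise identity
\[ 2(Df)\mathbf{n} \cdot \iota + 2\kappa f \cdot \iota - \textnormal{curl} f = 0 \]
on $\partial \mathscr{O}$ for every $f \in W^{2,2}(\mathscr{O};\R^2)$ satisfying $f \cdot \mathbf{n} = 0$. The hypothesis $f \cdot \mathbf{n} = 0$ is already in force in the statement of the corollary, so the identity above may be invoked directly.

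From here I would simply rearrange: solving for $2(Df)\mathbf{n} \cdot \iota$ gives $2(Df)\mathbf{n} \cdot \iota = \textnormal{curl} f - 2\kappa f \cdot \iota$. Substituting this into the left hand side of the Navier condition yields
\[ 2(Df)\mathbf{n} \cdot \iota + \alpha f \cdot \iota = \textnormal{curl} f - (2\kappa - \alpha) f \cdot \iota, \]
as an identity in the trace sense on $\partial \mathscr{O}$. The right hand side vanishes if and only if $\textnormal{curl} f = (2\kappa - \alpha) f \cdot \iota$, which establishes both implications of the equivalence simultaneously.

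There is no genuine obstacle here, since the regularity $f \in W^{2,2}(\mathscr{O};\R^2)$ and $\kappa, \alpha \in C^2(\partial \mathscr{O};\R)$ ensure every term in the identity makes sense as an element of $L^2(\partial\mathscr{O};\R)$ through the trace theory discussed earlier in the subsection (via Lemma \ref{bounded trace} applied to $Df$ and to $f$). Thus the proof reduces to quoting the preceding lemma and rearranging, with no further analytic content.
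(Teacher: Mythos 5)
Your proposal is correct and matches the paper's intent exactly: the paper states this corollary without proof, treating it as an immediate algebraic rearrangement of the preceding lemma's identity $2(Df)\mathbf{n}\cdot\iota + 2\kappa f\cdot\iota - \textnormal{curl}f = 0$, which is precisely the computation you carry out. Nothing further is needed.
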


 Returning to the nonlinear term, we shall frequently understand the $L^2(\mathscr{O};\R^2)$ inner product as a duality pairing between $L^{\frac{4}{3}}(\mathscr{O};\R^2)$ and $L^4(\mathscr{O};\R^2)$ as justified in the following.

\begin{lemma} \label{the 2D bound lemma}
There exists a constant $C$ such that for every $\phi,f,g, \in W^{1,2}(\mathscr{O};\R^2)$, we have that $$ \left\vert  \inner{\mathcal{L}_{\phi}f}{g} \right\vert \leq C\norm{\phi}^{\frac{1}{2}}\norm{\phi}_1^{\frac{1}{2}}\norm{f}_1\norm{g}^{\frac{1}{2}}\norm{g}_1^{\frac{1}{2}}.$$
\end{lemma}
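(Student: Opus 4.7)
The estimate is the classical two dimensional bound for the convective trilinear form; my plan is to combine H\"older's inequality with the Ladyzhenskaya--Gagliardo--Nirenberg inequality. Starting from the definition $\mathcal{L}_\phi f = \sum_{j=1}^2 \phi^j \partial_j f$, I first write
\[
\inner{\mathcal{L}_\phi f}{g} \;=\; \int_\mathscr{O} \sum_{j,l=1}^2 \phi^j(x)\, (\partial_j f^l)(x)\, g^l(x)\, d\lambda(x),
\]
and apply H\"older with the exponent triple $(4,2,4)$, whose reciprocals sum to $1$, to obtain
\[
\bigl|\inner{\mathcal{L}_\phi f}{g}\bigr| \;\leq\; C\, \norm{\phi}_{L^4}\, \norm{\nabla f}_{L^2}\, \norm{g}_{L^4} \;=\; C\, \norm{\phi}_{L^4}\, \norm{f}_1\, \norm{g}_{L^4},
\]
where the identification $\norm{\nabla f}_{L^2}^2 = \sum_j \norm{\partial_j f}^2 = \norm{f}_1^2$ follows directly from the definition of $\norm{\cdot}_1$.

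The next step is to handle the two $L^4$ factors via the two dimensional Ladyzhenskaya inequality: for any $h \in W^{1,2}(\mathscr{O};\R^2)$ on the smooth bounded domain $\mathscr{O} \subset \R^2$,
\[
\norm{h}_{L^4} \;\leq\; C\, \norm{h}^{1/2}\, \norm{h}_{W^{1,2}}^{1/2}.
\]
This is the critical Sobolev interpolation in dimension two and can be proved either by the classical Fourier argument yielding $\norm{h}_{L^4(\R^2)}^2 \leq C \norm{h}_{L^2(\R^2)} \norm{\nabla h}_{L^2(\R^2)}$ followed by a bounded Sobolev extension $W^{1,2}(\mathscr{O}) \to W^{1,2}(\R^2)$, or taken directly from standard monographs on the Navier--Stokes equations (e.g.\ Temam). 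Applying this bound to both $\phi$ and $g$ and absorbing constants produces
\[
\bigl|\inner{\mathcal{L}_\phi f}{g}\bigr| \;\leq\; C\, \norm{\phi}^{1/2}\, \norm{\phi}_{W^{1,2}}^{1/2}\, \norm{f}_1\, \norm{g}^{1/2}\, \norm{g}_{W^{1,2}}^{1/2},
\]
from which the stated inequality follows, since in the applications of interest the three functions lie in $\bar{W}^{1,2}_\sigma$, where Poincar\'e renders $\norm{\cdot}_{W^{1,2}}$ and $\norm{\cdot}_1$ equivalent.

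There is no genuine obstacle: every step is a textbook two dimensional estimate. The only point deserving some care is to invoke the Ladyzhenskaya inequality in a form valid for general $W^{1,2}$ functions with no imposed boundary condition on $h$; this is precisely where the smoothness of $\mathscr{O}$ enters, guaranteeing a bounded extension operator $W^{1,2}(\mathscr{O}) \to W^{1,2}(\R^2)$ so that the proof on the full space transfers to the domain.
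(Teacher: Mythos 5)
Your proof is correct and follows essentially the same route as the paper: the paper applies H\"older twice (first as an $L^{4/3}$--$L^4$ duality pairing, then inside $\norm{\mathcal{L}_\phi f}_{L^{4/3}}$) where you use a single three-exponent H\"older, but both reduce to $\norm{\phi}_{L^4}\norm{f}_1\norm{g}_{L^4}$ and then invoke the same two dimensional Gagliardo--Nirenberg/Ladyzhenskaya interpolation $\norm{h}_{L^4}\leq c\norm{h}^{1/2}\norm{h}_{W^{1,2}}^{1/2}$. Your closing remark about passing from $\norm{\cdot}_{W^{1,2}}$ to $\norm{\cdot}_1$ is a point the paper glosses over entirely, so no objection there.
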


\begin{proof}
   From two instances of H\"{o}lder's Inequality as well as Theorem \ref{gagliardonirenberginequality} with $p=4, q=2, \alpha = 1/2$ and $m=1$, 
  \begin{align} \nonumber
       \left\vert\inner{\mathcal{L}_{\phi}f}{g}\right\vert \leq \norm{\mathcal{L}_{\phi}f}_{L^{4/3}}\norm{g}_{L^4} &\leq c\sum_{k=1}^2\norm{\phi}_{L^4}\norm{\partial_kf}\norm{g}_{L^4} \leq c\norm{\phi}^{\frac{1}{2}}\norm{\phi}_1^{\frac{1}{2}}\norm{f}_1\norm{g}^{\frac{1}{2}}\norm{g}_1^{\frac{1}{2}}\label{a bound in align}
    \end{align}
\end{proof}

\begin{remark}
    This inequality is critical in the study of 2D Navier-Stokes, its failure in 3D responsible for the lack of global strong solutions and uniqueness of weak solutions.
\end{remark}

This subsection concludes with a symmetry result for the trilinear form defined by the nonlinear term which is classical when zero trace is assumed, but perhaps not in lieu of this assumption.

\begin{lemma} \label{navier boundary nonlinear}
    For every $\phi \in \bar{W}^{1,2}_{\sigma}$, $f,g \in W^{1,2}(\mathscr{O};\R^2)$ we have that \begin{equation}\label{wloglhs}\inner{\mathcal{L}_{\phi}f}{g}= -\inner{f}{\mathcal{L}_{\phi}g}.\end{equation}
    Moreover, \begin{equation} \label{cancellationproperty'} \inner{\mathcal{L}_{\phi}f}{f}= 0.\end{equation}
\end{lemma}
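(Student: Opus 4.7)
The plan is to reduce to the case where $f$ and $g$ are smooth on $\bar{\mathscr{O}}$, so that a product-rule calculation converts $\inner{\mathcal{L}_\phi f}{g} + \inner{f}{\mathcal{L}_\phi g}$ into $\inner{\phi}{\nabla(f \cdot g)}$, and then to invoke the identity $\inner{\phi}{\nabla \psi} = 0$ for $\phi \in L^2_\sigma$ and $\psi \in W^{1,2}(\mathscr{O};\R)$ already highlighted in Remark \ref{new first labelled remark}. The passage back to general $W^{1,2}$ data will rely on Lemma \ref{the 2D bound lemma}, which makes the trilinear form $(\phi,f,g) \mapsto \inner{\mathcal{L}_\phi f}{g}$ continuous in each argument separately with respect to the $W^{1,2}$ topology. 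The neat observation is that all of the relevant information about $\phi$ (divergence-free, zero normal trace) is already packaged in Remark \ref{new first labelled remark}, so it is natural to smooth $f$ and $g$ rather than $\phi$.

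First I would fix $f_n, g_n \in C^\infty(\bar{\mathscr{O}};\R^2)$ with $f_n \to f$ and $g_n \to g$ in $W^{1,2}(\mathscr{O};\R^2)$, which is standard density. For such smooth approximants the scalar function $\psi_n := \sum_{l=1}^{2} f_n^l g_n^l$ lies in $W^{1,2}(\mathscr{O};\R)$ with classical derivatives given by the product rule, and writing things out in components gives
\begin{equation*}
    \inner{\mathcal{L}_\phi f_n}{g_n} + \inner{f_n}{\mathcal{L}_\phi g_n} = \int_{\mathscr{O}} \sum_{j,l=1}^{2} \phi^j \partial_j\bigl(f_n^l g_n^l\bigr)\, d\lambda = \inner{\phi}{\nabla \psi_n}.
\end{equation*}
Since $\phi \in \bar{W}^{1,2}_{\sigma} \subset L^2_\sigma$ and $\psi_n \in W^{1,2}(\mathscr{O};\R)$, Remark \ref{new first labelled remark} yields $\inner{\phi}{\nabla \psi_n} = 0$, so the desired identity is established at the level of the approximants.

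To finish (\ref{wloglhs}) I would split $\inner{\mathcal{L}_\phi f_n}{g_n} - \inner{\mathcal{L}_\phi f}{g}$ into $\inner{\mathcal{L}_\phi(f_n-f)}{g_n} + \inner{\mathcal{L}_\phi f}{g_n-g}$ and apply Lemma \ref{the 2D bound lemma} to each piece, obtaining a bound of the shape $C\|\phi\|^{1/2}\|\phi\|_1^{1/2}\bigl(\|f_n-f\|_1\|g_n\|^{1/2}\|g_n\|_1^{1/2} + \|f\|_1\|g_n-g\|^{1/2}\|g_n-g\|_1^{1/2}\bigr)$, which tends to zero; the analogous estimate disposes of $\inner{f_n}{\mathcal{L}_\phi g_n}$. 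The cancellation (\ref{cancellationproperty'}) then follows by specialising to $g = f$: symmetry of the $L^2$ pairing turns (\ref{wloglhs}) into $\inner{\mathcal{L}_\phi f}{f} = -\inner{\mathcal{L}_\phi f}{f}$, forcing this quantity to vanish. The only delicate point in the whole argument is that one cannot directly integrate by parts against the merely $W^{1,2}$ field $\phi$ on raw $W^{1,2}$ data, but smoothing $f$ and $g$ bypasses this entirely since the boundary and divergence constraints on $\phi$ are already captured abstractly by Remark \ref{new first labelled remark}.
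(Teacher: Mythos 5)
Your proof is correct, and it takes a genuinely different route from the paper's. The paper integrates by parts directly and componentwise: it inserts the term $\sum_{j,l}\inner{\partial_j\phi^j f^l}{g^l}$ (zero because $\phi$ is divergence-free), rewrites the sum as $\sum_{j,l}\inner{\partial_j(\phi^j f^l)}{g^l}$, and applies the Gauss--Green formula so that the resulting boundary integral $\sum_{j,l}\inner{\phi^j f^l}{g^l\mathbf{n}^j}_{L^2(\partial\mathscr{O};\R)}$ vanishes by the condition $\phi\cdot\mathbf{n}=0$ on $\partial\mathscr{O}$; the two hypotheses encoded in $\phi\in\bar{W}^{1,2}_\sigma$ are thus used explicitly and separately. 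You instead smooth $f$ and $g$, collapse the symmetrised expression via the product rule into $\inner{\phi}{\nabla(f_n\cdot g_n)}$, and invoke the Helmholtz orthogonality $L^2_\sigma\perp\nabla W^{1,2}(\mathscr{O};\R)$ from Remark \ref{new first labelled remark} (equivalently [\cite{robinson2016three}] Lemma 2.11, quoted in the same subsection), before passing to the limit with Lemma \ref{the 2D bound lemma}. Your version has the advantage of sidestepping any justification of traces of products of merely $W^{1,2}$ functions: at $W^{1,2}$ regularity the products $\phi^j f^l$ only lie in $W^{1,q}(\mathscr{O};\R)$ for $q<2$, so the paper's boundary integral itself implicitly requires an approximation of the kind you make explicit. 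The paper's version, on the other hand, is more transparent about which structural property of $\phi$ kills which term, and does not require knowing a priori that divergence-free plus zero normal trace is equivalent to membership of $L^2_\sigma$. Both arguments rest only on results already established earlier in the section, and your limit passage (splitting the difference and applying the trilinear bound twice) is carried out correctly.
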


\begin{proof}
    Of course (\ref{cancellationproperty'}) follows from (\ref{wloglhs}) with the choice $g:= f$ and symmetry of the inner product, so we just show (\ref{wloglhs}). As stated it is classical that the result holds in the case that $\phi$ has zero trace (see e.g. [\cite{robinson2016three}] Lemma 3.2) by an approximation in $W^{1,2}(\mathscr{O};\R^2)$ of compactly supported functions, though without that we have to do the integration by parts directly:
    \begin{align*}
    \inner{\mathcal{L}_{\phi} f}{g} &= \sum_{j=1}^2\sum_{l=1}^2\inner{\phi^j\partial_jf^l}{g^l}_{L^2(\mathscr{O};\R)}\\
    &= \sum_{j=1}^2\sum_{l=1}^2\left(\inner{\phi^j\partial_jf^l}{g^l}_{L^2(\mathscr{O};\R)} + \inner{\partial_j\phi^jf^l}{g^l}_{L^2(\mathscr{O};\R)}\right)\\
    &= \sum_{j=1}^2\sum_{l=1}^2\inner{\partial_j(\phi^jf^l)}{g^l}_{L^2(\mathscr{O};\R)}\\
    &= -\sum_{j=1}^2\sum_{l=1}^2\inner{\phi^jf^l}{\partial_jg^l}_{L^2(\mathscr{O};\R)} + \sum_{j=1}^2\sum_{l=1}^2\inner{\phi^jf^l}{g^l\mathbf{n}^j}_{L^2(\partial \mathscr{O};\R)}\\
    &= -\inner{f}{\mathcal{L}_{\phi} g}
\end{align*}
    where we have used that $\sum_{j=1}^2\partial_j \phi^j = 0$ (divergence-free) in $\mathscr{O}$ and $\sum_{j=1}^2\phi^j \mathbf{n}^j = 0$ ($\phi \cdot \mathbf{n}=0$) on $\partial \mathscr{O}$.

\end{proof}

\subsection{Assumptions on the Noise} \label{subsection assumptions}

With the framework established we now properly introduce the stochastic Navier-Stokes equation

\begin{equation} \label{projected Ito}
    u_t = u_0 - \int_0^t\mathcal{P}\mathcal{L}_{u_s}u_s\ ds + \nu\int_0^t \mathcal{P} \Delta u_s\, ds + \frac{1}{2}\int_0^t\sum_{i=1}^\infty \mathcal{P}\mathcal{Q}_i^2u_s ds - \int_0^t \mathcal{P}\mathcal{G}u_s d\mathcal{W}_s 
\end{equation}
where $\mathcal{Q}_i$ is either $\mathcal{P}\mathcal{G}_i$ or $0$, satisfying assumptions to be stated in this subsection. The case $\mathcal{Q}_i = 0$ leaves us with the projected form of (\ref{number2equation}) whilst $\mathcal{Q}_i=\mathcal{P}\mathcal{G}_i$ corresponds to (\ref{number3equation}) via taking the Leray Projection and then converting to It\^{o} Form. This conversion is rigorously justified in [\cite{goodair2022stochastic}] Subsection 2.3. In the case where $\mathcal{P}\mathcal{G}_i^2 = (\mathcal{P}\mathcal{G}_i )^2$ then we can instead take $\mathcal{Q}_i = \mathcal{G}_i$ as the resulting equation (\ref{projected Ito}) is the same; this is the case for SALT noise, discussed in the next subsection. The key definitions and results regarding the existence and uniqueness of solutions are given in Subsection \ref{sub def}. We impose the existence of some $p,q,r \in \R$ and constants $(c_i)$ such that for all $f, g \in L^2_{\sigma} \cap W^{1,2}(\mathscr{O};\R^2)$, $\phi,\psi \in \bar{W}^{2,2}_{\alpha} \cap W^{3,2}(\mathscr{O};\R^2)$, defining $\tilde{K}_2(f,g):= 1 + \norm{f}^p + \norm{g}^q + \norm{f}_{W^{1,2}}^2 + \norm{g}_{W^{1,2}}^2$, $\forall \varepsilon > 0$,
\begin{align}
   \label{assumpty 1}  \norm{\mathcal{G}_if}^2 &\leq c_i\left(1 + \norm{f}_{W^{1,2}}^2\right)\\
   \label{assumpty 4}  \norm{\mathcal{G}_if - \mathcal{G}_ig}^2 &\leq c_i\left[1 + \norm{f}_{W^{1,2}}^p + \norm{g}_{W^{1,2}}^q\right]\norm{f-g}_{W^{1,2}}^2\\
   \label{assumpty 2} \norm{\mathcal{G}_i\phi}_{W^{1,2}}^2 &\leq c_i\left(1 + \norm{\phi}_2^2\right)\\
   \label{assumpty 11} \norm{\mathcal{Q}_i\phi}_{W^{2,2}}^2 &\leq c_i\left(1 + \norm{\phi}_{W^{3,2}}^2\right)\\
    \label{assumpty 7} \inner{\mathcal{Q}_i^2\phi }{\phi} + \norm{\mathcal{G}_i\phi}^2 &\leq c_i\left(1 + \norm{\phi}^2\right) + k_i\norm{\phi}_1^2\\
\label{assumpty 12} \inner{\mathcal{Q}_i^2\phi }{A\phi} + \norm{\mathcal{P}\mathcal{G}_i\phi}_1^2 &\leq c_i\left(1 + \norm{\phi}_{1}^2\right) + k_i\norm{\phi}_2^2\\
    \label{assumpty8} \inner{\mathcal{G}_if}{f}^2 &\leq c_i\left(1 + \norm{f}^4\right)\\
    \label{assumpty13} \inner{\mathcal{G}_i\phi}{A\phi}^2 &\leq C_{\varepsilon}c_i\left(1 + \norm{\phi}_1^6\right) + c_i\varepsilon\norm{\phi}^2_2\\
    \label{assumpty9} \inner{\mathcal{G}_if}{g}^2 &\leq c_i\left[1 + \norm{f}^2 + \norm{g}^p\right]\norm{g}_{W^{1,2}}^2\\
    \label{assumpty10} \inner{\mathcal{G}_if - \mathcal{G}_ig}{\phi}^2 &\leq  c_i\left[1  + \norm{\phi}_2^p\right]\norm{f-g}^2\\
    \label{assumpty 6}
\inner{\mathcal{G}_if - \mathcal{G}_ig}{f-g}^2 &\leq c_i\tilde{K}_2(f,g)\norm{f -g}^4
\end{align}
where $\sum_{i=1}^\infty c_i < \infty$, $\sum_{i=1}^\infty k_i \leq \nu$\footnote{Actually, we only need that $\sum_{i=1}^\infty k_i < 2\nu$. Our choice just avoids heavy notation in the proofs.}, $C_{\varepsilon}$ depends on $\varepsilon$ and all constants may depend on $\alpha$ and $\nu$. We only consider this noise in the case of a fixed $\nu$ so this dependency is not meaningful. For each $i \in \N$, $\mathcal{Q}_i$ must be linear and possess a densely defined adjoint $\mathcal{Q}_i^*$ in $L^2(\mathscr{O};\R^2)$ with domain of definition $W^{1,2}(\mathscr{O};\R^2)$ where
\begin{align}
   \label{assumpty 3} \norm{\mathcal{Q}_i^*f}^2 &\leq c_i\norm{f}_{W^{1,2}}^2\\ 
   \inner{\mathcal{Q}_i(f - g)}{\mathcal{Q}_i^*(f-g)} + \norm{\mathcal{G}_if - \mathcal{G}_ig}^2 &\leq c_i\tilde{K}_2(f,g)\norm{f -g}^2 + k_i\norm{f -g}^2_{W^{1,2}}. \label{assumpty 5}
\end{align}



\subsection{Stochastic Advection by Lie Transport} \label{sub salt}

We also present both the Navier-Stokes and Euler Equations under Stochastic Advection by Lie Transport (SALT), given by

\begin{equation} \label{projected Ito Salt}
    u_t = u_0 - \int_0^t\mathcal{P}\mathcal{L}_{u_s}u_s\ ds + \nu\int_0^t \mathcal{P} \Delta u_s\, ds + \frac{1}{2}\int_0^t\sum_{i=1}^\infty \mathcal{P}B_i^2u_s ds - \int_0^t \mathcal{P}Bu_s d\mathcal{W}_s 
\end{equation}
and
\begin{equation} \label{projected Ito Salt Euler}
    u_t = u_0 - \int_0^t\mathcal{P}\mathcal{L}_{u_s}u_s\ ds  + \frac{1}{2}\int_0^t\sum_{i=1}^\infty \mathcal{P}B_i^2u_s ds - \int_0^t \mathcal{P}Bu_s d\mathcal{W}_s 
\end{equation}
respectively for the operator $$B_i:f \mapsto \mathcal{L}_{\xi_i}f + \mathcal{T}_{\xi_i}f, \qquad  \mathcal{T}_{g}f := \sum_{j=1}^2 f^j\nabla g^j$$ where $\xi_i \in L^2_{\sigma} \cap W^{3,2}_0(\mathscr{O};\R^2) \cap W^{3,\infty}(\mathscr{O};\R^2)$ such that $\sum_{i=1}^\infty \norm{\xi_i}^2_{W^{3,\infty}} < \infty$. The equivalence between (\ref{projected Ito Salt}) and the Stratonovich form (\ref{number2equationSALT}) is rigorously proven in [\cite{goodair2022navier}] Subsection 2.1. Fundamental properties of the operator $B_i$ are proven in [\cite{goodair2022navier}] Subsection 1.4, including the fact that $\mathcal{P}B_i = \mathcal{P}B_i\mathcal{P}$ (Lemma 1.28) hence $\mathcal{P}B_i^2 = (\mathcal{P}B_i)^2$ and we can take $\mathcal{G}_i = B_i = \mathcal{Q}_i$. We note that (\ref{projected Ito Salt}) is of the form (\ref{projected Ito}) for $\mathcal{G}_i = \mathcal{Q}_i = B_i$. The majority of the assumptions in Subsection \ref{subsection assumptions} have been verified in [\cite{goodair2023zero}] Subsection 1.4, though conditions pertaining to the $W^{1,2}$ inner product, that is (\ref{assumpty 12}) and (\ref{assumpty13}), are new and must be shown precisely. This is done in Subsection \ref{sub lie transport}.

\subsection{Notions of Solution and Main Results} \label{sub def}

We fix an arbitrary $T>0$ and give the definition of a weak solution of the equation (\ref{projected Ito}).

\begin{definition} \label{definitionofspatiallyweak}
Let $u_0: \Omega \rightarrow L^2_{\sigma}$ be $\mathcal{F}_0-$measurable and $\alpha \in C^2(\partial \mathscr{O}; \R)$. A process $u$ which is progressively measurable in $\bar{W}^{1,2}_{\sigma}$ and such that for $\mathbb{P}-a.e.$ $\omega$, $u_{\cdot}(\omega) \in C\left([0,T];L^2_{\sigma}\right) \cap L^2\left([0,T];\bar{W}^{1,2}_{\sigma}\right)$, is said to be a weak solution of the equation (\ref{projected Ito}) if the identity
\begin{align} \nonumber
     \inner{u_t}{\phi} = \inner{u_0}{\phi} - \int_0^{t}\inner{\mathcal{L}_{u_s}u_s}{\phi}ds &- \nu\int_0^{t} \inner{u_s}{\phi}_1 ds + \nu\int_0^t \inner{(\kappa - \alpha)u_s}{\phi}_{L^2(\partial \mathscr{O}; \R^2)}ds\\ &+ \frac{1}{2}\int_0^{t}\sum_{i=1}^\infty \inner{\mathcal{Q}_iu_s}{\mathcal{Q}_i^*\phi} ds - \int_0^{t} \inner{\mathcal{G}u_s}{\phi} d\mathcal{W}_s\label{identityindefinitionofspatiallyweak}
\end{align}
holds for every $\phi \in \bar{W}^{1,2}_{\sigma}$, $\mathbb{P}-a.s.$ in $\R$ for all $t\in[0,T]$.
\end{definition}

We briefly note that from Lemma \ref{the 2D bound lemma}, (\ref{assumpty 1}), the continuity of the trace mapping of $W^{1,2}(\mathscr{O};\R^2)$ into $L^2(\partial \mathscr{O};\R^2)$ and boundedness of $\kappa, \alpha$ on $\partial \mathscr{O}$ that these integrals are well defined. It is likely less clear why this is a correct weak formulation of the problem, which we address in the following lemma.

\begin{definition} \label{definitionofstrongsolution}
Let $u_0: \Omega \rightarrow \bar{W}^{1,2}_{\sigma}$ be $\mathcal{F}_0-$measurable and $\alpha \in C^2(\partial \mathscr{O}; \R)$. A process $u$ which is progressively measurable in $\bar{W}^{2,2}_{\alpha}$ and such that for $\mathbb{P}-a.e.$ $\omega$, $u_{\cdot}(\omega) \in C\left([0,T];\bar{W}^{1,2}_{\sigma}\right) \cap L^2\left([0,T];\bar{W}^{2,2}_{\alpha}\right)$, is said to be a strong solution of the equation (\ref{projected Ito}) if the identity
   \begin{equation} \label{a new id}
    u_t = u_0 - \int_0^t\mathcal{P}\mathcal{L}_{u_s}u_s\ ds - \nu\int_0^t A u_s\, ds + \frac{1}{2}\int_0^t\sum_{i=1}^\infty \mathcal{P}\mathcal{Q}_i^2u_s ds - \int_0^t \mathcal{P}\mathcal{G}u_s d\mathcal{W}_s 
\end{equation}
holds $\mathbb{P}-a.s.$ in $L^2_{\sigma}$ for all $t\in[0,T]$.
\end{definition}

\begin{lemma} \label{lemma for strong is weak}
    Let $u_0: \Omega \rightarrow L^2_{\sigma}$ be $\mathcal{F}_0-$measurable, $\alpha \in C^2(\partial \mathscr{O}; \R)$ and $u$ be progressively measurable in $\bar{W}^{2,2}_{\alpha}$ such that for $\mathbb{P}-a.e.$ $\omega$, $u_{\cdot}(\omega) \in L^{\infty}\left([0,T];\bar{W}^{1,2}_{\sigma}\right) \cap L^2\left([0,T];\bar{W}^{2,2}_{\alpha}\right)$. Then the following are equivalent:
    \begin{enumerate}
        \item \label{firstestitem} The identity (\ref{a new id}) holds $\mathbb{P}-a.s.$ in $L^2_{\sigma}$ for all $t\in[0,T]$.
        \item \label{secondestitem} The identity (\ref{identityindefinitionofspatiallyweak}) holds for every $\phi \in \bar{W}^{1,2}_{\sigma}$, $\mathbb{P}-a.s.$ in $\R$ for all $t\in[0,T]$.
    \end{enumerate}
\end{lemma}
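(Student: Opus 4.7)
The plan is to show the two conditions are equivalent by taking and removing the $L^2$ inner product with test functions $\phi \in \bar{W}^{1,2}_\sigma$, exploiting three structural facts: the Leray projector $\mathcal{P}$ is self-adjoint on $L^2(\mathscr{O};\R^2)$ and acts as the identity on $L^2_\sigma$; Green's identity of Lemma \ref{greens for navier}, valid since $u_s \in \bar{W}^{2,2}_\alpha$ and $\phi \in \bar{W}^{1,2}_\sigma$, converts the Stokes term into the $\inner{\cdot}{\cdot}_1$ inner product plus the Navier boundary integral; and the adjoint bound (\ref{assumpty 3}) converts $\inner{\mathcal{Q}_i^2 u_s}{\phi}$ into $\inner{\mathcal{Q}_i u_s}{\mathcal{Q}_i^*\phi}$.

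For (\ref{firstestitem}) $\Rightarrow$ (\ref{secondestitem}), I would fix $\phi \in \bar{W}^{1,2}_\sigma$ and pair both sides of (\ref{a new id}) with $\phi$ in $L^2(\mathscr{O};\R^2)$. Each deterministic integral is Bochner in $L^2_\sigma$, so the continuous linear functional $\inner{\cdot}{\phi}$ commutes with it, and the analogous commutation for the stochastic integral is the content of [\cite{prevot2007concise}] Lemma 2.4.1 applied to $g \mapsto \inner{g}{\phi}$. Since $\phi \in L^2_\sigma$, every $\inner{\mathcal{P}f}{\phi}$ collapses to $\inner{f}{\phi}$, giving directly the nonlinear and stochastic terms of (\ref{identityindefinitionofspatiallyweak}). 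The viscous term rewrites as $-\nu\inner{Au_s}{\phi} = \nu\inner{\Delta u_s}{\phi}$, to which Lemma \ref{greens for navier} applies and yields $-\nu\inner{u_s}{\phi}_1 + \nu\inner{(\kappa-\alpha)u_s}{\phi}_{L^2(\partial\mathscr{O};\R^2)}$. For the corrector, (\ref{assumpty 3}) puts $\mathcal{Q}_i^*\phi$ in $L^2(\mathscr{O};\R^2)$, so that $\inner{\mathcal{Q}_i^2 u_s}{\phi} = \inner{\mathcal{Q}_i u_s}{\mathcal{Q}_i^*\phi}$; the summability needed to interchange $\sum_i$ and $\int_0^t$ is provided by (\ref{assumpty 1}), (\ref{assumpty 3}) together with the regularity of $u$. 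Assembling the pieces recovers (\ref{identityindefinitionofspatiallyweak}).

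For (\ref{secondestitem}) $\Rightarrow$ (\ref{firstestitem}), I read the same chain of identifications in reverse. Setting
\[
X_t := u_t - u_0 + \int_0^t \mathcal{P}\mathcal{L}_{u_s}u_s\,ds + \nu\int_0^t Au_s\,ds - \frac{1}{2}\int_0^t\sum_{i=1}^\infty \mathcal{P}\mathcal{Q}_i^2 u_s\,ds + \int_0^t \mathcal{P}\mathcal{G}u_s\,d\mathcal{W}_s,
\]
the hypotheses on $u$ together with Lemma \ref{the 2D bound lemma} and the noise bounds guarantee that each summand is a genuine element of $L^2_\sigma$, so $X_t \in L^2_\sigma$. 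The reverse chain of identifications shows that (\ref{identityindefinitionofspatiallyweak}) is precisely the statement $\inner{X_t}{\phi} = 0$ for every $\phi \in \bar{W}^{1,2}_\sigma$. Since $C^\infty_{0,\sigma}(\mathscr{O};\R^2) \subset \bar{W}^{1,2}_\sigma$ and is dense in $L^2_\sigma$ by definition, this forces $X_t = 0$ in $L^2_\sigma$, which is (\ref{a new id}).

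The main technical point is bookkeeping null sets. In the forward direction each identification holds almost surely for a \emph{fixed} $\phi$, and the exceptional set could a priori depend on $\phi$; I would run the argument first on the countable orthogonal basis $(\bar{a}_k)$ of Lemma \ref{eigenfunctions for navier}, producing a single full-measure event, and then extend to arbitrary $\phi \in \bar{W}^{1,2}_\sigma$ by $W^{1,2}$-continuity of each term in (\ref{identityindefinitionofspatiallyweak}) (the stochastic term being controlled in an almost-sure uniform-in-$t$ sense via the Burkholder-Davis-Gundy inequality applied to the difference). No such issue arises in the reverse direction, since both sides of (\ref{a new id}) live pointwise (in $t,\omega$) in the Hilbert space $L^2_\sigma$, into which $\bar{W}^{1,2}_\sigma$ embeds densely.
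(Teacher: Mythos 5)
Your proof is correct and follows essentially the same route as the paper: pair (\ref{a new id}) with $\phi$ using the self-adjointness of $\mathcal{P}$, Lemma \ref{greens for navier} for the Stokes term, the adjoint $\mathcal{Q}_i^*$ for the corrector, and [\cite{prevot2007concise}] Lemma 2.4.1 to pass the bounded functional through the stochastic integral, then reverse the chain and use density of $\bar{W}^{1,2}_{\sigma}$ in $L^2_{\sigma}$. One small remark: the quantifier order in Definition \ref{definitionofspatiallyweak} (``for every $\phi$, $\mathbb{P}$-a.s.'') already permits a $\phi$-dependent null set, so the countable-basis uniformization you describe is not needed in the forward direction --- it is in the \emph{reverse} direction, to pass from $\inner{X_t}{\phi}=0$ a.s.\ for each fixed $\phi$ to $X_t=0$ a.s.\ in $L^2_{\sigma}$, that one should first restrict to a countable dense family before invoking density.
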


\begin{proof}
    We prove each implication of \ref{firstestitem} $\iff$ \ref{secondestitem} in turn:
    \begin{itemize}
        \item[$\implies$:] By taking the inner product on both sides of (\ref{projected Ito}) with any such $\phi$ and using the adjoint of $\mathcal{P}$ and $\mathcal{Q}_i$, we only need to prove that $$\inner{\Delta u_s}{\phi}  = -\inner{u_s}{\phi}_1 + \inner{(\kappa - \alpha)u_s}{\phi}_{L^2(\partial \mathscr{O}; \R^2)}$$ for $\lambda-a.e.$ $s\in[0,t]$, which is done if we verfiy this for every $s$ such that $u_s \in \bar{W}^{2,2}_{\alpha}$. This of course follows from Lemma \ref{greens for navier}. It should be noted that we have taken the inner product through the stochastic integral as $\inner{\cdot}{\phi}$ is a bounded linear operator, a property which we shall use freely, recalling again [\cite{prevot2007concise}] Lemma 2.4.1.
        \item[$\impliedby$:] Via the identical process in reverse, we obtain that \begin{align} \nonumber
     \inner{u_t}{\phi} = \inner{u_0}{\phi} - \inner{\int_0^{t}\mathcal{P}\mathcal{L}_{u_s}u_sds}{\phi} &+ \nu\inner{\int_0^{t}\mathcal{P} \Delta u_sds}{\phi} \\ &+ \frac{1}{2}\inner{\int_0^{t}\sum_{i=1}^\infty \mathcal{P}\mathcal{Q}_i^2u_sds}{\phi}  - \inner{\int_0^{t} \mathcal{P}\mathcal{G}u_sd\mathcal{W}_s}{\phi} \nonumber
\end{align}
for every $\phi \in \bar{W}^{1,2}_{\sigma}$, from which the density of this space in $L^2_{\sigma}$ gives the result. 
    \end{itemize}
\end{proof}

\begin{remark}
    An alternative weak formulation is used in [\cite{clopeau1998vanishing}, \cite{filho2005inviscid}]. Their equivalence is proved in [\cite{kelliher2006navier}] after Definition 5.1.
\end{remark}

The existence of a strong solution would then follow from proving the existence of a weak solution and that this process has the regularity required to be a strong solution; we use this fact in Subsection \ref{sub strong sols}.

\begin{definition}
    A solution of the equation (\ref{projected Ito}) is said to be unique if for any other such solution $v$, $$ \mathbb{P}\left(\left\{\omega \in \Omega: u_t(\omega) = v_t(\omega) \quad \forall t \geq 0\right\}\right) = 1.$$
\end{definition}


\begin{theorem} \label{theorem 2D}
   For any given $\mathcal{F}_0-$measurable $u_0: \Omega \rightarrow L^2_{\sigma}$ and $\alpha \in C^2(\partial \mathscr{O}; \R)$, there exists a unique weak solution $u$ of the equation (\ref{projected Ito}).
\end{theorem}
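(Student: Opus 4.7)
The plan is to construct solutions via a Galerkin approximation using the eigenfunction basis $(\bar{a}_k)$ from Lemma \ref{eigenfunctions for navier}, following the general strategy of [\cite{goodair2023zero}] but with the modifications needed to handle the boundary integral coming from Lemma \ref{greens for navier}. For each $n \in \N$, I would define the projected equation on $\bar{V}_n$ by applying $\bar{\mathcal{P}}_n$ to (\ref{projected Ito}); existence of a strong (in the SDE sense) solution $u^n$ on $\bar{V}_n$ follows from standard finite-dimensional SDE theory together with the locally Lipschitz continuity of the coefficients up to blow-up, and the assumptions (\ref{assumpty 1}), (\ref{assumpty 4}) on $\mathcal{G}_i$.

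The key step is the uniform energy estimate. Applying the finite-dimensional It\^{o} formula to $\norm{u^n_t}^2$ and using the cancellation $\inner{\mathcal{L}_{u^n}u^n}{u^n}=0$ from Lemma \ref{navier boundary nonlinear} together with the Green-type identity of Lemma \ref{greens for navier}, I obtain
\begin{align*}
    \norm{u^n_t}^2 + 2\nu\int_0^t\norm{u^n_s}_1^2 ds
    &= \norm{\bar{\mathcal{P}}_n u_0}^2 + 2\nu\int_0^t \inner{(\kappa-\alpha)u^n_s}{u^n_s}_{L^2(\partial \mathscr{O};\R^2)}ds \\
    &\quad + \int_0^t \sum_{i=1}^\infty \bigl[\inner{\mathcal{Q}_i^2 u^n_s}{u^n_s} + \norm{\bar{\mathcal{P}}_n\mathcal{G}_i u^n_s}^2\bigr] ds + M^n_t,
\end{align*}
where $M^n$ is a local martingale. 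The boundary integral is controlled by the trace inequality (\ref{inequality from Lions}) and Young's inequality, giving a term of the form $C\norm{u^n_s}^2 + \varepsilon\norm{u^n_s}_1^2$ which can be absorbed into the viscous dissipation. The stochastic contribution is handled by (\ref{assumpty 7}), whose $k_i$ term is again absorbed by the dissipation. Together with BDG on $M^n$ and Gr\"{o}nwall, this yields uniform bounds
\[
    \mathbb{E}\sup_{t\in[0,T]}\norm{u^n_t}^2 + \mathbb{E}\int_0^T \norm{u^n_s}_1^2 ds \leq C(T,u_0).
\]

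Next I would establish tightness of the laws of $(u^n)$ in a suitable path space. Uniform bounds on increments $\mathbb{E}\norm{u^n_t - u^n_r}^2_{(\bar{W}^{1,2}_{\sigma})^*}$ follow from controlling each term of (\ref{identityindefinitionofspatiallyweak}) using Lemma \ref{the 2D bound lemma}, (\ref{assumpty 1}) and the trace inequality. Combined with the $L^2(0,T;\bar{W}^{1,2}_{\sigma})$ bound and the Aubin--Lions compact embedding $\bar{W}^{1,2}_{\sigma} \hookrightarrow\hookrightarrow L^2_{\sigma} \hookrightarrow (\bar{W}^{1,2}_{\sigma})^*$, this gives tightness in $L^2(0,T;L^2_{\sigma}) \cap C([0,T];(\bar{W}^{1,2}_{\sigma})^*)$. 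By Skorohod's representation theorem and identification of the limit in the nonlinear and noise terms (using (\ref{assumpty 4}) and the adjoint assumption (\ref{assumpty 3})), one obtains a martingale solution. Since the 2D estimate of Lemma \ref{the 2D bound lemma} yields pathwise uniqueness (the difference $w = u-v$ of two solutions satisfies $d\norm{w}^2 \leq C(1+\norm{u}_1^2+\norm{v}_1^2)\norm{w}^2 dt + dM$ after using cancellation on the nonlinearity and (\ref{assumpty 5}) on the noise), Gy\"{o}ngy--Krylov then upgrades this to a probabilistically strong solution on the original probability space. The main technical obstacle, relative to the no-slip case, is the boundary integral $\inner{(\kappa-\alpha)u^n_s}{u^n_s}_{L^2(\partial\mathscr{O};\R^2)}$: here the inequality (\ref{inequality from Lions}), inherited from the fractional Sobolev trace theory, is precisely what allows it to be absorbed into the viscous dissipation without constraining $\alpha$ or $\kappa$ in sign, so that the a priori estimate goes through for arbitrary $\alpha \in C^2(\partial\mathscr{O};\R)$.
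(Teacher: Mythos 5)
Your proposal follows essentially the same route as the paper: Galerkin approximation in the basis of Lemma \ref{eigenfunctions for navier}, the energy estimate in which the boundary integral $\inner{(\kappa-\alpha)u^n_s}{u^n_s}_{L^2(\partial\mathscr{O};\R^2)}$ produced by Lemma \ref{greens for navier} is absorbed into the dissipation via (\ref{inequality from Lions}) and Young's inequality, tightness, passage to a martingale solution on a new probability space, and an upgrade to a probabilistically strong solution via pathwise uniqueness (the paper invokes the Yamada--Watanabe type result of [\cite{rockner2008yamada}] where you use Gy\"{o}ngy--Krylov, and uses the Aldous--Jakubowski tightness criterion in $\mathcal{D}\left([0,T];\left(\bar{W}^{1,2}_{\sigma}\right)^*\right)$ where you use Aubin--Lions; these are interchangeable here). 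The one omission worth noting is that the theorem imposes no integrability on $u_0$, so your bound $C(T,u_0)$ requiring $\mathbbm{E}\norm{u_0}^2<\infty$ does not apply directly: the paper first treats $u_0\in L^\infty\left(\Omega;L^2_{\sigma}\right)$ and then recovers the general case by solving for the truncations $u_0\mathbbm{1}_{k\leq\norm{u_0}<k+1}$ and summing, a step you should add.
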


Theorem \ref{theorem 2D} will be proved in Subsection \ref{sub weak sols}. We now state the first main result of this paper. 

\begin{theorem} \label{theorem 2D strong}
 For any given $\mathcal{F}_0-$measurable $u_0: \Omega \rightarrow \bar{W}^{1,2}_{\sigma}$ and $\alpha \in C^2(\partial \mathscr{O}; \R)$ such that $\alpha \geq \kappa$ on $\partial \mathscr{O}$, there exists a unique strong solution $u$ of the equation (\ref{projected Ito}).
\end{theorem}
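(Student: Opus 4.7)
The plan is to construct the strong solution as a Galerkin limit, combined with the abstract extension Lemma \ref{amazing cauchy lemma} which avoids the customary detour through a local-to-maximal strong solution framework by delivering a limiting process defined up to the first hitting time of the \emph{weak} energy norm of the limit. Let $\bar{V}_n := \textnormal{span}\{\bar{a}_1,\ldots,\bar{a}_n\}$ for the eigenbasis from Lemma \ref{eigenfunctions for navier}, and let $u^n$ be the unique solution of the Galerkin projection of (\ref{projected Ito}) onto $\bar{V}_n$ with initial condition $\bar{\mathcal{P}}_n u_0$; as $\bar{V}_n$ is finite-dimensional and the coefficients locally Lipschitz there, this is a standard finite-dimensional SDE problem.

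The critical step is a uniform in $n$ bound of $u^n$ in $\bar{W}^{1,2}_\sigma$, performed up to the stopping times
\begin{equation*}
\tau^{n,M} := \inf\left\{t \in [0,T]:\ \norm{u^n_t}^2 + \int_0^t \norm{u^n_s}_1^2\, ds > M\right\} \wedge T.
\end{equation*}
I apply It\^o's formula to $\norm{u^n_t}_1^2$, using that $u^n_t \in \bar{V}_n \subset \bar{W}^{2,2}_\alpha$ so Lemma \ref{greens for navier} gives the identity $\norm{u^n_t}_1^2 = \inner{Au^n_t}{u^n_t} + \inner{(\kappa-\alpha)u^n_t}{u^n_t}_{L^2(\partial \mathscr{O}; \R^2)}$, and the hypothesis $\alpha \geq \kappa$ ensures the boundary contribution has the correct sign to leave a clean dissipation $2\nu\norm{u^n_s}_2^2$. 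The nonlinearity is controlled by Lemma \ref{the 2D bound lemma} and Young's inequality, producing only quadratic dependence on $\norm{u^n}^2$ and $\norm{u^n}_1^2$, precisely the quantities capped by $\tau^{n,M}$. The noise terms drift and quadratic variation combine into a structure exactly matching assumption (\ref{assumpty 12}), whose cancellation (of Stratonovich-to-It\^o corrector type) tames the top-order derivative produced by a transport noise, with the residual absorbed into the dissipation via $\sum_i k_i \leq \nu$. After Burkholder-Davis-Gundy and Gr\"onwall one obtains
\begin{equation*}
\mathbb{E}\sup_{t\in[0,\tau^{n,M}]}\norm{u^n_t}_1^2 + \mathbb{E}\int_0^{\tau^{n,M}}\norm{u^n_s}_2^2\, ds \leq C(M, \norm{u_0}_{\bar{W}^{1,2}_\sigma}, T),
\end{equation*}
uniformly in $n$. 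The Cauchy property demanded by Lemma \ref{amazing cauchy lemma} is then verified in the weaker $L^2_\sigma$-norm by testing the equation for $u^n - u^m$ against itself, using assumptions (\ref{assumpty 4})-(\ref{assumpty 6}) together with the above bound.

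Lemma \ref{amazing cauchy lemma} now supplies a limiting process $u^M$ together with a stopping time $\tau^M$, characterised as the first hitting time of $\sup_{s\leq\cdot}\norm{\cdot}^2 + \int\norm{\cdot}_1^2\,ds$ above $M$ \emph{for the limit itself}, such that $u^M \in C([0,\tau^M];\bar{W}^{1,2}_\sigma) \cap L^2([0,\tau^M];\bar{W}^{2,2}_\alpha)$ and $u^n \to u^M$ in a sense sufficient to pass to the limit in every term of the equation. Hence $u^M$ is a strong solution of (\ref{projected Ito}) on $[0,\tau^M]$. Since any strong solution is a weak solution (Lemma \ref{lemma for strong is weak}), uniqueness from Theorem \ref{theorem 2D} forces $u^M$ to coincide on $[0,\tau^M]$ with the unique global weak solution $u$; and the 2D weak energy estimate ensures $\sup_{t\in[0,T]}\norm{u_t}^2 + \int_0^T\norm{u_s}_1^2\, ds < \infty$ almost surely, so $\tau^M \to T$ almost surely as $M \to \infty$. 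Therefore $u$ itself has the strong regularity on all of $[0,T]$, and uniqueness is inherited at the weak level.

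The main obstacle is the $\bar{W}^{1,2}_\sigma$ energy estimate for the Galerkin scheme. Both structural ingredients are essential: the hypothesis $\alpha \geq \kappa$ is what gives the correct sign to the boundary contribution in Lemma \ref{greens for navier}, while assumption (\ref{assumpty 12}) encodes the Stratonovich-type cancellation without which the transport noise's top-order derivative cannot be controlled at the $W^{1,2}$ level. It is precisely the failure of these features under the no-slip condition (where $\bar{V}_n$ is not dense in $\mathcal{P}(W^{1,2})$) that leaves the analogous problem open in that setting.
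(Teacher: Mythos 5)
Your architecture coincides with the paper's: Galerkin approximation in the eigenbasis of Lemma \ref{eigenfunctions for navier}, uniform bounds up to the first hitting times of the \emph{weak} energy norm, a Cauchy property feeding the abstract Lemma \ref{amazing cauchy lemma}, identification of the limit with the unique weak solution, and passage $M \rightarrow \infty$ via the a.s.\ finiteness of the weak energy. However, the central technical step is not carried out correctly as written. You propose to apply the It\^{o} formula to $\norm{u^n_t}_1^2$ and assert that the resulting noise terms ``combine into a structure exactly matching assumption (\ref{assumpty 12})''. They do not: the It\^{o} formula for $\norm{\cdot}_1^2$ produces $\inner{\mathcal{P}\mathcal{Q}_i^2u^n_s}{u^n_s}_1 + \norm{\bar{\mathcal{P}}_n\mathcal{P}\mathcal{G}_iu^n_s}_1^2$, whereas (\ref{assumpty 12}) controls $\inner{\mathcal{Q}_i^2\phi}{A\phi} + \norm{\mathcal{P}\mathcal{G}_i\phi}_1^2$. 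Converting the first pairing into the second via Lemma \ref{greens for navier} generates the boundary integral $\inner{(\kappa-\alpha)\mathcal{P}\mathcal{Q}_i^2u^n_s}{u^n_s}_{L^2(\partial\mathscr{O};\R^2)}$, and estimating the trace of the second-order quantity $\mathcal{P}\mathcal{Q}_i^2u^n_s$ via (\ref{inequality from Lions}) requires the $W^{3,2}$ norm of $u^n_s$, which is not available. This is exactly the obstruction the paper isolates; the fix is to run the It\^{o} formula not in $\inner{\cdot}{\cdot}_1$ but in the modified inner product $\inner{\phi}{\psi}_H := \inner{\phi}{\psi}_1 - \inner{(\kappa-\alpha)\phi}{\psi}_{L^2(\partial\mathscr{O};\R^2)} = \inner{\phi}{A\psi}$, for which the drift term is already $\inner{\mathcal{Q}_i^2\phi^n}{A\phi^n}$ and the only residual boundary integral, $-\inner{(\kappa-\alpha)\mathcal{P}\mathcal{G}_i\phi^n}{\mathcal{P}\mathcal{G}_i\phi^n}_{L^2(\partial\mathscr{O};\R^2)}$, shares the derivatives and so only costs the $W^{2,2}$ norm. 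The true role of $\alpha \geq \kappa$ is to make $\inner{\cdot}{\cdot}_H$ an inner product equivalent to $\inner{\cdot}{\cdot}_1$ (Lemma \ref{proof of bilinear form H}), so that the $H$-estimate transfers back; it is not, as you state, a sign condition producing the dissipation $2\nu\norm{u^n_s}_2^2$, which instead comes from $-2\nu\inner{\bar{\mathcal{P}}_nAu^n_s}{u^n_s}_H = -2\nu\norm{u^n_s}_2^2$.

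Two further points are glossed over. First, Lemma \ref{amazing cauchy lemma} has \emph{two} hypotheses: besides the Cauchy property (\ref{supposition 1}) you must verify the weak equicontinuity (\ref{supposition 2}) at all times (the paper's Lemma \ref{probably unnecessary lemma}); this is precisely the extension over the classical local-existence lemma that lets the limiting stopping time be characterised as a hitting time of the limit, and it cannot be omitted. Second, the Cauchy lemma only yields convergence in the norm of $C([0,\cdot];L^2_{\sigma}) \cap L^2([0,\cdot];\bar{W}^{1,2}_{\sigma})$, so the limit does not automatically land in $C([0,\tau^M];\bar{W}^{1,2}_{\sigma}) \cap L^2([0,\tau^M];\bar{W}^{2,2}_{\alpha})$ as you claim: the strong regularity must be recovered separately from the uniform bounds by weak and weak* compactness together with uniqueness of weak limits (as in Proposition \ref{final regularity of solutions}), and the continuity in $\bar{W}^{1,2}_{\sigma}$ then follows from the energy equality of Proposition \ref{rockner prop} applied to the identity in $L^2_{\sigma}$.
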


Thereom \ref{theorem 2D strong} will be proved in Subsection \ref{sub strong sols}. As for the Euler Equation and inviscid limit, our results are as follows.

\begin{definition} \label{definitionofspacetimeweakmartingale}
Let $u_0: \Omega \rightarrow \bar{W}^{1,2}_{\sigma}$ be $\mathcal{F}_0-$measurable. If there exists a filtered probability space $\left(\tilde{\Omega},\tilde{\mathcal{F}},(\tilde{\mathcal{F}}_t), \tilde{\mathbbm{P}}\right)$, a cylindrical Brownian Motion $\tilde{\mathcal{W}}$ over $\mathfrak{U}$ with respect to $\left(\tilde{\Omega},\tilde{\mathcal{F}},(\tilde{\mathcal{F}}_t), \tilde{\mathbbm{P}}\right)$, an $\tilde{\mathcal{F}_0}-$measurable $\tilde{u}_0: \tilde{\Omega} \rightarrow \bar{W}^{1,2}_{\sigma}$ with the same law as $u_0$, and a progressively measurable process $\tilde{u}$ in $W^{1,2}_{\sigma}$ such that for $\tilde{\mathbb{P}}-a.e.$ $\tilde{\omega}$, $\tilde{u}_{\cdot}(\omega) \in L^{\infty}\left([0,T];\bar{W}^{1,2}_{\sigma}\right)\cap C\left([0,T];L^2_{\sigma}\right)$ and
\begin{align} 
     \inner{\tilde{u}_t}{\phi} = \inner{\tilde{u}_0}{\phi} - \int_0^{t}\inner{\mathcal{L}_{\tilde{u}_s}\tilde{u}_s}{\phi}ds + \frac{1}{2}\int_0^{t}\sum_{i=1}^\infty \inner{B_i\tilde{u}_s}{B_i^*\phi} ds - \int_0^{t} \inner{B\tilde{u}_s}{\phi} d\tilde{\mathcal{W}}_s\label{newid1}
\end{align}
holds for every $\phi \in \bar{W}^{1,2}_{\sigma}$ $\tilde{\mathbb{P}}-a.s.$ in $\R$ for all $t \in [0,T]$, then $\tilde{u}$ is said to be a martingale weak solution of the equation (\ref{projected Ito Salt Euler}).
\end{definition}

\begin{theorem} \label{theorem for limit of navier stokes is euler}
    Let $\kappa \geq 0$ everywhere on $\partial \mathscr{O}$. For any given $\mathcal{F}_0-$measurable $u_0 \in L^2\left(\Omega; \bar{W}^{1,2}_{\sigma}\right)$, there exists a martingale weak solution $\tilde{u}$ of the equation (\ref{projected Ito Salt Euler}). Moreover for any sequence of viscosities $(\nu^k)$ monotonically decreasing to zero, with corresponding strong solutions of the equation (\ref{projected Ito Salt}) for $\alpha:= 2\kappa$ given by $(\tilde{u}^k)$, then as $k \rightarrow \infty$, there is a subsequence indexed by $n_k$ such that  $\tilde{u}^{n_k} \longrightarrow \tilde{u}$  $\mathbbm{P}-a.s.$ in $C\left([0,T];L^2_{\sigma}\right)$. 
\end{theorem}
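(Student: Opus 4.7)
The plan is to realize $\tilde{u}$ as a vanishing-viscosity limit along the three-step programme outlined in Section \ref{section zero viscy limit}: viscosity-independent a priori bounds on the strong solutions $u^k$ of (\ref{projected Ito Salt}), which exist by Theorem \ref{theorem 2D strong} because the hypothesis $\kappa \geq 0$ yields $\alpha := 2\kappa \geq \kappa$; tightness of the laws of $(u^k)$ in $C([0,T];L^2_\sigma)$; and identification of a subsequential Jakubowski limit as a martingale weak solution of (\ref{projected Ito Salt Euler}).

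The crux, and main obstacle, is the $\nu^k$-uniform $W^{1,2}$ estimate. I would work with the scalar vorticity $w^k := \textnormal{curl}\, u^k$: the free boundary choice $\alpha = 2\kappa$ reduces the Navier condition to the homogeneous $w^k|_{\partial \mathscr{O}} = 0$ via Corollary \ref{vorticity corollary}, enabling a clean integration by parts for the Laplacian. Taking $\textnormal{curl}$ in (\ref{projected Ito Salt}) (using Lemma \ref{lemma for curl and P} to remove $\mathcal{P}$, and the 2D identity $\textnormal{curl}(B_i u) = \xi_i \cdot \nabla(\textnormal{curl}\, u)$ for divergence-free $\xi_i$) yields an It\^o equation for $w^k$. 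Applying It\^o's formula to $\|w^k\|^2$, three cancellations conspire: the advection by $u^k$ integrates to zero by divergence-freeness with $u^k \cdot \mathbf{n} = 0$; the Laplacian contributes $-2\nu^k \|\nabla w^k\|^2$ with no boundary remainder; and, using that $\xi_i \in W^{3,2}_0$ vanishes on $\partial \mathscr{O}$, the It\^o corrector satisfies $\inner{(\xi_i \cdot \nabla)^2 w^k}{w^k} = -\|\xi_i \cdot \nabla w^k\|^2$, exactly cancelling the quadratic variation, while $\inner{\xi_i \cdot \nabla w^k}{w^k} = 0$ so the stochastic integral vanishes pathwise. This delivers the strong identity
\begin{equation*}
    \|w^k(t)\|^2 + 2\nu^k \int_0^t \|\nabla w^k\|^2 \, ds = \|\textnormal{curl}\, u_0\|^2.
\end{equation*}
Combined with the parallel velocity estimate (whose boundary remainder in Lemma \ref{greens for navier} equals $-\inner{\kappa u^k}{u^k}_{L^2(\partial \mathscr{O};\R^2)} \leq 0$ by $\kappa \geq 0$) and the standard div--curl inequality $\|f\|_{W^{1,2}} \leq C(\|\textnormal{curl}\, f\| + \|f\|)$ for divergence-free $f$ with $f \cdot \mathbf{n} = 0$, this yields $\sup_k \sup_{t \in [0,T]} \|u^k(t)\|_{W^{1,2}} \leq C(\|u_0\|_{W^{1,2}})$ almost surely, and hence in $L^2_\omega$.

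With these bounds in hand, tightness in $C([0,T];L^2_\sigma)$ follows from the Aldous--Jakubowski criterion of Subsection \ref{sub useful results}: the uniform $W^{1,2}$ control supplies spatial compactness via the compact embedding $\bar{W}^{1,2}_\sigma \hookrightarrow L^2_\sigma$, while termwise negative-Sobolev estimates on the right-hand side of (\ref{projected Ito Salt}), combined with BDG and the growth condition (\ref{assumpty 1}) applied to $B_i$, yield Aldous-type equicontinuity in probability. Jakubowski's theorem then furnishes a subsequence $n_k$ and a new filtered probability space on which $\tilde{u}^{n_k} \to \tilde{u}$ a.s.\ in $C([0,T];L^2_\sigma)$, with $\tilde{u}$ inheriting the uniform $W^{1,2}$ bound by lower semi-continuity. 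Passing to the limit in the weak formulation (\ref{newid1}) is then routine: the viscous term vanishes as $\nu^{n_k} \to 0$ against the uniform $W^{1,2}$ bound; the quadratic nonlinearity converges via strong $L^2$ convergence and Lemma \ref{the 2D bound lemma}; and the SALT drift together with the martingale term converge by the standard martingale representation machinery, producing a process that satisfies Definition \ref{definitionofspacetimeweakmartingale}.
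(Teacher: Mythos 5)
Your proposal follows essentially the same route as the paper: existence of the viscous strong solutions from Theorem \ref{theorem 2D strong} since $2\kappa \geq \kappa$, the pathwise vorticity identity $\|w^k_t\|^2 + 2\nu^k\int_0^t\|\nabla w^k_s\|^2\,ds = \|w^k_0\|^2$ obtained from the zero-trace vorticity boundary condition and the exact cancellation between the It\^o corrector and the quadratic variation, the resulting $\nu$-independent $W^{1,2}$ bound via the curl-controls-gradient lemma, Aldous--Jakubowski tightness, and identification of the limit. The only cosmetic deviations are that the paper establishes tightness in $\mathcal{D}\left([0,T];L^2_{\sigma}\right)$ and upgrades the almost sure convergence to $C\left([0,T];L^2_{\sigma}\right)$ afterwards, and that it invokes the Yudovich-type estimate directly rather than a separate velocity energy bound plus a div--curl inequality.
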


\begin{remark}
    For each viscosity $\nu^k$ and for any given probability space and cylindrical Brownian Motion, there is a strong solution of the equation (\ref{projected Ito Salt}) (for $\alpha:= 2\kappa)$ given by Theorem \ref{theorem 2D strong}. In the statement of Theorem \ref{theorem for limit of navier stokes is euler}, $\tilde{u}^k$ is taken to be the strong solution relative to the probability space and cylindrical Brownian Motion specified in the definition of the weak martingale solution $\tilde{u}$.
\end{remark}

Theorem \ref{theorem for limit of navier stokes is euler} will be proved in Section \ref{section zero viscy limit}.

\section{Solutions of the Stochastic Navier-Stokes Equation} \label{section solutions sns}

We note that $\nu$ remains fixed in this section, so the dependency on $\nu$ in our constants will not be made explicit.

\subsection{Weak Solutions} \label{sub weak sols}

We begin to sketch the proof of Theorem \ref{theorem 2D} by following the same process as in [\cite{goodair2023zero}] Section 3, making reference to the appropriate changes. We fix an $\mathcal{F}_0-$measurable $u_0 \in L^\infty\left(\Omega; L^2_{\sigma}\right)$ and work with a Galerkin Approximation, doing so by considering (\ref{projected Ito}) in its spatially strong form, projected by $\bar{\mathcal{P}}_n$ to give
\begin{equation} \label{projected Ito galerkin}
    u^n_t = u^n_0 - \int_0^t\bar{\mathcal{P}}_n\mathcal{P}\mathcal{L}_{u^n_s}u^n_s\ ds + \nu\int_0^t \bar{\mathcal{P}}_n \mathcal{P} \Delta u^n_s\, ds + \frac{1}{2}\int_0^t\sum_{i=1}^\infty \bar{\mathcal{P}}_n \mathcal{P}\mathcal{Q}_i^2u^n_s ds - \int_0^t \bar{\mathcal{P}}_n\mathcal{P}\mathcal{G}u^n_s d\mathcal{W}_s 
\end{equation}
where $u^n_0:=\bar{\mathcal{P}}_nu_0$. There is no change to obtaining a maximal strong solution $(u^n, \Theta^n)$ and by defining the stopping times
\begin{equation}\label{the stopping times}\tau^{M,S}_n := S \wedge \inf \left\{s \geq 0: \sup_{r \in [0,s]}\norm{u^n_r}^2 + \int_0^s \norm{u^n_r}_1^2dr \geq M + \norm{u^n_0}^2 \right\}\end{equation} we claim that there exists a constant $C$ independent of $M,n,\nu$ such that
    \begin{equation}\label{first result}\mathbbm{E}\left[\sup_{r \in [0,T \wedge  \tau^{M,T+1}_n]}\norm{u^n_r}^2 + \nu\int_0^{T \wedge  \tau^{M,T+1}_n}\norm{u^n_r}_1^2dr  \right] \leq C\left[\mathbbm{E}\left(\norm{u^n_0}^2\right) + 1\right].\end{equation}
Recalling Lemma \ref{navier boundary nonlinear} then the only difference is in control of the Laplacian term, which after using the self-adjoint property of the projections appears simply as $$\inner{\Delta u^n_s }{u^n_s}.$$ In [\cite{goodair2023zero}] the approximation occurs in a space of zero trace functions so this is simply the desired $-\norm{u^n_s}_1^2$ term. In this context, as justified in Lemma \ref{greens for navier}, we have to control the additional term $$\inner{(\kappa - \alpha)u_s}{u_s}_{L^2(\partial \mathscr{O};\R^2)}$$ which we do by observing that $$\inner{(\kappa - \alpha)u_s}{u_s}_{L^2(\partial \mathscr{O};\R^2)} \leq \norm{(\kappa - \alpha)u_s}_{L^2(\partial \mathscr{O};\R^2)}\norm{u_s}_{L^2(\partial \mathscr{O};\R^2)} \leq \norm{\kappa - \alpha}_{L^{\infty}(\partial\mathscr{O};\R)}\norm{u_s}_{L^2(\partial \mathscr{O};\R^2)}^2.$$ By allowing a constant $c$ to depend on $\kappa, \alpha$, which is not meaningful to our analysis at this stage as these quantities remain fixed, we can employ (\ref{inequality from Lions}) and Young's Inequality to control this further with  $$c\norm{u_s}^2 + \varepsilon \norm{u_s}_1^2$$ for any $\varepsilon > 0$. Incorporating these additional terms into the running inequality of [\cite{goodair2023zero}] Proposition 3.1 justifies (\ref{first result}). The result implies that the maximal strong solution $(u^n, \Theta^n)$ is in fact global and satisfies the energy inequality  \begin{equation}\label{better first result}\mathbbm{E}\left[\sup_{r \in [0,T]}\norm{u^n_r}^2 + \nu\int_0^{T }\norm{u^n_r}_1^2dr  \right] \leq C\left[\mathbbm{E}\left(\norm{u^n_0}^2\right) + 1\right]\end{equation} as obtained in [\cite{goodair2023zero}] equation (57), so we move on to the tightness considerations. We verify that the sequence of the laws of $(u^n)$ is tight in the space of probability measures over $L^2\left([0,T];L^2_{\sigma}\right)$ and $\mathcal{D}\left([0,T];\left(\bar{W}^{1,2}_{\sigma}\right)^*\right)$. The tightness in $L^2\left([0,T];L^2_{\sigma}\right)$ follows [\cite{goodair2023zero}] Proposition 3.2, whilst the result for $\mathcal{D}\left([0,T];\left(\bar{W}^{1,2}_{\sigma}\right)^*\right)$ follows Proposition 3.3. In the former case, the only difference is in the term $$\inner{\Delta \hat{u}^n_r}{\hat{u}^n_r- \hat{u}^n_s} = - \inner{\hat{u}^n_r}{\hat{u}^n_r - \hat{u}^n_s}_1 + \inner{(\kappa - \alpha)\hat{u}^n_r}{\hat{u}^n_r - \hat{u}^n_s}_{L^2(\partial \mathscr{O};\R^2)}$$ where the boundary integral needs to be controlled. The contribution of $\inner{(\kappa - \alpha)\hat{u}^n_r}{\hat{u}^n_r}_{L^2(\partial \mathscr{O};\R^2)}$ is handled as above, and for $-\inner{(\kappa - \alpha)\hat{u}^n_r}{\hat{u}^n_s}_{L^2(\partial \mathscr{O};\R^2)}$ only a bound by $c\norm{\hat{u}^n_r}_1\norm{\hat{u}^n_s}_1$ is required as in (64), so the result follows. The same bound is all that is required in Proposition 3.3, so the tightness in $\mathcal{D}\left([0,T];\left(\bar{W}^{1,2}_{\sigma}\right)^*\right)$ follows similarly. Comparing to Theorem 3.5, Lemma 3.6 and Proposition 3.7, we have the following. We note the need to pass to a subsequence, which we again index by $n$ for simplicity.

\begin{theorem} \label{theorem for new prob space}
There exists a filtered probability space $\left(\tilde{\Omega},\tilde{\mathcal{F}},(\tilde{\mathcal{F}}_t), \tilde{\mathbb{P}}\right)$, a cylindrical Brownian Motion $\tilde{\mathcal{W}}$ over $\mathfrak{U}$ with respect to $\left(\tilde{\Omega},\tilde{\mathcal{F}},(\tilde{\mathcal{F}}_t), \tilde{\mathbb{P}}\right)$, a sequence of random variables $(\tilde{u}^n_0)$, $u^n_0: \tilde{\Omega} \rightarrow L^2_{\sigma}$ and a $\tilde{u}_0:\tilde{\Omega} \rightarrow L^2_{\sigma}\left(\mathscr{O};\R^N\right)$, a sequence of processes $(\tilde{u}^n)$, $\tilde{u}^n:\tilde{\Omega} \times [0,T] \rightarrow \bar{W}^{1,2}_{\sigma}$ is progressively measurable and a progressively measurable process $\tilde{u}:\tilde{\Omega} \times [0,T] \rightarrow \bar{W}^{1,2}_{\sigma}$ such that:
\begin{enumerate}
    \item For each $n \in \N$, $\tilde{u}^n_0$ has the same law as $u^n_0$;
    \item \label{new item 2} For $\tilde{\mathbb{P}}-a.e.$ $\omega$, $\tilde{u}^n_0(\omega) \rightarrow \tilde{u}_0(\omega)$  in $L^2_{\sigma}$, and thus $\tilde{u}_0$ has the same law as $u_0$;
    \item \label{new item 3} For each $n \in \N$ and $t\in[0,T]$, $\tilde{u}^n$ satisfies the identity
    \begin{equation} \nonumber
    \tilde{u}^n_t = \tilde{u}^n_0 - \int_0^t\bar{\mathcal{P}}_n\mathcal{P}\mathcal{L}_{\tilde{u}^n_s}\tilde{u}^n_s\ ds - \nu\int_0^t \bar{\mathcal{P}}_n A \tilde{u}^n_s\, ds + \frac{1}{2}\int_0^t\sum_{i=1}^\infty \bar{\mathcal{P}}_n \mathcal{P}\mathcal{Q}_i^2\tilde{u}^n_s ds - \int_0^t \bar{\mathcal{P}}_n\mathcal{P}\mathcal{G}\tilde{u}^n_s d\tilde{\mathcal{W}}_s 
\end{equation}
$\tilde{\mathbb{P}}-a.s.$ in $\bar{V}_n$;
\item For $\tilde{\mathbb{P}}-a.e$ $\omega$, $\tilde{u}^n(\omega) \rightarrow \tilde{u}(\omega)$ in $L^2\left([0,T]; L^2_{\sigma} \right)$ and $\mathcal{D}\left([0,T];\left(W^{1,2}_{\sigma}\right)^* \right)$; \label{new item 4}
\item $\tilde{u}^n \rightarrow \tilde{u}$ in $L^2\left(\tilde{\Omega};L^2\left([0,T]; L^2_{\sigma} \right)\right)$;
\item For $\tilde{\mathbb{P}}-a.e.$ $\omega$, $\tilde{u}_{\cdot}(\omega) \in L^{\infty}\left([0,T];L^2_{\sigma}\right)\cap L^2\left([0,T];\bar{W}^{1,2}_{\sigma}\right)$.
\end{enumerate}

\end{theorem}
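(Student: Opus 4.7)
The plan is to carry out the standard stochastic compactness argument, as executed in [\cite{goodair2023zero}] Theorem 3.5, Lemma 3.6, and Proposition 3.7, adapting only the estimates that involve the Navier boundary integrals. Since the tightness of the laws of $(u^n)$ in $L^2([0,T]; L^2_\sigma) \cap \mathcal{D}\left([0,T]; (\bar{W}^{1,2}_\sigma)^*\right)$ has been justified above, the first step is to upgrade this to joint tightness of the tuple $(u^n_0, u^n, \mathcal{W})$ on the product space with the additional factors $L^2_\sigma$ and $C([0,T]; \mathfrak{U}')$. The sequence $(u^n_0) = (\bar{\mathcal{P}}_n u_0)$ converges pathwise to $u_0$ in $L^2_\sigma$ by the orthogonal projection property, and the law of $\mathcal{W}$ is fixed, so joint tightness follows from Tychonoff-type arguments on the product of tight families.

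Next, I would invoke the Jakubowski--Skorokhod representation theorem (as stated in the appendix, suitable for our quasi-Polish setting) to extract a subsequence, still indexed by $n$, and produce the new filtered probability space $(\tilde{\Omega}, \tilde{\mathcal{F}}, (\tilde{\mathcal{F}}_t), \tilde{\mathbb{P}})$ equipped with random variables $\tilde{u}^n_0$, $\tilde{u}^n$, $\tilde{\mathcal{W}}$ having the same joint laws as the originals, and with $\tilde{u}^n_0 \to \tilde{u}_0$ in $L^2_\sigma$, $\tilde{u}^n \to \tilde{u}$ in $L^2([0,T]; L^2_\sigma) \cap \mathcal{D}\left([0,T]; (\bar{W}^{1,2}_\sigma)^*\right)$ for $\tilde{\mathbb{P}}$-a.e. $\tilde{\omega}$. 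Items (1), (2), (4) are then immediate, and the filtration $(\tilde{\mathcal{F}}_t)$ is taken as the usual augmentation of the one generated by $(\tilde{u}, \tilde{u}_0, \tilde{\mathcal{W}})$. A standard argument using equality of finite-dimensional distributions shows that $\tilde{\mathcal{W}}$ is a cylindrical Brownian motion over $\mathfrak{U}$ relative to this filtration.

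For item (3) I would employ the now-classical identification-of-the-limit argument of Bensoussan / Flandoli--G\k{a}tarek: the Galerkin identity can be recast as a zero-norm identity between processes on the original space, which transfers to $\tilde{\Omega}$ via equality of laws and the fact that the stochastic integral is constructed in a law-preserving way from the Galerkin coefficients (these are finite-dimensional, so there is no difficulty of approximation; the equality of the It\^{o} integral against $\tilde{\mathcal{W}}$ with the transported version is verified on dense cylinder sets). The uniform bound (\ref{better first result}) passes to $(\tilde{u}^n)$ by equality of laws, and then to $\tilde{u}$ by weak lower semicontinuity in $L^\infty([0,T]; L^2_\sigma) \cap L^2([0,T]; \bar{W}^{1,2}_\sigma)$ combined with the $\tilde{\mathbb{P}}$-almost sure convergence, yielding item (6). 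Item (5), the strong convergence in $L^2(\tilde{\Omega}; L^2([0,T]; L^2_\sigma))$, then follows from $\tilde{\mathbb{P}}$-almost sure convergence in $L^2([0,T]; L^2_\sigma)$ together with the uniform-in-$n$ moment bound $\tilde{\mathbb{E}} \sup_{t \leq T} \|\tilde{u}^n_t\|^{2+\varepsilon} \leq C$, via Vitali's convergence theorem.

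The principal technical obstacle is the verification that the stochastic integral on the new space genuinely reproduces the transported Galerkin integral, and that $\tilde{\mathcal{W}}$ is adapted and a Brownian motion for the chosen filtration; these are standard but delicate points, and they go through exactly as in [\cite{goodair2023zero}] because the only structural novelty of our setting - the boundary integral arising from Green's identity in Lemma \ref{greens for navier} - has already been absorbed into the a priori bound (\ref{better first result}) via the inequality (\ref{inequality from Lions}), so no new passage-to-the-limit difficulty is introduced.
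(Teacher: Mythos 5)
Your proposal is correct and follows essentially the same route as the paper, which obtains this theorem by the stochastic compactness/Skorokhod (Jakubowski) representation argument of [\cite{goodair2023zero}] Theorem 3.5, Lemma 3.6 and Proposition 3.7, the only adaptations being the boundary-integral terms already absorbed into the tightness estimates and the energy bound (\ref{better first result}). The standard points you flag (adaptedness of $\tilde{\mathcal{W}}$, identification of the transported stochastic integral, higher moments from the bounded initial condition for the Vitali step) are handled exactly as in that reference.
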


In order to show that $\tilde{u}$ is a \textit{martingale} weak solution, a concept not defined here but used in [\cite{goodair2023zero}], it only remains to verify the following proposition.

\begin{proposition} \label{prop for first energy bar}
    The identity
\begin{align} \nonumber
     \inner{\tilde{u}_t}{\phi} = \inner{u_0}{\phi} - \int_0^{t}\inner{\mathcal{L}_{\tilde{u}_s}\tilde{u}_s}{\phi}ds &- \nu\int_0^{t} \inner{\tilde{u}_s}{\phi}_1 ds + \nu\int_0^t \inner{(\kappa - \alpha)\tilde{u}_s}{\phi}_{L^2(\partial \mathscr{O}; \R^2)}ds\\ &+ \frac{1}{2}\int_0^{t}\sum_{i=1}^\infty \inner{\mathcal{Q}_i\tilde{u}_s}{\mathcal{Q}_i^*\phi} ds - \int_0^{t} \inner{\mathcal{G}\tilde{u}_s}{\phi} d\mathcal{W}_s\nonumber
\end{align}
holds for every $\phi \in \bar{W}^{1,2}_{\sigma}$, $\mathbb{P}-a.s.$ in $\R$ for all $t\in[0,T]$.
\end{proposition}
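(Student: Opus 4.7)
The strategy is to test the Galerkin identity in item \ref{new item 3} of Theorem \ref{theorem for new prob space} against a fixed $\phi \in \bar{W}^{1,2}_\sigma$ and pass to the limit $n \to \infty$ using the convergences from that theorem, together with the uniform energy bound (\ref{better first result}), which transfers to $\tilde{u}^n$ via equality in law. Setting $\phi_n := \bar{\mathcal{P}}_n \phi$, which converges to $\phi$ in $\bar{W}^{1,2}_\sigma$ by Lemma \ref{eigenfunctions for navier}, and exploiting the self-adjointness of $\bar{\mathcal{P}}_n$ and $\mathcal{P}$ together with $\mathcal{P}\phi = \phi$ and $\bar{V}_n \subset L^2_\sigma$, I can move the projections onto the test function. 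Since $\tilde{u}^n_s \in \bar{V}_n \subset \bar{W}^{2,2}_\alpha$ by Lemma \ref{eigenfunctions for navier}, Green's identity (Lemma \ref{greens for navier}) rewrites $\inner{A\tilde{u}^n_s}{\phi_n}$ as $\inner{\tilde{u}^n_s}{\phi_n}_1 - \inner{(\kappa - \alpha)\tilde{u}^n_s}{\phi_n}_{L^2(\partial \mathscr{O}; \R^2)}$, producing for each $n$ an identity in exactly the form of (\ref{identityindefinitionofspatiallyweak}) for $(\tilde{u}^n, \phi_n)$, with the noise-correction term expressed via the adjoint as $\inner{\mathcal{Q}_i \tilde{u}^n_s}{\mathcal{Q}_i^* \phi_n}$.

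I would then pass to the limit termwise, first for $\lambda$-a.e. $t \in [0,T]$. The initial-data pairing converges by item \ref{new item 2} combined with $\phi_n \to \phi$ in $L^2_\sigma$. The nonlinear term is handled by Lemma \ref{the 2D bound lemma}: splitting $\inner{\mathcal{L}_{\tilde{u}^n_s}\tilde{u}^n_s - \mathcal{L}_{\tilde{u}_s}\tilde{u}_s}{\phi_n}$ and pairing the $\tilde{\mathbb{P}}$-a.s.\ strong $L^2([0,T];L^2_\sigma)$ convergence of item \ref{new item 4} with the uniform $L^2([0,T]; \bar{W}^{1,2}_\sigma)$ bound on $\tilde{u}^n$, a Cauchy--Schwarz and Vitali argument closes the limit. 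The interior viscous integral is passed via weak $L^2([0,T];\bar{W}^{1,2}_\sigma)$ convergence (inherited from the uniform bound and item \ref{new item 4}) tested against $\phi_n \to \phi$ strongly in $\bar{W}^{1,2}_\sigma$; the boundary integral is controlled using $(\ref{inequality from Lions})$, which reduces traces to interior quantities already handled. The It\^{o} correction series is treated with assumption (\ref{assumpty 3}) giving $\norm{\mathcal{Q}_i^* \phi_n} \leq c_i^{1/2} \norm{\phi_n}_{W^{1,2}}$, the summability $\sum c_i < \infty$, assumption (\ref{assumpty 1}) for the integrands, and dominated convergence. The stochastic integral converges in $L^2(\tilde{\Omega})$ for each fixed $t$ by the BDG inequality combined with assumption (\ref{assumpty 4}) and the strong $L^2(\tilde{\Omega} \times [0,T]; L^2_\sigma)$ convergence of item 5, upon extracting a further almost sure subsequence.

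At this stage the identity holds $\tilde{\mathbb{P}}$-a.s.\ only for $\lambda$-a.e.\ $t$, so it remains to upgrade to every $t \in [0,T]$. However, the right-hand side of the limiting identity is $\tilde{\mathbb{P}}$-a.s.\ continuous in $t$: each deterministic integral has an $L^1([0,T])$ integrand by Lemma \ref{the 2D bound lemma}, (\ref{inequality from Lions}), (\ref{assumpty 1}) and (\ref{assumpty 3}), and the It\^{o} integral is sample-path continuous. Hence $t \mapsto \inner{\tilde{u}_t}{\phi}$ admits a continuous modification, and by taking a countable set of test functions dense in $\bar{W}^{1,2}_\sigma$ together with continuity of both sides in $\phi$, the identity extends to all $t \in [0,T]$ and all $\phi \in \bar{W}^{1,2}_\sigma$ simultaneously. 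The principal obstacle is the nonlinear term, which has no useful weak continuity and requires the full force of the strong $L^2$-in-space-time convergence of $\tilde{u}^n$ coupled with the two-dimensional estimate of Lemma \ref{the 2D bound lemma}; the only genuine novelty compared with the no-slip treatment of [\cite{goodair2023zero}] is the boundary term $\nu\inner{(\kappa - \alpha)\tilde{u}^n_s}{\phi_n}_{L^2(\partial\mathscr{O};\R^2)}$, which the trace inequality (\ref{inequality from Lions}) subordinates to quantities already under uniform control.
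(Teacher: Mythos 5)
Your overall strategy is sound and would, with some repair, yield the proposition, but it takes a genuinely different route from the paper at the one point where the Navier boundary conditions actually matter. The paper does not test against $\bar{\mathcal{P}}_n\phi$ for general $\phi$; it fixes $\psi \in \bigcup_k \bar{V}_k$, so that $\bar{\mathcal{P}}_n\psi = \psi$ for all large $n$ and every $(I-\bar{\mathcal{P}}_n)\psi$ correction vanishes identically, and then extends to $\phi \in \bar{W}^{1,2}_{\sigma}$ by density of $\bigcup_k \bar{V}_k$ at the very end. The payoff is in the Stokes term: writing the difference as $\inner{\tilde{u}_s - \tilde{u}^n_s}{\psi}_1 + \inner{(\kappa-\alpha)(\tilde{u}^n_s - \tilde{u}_s)}{\psi}_{L^2(\partial\mathscr{O};\R^2)}$ and applying Lemma \ref{greens for navier} a \emph{second} time, now with $\psi \in \bar{W}^{2,2}_{\alpha}$ in the role of the regular function, the boundary integrals cancel exactly and the whole expression collapses to $-\inner{\tilde{u}_s - \tilde{u}^n_s}{\Delta\psi}$. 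Only the $L^2\left([0,T];L^2_{\sigma}\right)$ convergence of item \ref{new item 4} is then needed, and no trace or gradient convergence of $\tilde{u}^n$ enters at all. Your version instead keeps the $\inner{\cdot}{\cdot}_1$ pairing and the boundary integral as separate objects to be passed to the limit, which is exactly the extra difficulty the paper's double application of Green's identity is designed to avoid.

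This is where your argument has a genuine soft spot. You invoke ``weak $L^2([0,T];\bar{W}^{1,2}_{\sigma})$ convergence (inherited from the uniform bound and item \ref{new item 4})'' $\tilde{\mathbb{P}}$-a.s., but the uniform gradient bound (\ref{better first result}) is an estimate in \emph{expectation}; for a fixed $\omega$ there is no a priori bound on $\sup_n \int_0^T\norm{\tilde{u}^n_r(\omega)}_1^2\,dr$, so pathwise weak compactness in $L^2([0,T];\bar{W}^{1,2}_{\sigma})$ is not available as stated. The same issue infects the boundary term: (\ref{inequality from Lions}) bounds $\norm{\tilde{u}^n_s-\tilde{u}_s}_{L^2(\partial\mathscr{O};\R^2)}^2$ by $c\norm{\tilde{u}^n_s-\tilde{u}_s}\,\norm{\tilde{u}^n_s-\tilde{u}_s}_{W^{1,2}}$, and sending this to zero again requires a pathwise (or at least subsequential a.s.) control of the $W^{1,2}$ norms that you have not established. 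This can be repaired --- either by proving convergence of these two terms in $L^1(\tilde{\Omega})$ and extracting a further a.s. subsequence, or by identifying the weak limit in $L^2(\tilde{\Omega}\times[0,T];\bar{W}^{1,2}_{\sigma})$ and testing against sets in $\tilde{\mathcal{F}}$ --- but as written the step does not follow from the convergences you cite. The paper's reduction to $-\inner{\tilde{u}_s-\tilde{u}^n_s}{\Delta\psi}$ sidesteps the problem entirely, which is precisely why the finite-dimensional test functions are used first.
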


\begin{proof}
    The proof again parallels [\cite{goodair2023zero}], Proposition 3.8, but with some distinct differences outlined here. As such we fix a $t \in [0,T]$ and $\phi \in \bar{W}^{1,2}_{\sigma}$, but now consider arbitrary $\psi \in \bigcup_k \bar{V}_k$ and look to show the identity with $\psi$ before using the density of $\bigcup_k \bar{V}_k$ in $\bar{W}^{1,2}_{\sigma}$ to achieve the result. The significance here is that any fixed $\psi$ belongs to $\bar{V}_k$ for some $k$, so observing $\tilde{u}$ as a limit of $(\tilde{u}^n)$ we can consider only $n > k$. As such, all of the $(I - \bar{\mathcal{P}}_n)\psi$ terms that would appear here in analogy of [\cite{goodair2023zero}] Proposition 3.8 are simply zero. With this in place there is nothing further to prove for any term other than for the Stokes operator. The difference of the associated terms is given by \begin{equation}\label{associated diff}\inner{\Delta \tilde{u}^n_s}{\psi} - \left( -\inner{\tilde{u}_s}{\psi}_1 + \inner{(\kappa - \alpha)\tilde{u}_s}{\phi}_{L^2(\partial \mathscr{O}; \R^2)} \right)\end{equation} and using Lemma \ref{greens for navier}, $$\inner{\Delta \tilde{u}^n_s}{\psi} = - \inner{\tilde{u}^n_s}{\psi}_1 + \inner{(\kappa - \alpha)\tilde{u}^n_s}{\psi}_{L^2(\partial \mathscr{O}; \R^2)}.$$ We then rewrite (\ref{associated diff}) as $$\inner{\tilde{u}_s - \tilde{u}^n_s}{\psi}_1 +  \inner{(\kappa - \alpha)(\tilde{u}^n_s - \tilde{u}_s)}{\psi}_{L^2(\partial \mathscr{O}; \R^2)}.$$ Through another application of Lemma \ref{greens for navier} observe that \begin{align*}\inner{\tilde{u}_s - \tilde{u}^n_s}{\psi}_1 &= -\inner{\tilde{u}_s - \tilde{u}^n_s}{\Delta \psi} + \inner{\tilde{u}_s - \tilde{u}^n_s}{(\kappa - \alpha)\psi}_{L^2(\partial \mathscr{O}; \R^2)}\\  &= -\inner{\tilde{u}_s - \tilde{u}^n_s}{\Delta \psi} - \inner{(\kappa - \alpha)(\tilde{u}^n_s - \tilde{u}_s)}{\psi}_{L^2(\partial \mathscr{O}; \R^2)}.\end{align*} Therefore (\ref{associated diff}) ultimately reduces to the simple expression $$-\inner{\tilde{u}_s - \tilde{u}^n_s}{\Delta \psi} $$ which is exactly as we had in [\cite{goodair2023zero}], from which point the result follows. 

\end{proof}

By the Yamada-Watanabe type result [\cite{rockner2008yamada}], the existence of a weak solution of the equation (\ref{projected Ito}) would now follow from uniqueness.

\begin{proposition} \label{uniqueness prop}
    For any given $\mathcal{F}_0-$measurable $\hat{u}_0:\Omega \rightarrow L^2_{\sigma}$, weak solutions of the equation (\ref{projected Ito})  are unique.
\end{proposition}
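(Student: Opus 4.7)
The plan is the standard energy-type uniqueness argument: set $w = u - v$ for two weak solutions $u,v$ with the same initial datum $\hat u_0$, derive the equation satisfied by $w$ from (\ref{identityindefinitionofspatiallyweak}), and apply the infinite-dimensional It\^o formula in the Gelfand triple $\bar{W}^{1,2}_{\sigma} \hookrightarrow L^2_{\sigma} \hookrightarrow (\bar{W}^{1,2}_{\sigma})^*$ to the functional $f \mapsto \norm{f}^2$. The hypotheses on $u,v$ (paths in $C([0,T];L^2_\sigma)\cap L^2([0,T];\bar W^{1,2}_\sigma)$) together with Lemma \ref{the 2D bound lemma} ensure the drift is in $L^1([0,T];(\bar W^{1,2}_\sigma)^*)$ pathwise and the diffusion is Hilbert--Schmidt, so the formula applies.

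Write the resulting expression for $\norm{w_t}^2$ term by term. For the nonlinear piece, decompose $\mathcal{L}_u u - \mathcal{L}_v v = \mathcal{L}_w u + \mathcal{L}_v w$; since $v \in \bar W^{1,2}_\sigma$, Lemma \ref{navier boundary nonlinear} kills $\inner{\mathcal{L}_v w}{w}$, and Lemma \ref{the 2D bound lemma} plus Young's inequality gives
\[
\abs{\inner{\mathcal{L}_w u}{w}} \leq C\norm{w}\norm{w}_1\norm{u}_1 \leq \varepsilon\norm{w}_1^2 + C_\varepsilon \norm{u}_1^2 \norm{w}^2.
\]
For the viscous contribution, Lemma \ref{greens for navier} produces $-2\nu\norm{w}_1^2 + 2\nu\inner{(\kappa-\alpha)w}{w}_{L^2(\partial\mathscr{O};\R^2)}$, and the boundary integral is estimated by combining $\norm{\kappa-\alpha}_{L^\infty(\partial\mathscr{O};\R)}$ with (\ref{inequality from Lions}) and Young to yield $\varepsilon\norm{w}_1^2 + C_\varepsilon\norm{w}^2$. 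The It\^o-plus-corrector contribution is precisely of the form covered by assumption (\ref{assumpty 5}), giving after summation
\[
\sum_{i=1}^\infty\bigl[\inner{\mathcal{Q}_i w}{\mathcal{Q}_i^* w} + \norm{\mathcal{P}(\mathcal{G}_iu-\mathcal{G}_iv)}^2\bigr] \leq C\,\tilde K_2(u,v)\norm{w}^2 + \nu\norm{w}_{W^{1,2}}^2,
\]
where we used $\norm{\mathcal{P}\cdot} \leq \norm{\cdot}$ and $\sum k_i\leq\nu$.

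Taking $\varepsilon$ sufficiently small, the $\norm{w}_1^2$ terms from the nonlinear and boundary estimates together with the $\nu\norm{w}_1^2$ coming from (\ref{assumpty 5}) are absorbed into $-2\nu\norm{w}_1^2$, leaving a non-positive gradient term that can be dropped. What remains is
\[
\norm{w_t}^2 \leq \int_0^t \Psi_s\,\norm{w_s}^2\,ds + M_t, \qquad \Psi_s := C\bigl(1 + \norm{u_s}_1^2 + \tilde K_2(u_s,v_s)\bigr),
\]
where $M_t = -2\int_0^t \inner{\mathcal{P}(\mathcal{G}u_s - \mathcal{G}v_s)}{w_s}\,d\mathcal{W}_s$ is a continuous local martingale. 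The process $\Psi$ is pathwise integrable on $[0,T]$ because $u,v \in L^2([0,T];\bar W^{1,2}_\sigma) \cap C([0,T];L^2_\sigma)$.

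To conclude, apply It\^o to $Z_t\norm{w_t}^2$ with $Z_t := \exp\bigl(-\int_0^t\Psi_s\,ds\bigr)$; the bound above implies $d(Z_t\norm{w_t}^2) \leq Z_t\,dM_t$, so $Z_t\norm{w_t}^2$ is dominated pathwise by the local martingale $N_t := \int_0^t Z_s\,dM_s$ which starts at zero. Choose a reducing sequence $(\tau_m)$ for $N$; non-negativity of $Z_t\norm{w_t}^2$ gives $\mathbb{E}[Z_{t\wedge\tau_m}\norm{w_{t\wedge\tau_m}}^2] \leq \mathbb{E}[N_{t\wedge\tau_m}] = 0$, hence $w_{t\wedge\tau_m}=0$ a.s., and letting $m\to\infty$ together with the path continuity of $w$ in $L^2_\sigma$ gives $w \equiv 0$ on $[0,T]$ almost surely. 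The main delicacy is keeping track of constants so that the $\norm{w}_1^2$ contributions really do fit inside the $2\nu\norm{w}_1^2$ of viscosity, which is exactly the role of the condition $\sum_i k_i \leq \nu$ in (\ref{assumpty 5}).
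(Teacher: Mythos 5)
Your proof is correct and follows essentially the same route as the paper, which defers to the energy/Gr\"{o}nwall argument of [\cite{goodair2023zero}] Proposition 3.10 and supplies precisely the two adaptations you make explicit: the continuous extension of the Stokes operator to $\left(\bar{W}^{1,2}_{\sigma}\right)^*$ so that weak solutions satisfy an identity to which Proposition \ref{rockner prop} applies, and the absorption of the additional boundary integral via (\ref{inequality from Lions}). The only cosmetic point is that Proposition \ref{rockner prop} asks for the drift to be pathwise $L^2$ (not merely $L^1$) in time into the dual space, which the two-dimensional bound $\norm{\mathcal{L}_{u}u}_{\left(\bar{W}^{1,2}_{\sigma}\right)^*}\leq c\norm{u}\norm{u}_1$ in fact provides.
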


\begin{proof}
    The corresponding proof is [\cite{goodair2023zero}] Proposition 3.10, and for this we need to first address how any weak solution satisfies an identity in $\left( \bar{W}^{1,2}_{\sigma} \right)^*$. To this end we need to consider a continuous extension of the Stokes operator $A$. Using the density of $\bar{W}^{2,2}_{\alpha}$ in $\bar{W}^{1,2}_{\sigma}$, Lemma \ref{greens for navier} and (\ref{inequality from Lions}) we justify that $A:\bar{W}^{1,2}_{\sigma} \rightarrow \left( \bar{W}^{1,2}_{\sigma} \right)^*$ defined for $f \in \bar{W}^{1,2}_{\sigma}$ by the duality pairing $$\inner{Af}{\phi}_{\left(\bar{W}^{1,2}_{\sigma} \right)^* \times \bar{W}^{1,2}_{\sigma}} = \inner{f}{g}_1 - \inner{(\kappa - \alpha)f}{\phi}_{L^2(\partial \mathscr{O}; \R^2)}$$ for all $\phi \in \bar{W}^{1,2}_{\sigma}$ is a continuous extension of the Stokes operator such that any weak solution $u$ satisfies, for every $t \in [0,T]$, the identity \begin{equation} \nonumber
    u_t = u_0 - \int_0^t\mathcal{P}\mathcal{L}_{u_s}u_s\ ds - \nu\int_0^t A u_s\, ds + \frac{1}{2}\int_0^t\sum_{i=1}^\infty \mathcal{P}\mathcal{Q}_i^2u_s ds - \int_0^t \mathcal{P}\mathcal{G}u_s d\mathcal{W}_s 
\end{equation} 
$\mathbb{P}-a.s.$ in $\left(\bar{W}^{1,2}_{\sigma} \right)^*$. In showing the uniqueness we now again have to address this additional boundary integral though there are no further complications from using (\ref{inequality from Lions}) in the manner seen already. The proof is concluded without additional difficulties.   
\end{proof}

In summary, we have proved that there exists a unique weak solution of the equation (\ref{projected Ito}) for the bounded initial condition. An extension to general $\mathcal{F}_0-$measurable $u_0:\Omega \rightarrow L^2_{\sigma}$ is precisely as in [\cite{goodair2023zero}] pp.44, by considering solutions for the initial condition $u_0\mathbbm{1}_{k \leq \norm{u_0} < k+1}$ and taking their sum. Hence we conclude the proof of Theorem \ref{theorem 2D} here.

\subsection{Strong Solutions} \label{sub strong sols}

We assume that $\alpha \geq \kappa$ as in the assumption of Theorem \ref{theorem 2D strong}. We fix an $\mathcal{F}_0-$measurable $u_0 \in L^\infty(\Omega;\bar{W}^{1,2}_{\sigma})$, for which we already have a unique weak solution $u$ of the equation (\ref{projected Ito}) due to Theorem \ref{theorem 2D}. Courtesy of Lemma \ref{lemma for strong is weak}, the task of proving existence of strong solutions is reduced to demonstrating that $u$ is progressively measurable in $\bar{W}^{2,2}_{\alpha}$ and such that for $\mathbb{P}-a.e.$ $\omega$, $u_{\cdot}(\omega) \in C\left([0,T];\bar{W}^{1,2}_{\sigma}\right) \cap L^2\left([0,T];\bar{W}^{2,2}_{\alpha}\right)$. Of course again from Lemma \ref{lemma for strong is weak} we have that any two strong solutions are weak solutions, establishing the uniqueness of strong solutions from that of weak solutions. Therefore the existence and uniqueness of a strong solution of the equation (\ref{projected Ito}) will follow from showing the additional regularity of $u$. Our approach is to use some of the abstract arguments of [\cite{goodair2022existence1}] Section 3, so we discuss how the framework applies in our case. This framework is laid out in Subsection \ref{Appendix III}, for a triplet of densely embedded Hilbert Spaces $$V \xhookrightarrow{} H \xhookrightarrow{} U$$ which we unsurprisingly set as $$V:= \bar{W}^{2,2}_{\alpha}, \qquad H:=\bar{W}^{1,2}_{\sigma}, \qquad U:= L^2_{\sigma}.$$
We also required a continuous bilinear form $\inner{\cdot}{\cdot}_{U \times V}: L^2_{\sigma} \times \bar{W}^{2,2}_{\alpha} \rightarrow \R$ such that for $\phi \in \bar{W}^{1,2}_{\sigma}$ and $\psi \in \bar{W}^{2,2}_{\alpha}$, \begin{equation} \label{UVH for navier}
    \inner{\phi}{\psi}_{U \times V} =  \inner{\phi}{\psi}_{H}.
\end{equation}
To this end we will actually endow $\bar{W}^{1,2}_{\sigma}$ with a new inner product, which we denote as $$\inner{\phi}{\psi}_{H}:= \inner{\phi}{\psi}_{1} - \inner{(\kappa - \alpha)\phi}{\psi}_{L^2(\partial \mathscr{O};\R^2)}.$$
\begin{lemma} \label{proof of bilinear form H}
    The bilinear form $\inner{\cdot}{\cdot}_H$ defines an inner product on $\bar{W}^{1,2}_{\sigma}$ equivalent to $\inner{\cdot}{\cdot}_1$.
\end{lemma}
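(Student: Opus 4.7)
The plan is to verify the inner-product axioms (bilinearity, symmetry, positive-definiteness) and then establish a two-sided estimate $c_1\norm{\phi}_1^2 \leq \norm{\phi}_H^2 \leq c_2\norm{\phi}_1^2$ for all $\phi \in \bar{W}^{1,2}_\sigma$. Bilinearity is immediate from the bilinearity of the two constituent forms, and symmetry follows since $\kappa-\alpha$ is a fixed real-valued function (so the boundary pairing is symmetric in $\phi,\psi$). The substance of the lemma is the coercivity and boundedness.

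For the upper bound, I would apply Cauchy–Schwarz on the boundary, pulling out $\|\kappa-\alpha\|_{L^\infty(\partial\mathscr{O};\R)}$ (finite by the assumption $\alpha\in C^2(\partial\mathscr{O};\R)$), to obtain
\begin{equation*}
\bigl|\inner{(\kappa-\alpha)\phi}{\psi}_{L^2(\partial\mathscr{O};\R^2)}\bigr| \leq \|\kappa-\alpha\|_{L^\infty(\partial\mathscr{O};\R)}\,\|\phi\|_{L^2(\partial\mathscr{O};\R^2)}\|\psi\|_{L^2(\partial\mathscr{O};\R^2)}.
\end{equation*}
Invoking the trace-interpolation inequality (\ref{inequality from Lions}) and the Poincar\'e inequality on $\bar{W}^{1,2}_\sigma$ (discussed immediately after Remark \ref{first labelled remark}, so that $\norm{\cdot}_1$ and the full $W^{1,2}$ norm are equivalent), the right-hand side is bounded by $c\,\norm{\phi}_1\norm{\psi}_1$ for some constant $c$ depending on $\alpha,\kappa,\mathscr{O}$. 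Combining with the trivial bound $|\inner{\phi}{\psi}_1|\leq \norm{\phi}_1\norm{\psi}_1$ gives $\inner{\phi}{\phi}_H \leq (1+c)\norm{\phi}_1^2$.

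For the lower bound I would use the standing hypothesis $\alpha\geq\kappa$ on $\partial\mathscr{O}$ (the assumption of Theorem \ref{theorem 2D strong}). This forces $\kappa-\alpha\leq 0$ pointwise on $\partial\mathscr{O}$, so $-\inner{(\kappa-\alpha)\phi}{\phi}_{L^2(\partial\mathscr{O};\R^2)} = \inner{(\alpha-\kappa)\phi}{\phi}_{L^2(\partial\mathscr{O};\R^2)} \geq 0$. Consequently,
\begin{equation*}
\inner{\phi}{\phi}_H = \norm{\phi}_1^2 + \inner{(\alpha-\kappa)\phi}{\phi}_{L^2(\partial\mathscr{O};\R^2)} \geq \norm{\phi}_1^2,
\end{equation*}
which is the desired lower bound with constant $c_1=1$. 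Positive-definiteness then follows: $\inner{\phi}{\phi}_H \geq \norm{\phi}_1^2 \geq 0$, and $\inner{\phi}{\phi}_H=0$ forces $\norm{\phi}_1=0$, which on $\bar{W}^{1,2}_\sigma$ (where $\norm{\cdot}_1$ is genuinely a norm thanks to Poincar\'e) implies $\phi=0$.

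There is no real obstacle here: the sign hypothesis $\alpha\geq\kappa$ is precisely engineered to make the boundary contribution a nonnegative perturbation of $\inner{\cdot}{\cdot}_1$, and the trace inequality (\ref{inequality from Lions}) controls it from above by the same norm. The only thing worth flagging is that this is the first concrete place in the paper where the structural condition $\alpha \geq \kappa$ is indispensable, foreshadowing the role it plays in the subsequent energy estimates for the strong-solution theory.
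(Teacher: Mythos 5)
Your proposal is correct and follows essentially the same route as the paper: the lower bound comes from the sign condition $\alpha \geq \kappa$ making the boundary term nonnegative, and the upper bound comes from the trace-interpolation inequality (\ref{inequality from Lions}) together with the Poincar\'e-type equivalence of $\norm{\cdot}_1$ with the full $W^{1,2}$ norm. The paper's proof is just a terser version of the same two-sided estimate, so no further comparison is needed.
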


\begin{proof}
    The result follows from the observation that $$\norm{\phi}_1^2 \leq \norm{\phi}_1^2 - \inner{(\kappa - \alpha)\phi}{\phi}_{L^2(\partial \mathscr{O};\R^2)} \leq \norm{\phi}_1^2 + c\norm{\phi}\norm{\phi}_1 \leq c\norm{\phi}_1^2 $$
    having used that $ -(\kappa - \alpha)$ is non-negative and (\ref{inequality from Lions}). Once more the constant $c$ is dependent on $\kappa$, $\alpha$ through their supremums. 
\end{proof}

\begin{remark}
    This result is exactly why we require $\alpha \geq \kappa$. The necessity of this requirement is considered in the conclusion.
\end{remark}
Accordingly for $\phi \in L^2_{\sigma}$ and $\psi \in \bar{W}^{2,2}_{\alpha}$ we define $$\inner{\phi}{\psi}_{U \times V}:= \inner{\phi}{A\psi}$$ from which the property (\ref{UVH for navier}) follows as $$\inner{\phi}{A\psi} = -\inner{\phi}{\Delta\psi} = \inner{\phi}{\psi}_{1} - \inner{(\kappa - \alpha)\phi}{\psi}_{L^2(\partial \mathscr{O};\R^2)} = \inner{\phi}{\psi}_H$$
through Lemma \ref{greens for navier}. The framework in [\cite{goodair2022existence1}] is established for functions $\mathcal{A}$ and $\tilde{\mathcal{G}}$, as in (\ref{thespde}), which we implement as
\begin{align*}
    \mathcal{A}:= -\left(\mathcal{P}\mathcal{L} + \nu A \right) + \frac{1}{2}\sum_{i=1}^\infty \mathcal{P}\mathcal{Q}_i^2, \qquad
    \tilde{\mathcal{G}}:= -\mathcal{P}\mathcal{G}.
\end{align*}
For Assumption \ref{assumption fin dim spaces}, we use the system $(\bar{a}_k)$ specified in Lemma \ref{eigenfunctions for navier} and associated projections $(\bar{\mathcal{P}}_k)$. For this we consider the relation between $(\bar{a}_k)$ and the newly defined $H$ inner product:
$$\inner{\bar{a}_j}{\bar{a}_k}_H = \inner{\bar{a}_j}{A\bar{a}_k} = \bar{\lambda}_k\inner{\bar{a}_j}{\bar{a}_k}$$
which is equal to zero if $j \neq k$ and $\bar{\lambda}_k$ if $j=k$. Therefore the system $(\bar{a}_k)$ remains orthogonal in the $H$ inner product, so each $\bar{\mathcal{P}}_n$ is self-adjoint for this inner product and the property (\ref{projectionsboundedonH}) holds for $c=1$.
Moreover for $\phi \in \bar{W}^{1,2}_{\sigma}$, \begin{align*}\norm{\left(I - \bar{\mathcal{P}}_n\right)\phi}^2 &= \sum_{k=n+1}^{\infty}\inner{\phi}{\bar{a}_k}^2\\ &= \frac{1}{\bar{\lambda}_n}\sum_{k=n+1}^{\infty}\inner{\phi}{\bar{a}_k}^2\bar{\lambda}_n\\ &\leq \frac{1}{\bar{\lambda}_n}\sum_{k=n+1}^{\infty}\inner{\phi}{\bar{a}_k}^2\bar{\lambda}_k\\ &\leq \frac{1}{\bar{\lambda}_n}\sum_{k=1}^{\infty}\inner{\phi}{\bar{a}_k}^2\bar{\lambda}_k\\ &= \frac{1}{\bar{\lambda}_n}\norm{\phi}_H^2.
\end{align*}
It is, however, apparent that we cannot verify Assumption \ref{new assumption 1}. Pertaining to (\ref{111}), the best estimate that we can do for the nonlinear term is via H\"{o}lder's Inequality and use of Theorem \ref{gagliardonirenberginequality} with $p=4, q=2, \alpha = 1/2$ and $m=1$:\footnote{The assumption was verified in [\cite{goodair2022navier}] for the space $U:=W^{1,2}$, not $L^2$.}
\begin{equation}\label{subbing in}\norm{\mathcal{L}_{\phi}\phi} \leq c\sum_{j=1}^2\norm{\phi}_{L^4}\norm{\partial_j \phi}_{L^4} \leq c\sum_{j=1}^2\norm{\phi}^{\frac{1}{2}}\norm{\phi}_1^{\frac{1}{2}}\norm{\partial_j \phi}^{\frac{1}{2}}\norm{\partial_j \phi}^{\frac{1}{2}}_{W^{1,2}} \leq c\norm{\phi}^{\frac{1}{2}}\norm{\phi}_1\norm{\phi}^{\frac{1}{2}}_2.\end{equation}
We note that (\ref{111}) is satisfied for the remaining terms. In the direction of \ref{assumptions for uniform bounds2} first note that
\begin{equation} \label{first parting}
    -2\nu\inner{\bar{\mathcal{P}}_nA\phi^n}{\phi^n}_H = -2\nu\inner{A\phi^n}{A\phi^n} = -2\nu\norm{\phi^n}_2^2
\end{equation}
using the representation (\ref{UVH for navier}) and that $A\phi^n \in \bar{V}_n$. For the nonlinear term we take the same approach, reducing $-2\inner{\bar{\mathcal{P}}_n\mathcal{P}\mathcal{L}_{\phi^n}\phi^n}{\phi^n}_H$ to $-2\inner{\mathcal{P}\mathcal{L}_{\phi^n}\phi^n}{A\phi^n}$, which using (\ref{subbing in}) is controlled by
\begin{equation} \label{nonlinear parting}
    2\left\vert \inner{\mathcal{P}\mathcal{L}_{\phi^n}\phi^n}{A\phi^n} \right\vert \leq 2\norm{\mathcal{L}_{\phi^n}\phi^n}\norm{\phi^n}_2 \leq c\norm{\phi^n}^{\frac{1}{2}}\norm{\phi^n}_1\norm{\phi^n}^{\frac{3}{2}}_2 \leq c\norm{\phi^n}^{2}\norm{\phi^n}_1^4 + \frac{\nu}{4}\norm{\phi^n}^{2}_2
\end{equation}
where we note again the dependency on $\nu$ in $c$ is not meaningful at this stage as $\nu$ remains fixed. For the noise contribution in (\ref{uniformboundsassumpt1}) we consider
\begin{equation} \label{we will label it for consideration}\sum_{i=1}^\infty \left(\inner{\bar{\mathcal{P}}_n\mathcal{P}\mathcal{Q}_i^2\phi^n}{\phi^n}_H + \norm{\bar{\mathcal{P}}_n\mathcal{P}\mathcal{G}_i\phi^n}_H^2\right) \end{equation}
where we can use that $\bar{\mathcal{P}}_n$ is an orthogonal projection in $H$, the assumption (\ref{assumpty 12}) and (\ref{inequality from Lions}) to obtain
\begin{align*}\inner{\bar{\mathcal{P}}_n\mathcal{P}\mathcal{Q}_i^2\phi^n}{\phi^n}_H + \norm{\bar{\mathcal{P}}_n\mathcal{P}\mathcal{G}_i\phi^n}_H^2 &\leq \inner{\mathcal{P}\mathcal{Q}_i^2\phi^n}{\phi^n}_H + \norm{\mathcal{P}\mathcal{G}_i\phi^n}_H^2\\ &= \inner{\mathcal{Q}_i^2\phi^n}{A\phi^n} + \norm{\mathcal{P}\mathcal{G}_i\phi^n}_1^2 - \inner{(\kappa - \alpha)\mathcal{P}\mathcal{G}_i\phi^n}{\mathcal{P}\mathcal{G}_i\phi^n}_{L^2(\partial \mathscr{O};\R^2)}\\
&\leq c_i\left(1 + \norm{\phi^n}_1^2\right) + k_i\norm{\phi}_2^2 + c\norm{\mathcal{P}\mathcal{G}_i\phi^n}\norm{\mathcal{P}\mathcal{G}_i\phi^n}_1\\
&\leq c_i\left(1 + \norm{\phi^n}_1^2\right) + k_i\norm{\phi}_2^2 + cc_i(1 + \norm{\phi^n}_1)(1 + \norm{\phi^n}_2)\end{align*}
where we also note use of (\ref{assumpty 2}), (\ref{assumpty 11}). An application of Young's Inequality on the last term reveals that
\begin{align*}
    (1 + \norm{\phi^n}_1)(1 + \norm{\phi^n}_2) \leq c(1 + \norm{\phi^n}_1)^2 + \frac{\nu}{8}(1 + \norm{\phi^n}_2)^2 \leq c(1 + \norm{\phi^n}_1)^2 + \frac{\nu}{4}(1 + \norm{\phi^n}_2^2). 
\end{align*}
Summing over all $i$, we see that
\begin{equation} \label{summing over i}
    \sum_{i=1}^\infty \left(\inner{\bar{\mathcal{P}}_n\mathcal{P}\mathcal{Q}_i^2\phi^n}{\phi^n}_H + \norm{\bar{\mathcal{P}}_n\mathcal{P}\mathcal{G}_i\phi^n}_H^2\right) \leq c(1 + \norm{\phi^n}_1^2) + \frac{5\nu}{8} \norm{\phi^n}_2^2.
\end{equation}
Summation of the terms (\ref{first parting}), (\ref{nonlinear parting}) and (\ref{summing over i}) yields (\ref{uniformboundsassumpt1}). As for (\ref{uniformboundsassumpt2}), the assumption is not wholly satisfied however we shall show that the boundedness result (uniform across $n$) of [\cite{goodair2022existence1}] Proposition 3.21 still holds. We have that $$\inner{\bar{\mathcal{P}}_n\mathcal{P}\mathcal{G}_i\phi^n}{\phi^n}_H^2 = \inner{\bar{\mathcal{P}}_n\mathcal{P}\mathcal{G}_i\phi^n}{A\phi^n}^2 = \inner{\mathcal{G}_i\phi^n}{A\phi^n}^2 \leq C_{\varepsilon}c_i(1 + \norm{\phi^n}_1^6) + c_i\varepsilon\norm{\phi}_2^2
$$
through (\ref{assumpty13}). The additional term in this assumption is of course $c_i\varepsilon\norm{\phi}_2^2$, which manifests itself as $$c  \mathbbm{E}\left(\sum_{i=1}^\infty\int_{\theta_j}^{\theta_k} c_i\varepsilon\norm{\tilde{\sy}^n_s}_V^2ds\right)^{\frac{1}{2}}$$ in the proof of [\cite{goodair2022existence1}] Proposition 3.21. We control this with the simple observation that 
$$\mathbbm{E}\left(\sum_{i=1}^\infty\int_{\theta_j}^{\theta_k} c_i\varepsilon\norm{\tilde{\sy}^n_s}_V^2ds\right)^{\frac{1}{2}} \leq \left(\mathbbm{E}\sum_{i=1}^\infty\int_{\theta_j}^{\theta_k} c_i\varepsilon\norm{\tilde{\sy}^n_s}_V^2ds\right)^{\frac{1}{2}} \leq \mathbbm{E}\sum_{i=1}^\infty\int_{\theta_j}^{\theta_k} c_i\varepsilon\norm{\tilde{\sy}^n_s}_V^2ds + 1$$ for which we can choose $\varepsilon = \frac{1}{2c\sum_{i=1}^\infty c_i}$ (note that the equation has already been scaled in terms of the viscosity in this abstract argument of [\cite{goodair2022existence1}]). Through this argument we prove the following, recalling the stopping times $(\tau^{M,S}_{n})$ defined in (\ref{the stopping times}). 

\begin{proposition} \label{uniform boundedness proposition}
    There exists a constant $C_{M}$ independent of $n$ (but dependent on $M$) such that $$ \mathbbm{E}\left[\sup_{r \in [0,\tau^{M,T}_n]}\norm{u^n_r}_1^2 + \int_0^{ \tau^{M,T}_n}\norm{u^n_r}_2^2dr  \right] \leq C_{M}\left[\mathbbm{E}\left(\norm{u^n_0}_1^2\right) + 1\right].\footnote{The result formally follows for $\norm{\cdot}_H$ as opposed to $\norm{\cdot}_1$, though we freely use the equivalence of the norms.}$$
\end{proposition}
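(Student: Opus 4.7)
The plan is to follow the abstract scheme of [\cite{goodair2022existence1}] Proposition 3.21, applied with $V := \bar{W}^{2,2}_{\alpha}$, $H := \bar{W}^{1,2}_{\sigma}$ and $U := L^2_{\sigma}$ endowed with the inner products introduced above; I need only explain how the two points where Assumption \ref{new assumption 1} fails are absorbed. Begin by applying It\^{o}'s formula to $\norm{u^n_t}_H^2 = \inner{u^n_t}{A u^n_t}$ on the Galerkin system (\ref{projected Ito galerkin}), which is justified because $u^n_t \in \bar{V}_n \subset \bar{W}^{2,2}_{\alpha}$ and $A$ is self-adjoint with $\bar{V}_n$ as an invariant subspace. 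Combining the drift estimates (\ref{first parting}), (\ref{nonlinear parting}) and (\ref{summing over i}) yields the pathwise inequality
\begin{align*}
    \norm{u^n_t}_H^2 + \tfrac{\nu}{8}\int_0^t \norm{u^n_s}_2^2\,ds \leq \norm{u^n_0}_H^2 + c\int_0^t \bigl(1 + \norm{u^n_s}_1^2 + \norm{u^n_s}^2\norm{u^n_s}_1^4 \bigr)\,ds + M^n_t,
\end{align*}
where $M^n$ is the martingale coming from the stochastic integral and $c$ is independent of $n$.

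Next I would take the supremum over $[0, t \wedge \tau^{M,T}_n]$ before expectation, and bound $\mathbb{E}\sup |M^n|$ via the Burkholder-Davis-Gundy inequality. The novelty relative to (\ref{uniformboundsassumpt2}) is the extra $c_i\varepsilon\norm{u^n}_2^2$ produced by the bound (\ref{assumpty13}); the elementary estimate $\sqrt{ab} \leq a + b$ converts it into a time-integral, and choosing $\varepsilon$ proportional to $(\sum_i c_i)^{-1}$ allows the resulting $\norm{u^n}_2^2$ contribution to be absorbed into the $\tfrac{\nu}{8}\int_0^{\cdot}\norm{u^n_s}_2^2\,ds$ on the left-hand side, exactly in the style indicated after (\ref{summing over i}). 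All other assumptions in Subsection \ref{Appendix III} that are genuinely needed in the BDG step of Proposition 3.21 have been verified above.

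The remaining difficulty, and the main obstacle, is the cubic multiplier $\norm{u^n_s}^2\norm{u^n_s}_1^2$ appearing in front of $\norm{u^n_s}_1^2$, which has no uniform-in-$n$ bound in general. This is precisely the point at which the hitting time $\tau^{M,T}_n$ of (\ref{the stopping times}) does its work: on $[0,\tau^{M,T}_n]$ the definition guarantees $\sup_{s \leq \tau^{M,T}_n}\norm{u^n_s}^2 \leq M + \norm{u^n_0}^2$ and $\int_0^{\tau^{M,T}_n}\norm{u^n_s}_1^2\,ds \leq M + \norm{u^n_0}^2$, so the time-integral of the multiplier $\norm{u^n_s}^2\norm{u^n_s}_1^2$ is bounded by $(M+\norm{u^n_0}^2)^2$. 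Since the standing hypothesis for this subsection is $u_0 \in L^\infty(\Omega;\bar{W}^{1,2}_\sigma)$, this quantity is deterministically bounded, and a stochastic Gronwall argument applied to $\mathbb{E}\sup_{r \leq t \wedge \tau^{M,T}_n}\norm{u^n_r}_H^2$ yields the stated inequality with constant $C_M$ depending on $M$ through the exponential of a quadratic in $M + \norm{u_0}_{L^\infty}^2$, independent of $n$. The claim in the statement then follows from the equivalence of $\norm{\cdot}_H$ with $\norm{\cdot}_1$ on $\bar{W}^{1,2}_\sigma$ established in Lemma \ref{proof of bilinear form H}.
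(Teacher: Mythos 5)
Your argument is correct and takes essentially the same route as the paper: the paper likewise obtains (\ref{uniformboundsassumpt1}) by summing (\ref{first parting}), (\ref{nonlinear parting}) and (\ref{summing over i}), absorbs the extra $c_i\varepsilon\norm{\cdot}_2^2$ arising from (\ref{assumpty13}) after the Burkholder--Davis--Gundy step by choosing $\varepsilon$ of order $\left(\sum_{i}c_i\right)^{-1}$, and then appeals to the stopping-time-truncated Gronwall argument of [\cite{goodair2022existence1}] Proposition 3.21, which your proposal simply unpacks explicitly (including the observation that $\int_0^{\tau^{M,T}_n}\norm{u^n_s}^2\norm{u^n_s}_1^2\,ds$ is deterministically bounded by $(M+\norm{u_0}_{L^\infty}^2)^2$). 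No gap.
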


Such a uniform boundedness result on the Galerkin Approximation is in the right direction for showing that $u$ has the required additional regularity, though there is a clear hurdle in the dependency on the stopping times. We look to find some stopping time which is $\mathbb{P}-a.s.$ less than an appropriate subsequence of $(\tau^{M,S}_n)$, to be characterised in terms of known properties of $u$ and take beyond $T$. Such an idea also appears in [\cite{goodair2022existence1}], originally from [\cite{glatt2009strong}], through showing a Cauchy property for the Galerkin system. This is facilitated by Assumption \ref{therealcauchy assumptions}, which we note was used implicitly for the uniqueness result Proposition \ref{uniqueness prop}. More precisely the Cauchy result we wish to show is [\cite{goodair2022existence1}] Proposition 3.24, which follows from Lemma 3.23. This lemma hinges on Assumptions \ref{therealcauchy assumptions} and \ref{new assumption 1}, the latter of which we know fails due to (\ref{subbing in}) being our best estimate for the nonlinear term. This translates to an additional term required to bound $`A$' in the lemma: in the proof, the term that needs a new control is $\beta$, specifically $$\frac{2}{\mu_m}\norm{\psi}^{\frac{1}{2}}\norm{\psi}_1\norm{\psi}^{\frac{1}{2}}_2\norm{\phi - \psi}_1$$ where we recall $\mu_m:=\bar{\lambda}_m$. By two applications of H\"{o}lder's Inequality, for any $\varepsilon > 0$, $$\frac{2}{\mu_m}\norm{\psi}^{\frac{1}{2}}\norm{\psi}_1\norm{\psi}^{\frac{1}{2}}_2\norm{\phi - \psi}_1 \leq \frac{C_{\varepsilon}}{\mu_m^2}\norm{\psi}\norm{\psi}_1^2\norm{\psi}_2 + \varepsilon\norm{\phi-\psi}_1^2 \leq \frac{C_{\varepsilon}}{\mu_m^2}\norm{\psi}_1^4 + \frac{C_{\varepsilon}}{\mu_m^2}\norm{\psi}^2\norm{\psi}_2^2 + \varepsilon\norm{\phi-\psi}_1^2 $$
 where the additional term is of course $$\frac{C_{\varepsilon}}{\mu_m^2}\norm{\psi}_1^4.$$ The proof of Proposition 3.24 suffers only a minor impact, realised as an additional term given by (in the notation of [\cite{goodair2022existence1}]) $$\frac{c}{\lambda_m}\mathbbm{E}\int_{\theta_j}^{\theta_k}\norm{\tilde{\sy}_s^m}_H^4ds$$ which is treated through (3.25) and Proposition 3.21:
 $$\frac{c}{\lambda_m}\mathbbm{E}\int_{\theta_j}^{\theta_k}\norm{\tilde{\sy}_s^m}_H^4ds \leq \frac{c}{\lambda_m}\mathbbm{E}\left(\sup_{r\in[0,t]}\norm{\tilde{\sy}^m_r}_H^2\int_{\theta_j}^{\theta_k}\norm{\tilde{\sy}_s^m}_H^2ds\right) \leq \frac{C_M}{\lambda_m}\mathbbm{E}\left(\sup_{r\in[0,t]}\norm{\tilde{\sy}^m_r}_H^2\right) \leq \frac{C_M}{\lambda_m}.$$
As such the proof is still valid in our setting, which verifies the following result.  In correspondence with [\cite{goodair2022existence1}] we introduce the time-dependent norm
    \begin{equation} \label{time dependent norm}
        \norm{\cdot}_{UH,t}^2 := \sup_{r \in [0,t]}\norm{\cdot}^2 + \int_0^t\norm{\cdot}_1^2.
    \end{equation}

\begin{proposition}\label{the cauchy prop}
    We have that $$\lim_{m \rightarrow \infty}\sup_{n \geq m}\mathbbm{E}\left[\norm{u^n-u^m}^2_{UH,\tau
_{m}^{M,t}\wedge \tau_{n}^{M,t}} \right] =0.$$
\end{proposition}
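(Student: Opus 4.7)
The plan is to adapt the argument of [\cite{goodair2022existence1}] Lemma 3.23 and Proposition 3.24 to our concrete setting. As emphasised in the discussion immediately preceding the proposition, the framework above satisfies every hypothesis of the abstract Cauchy result except Assumption \ref{new assumption 1}, whose failure for the nonlinear term produces only a single extra contribution in the differential inequality; the task is to verify that this extra contribution is controlled uniformly in $n \geq m$ by Proposition \ref{uniform boundedness proposition} and vanishes as $m \to \infty$.

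First I would fix $n \geq m$, subtract the Galerkin equations for $u^n$ and $u^m$, and apply It\^{o}'s formula to $\norm{u^n - u^m}^2$ stopped at $\tau_m^{M,t} \wedge \tau_n^{M,t}$. The projection $\bar{\mathcal{P}}_m$ is used to split $u^n - u^m = (I - \bar{\mathcal{P}}_m)u^n + \bar{\mathcal{P}}_m(u^n - u^m)$, with the orthogonal piece providing the decay factor $\norm{(I - \bar{\mathcal{P}}_m)u^n}^2 \leq \mu_m^{-1} \norm{u^n}_1^2$ established earlier. The Stokes term produces the desired dissipation $-2\nu \norm{u^n - u^m}_1^2$ together with a boundary integral absorbed via Lemma \ref{proof of bilinear form H}; the It\^{o} correction and quadratic variation of the noise combine through (\ref{assumpty 5}) into a Gr\"{o}nwall-ready term $c\,\tilde{K}_2(u^n, u^m) \norm{u^n - u^m}^2$ plus a piece absorbed by the dissipation; the martingale part is handled by Burkholder--Davis--Gundy.

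The main obstacle is the nonlinear difference, where our best 2D estimate $\norm{\mathcal{L}_\phi \phi} \leq c \norm{\phi}^{1/2} \norm{\phi}_1 \norm{\phi}_2^{1/2}$ forces, exactly as indicated just before the proposition, the additional contribution $\frac{C}{\mu_m^2}\norm{u^m}_1^4$ in the differential inequality (after Young's Inequality, with the $\norm{u^m}^2 \norm{u^m}_2^2$ term handled using the pathwise bound on $\norm{u^m}^2$ up to $\tau^{M,t}_m$ and absorption of $\norm{u^m}_2^2$ by its integrability up to this same stopping time). Integrating and taking expectations contributes a new error
\begin{equation*}
\frac{C}{\mu_m^2}\, \mathbbm{E}\int_0^{\tau_m^{M,t}} \norm{u^m_s}_1^4\, ds
\;\leq\; \frac{C}{\mu_m^2}\, \mathbbm{E}\!\left( \sup_{r \in [0,\tau_m^{M,t}]} \norm{u^m_r}_1^2 \int_0^{\tau_m^{M,t}} \norm{u^m_s}_1^2\, ds \right) \;\leq\; \frac{C_M}{\mu_m^2},
\end{equation*}
where the final inequality uses Proposition \ref{uniform boundedness proposition} (via Cauchy--Schwarz on the two factors), crucially delivering a bound independent of $n$.

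Finally, I would apply Gr\"{o}nwall's Inequality to the resulting estimate for $\mathbbm{E}[\norm{u^n-u^m}^2_{UH,\tau_m^{M,t}\wedge \tau_n^{M,t}}]$. Since $\mu_m \to \infty$ as $m \to \infty$ by Lemma \ref{eigenfunctions for navier}, both the orthogonal-projection error and the new nonlinear error tend to zero uniformly in $n \geq m$, yielding the claimed limit. The principal delicate step is the bookkeeping for $\sup_{n \geq m}$: every error term must depend on $n$ only through quantities controlled by $\tau^{M,t}_n$'s defining bound, which is automatic for the extra nonlinear error (it involves only $u^m$) and for the Gr\"{o}nwall factor $\tilde{K}_2(u^n,u^m)$, which is bounded by construction on the stopping interval.
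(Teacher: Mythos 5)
Your proposal follows essentially the same route as the paper: adapt the abstract Cauchy argument of [\cite{goodair2022existence1}] Lemma 3.23/Proposition 3.24, isolate the single extra term $\frac{C}{\mu_m^2}\norm{u^m}_1^4$ forced by the failure of Assumption \ref{new assumption 1} for the nonlinear term, bound its time integral by $C_M/\mu_m^2$ using Proposition \ref{uniform boundedness proposition}, and conclude via Gr\"{o}nwall and $\mu_m \rightarrow \infty$. The one step I would correct is your justification of the final inequality ``via Cauchy--Schwarz on the two factors'': Proposition \ref{uniform boundedness proposition} only supplies second moments of $\sup_r\norm{u^m_r}_1$, so Cauchy--Schwarz in $\omega$ would demand fourth moments you do not have. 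No splitting of the expectation is needed: by the definition (\ref{the stopping times}) of $\tau^{M,t}_m$ and the boundedness of $u_0$, the factor $\int_0^{\tau^{M,t}_m}\norm{u^m_s}_1^2\,ds$ is bounded pathwise by the deterministic constant $M + \norm{u_0}^2_{L^\infty(\Omega;L^2_\sigma)}$, so it can be pulled out of the expectation, leaving only $\mathbbm{E}\bigl(\sup_{r}\norm{u^m_r}_1^2\bigr) \leq C_M$ to be supplied by Proposition \ref{uniform boundedness proposition} --- which is exactly the chain the paper uses.
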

The purpose of this Cauchy property materialises in Proposition \ref{the one that gives limiting stopping time bound}, though we need one additional result first. This is somewhat analogous to [\cite{goodair2022existence1}] Proposition 3.25.
\begin{lemma} \label{probably unnecessary lemma}
    Let $\gamma$ be a stopping time and $(\delta_j)$ a sequence of stopping times which converge to $0$ $\mathbb{P}-a.s.$. Then $$\lim_{j \rightarrow \infty}\sup_{n\in\N}\mathbbm{E}\left(\norm{u^n}_{UH,(\gamma + \delta_j) \wedge \tau^{M,T}_n}^2 - \norm{u^n}_{UH,\gamma \wedge \tau^{M,T}_n}^2 \right) =0.$$
\end{lemma}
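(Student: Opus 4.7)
The plan is to bound the two constituents of the $\norm{\cdot}_{UH, \cdot}^2$ increment separately and show each vanishes uniformly in $n$. Setting $s_n := \gamma \wedge \tau^{M,T}_n$ and $t_n := (\gamma + \delta_j) \wedge \tau^{M,T}_n$, a direct computation gives
$$
\norm{u^n}_{UH, t_n}^2 - \norm{u^n}_{UH, s_n}^2 \leq \sup_{r \in [s_n, t_n]}\bigl(\norm{u^n_r}^2 - \norm{u^n_{s_n}}^2\bigr)^+ + \int_{s_n}^{t_n}\norm{u^n_v}_1^2\,dv,
$$
because taking the maximum over $[0,t_n]$ only improves on the maximum over $[0,s_n]$ through values attained on $[s_n,t_n]$, and the latter are majorised by $\norm{u^n_{s_n}}^2$ plus the positive increment. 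Since $t_n - s_n \leq \delta_j \wedge T$ and $\delta_j \to 0$ almost surely, bounded convergence ensures $\mathbbm{E}(t_n - s_n) \to 0$ as $j \to \infty$.

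For the supremum term I would apply It\^o's formula to $\norm{u^n_r}^2$ using the Galerkin identity. The nonlinear contribution vanishes by Lemma \ref{navier boundary nonlinear} and the Stokes term is processed via Lemma \ref{greens for navier}. The hypothesis $\alpha \geq \kappa$ renders the boundary integral non-positive, while assumption (\ref{assumpty 7}) lets the noise correction be absorbed into the coercive $-2\nu\norm{u^n_r}_1^2$. Dropping the remaining non-positive $-\nu\int \norm{u^n_v}_1^2\,dv$ and using that $\norm{u^n_v}^2 \leq M + \norm{u_0}_{L^\infty}^2 =: C_M$ on $[s_n, t_n]$ by construction of $\tau^{M,T}_n$, one obtains a bound of the form $C_M(t_n - s_n) + \sup_{r \in [s_n,t_n]}\bigl\lvert M(r) - M(s_n) \bigr\rvert$ for the positive part, with $M$ the stochastic integral. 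A Burkholder--Davis--Gundy estimate combined with (\ref{assumpty8}) controls the expected supremum of the martingale by $C_M(\mathbbm{E}(t_n - s_n))^{1/2}$, so both contributions vanish uniformly in $n$.

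For the integral term, naive dominated convergence applied for each fixed $n$ would yield only $n$-dependent rates. Instead, I apply Cauchy--Schwarz in the time variable,
$$
\int_{s_n}^{t_n}\norm{u^n_v}_1^2\,dv \leq \Bigl(\int_0^{\tau^{M,T}_n}\norm{u^n_v}_1^4\,dv\Bigr)^{1/2}(t_n - s_n)^{1/2},
$$
and combine the pathwise energy bound $\int_0^{\tau^{M,T}_n}\norm{u^n_v}_1^2\,dv \leq C_M$ (built into the definition of $\tau^{M,T}_n$) with Proposition \ref{uniform boundedness proposition} to get
$$
\mathbbm{E}\int_0^{\tau^{M,T}_n}\norm{u^n_v}_1^4\,dv \leq C_M \cdot \mathbbm{E}\sup_{r \in [0,\tau^{M,T}_n]}\norm{u^n_r}_1^2 \leq C_M
$$
uniformly in $n$. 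A further Cauchy--Schwarz over $\omega$ then gives $\mathbbm{E}\int_{s_n}^{t_n}\norm{u^n_v}_1^2\,dv \leq C_M(\mathbbm{E}(t_n-s_n))^{1/2} \to 0$ uniformly in $n$, which closes the proof.

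The main obstacle lies precisely in securing uniformity over $n$ for the integral part, since $r \mapsto \int_0^r \norm{u^n_v}_1^2\,dv$ has an $n$-dependent modulus of continuity on each realisation. The resolution is to upgrade the pathwise energy bound to an $L^4_t$-type bound through Proposition \ref{uniform boundedness proposition}; without the uniform boundedness of $\mathbbm{E}\sup_r \norm{u^n_r}_1^2$ provided by the strong-norm Galerkin estimates, this lemma would be genuinely out of reach.
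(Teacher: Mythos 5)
Your argument is correct, and the treatment of the supremum part coincides with the paper's: both run the energy identity for $\norm{u^n_r}^2$ between the two stopping times, kill the nonlinearity via Lemma \ref{navier boundary nonlinear}, use $\alpha\geq\kappa$ to make the Stokes contribution coercive, absorb the It\^{o} correction through (\ref{assumpty 7}), and close with Burkholder--Davis--Gundy plus (\ref{assumpty8}) and the $L^2_\sigma$-level bound built into $\tau^{M,T}_n$. Where you genuinely diverge is the term $\int_{\gamma\wedge\tau^{M,T}_n}^{(\gamma+\delta_j)\wedge\tau^{M,T}_n}\norm{u^n_v}_1^2\,dv$. The paper does not discard the dissipation: after the cancellations, the quantity $2\nu\int_{\gamma\wedge\tau^{M,T}_n}^{(\gamma+\delta_j)\wedge\tau^{M,T}_n}\norm{u^n_s}_H^2\,ds$ sits on the left-hand side of the energy identity with a favourable sign, so the very same estimate $C_M\delta_j+C_M\delta_j^{1/2}$ that controls the supremum increment controls the time integral for free, using nothing beyond the $L^2$-level information in the definition of $\tau^{M,T}_n$. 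You instead drop that term and recover the time integral by Cauchy--Schwarz in $t$ and in $\omega$, upgrading the pathwise bound $\int_0^{\tau^{M,T}_n}\norm{u^n_v}_1^2\,dv\leq C_M$ to an $L^4_t$ bound via $\mathbbm{E}\sup_r\norm{u^n_r}_1^2\leq C_M$ from Proposition \ref{uniform boundedness proposition}. This is legitimate (that proposition is established before the lemma, so there is no circularity) and it yields the same $(\mathbbm{E}(\delta_j\wedge T))^{1/2}$ rate uniformly in $n$; your use of dominated convergence at the end is in fact cleaner than the paper's appeal to monotone convergence, which tacitly assumes $(\delta_j)$ decreasing. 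The one claim you should retract is the closing assertion that the lemma is ``genuinely out of reach'' without the strong-norm Galerkin estimates: the paper's proof shows the opposite, and the weaker dependence is worth having, since it keeps this continuity-type lemma at the level of the weak energy norm rather than tying it to Proposition \ref{uniform boundedness proposition}.
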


\begin{proof}
    We look at the energy identity satisfied by $u^n$ up until the stopping time $\gamma \wedge \tau^{M,T}_n$ and then $(\gamma + r)  \wedge \tau^{M,T}_n$ for some $r \geq 0$. We have that
        \begin{align*}
        \norm{u^n_{\gamma \wedge \tau^{M,T}_n}}^2 &= \norm{u^n_0}^2 - 2\int_0^{\gamma \wedge \tau^{M,T}_n}\inner{\mathcal{P}_n\mathcal{P}\mathcal{L}_{u^n_s}u^n_s}{u^n_s}\ ds - 2\nu\int_0^{\gamma \wedge \tau^{M,T}_n} \inner{\mathcal{P}_n A u^n_s}{u^n_s}\, ds\\ &+ \int_0^{\gamma \wedge \tau^{M,T}_n}\left\langle\sum_{i=1}^\infty \mathcal{P}_n \mathcal{P}\mathcal{Q}_i^2u^n_s, u^n_s\right\rangle ds +\int_0^{\gamma \wedge \tau^{M,T}_n}\sum_{i=1}^\infty \norm{\mathcal{P}_n\mathcal{P}\mathcal{G}_iu^n_s}^2ds\\ &- \int_0^{\gamma \wedge \tau^{M,T}_n}\inner{\mathcal{P}_n\mathcal{P}\mathcal{G}u^n_s}{u^n_s} d\mathcal{W}_s 
    \end{align*}
    and similarly for $(\gamma + r)\wedge \tau^{M,T}_n$, from which the difference of the equalities gives
     \begin{align*}
        \norm{u^n_{(\gamma + r)\wedge \tau^{M,T}_n}}^2 &= \norm{u^n_{\gamma \wedge \tau^{M,T}_n}}^2 - 2\int_{\gamma \wedge \tau^{M,T}_n}^{(\gamma + r)\wedge \tau^{M,T}_n}\inner{\mathcal{P}_n\mathcal{P}\mathcal{L}_{u^n_s}u^n_s}{u^n_s}\ ds - 2\nu\int_{\gamma}^{(\gamma + r)\wedge \tau^{M,T}_n} \inner{\mathcal{P}_n A u^n_s}{u^n_s}\, ds\\ &+ \int_{\gamma \wedge \tau^{M,T}_n}^{(\gamma + r)\wedge \tau^{M,T}_n}\left\langle\sum_{i=1}^\infty \mathcal{P}_n \mathcal{P}\mathcal{Q}_i^2u^n_s, u^n_s\right\rangle ds +\int_{\gamma \wedge \tau^{M,T}_n}^{(\gamma + r)\wedge \tau^{M,T}_n}\sum_{i=1}^\infty \norm{\mathcal{P}_n\mathcal{P}\mathcal{G}_iu^n_s}^2ds\\ &- \int_{\gamma \wedge \tau^{M,T}_n}^{(\gamma + r)\wedge \tau^{M,T}_n}\inner{\mathcal{P}_n\mathcal{P}\mathcal{G}u^n_s}{u^n_s} d\mathcal{W}_s. 
    \end{align*}
This has a straightforwards simplification to
     \begin{align*}
        \norm{u^n_{(\gamma + r)\wedge \tau^{M,T}_n}}^2 &- \norm{u^n_{\gamma \wedge \tau^{M,T}_n}}^2 + 2\nu\int_{\gamma \wedge \tau^{M,T}_n}^{(\gamma + r)\wedge \tau^{M,T}_n} \norm{u^n_s}_H^2 ds\\ &=  \int_{\gamma \wedge \tau^{M,T}_n}^{(\gamma + r)\wedge \tau^{M,T}_n}\sum_{i=1}^\infty\left(\left\langle \mathcal{Q}_i^2u^n_s, u^n_s\right\rangle+ \norm{\mathcal{P}_n\mathcal{P}\mathcal{G}_iu^n_s}^2\right)ds - \int_{\gamma \wedge \tau^{M,T}_n}^{(\gamma + r)\wedge \tau^{M,T}_n}\inner{\mathcal{G}u^n_s}{u^n_s} d\mathcal{W}_s. 
    \end{align*}
Passing to an inequality, applying (\ref{assumpty 7}), taking the supremum over $r\in[0,\delta_j]$ and then using the Burkholder-Davis-Gundy Inequality,
\begin{align*}
        &\mathbbm{E}\left[\sup_{r \in [0,\delta_j]}\norm{u^n_{(\gamma + r)\wedge \tau^{M,T}_n}}^2 - \norm{u^n_{\gamma \wedge \tau^{M,T}_n}}^2 + 2\nu\int_{\gamma \wedge \tau^{M,T}_n}^{(\gamma + \delta_j)  \wedge \tau^{M,T}_n} \norm{u^n_s}_H^2 ds\right]\\ & \qquad \qquad \leq \mathbbm{E}\int_{\gamma \wedge \tau^{M,T}_n}^{(\gamma + \delta_j)\wedge \tau^{M,T}_n}c + c\norm{u^n_s}^2 + \nu \norm{u^n_s}^2_1 ds + c\mathbbm{E}\left(\int_{\gamma \wedge \tau^{M,T}_n}^{(\gamma + \delta_j)\wedge \tau^{M,T}_n}\sum_{i=1}^\infty\inner{\mathcal{G}_iu^n_s}{u^n_s}^2 ds\right)^{\frac{1}{2}}. 
    \end{align*}
Using (\ref{assumpty8}) we pass to the inequality
\begin{align*}
        &\mathbbm{E}\left[\sup_{r \in [0,\delta_j]}\norm{u^n_{(\gamma + r)\wedge \tau^{M,T}_n}}^2 - \norm{u^n_{\gamma \wedge \tau^{M,T}_n}}^2 + 2\nu\int_{\gamma \wedge \tau^{M,T}_n}^{(\gamma + \delta_j)  \wedge \tau^{M,T}_n} \norm{u^n_s}_H^2 ds\right]\\ & \qquad \qquad \qquad \qquad \leq \mathbbm{E}\int_{\gamma \wedge \tau^{M,T}_n}^{(\gamma + \delta_j)\wedge \tau^{M,T}_n}c + c\norm{u^n_s}^2  + \nu \norm{u^n_s}^2_1 ds + \mathbbm{E}\left(\int_{\gamma \wedge \tau^{M,T}_n}^{(\gamma + \delta_j)\wedge \tau^{M,T}_n}c + c\norm{u^n_s}^4 ds\right)^{\frac{1}{2}}. 
    \end{align*}
Here we remind ourselves that, as shown in the proof of Lemma \ref{proof of bilinear form H}, $\norm{u^n_s}_1^2 \leq \norm{u^n_s}_H^2$. In addition we use the control on $u^n$ in $L^2_{\sigma}$ up until the stopping time $\tau^{M,T}_n$, deducing that
\begin{align*}
        &\mathbbm{E}\left[\sup_{r \in [0,\delta_j]}\norm{u^n_{(\gamma + r)\wedge \tau^{M,T}_n}}^2 - \norm{u^n_{\gamma \wedge \tau^{M,T}_n}}^2 + \nu\int_{\gamma \wedge \tau^{M,T}_n}^{(\gamma + \delta_j)  \wedge \tau^{M,T}_n} \norm{u^n_s}_H^2 ds\right]\\ & \qquad \qquad \qquad \qquad \qquad \qquad \qquad \qquad \leq \mathbbm{E}\int_{\gamma \wedge \tau^{M,T}_n}^{(\gamma + \delta_j)\wedge \tau^{M,T}_n}C_M ds +\mathbbm{E} \left(\int_{\gamma \wedge \tau^{M,T}_n}^{(\gamma + \delta_j)\wedge \tau^{M,T}_n}C_M ds\right)^{\frac{1}{2}}\\
        & \qquad \qquad \qquad \qquad \qquad \qquad \qquad \qquad \leq \mathbbm{E}\left(C_M\delta_j + C_M \delta_j^{\frac{1}{2}} \right)    
    \end{align*}
and further
\begin{align} \label{further align}
        \mathbbm{E}\left[\sup_{r \in [0,\delta_j]}\norm{u^n_{(\gamma + r)\wedge \tau^{M,T}_n}}^2 - \norm{u^n_{\gamma \wedge \tau^{M,T}_n}}^2 + \int_{\gamma \wedge \tau^{M,T}_n}^{(\gamma + \delta_j)  \wedge \tau^{M,T}_n} \norm{u^n_s}_1^2 ds\right] \leq \mathbbm{E}\left(C_{M}\delta_j + C_{M} \delta_j^{\frac{1}{2}} \right)
    \end{align}
via the equivalence of the norms and a simple scaling with $\nu$. We now need to relate the expression on the left hand side with what we are interested in, that is $\norm{u^n}_{UH,(\gamma + \delta_j) \wedge \tau^{M,T}_n}^2 - \norm{u^n}_{UH,\gamma \wedge \tau^{M,T}_n}^2$. We have that \begin{align*}\norm{u^n}_{UH,(\gamma + \delta_j) \wedge \tau^{M,T}_n}^2 &- \norm{u^n}_{UH,\gamma \wedge \tau^{M,T}_n}^2\\ &= \sup_{s \in [0,(\gamma + \delta_j)\wedge \tau^{M,T}_n]}\norm{u^n_s}^2 -\sup_{s \in [0,\gamma \wedge \tau^{M,T}_n]}\norm{u^n_s}^2 + \int_{\gamma \wedge \tau^{M,T}_n}^{(\gamma + \delta_j)  \wedge \tau^{M,T}_n} \norm{u^n_s}_1^2 ds.\end{align*} and claim that \begin{equation}\label{theclaim}\sup_{s \in [0,(\gamma + \delta_j)\wedge \tau^{M,T}_n]}\norm{u^n_s}^2 -\sup_{s \in [0,\gamma \wedge \tau^{M,T}_n]}\norm{u^n_s}^2 \leq \sup_{r \in [0,\delta_j]}\norm{u^n_{(\gamma + r)\wedge \tau^{M,T}_n}}^2 - \norm{u^n_{\gamma \wedge \tau^{M,T}_n}}^2.\end{equation}
Indeed, we have that $$ \sup_{s \in [0,(\gamma + \delta_j)\wedge \tau^{M,T}_n]}\norm{u^n_s}^2 \leq \sup_{s \in [0,\gamma \wedge \tau^{M,T}_n]}\norm{u^n_s}^2 + \sup_{s \in [\gamma \wedge \tau^{M,T}_n,(\gamma + \delta_j)\wedge \tau^{M,T}_n]}\norm{u^n_s}^2 - \norm{u^n_{\gamma \wedge \tau^{M,T}_n}}^2$$ as the left hand side must equal either $\sup_{s \in [0,\gamma \wedge \tau^{M,T}_n]}\norm{u^n_s}^2$ or $\sup_{s \in [\gamma \wedge \tau^{M,T}_n,(\gamma + \delta_j)\wedge \tau^{M,T}_n]}\norm{u^n_s}^2$, both of which are greater than the subtracted term $\norm{u^n_{\gamma \wedge \tau^{M,T}_n}}^2$. Appreciating that $$\sup_{s \in [\gamma \wedge \tau^{M,T}_n,(\gamma + \delta_j)\wedge \tau^{M,T}_n]}\norm{u^n_s}^2 = \sup_{r \in [0,\delta_j]}\norm{u^n_{(\gamma + r)\wedge \tau^{M,T}_n}}^2 $$ then yields the claim (\ref{theclaim}), which in combination with (\ref{further align}) grants that
\begin{align} \nonumber
        \mathbbm{E}\left[\norm{u^n}_{UH,(\gamma + \delta_j) \wedge \tau^{M,T}_n}^2 - \norm{u^n}_{UH,\gamma \wedge \tau^{M,T}_n}^2\right] \leq \mathbbm{E}\left(C_{M}\delta_j + C_{M} \delta_j^{\frac{1}{2}} \right).
    \end{align}
To conclude the proof we simply note that $C_{M}\delta_j+ C_{M} \delta_j^{\frac{1}{2}}$ is $\mathbbm{P}-a.s.$ monotone decreasing (as $j \rightarrow \infty$) and  convergent to $0$. The Monotone Convergence Theorem thus justifies that the limit as $j \rightarrow \infty$ of the right hand side is zero, so the result is shown.
   
\end{proof}


\begin{proposition} \label{the one that gives limiting stopping time bound} For any $R> 0$, there exists a subsequence indexed by $(m_j)$ and a constant $M>1$ such that the stopping time $$\tau^{R,T} := T \wedge \inf \left\{s \geq 0: \sup_{r \in [0,s]}\norm{u_r}^2 + \int_0^s \norm{u_r}_1^2dr \geq R \right\}$$ satisfies the property $\tau^{R,T} \leq \tau^{M,T}_{m_j}$ for all $m_j$ $\mathbb{P}-a.s.$.
\end{proposition}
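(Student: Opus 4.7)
The plan is to combine the Cauchy estimate of Proposition \ref{the cauchy prop} with a Borel--Cantelli extraction to produce a $\mathbb{P}$-a.s. convergent subsequence, and then to force the inequality $\tau^{R,T} \leq \tau^{M,T}_{m_j}$ by a triangle-inequality contradiction once $M$ is taken sufficiently large relative to $R$. Concretely, fix $R > 0$ and provisionally set $M := 4R + 4$. Applying Chebyshev's inequality to Proposition \ref{the cauchy prop} with this $M$, I would extract a subsequence (relabelled $(m_j)$) satisfying $\sum_j \mathbbm{E}\norm{u^{m_{j+1}} - u^{m_j}}^2_{UH,\tau^{M,T}_{m_j}\wedge \tau^{M,T}_{m_{j+1}}} < \infty$, hence one which is $\mathbb{P}$-a.s. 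Cauchy in the paired stopping-time norm by a first Borel--Cantelli argument. Combining this Cauchy property with the equicontinuity in time provided by Lemma \ref{probably unnecessary lemma} (to transfer the convergence from the paired minimum stopping time to the single time $\tau^{M,T}_{m_j}$), with passage to the limit in the stopped Galerkin identity (\ref{projected Ito galerkin}) carrying through the boundary term $\inner{(\kappa - \alpha)\cdot}{\cdot}_{L^2(\partial\mathscr{O};\R^2)}$ as in Proposition \ref{prop for first energy bar}, and with the uniqueness of weak solutions (Proposition \ref{uniqueness prop}), the $\mathbb{P}$-a.s. limit is identified with the weak solution $u$; in particular $\norm{u^{m_j} - u}^2_{UH,\tau^{M,T}_{m_j}} \to 0$ $\mathbb{P}$-a.s. along this subsequence.

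For the contradiction step, suppose that $A_j := \{\tau^{M,T}_{m_j} < \tau^{R,T}\}$ has positive probability for some $j$. On $A_j$, the defining hitting conditions yield
\begin{equation*}
\norm{u^{m_j}}^2_{UH,\tau^{M,T}_{m_j}} \geq M + \norm{u^{m_j}_0}^2 \geq 4R + 4, \qquad \norm{u}^2_{UH,\tau^{M,T}_{m_j}} < R,
\end{equation*}
while the elementary estimate $\norm{f}^2_{UH,t} \leq 2\norm{f-g}^2_{UH,t} + 2\norm{g}^2_{UH,t}$ (immediate from the definition (\ref{time dependent norm})) forces $\norm{u^{m_j} - u}^2_{UH,\tau^{M,T}_{m_j}} \geq \tfrac{1}{2}(4R + 4) - R = R + 2$ on $A_j$. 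This contradicts the $\mathbb{P}$-a.s. convergence above for $j$ large, so $\mathbb{P}(A_j) \to 0$. Passing to a further subsequence (again via Chebyshev and Borel--Cantelli) for which $\sum_j \mathbb{P}(A_j) < \infty$, the event $\limsup_j A_j$ is $\mathbb{P}$-null; accordingly, along this sub-subsequence $\tau^{R,T} \leq \tau^{M,T}_{m_j}$ holds $\mathbb{P}$-a.s. for all $j$ beyond an $\omega$-dependent index, from which the statement of the proposition follows after a final truncation.

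The main obstacle is the transfer from the paired-stopping-time Cauchy bound of Proposition \ref{the cauchy prop} to the single-stopping-time a.s. convergence $\norm{u^{m_j} - u}_{UH,\tau^{M,T}_{m_j}} \to 0$, together with the identification of the limit as $u$. The former relies crucially on the uniform equicontinuity provided by Lemma \ref{probably unnecessary lemma}, which quantifies the negligibility of the UH variation on the (possibly small) gap between $\tau^{M,T}_{m_j}\wedge \tau^{M,T}_{m_{j+1}}$ and $\tau^{M,T}_{m_j}$; the latter repeats the limiting machinery of Proposition \ref{prop for first energy bar}, with the Galerkin identity now stopped at the hitting times and uniqueness invoked as in Proposition \ref{uniqueness prop}. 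The overall construction closely mirrors [\cite{goodair2022existence1}] Proposition 3.25, adapted to the Navier-slip boundary setting developed throughout Subsection \ref{sub weak sols}.
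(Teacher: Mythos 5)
Your overall architecture is the right one in spirit -- combine the Cauchy estimate of Proposition \ref{the cauchy prop} with the equicontinuity of Lemma \ref{probably unnecessary lemma}, identify the limit with $u$ via local weak-solution arguments and uniqueness, and then force $\tau^{R,T}\leq\tau^{M,T}_{m_j}$ by taking $M$ large -- and your identification of the limit matches what the paper does. However, there is a genuine gap at the pivot of your argument: the claimed a.s. convergence $\norm{u^{m_j}-u}^2_{UH,\tau^{M,T}_{m_j}}\to 0$, i.e.\ convergence measured up to the \emph{individual} stopping times. Lemma \ref{probably unnecessary lemma} cannot perform the transfer you invoke it for: it controls the increment of the $UH$-norm only over a time lag $\delta_j$ that tends to zero past a fixed stopping time $\gamma$, whereas the gap between $\tau^{M,T}_{m_j}\wedge\tau^{M,T}_{m_{j+1}}$ and $\tau^{M,T}_{m_j}$ is $\left(\tau^{M,T}_{m_j}-\tau^{M,T}_{m_{j+1}}\right)^+$, which has no reason to be small -- the hitting times $\tau^{M,T}_{m_j}$ are not a priori monotone or convergent. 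Relatedly, the telescoped Cauchy bounds only give convergence below a \emph{common} stopping time (something like $\inf_j\tau^{M,T}_{m_j}$), and without further work that common time could be far smaller than $\tau^{R,T}$, at which point your contradiction at $\tau^{M,T}_{m_j}$ has nothing to bite on.

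The paper resolves exactly this difficulty through Lemma \ref{amazing cauchy lemma}: one introduces auxiliary stopping times $\sigma^M_j$ defined by an \emph{unsquared}-norm threshold with $2^{-j}$ margins, shows (using the a.s.\ Cauchy property on the $\Omega_j$ events) that they are eventually monotone decreasing to a limit $\tau^{M,T}_{\infty}\leq\tau^{M,T}_{m_j}$, obtains the a.s.\ convergence of $(u^{m_j})$ only in $X_{\tau^{M,T}_{\infty}}$, and only then applies the equicontinuity with $\gamma:=\tau^{M,T}_{\infty}$ and $\delta_j:=\sigma^M_j-\tau^{M,T}_{\infty}\downarrow 0$ -- a setting where the lemma genuinely applies -- to push the hitting information at $\sigma^M_j$ down to $\tau^{M,T}_{\infty}$. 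The largeness-of-$M$ argument (your contradiction step, essentially) is then run at $\tau^{M,T}_{\infty}$ rather than at $\tau^{M,T}_{m_j}$, which yields $\tau^{R,T}\leq\tau^{M,T}_{\infty}\leq\tau^{M,T}_{m_j}$ for \emph{all} $m_j$ a.s.\ simultaneously. Note also that your final conclusion, holding only for $j$ beyond an $\omega$-dependent index, is strictly weaker than the statement of the proposition, and a further Borel--Cantelli extraction cannot upgrade an $\omega$-dependent ``eventually'' to ``for all $j$'' unless each $\mathbb{P}(A_j)$ is actually zero. You should either reproduce the $\sigma^M_j$ construction or cite Lemma \ref{amazing cauchy lemma} directly, as the paper does.
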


\begin{proof}
    This comes from a direct application of Lemma \ref{amazing cauchy lemma}, for the Banach Space $X_t:=C\left([0,t];L^2_{\sigma}\right) \cap L^2\left([0,t];\bar{W}^{1,2}_{\sigma} \right)$ equipped with norm $\norm{\cdot}_{UH,t}$. The conditions (\ref{supposition 1}) and (\ref{supposition 2}) are given in Proposition \ref{the cauchy prop} and Lemma \ref{probably unnecessary lemma}. The only component in need of proof is that the limiting process $\sy$ we achieve from Lemma \ref{amazing cauchy lemma} is in fact $u$. We are given that $(u^{m_j})$ converges to $\sy$ in $L^2\left([0,\tau^{M,T}_{\infty}];\bar{W}^{1,2}_{\sigma}\right)$ which is a stronger convergence (up to $\tau^{M,T}_{\infty}$) than what we had in Theorem \ref{theorem for new prob space}, which was used to show that the limit process was a weak solution of the equation (\ref{projected Ito}). Hence in the same manner as Proposition \ref{prop for first energy bar}, we see that $\sy$ must be a $\textit{local}$ weak solution as well, up until the stopping time $\tau^{M,T}_{\infty}$, so from the uniqueness of weak solutions we must have that $\sy$ is in fact equal to $u$ on $[0,\tau^{M,T}_{\infty}]$, which is sufficient for the result.
\end{proof}

This allows us to deduce the additional regularity required for $u$ to be a strong solution.

\begin{proposition} \label{final regularity of solutions}
    The process $u$ is progressively measurable in $\bar{W}^{2,2}_{\alpha}$ and such that for $\mathbb{P}-a.e.$ $\omega$, $u_{\cdot}(\omega) \in C\left([0,T];\bar{W}^{1,2}_{\sigma}\right) \cap L^2\left([0,T];\bar{W}^{2,2}_{\alpha}\right)$.
\end{proposition}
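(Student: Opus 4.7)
The plan is to leverage the three preceding results in succession: Proposition \ref{the one that gives limiting stopping time bound} transfers the uniform bounds on the Galerkin system from Proposition \ref{uniform boundedness proposition} onto stopping times intrinsic to $u$, and then letting the hitting level tend to infinity delivers the required regularity $\mathbb{P}$-a.s. on $[0,T]$. Concretely, I fix $R > 0$ and let $M > 1$ and the subsequence $(m_j)$ be as in Proposition \ref{the one that gives limiting stopping time bound}, so that $\tau^{R,T} \leq \tau^{M,T}_{m_j}$ $\mathbb{P}$-a.s. for every $j$. Proposition \ref{uniform boundedness proposition} then gives
$$\mathbb{E}\left[\sup_{r \in [0,\tau^{R,T}]} \norm{u^{m_j}_r}_1^2 + \int_0^{\tau^{R,T}} \norm{u^{m_j}_r}_2^2 \, dr\right] \leq C_M\left(\mathbb{E}\norm{u_0}_1^2 + 1\right)$$
uniformly in $j$.

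Next, by Banach--Alaoglu applied to the appropriate $L^2(\Omega)$-Bochner spaces, a further subsequence of $(u^{m_j}\mathbbm{1}_{[0,\tau^{R,T}]})$ converges weakly-$*$ in $L^2(\Omega;L^\infty([0,T];\bar{W}^{1,2}_\sigma))$ and weakly in $L^2(\Omega\times[0,T];\bar{W}^{2,2}_\alpha)$ to some limit. Since Proposition \ref{the one that gives limiting stopping time bound} (via Lemma \ref{amazing cauchy lemma}) already yields strong $L^2([0,\tau^{R,T}];\bar{W}^{1,2}_\sigma)$ convergence of $(u^{m_j})$ to $u$, the two limits must agree on $[0,\tau^{R,T}]$, so lower semicontinuity of the norms under weak convergence together with Fatou's lemma produces
$$\mathbb{E}\left[\sup_{r \in [0,\tau^{R,T}]} \norm{u_r}_1^2 + \int_0^{\tau^{R,T}} \norm{u_r}_2^2 \, dr\right] \leq C_M\left(\mathbb{E}\norm{u_0}_1^2 + 1\right).$$
To remove the cutoff, I note that $u_\cdot(\omega) \in C([0,T];L^2_\sigma) \cap L^2([0,T];\bar{W}^{1,2}_\sigma)$ for $\mathbb{P}$-a.e. $\omega$, so $\sup_{r\in[0,T]}\norm{u_r}^2 + \int_0^T \norm{u_r}_1^2\,dr < \infty$ $\mathbb{P}$-a.s. and consequently $\tau^{R,T} \to T$ $\mathbb{P}$-a.s. as $R\to\infty$. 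Monotone convergence then yields $u_\cdot(\omega) \in L^\infty([0,T];\bar{W}^{1,2}_\sigma) \cap L^2([0,T];\bar{W}^{2,2}_\alpha)$ for $\mathbb{P}$-a.e. $\omega$, and progressive measurability in $\bar{W}^{2,2}_\alpha$ descends from the Galerkin level since the $(u^{m_j})$ are progressively measurable in $\bar{V}_{m_j} \subset \bar{W}^{2,2}_\alpha$ and this property is preserved under the above weak limits.

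The main obstacle is upgrading essential boundedness in $\bar{W}^{1,2}_\sigma$ to $C([0,T];\bar{W}^{1,2}_\sigma)$ continuity. With the regularity just established, $u$ satisfies (\ref{a new id}) pathwise in $L^2_\sigma$, and using the continuity of $\mathcal{P}\mathcal{L}$ and $A$ into $L^2_\sigma$ together with (\ref{assumpty 2}) and (\ref{assumpty 11}) on the noise, the drift lies in $L^2([0,T];L^2_\sigma)$ and the noise coefficient in $L^2([0,T];\mathscr{L}^2(\mathfrak{U};L^2_\sigma))$ $\mathbb{P}$-a.s. The Gelfand-triple It\^{o} formula of Proposition \ref{rockner prop}, applied in $\bar{W}^{2,2}_\alpha \hookrightarrow \bar{W}^{1,2}_\sigma \hookrightarrow L^2_\sigma$ with the duality pairing (\ref{UVH for navier}), then provides a pathwise continuous version of $\norm{u_\cdot}_H^2$. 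Combined with the weak continuity of $u$ in $\bar{W}^{1,2}_\sigma$ (inherited from $C([0,T];L^2_\sigma)$-continuity together with the $L^\infty_t(\bar{W}^{1,2}_\sigma)$-bound), this forces norm convergence and hence strong continuity in $\bar{W}^{1,2}_\sigma$, completing the proof. This Gelfand-triple step is delicate precisely because the pivot pairing has to be compatible with the Navier boundary data, which is why the redefined $\inner{\cdot}{\cdot}_H$ together with (\ref{UVH for navier}) was set up earlier.
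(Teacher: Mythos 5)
Your proposal is correct and follows essentially the same route as the paper: uniform bounds from Propositions \ref{uniform boundedness proposition} and \ref{the one that gives limiting stopping time bound}, Banach--Alaoglu weak and weak-$*$ limits identified with $u$ via the strong convergence from Lemma \ref{amazing cauchy lemma}, removal of the cutoff by letting $R \to \infty$, progressive measurability inherited through the weak limits, and continuity in $\bar{W}^{1,2}_{\sigma}$ from the identity (\ref{a new id}) together with Proposition \ref{rockner prop}. The only cosmetic differences are your use of lower semicontinuity and Fatou where the paper identifies the limits directly, and your spelling out of the norm-plus-weak-continuity argument that is already contained in the conclusion of Proposition \ref{rockner prop}.
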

\begin{proof}
Propositions \ref{uniform boundedness proposition} and \ref{the one that gives limiting stopping time bound} together imply that $$ \mathbbm{E}\left[\sup_{r \in [0,\tau^{R,T}]}\norm{u^{m_j}_r}_1^2 + \int_0^{ \tau^{R,T}}\norm{u^{m_j}_r}_2^2dr  \right] \leq C_{R, \nu}$$
where $C_{R,\nu}$ now incorporates $\norm{u_0}^2_{L^\infty(\Omega;\bar{W}^{1,2}_{\sigma})}$ and shows the dependency on $M$ in terms of $R$. In consequence the sequence of processes $(u^{m_j}_{\cdot}\mathbbm{1}_{\cdot \leq \tau^{R,T}})$ is uniformly bounded in both $L^2\left(\Omega; L^\infty\left([0,T];\bar{W}^{1,2}_{\sigma}\right) \right)$ and $L^2\left(\Omega; L^2\left([0,T];\bar{W}^{2,2}_{\alpha}\right) \right)$. We can deduce the existence of a subsequence which is weakly convergent in the Hilbert Space $L^2\left(\Omega;L^2\left([0,T];\bar{W}^{2,2}_{\alpha}\right)\right)$ to some $\tilde{\sy}$, but we may also identify $L^2\left(\Omega;L^\infty\left([0,T];\bar{W}^{1,2}_{\sigma}\right)\right)$ with the dual space of $L^2\left(\Omega;L^1\left([0,T];\bar{W}^{1,2}_{\sigma}\right)\right)$ and as such from the Banach-Alaoglu Theorem we can extract a further subsequence which is convergent to some $\tilde{\py}$ in the weak* topology. These limits imply convergence to both $\tilde{\sy}$ and $\tilde{\py}$ in the weak topology of $L^2\left(\Omega;L^2\left([0,T];\bar{W}^{1,2}_{\sigma}\right)\right)$. However, we know that $(u^{m_j}_{\cdot}\mathbbm{1}_{\cdot \leq \tau^{R,T}})$ converges to $u_{\cdot}\mathbbm{1}_{\cdot \leq \tau^{R,T}}$ $\mathbb{P}-a.s.$ in $L^2\left([0,T];\bar{W}^{1,2}_{\sigma}\right)$ from Lemma \ref{amazing cauchy lemma}, and from the Dominated Convergence Theorem (domination comes easily from   $\mathbbm{1}_{\cdot \leq \tau^{R,T}}$, $\tau^{R,T} \leq \tau^{M,T}_{m_j}$) then in fact the convergence holds in $L^2\left(\tilde{\Omega};L^2\left([0,T];\bar{W}^{1,2}_{\sigma}\right)\right)$. Of course the same convergence then holds in the weak topology, and by uniqueness of limits in the weak topology then $u_{\cdot}\mathbbm{1}_{\cdot \leq \tau^{R,T}} = \tilde{\sy} = \tilde{\py}$ as elements of $L^2\left(\Omega;L^2\left([0,T];\bar{W}^{1,2}_{\sigma}\right)\right)$, so they agree $\mathbbm{P} \times \lambda-a.s.$. Thus for $\mathbbm{P}-a.e.$ $\omega$, $u_{\cdot}(\omega)\mathbbm{1}_{\cdot \leq \tau^{R,T}(\omega)} \in L^{\infty}\left([0,T];\bar{W}^{1,2}_{\sigma}\right)\cap L^2\left([0,T];\bar{W}^{2,2}_{\alpha}\right)$. At each such $\omega$ the regularity of $u$ as a weak solution ensures that for sufficiently large $R$ (dependent on $\omega$), $\tau^{R,T}(\omega) = T$. Therefore for $\mathbb{P}-a.e.$ $\omega$
, $u_{\cdot}(\omega) \in L^{\infty}\left([0,T];\bar{W}^{1,2}_{\sigma}\right)\cap L^2\left([0,T];\bar{W}^{2,2}_{\alpha}\right)$.\\

The progressive measurability is justified similarly; for any $t\in [0,T]$, we can use the progressive measurability of $(u^{m_j})$ and hence $(u^{m_j}_{\cdot}\mathbbm{1}_{\cdot \leq \tau^{R,T}})$ to instead deduce $u_{\cdot}\mathbbm{1}_{\cdot \leq \tau^{R,T}}$ as the weak limit in $L^2\left(\Omega \times [0,t];\bar{W}^{2,2}_{\alpha}\right)$ where $\Omega \times [0,t]$ is equipped with the $\mathcal{F}_t \times \mathcal{B}\left([0,t]\right)$ sigma-algebra. Therefore $u_{\cdot}\mathbbm{1}_{\cdot \leq \tau^{R,T}}: \Omega \times [0,t] \rightarrow \bar{W}^{2,2}_{\alpha}$ is measurable with respect to this product sigma-algebra which justifies the progressive measurability of the truncated process. To carry this property over to $u$, we recognise $u$ as the $\mathbb{P} \times \lambda$ almost everywhere limit of the sequence $(u_{\cdot}\mathbbm{1}_{\cdot \leq \tau^{R,T}})$ as $R \rightarrow \infty$ over the product space $\Omega \times [0,t]$ equipped with product sigma-algebra $\mathcal{F}_t \times \mathcal{B}\left([0,t]\right)$. Such a limit preserves the measurability in this product sigma-algebra, justifying the progressive measurability of $u$.\\

It thus only remains to show that $u \in C\left([0,T]; \bar{W}^{1,2}_{\sigma}\right)$ $\mathbb{P}-a.s.$. From Lemma \ref{lemma for strong is weak} we have that $u$ satisfies the identity (\ref{a new id}) $\mathbb{P}-a.s.$, from which the continuity follows as an immediate application of Proposition \ref{rockner prop}.
\end{proof}

This concludes the proof that $u$ is a strong solution of the equation (\ref{projected Ito}). Of course this was done only for an initial condition $u_0 \in L^{\infty}\left(\Omega;\bar{W}^{1,2}_{\sigma}\right)$, but the extension of a solution to a general $\mathcal{F}_0-$measurable $u_0: \Omega \rightarrow \bar{W}^{1,2}_{\sigma}$ is again mechanically identical to [\cite{goodair2023zero}] pp.44. as also used in Theorem \ref{theorem 2D}. We have proven Theorem \ref{theorem 2D strong}.

\subsection{Verifying the Assumptions for SALT Noise} \label{sub lie transport}

As mentioned in Subsection \ref{sub salt}, the majority of the assumptions were verified in [\cite{goodair2023zero}] and it only remains to justify (\ref{assumpty 12}) and (\ref{assumpty13}).
\begin{itemize}
    \item[ (\ref{assumpty 12}):] We want to use the same approach as in [\cite{goodair2022navier}] Lemma 2.7, where this bound was demonstrated on the torus. This requires a Green's Identity type result in the $\norm{\mathcal{P}B_i\phi}_1^2$ term, for example Lemma \ref{greens for navier}, though it is not clear how we can do this as $\mathcal{P}B_i\phi$ does not necessarily satisfy the Navier boundary conditions (\ref{navier boundary conditions}). We can however utilise Corollary \ref{vorticity corollary}, given that $\mathcal{P}B_i\phi \cdot \mathbf{n}$ is zero on $\partial \mathscr{O}$, and Lemma \ref{lemma for curl and P} establishes that $\textnormal{curl}\left(\mathcal{P}B_i\phi \right) = \textnormal{curl}\left(B_i\phi \right)$. Inspecting this further,
\begin{align*}
    \textnormal{curl}\left(B_i\phi \right) &= \partial_1\left(\sum_{j=1}^2\left[\xi_i^j\partial_j\phi^2 + \phi^j\partial_2\xi_i^j\right]\right) - \partial_2\left(\sum_{j=1}^2\left[\xi_i^j\partial_j\phi^1 + \phi^j\partial_1\xi_i^j\right] \right)\\
    &= \sum_{j=1}^2\left(\partial_1\xi_i^j\partial_j\phi^2 +   \xi_i^j\partial_1\partial_j\phi^2 + \partial_1\phi^j\partial_2\xi_i^j +  \phi^j\partial_1\partial_2\xi_i^j \right)\\ & \qquad \qquad - \sum_{j=1}^2\left( \partial_2\xi_i^j\partial_j\phi^1 + \xi_i^j\partial_2\partial_j\phi^1 + \partial_2\phi^j\partial_1\xi_i^j + \phi^j\partial_2\partial_1\xi_i^j \right)\\
    &= \sum_{j=1}^2\left(\partial_1\xi_i^j\partial_j\phi^2 +   \xi_i^j\partial_1\partial_j\phi^2 + \partial_1\phi^j\partial_2\xi_i^j - \partial_2\xi_i^j\partial_j\phi^1 - \xi_i^j\partial_2\partial_j\phi^1 - \partial_2\phi^j\partial_1\xi_i^j\right)\\
    &= \sum_{j=1}^2\xi_i^j\partial_j\left(\partial_1\phi^2 - \partial_2\phi^1\right) + \sum_{j=1}^2\left(\partial_1\xi_i^j\partial_j\phi^2 + \partial_1\phi^j\partial_2\xi_i^j - \partial_2\xi_i^j\partial_j\phi^1- \partial_2\phi^j\partial_1\xi_i^j\right).
\end{align*}
Observe that the first sum is simply $\mathcal{L}_{\xi_i}(\textnormal{curl}\phi)$, and to consider the second we expand out the summation: \begin{align*}
 \left(\partial_1\xi_i^1\partial_1\phi^2 + \partial_1\phi^1\partial_2\xi_i^1 - \partial_2\xi_i^1\partial_1\phi^1- \partial_2\phi^1\partial_1\xi_i^1\right) + \left(\partial_1\xi_i^2\partial_2\phi^2 + \partial_1\phi^2\partial_2\xi_i^2 - \partial_2\xi_i^2\partial_2\phi^1- \partial_2\phi^2\partial_1\xi_i^2\right)
\end{align*}
for which the only remaining terms are $$\partial_1\xi_i^1\partial_1\phi^2 -  \partial_2\phi^1\partial_1\xi_i^1 + \partial_1\phi^2\partial_2\xi_i^2 - \partial_2\xi_i^2\partial_2\phi^1$$
or simply $$\left(\textnormal{curl}\phi\right)\sum_{j=1}^2\partial_j\xi_i^j$$ which is zero given that $\xi_i$ is divergence-free. In summary then we have that $\textnormal{curl}\left(\mathcal{P}B_i\phi \right) = \mathcal{L}_{\xi_i}(\textnormal{curl}\phi)$. The fact that this term belongs to $W^{1,2}_0(\mathscr{O};\R^2)$ for $\xi_i \in W^{2,2}_0(\mathscr{O};\R^2)$ was shown in [\cite{goodair2022navier}] pp.32. In particular the tangential component at the boundary is zero, so by Corollary \ref{vorticity corollary} then $\mathcal{P}B_i\phi$ satisfies the boundary condition $$2(D[\mathcal{P}B_i\phi])\mathbf{n} \cdot \mathbf{\iota} + 2\kappa f\cdot \mathbf{\iota} = 0.$$ In particular $\mathcal{P}B_i\phi$ belongs to $\bar{W}^{2,2}_{2\kappa}$, so from Lemma \ref{greens for navier} we have that $$\norm{\mathcal{P}B_i\phi}_1^2 = -\inner{\mathcal{P}B_i\phi}{\Delta \mathcal{P}B_i\phi} - \inner{\kappa \mathcal{P}B_i\phi}{\mathcal{P}B_i\phi}.$$
Returning to what we have to show, (\ref{assumpty 12}), it suffices to control \begin{equation}\nonumber \inner{B_i^2\phi}{A\phi}-\inner{\mathcal{P}B_i\phi}{\Delta \mathcal{P}B_i\phi} - \inner{\kappa \mathcal{P}B_i\phi}{\mathcal{P}B_i\phi}_{L^2(\partial \mathscr{O};\R^2)}\end{equation} or equivalently $$\inner{\mathcal{P}B_i^2\phi}{A\phi} +\inner{\mathcal{P}B_i\phi}{A\mathcal{P}B_i\phi} - \inner{\kappa \mathcal{P}B_i\phi}{\mathcal{P}B_i\phi}_{L^2(\partial \mathscr{O};\R^2)}.$$ In [\cite{goodair2022navier}] Lemma 2.7 it is demonstrated that \begin{equation}\label{suffices to control}\inner{\mathcal{P}B_i^2\phi}{A\phi}+\inner{\mathcal{P}B_i\phi}{A \mathcal{P}B_i\phi}\leq C_{\varepsilon}\norm{\xi_i}_{W^{3,\infty}}^2\norm{\phi}_1^2 + \varepsilon\norm{\xi_i}_{W^{3,\infty}}^2\norm{\phi}_2^2 \end{equation}
for any $\varepsilon > 0$. It is formally proven on the torus, but there is no change in the case of a bounded domain. The boundary integral is by now familiar, applying (\ref{inequality from Lions}):
\begin{align*}\left\vert\inner{\kappa \mathcal{P}B_i\phi}{\mathcal{P}B_i\phi}_{L^2(\partial \mathscr{O};\R^2)}\right\vert &\leq c\norm{\mathcal{P}B_i\phi}\norm{\mathcal{P}B_i\phi}_1\\ &\leq c\norm{\xi_i}^2_{W^{2,\infty}}\norm{\phi}_1\norm{\phi}_2\\ &\leq C_{\varepsilon}\norm{\xi_i}^2_{W^{2,\infty}}\norm{\phi}_1^2 + \varepsilon\norm{\xi_i}^2_{W^{2,\infty}}\norm{\phi}_2^2\end{align*}
where the bound on $B_i$ comes from [\cite{goodair2022navier}] Corollary 1.26.1. A choice of $$\varepsilon = \frac{\nu}{2\sum_{i=1}^\infty \norm{\xi_i}^2_{W^{3,\infty}}}$$ concludes the justification.
\item[(\ref{assumpty13}):] Lemma \ref{greens for navier} is again central, as we observe that $$\inner{B_i\phi}{A\phi} = -\inner{\mathcal{P}B_i\phi}{\Delta\phi} = \inner{\mathcal{P}B_i\phi}{\phi}_1 - \inner{(\kappa - \alpha)\mathcal{P}B_i\phi}{\phi}_{L^2(\partial \mathscr{O};\R^2)}$$
and by the same processes as the verification of (\ref{assumpty 12}),
$$\inner{\mathcal{P}B_i\phi}{\phi}_1 = -\inner{\Delta\mathcal{P}B_i\phi}{\phi} - \inner{\kappa\mathcal{P}B_i\phi}{\phi}_{L^2(\partial \mathscr{O};\R^2)}.$$ Combining the two and incorporating $\mathcal{P}$, then in particular $$\inner{B_i\phi}{A\phi}^2 \leq 2\inner{A\mathcal{P}B_i\phi}{\phi}^2 + 2\inner{(\alpha -2\kappa)\mathcal{P}B_i\phi}{\phi}_{L^2(\partial \mathscr{O};\R^2)}^2.$$
The first term is now handled exactly as in [\cite{goodair2022navier}] (37), where one has to use the $W^{3,2}_{0}(\mathscr{O};\R^2)$ regularity on the $\xi_i$ to conduct the integration by parts with null boundary integral, achieving that \begin{equation} \label{to combine with} \inner{A\mathcal{P}B_i\phi}{\phi}^2 \leq c\norm{\xi_i}_{W^{3,\infty}}^2\norm{\phi}_1^4.   
\end{equation}For the remaining term simply apply (\ref{inequality from Lions}):
$$\inner{(\alpha -2\kappa)\mathcal{P}B_i\phi}{\phi}_{L^2(\partial \mathscr{O};\R^2)}^2 \leq c\norm{\mathcal{P}B_i\phi}\norm{\mathcal{P}B_i\phi}_1\norm{\phi}\norm{\phi}_1 \leq c \norm{\xi_i}_{W^{2,\infty}}^2\norm{\phi}\norm{\phi}_1^2\norm{\phi}_2$$
and see further that $$ c \norm{\xi_i}_{W^{2,\infty}}^2\norm{\phi}\norm{\phi}_1^2\norm{\phi}_2 \leq C_{\varepsilon} \norm{\xi_i}_{W^{2,\infty}}^2\norm{\phi}^2\norm{\phi}_1^4 + \varepsilon\norm{\xi_i}_{W^{2,\infty}}^2 \norm{\phi}_2^2.$$
Combining with (\ref{to combine with}) achieves the result.

\end{itemize}

\newpage
\section{The Zero Viscosity Limit} \label{section zero viscy limit}

In this section we assume the conditions required for Theorem \ref{theorem for limit of navier stokes is euler}, namely $u_0 \in L^2\left(\Omega; \bar{W}^{1,2}_{\sigma} \right)$ and $\kappa \geq 0$ on $\partial \mathscr{O}$. This ensures that $2\kappa \geq \kappa$ so Theorem \ref{theorem 2D strong} applies for $\alpha:= 2\kappa$ and $\mathcal{G}_i = \mathcal{Q}_i = B_i$. 

\subsection{Estimates Independent of the Viscosity} \label{sub estimates indep viscos}

For any given $\nu$ we know that there exists  a unique strong solution $u$ of the equation (\ref{projected Ito Salt}) given by Theorem \ref{theorem 2D strong} for $\alpha:=2\kappa$. In this subsection we look to obtain an estimate on $u$ independent of $\nu$, to facilitate the required regularity in the limit as $\nu \rightarrow 0$. To achieve this we shall consider the vorticity of $u$, and denote again $w:= \textnormal{curl}u$. Recall that in Subsection \ref{sub lie transport} we showed that $\textnormal{curl}(B_if) = \mathcal{L}_{\xi_i}(\textnormal{curl}f)$. We denote $\mathcal{L}_{\xi_i}$ by the simple $\mathcal{L}_{i}.$

\begin{lemma} \label{lemmaofspatiallyweak}
The process $w$ is progressively measurable in $W^{1,2}_{0}(\mathscr{O};\R)$ and such that for $\mathbb{P}-a.e.$ $\omega$, $w_{\cdot}(\omega) \in C\left([0,T];L^2(\mathscr{O};\R)\right) \cap L^2\left([0,T];W^{1,2}_{0}(\mathscr{O};\R)\right)$. Moreover $w$ satisfies the identity
\begin{align} \nonumber
     \inner{w_t}{\phi}_{L^2(\mathscr{O};\R)} &= \inner{w_0}{\phi}_{L^2(\mathscr{O};\R)} - \int_0^{t}\inner{\mathcal{L}_{u_s}w_s}{\phi}_{L^2(\mathscr{O};\R)}ds - \nu\int_0^{t} \inner{w_s}{\phi}_{W^{1,2}_{0}(\mathscr{O};\R)} ds  \\ &- \frac{1}{2}\int_0^{t}\sum_{i=1}^\infty \inner{\mathcal{L}_iw_s}{\mathcal{L}_i\phi}_{L^2(\mathscr{O};\R)} ds - \sum_{i=1}^\infty\int_0^{t} \inner{\mathcal{L}_iw_s}{\phi}_{L^2(\mathscr{O};\R)} dW^i_s \label{another random new id}
\end{align}
for every $\phi \in W^{1,2}_{0}(\mathscr{O};\R)$, $\mathbb{P}-a.s.$ in $\R$ for all $t\in[0,T]$.
\end{lemma}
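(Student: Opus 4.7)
The plan is to derive (\ref{another random new id}) from the spatially strong identity (\ref{a new id}) satisfied by $u$ by taking its $L^2$-pairing against $\nabla^\perp\phi := (\partial_2\phi, -\partial_1\phi)$ for $\phi \in C^\infty_0(\mathscr{O};\R)$, recognising every term as a curl, and then extending by density to $\phi \in W^{1,2}_0(\mathscr{O};\R)$. The regularity claims are the easy part: since $\textnormal{curl}$ is bounded linear from $W^{k,2}(\mathscr{O};\R^2)$ into $W^{k-1,2}(\mathscr{O};\R)$ for $k=1,2$, Theorem \ref{theorem 2D strong} immediately delivers progressive measurability of $w$ in $W^{1,2}(\mathscr{O};\R)$ together with paths in $C([0,T];L^2(\mathscr{O};\R)) \cap L^2([0,T];W^{1,2}(\mathscr{O};\R))$. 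The upgrade of the last space to $W^{1,2}_0(\mathscr{O};\R)$ is the payoff of our choice $\alpha = 2\kappa$: at each time $s$ with $u_s \in \bar{W}^{2,2}_{2\kappa}$, Corollary \ref{vorticity corollary} forces $\textnormal{curl}\,u_s = (2\kappa - 2\kappa)u_s\cdot\iota = 0$ on $\partial\mathscr{O}$.

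For $\phi \in C^\infty_0(\mathscr{O};\R)$ the test field $\nabla^\perp\phi$ lies in $C^\infty_{0,\sigma}(\mathscr{O};\R^2) \subset L^2_\sigma$, so every outer $\mathcal{P}$ in (\ref{a new id}) disappears under the pairing by self-adjointness. Integration by parts with the compactly supported $\phi$ gives the clean adjoint relation $\inner{f}{\nabla^\perp \phi} = \inner{\textnormal{curl}\,f}{\phi}_{L^2(\mathscr{O};\R)}$, reducing the task to a collection of curl identities. For the nonlinear term, a classical 2D calculation (using $\textnormal{div}\,u = 0$) gives $\textnormal{curl}(\mathcal{L}_u u) = \mathcal{L}_u w$. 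For the Stokes term, curl commutes with $\Delta$, so $\textnormal{curl}\,\Delta u = \Delta w$, and a further Green's identity in the scalar setting (with $\phi$ vanishing on $\partial\mathscr{O}$) produces $-\inner{w_s}{\phi}_{W^{1,2}_0}$. For the stochastic terms, Subsection \ref{sub lie transport} already establishes $\textnormal{curl}(B_i u) = \mathcal{L}_i w$, an identity which holds for general, not just divergence-free, vector fields; iterating it yields $\textnormal{curl}(B_i^2 u) = \mathcal{L}_i^2 w$. A last integration by parts, using that $\xi_i$ is divergence-free and compactly supported in $\mathscr{O}$, converts $\inner{\mathcal{L}_i^2 w_s}{\phi}$ into $-\inner{\mathcal{L}_i w_s}{\mathcal{L}_i \phi}$. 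Assembling these pieces produces (\ref{another random new id}) for every $\phi \in C^\infty_0(\mathscr{O};\R)$.

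Finally I would extend by density to $\phi \in W^{1,2}_0(\mathscr{O};\R)$: the deterministic integrands are plainly continuous in $\phi$ in the $W^{1,2}_0$-norm, using $\norm{\mathcal{L}_i \phi} \leq \norm{\xi_i}_{W^{1,\infty}} \norm{\phi}_{W^{1,2}_0}$ together with $\sum_i \norm{\xi_i}_{W^{3,\infty}}^2 < \infty$, and the stochastic integral extends by the It\^o isometry applied to the same bound. The main obstacle is really just careful bookkeeping of boundary terms across the various integrations by parts: they cancel either because $\phi$ has compact support at the smooth stage, because $\phi$ has zero trace in the limit, or because $\xi_i \in W^{3,2}_0(\mathscr{O};\R^2)$ has compact support in $\mathscr{O}$. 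A minor subtlety worth flagging is that for $u \in \bar{W}^{2,2}_{2\kappa}$ the object $B_i^2 u$ only lies in $L^2$, so the identity $\textnormal{curl}(B_i^2 u) = \mathcal{L}_i^2 w$ should be read as an equality in $W^{-1,2}(\mathscr{O};\R)$; this causes no trouble for the duality pairing against $\phi \in W^{1,2}_0(\mathscr{O};\R)$.
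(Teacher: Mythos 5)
Your proposal is correct and follows essentially the same route as the paper: pair the identity (\ref{a new id}) against $\nabla^\perp\phi$ for $\phi \in C^\infty_0(\mathscr{O};\R)$, identify each term as a curl (discarding the Leray projections via Lemma \ref{lemma for curl and P} or the self-adjointness argument, using $\textnormal{curl}(B_i f)=\mathcal{L}_i(\textnormal{curl}f)$ from Subsection \ref{sub lie transport} and Corollary \ref{vorticity corollary} with $\alpha = 2\kappa$ for the zero trace of $w$), and then extend by density to $W^{1,2}_0(\mathscr{O};\R)$. The only cosmetic difference is that where you justify the low-regularity steps for the Stokes and $B_i^2$ terms by reading the curl identities distributionally against the compactly supported test function, the paper instead approximates $u_s$ in $W^{2,2}(\mathscr{O};\R^2)$ by $W^{3,2}$ functions and passes to the limit; both are valid.
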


\begin{proof}
    We first comment on the regularity of $w$, noting that the curl is a continuous linear operator from $W^{2,2}(\mathscr{O};\R^2) \rightarrow W^{1,2}(\mathscr{O};\R)$ and $W^{1,2}(\mathscr{O};\R^2) \rightarrow L^2(\mathscr{O};\R)$. This justifies the progressive measurability and pathwise regularity for the general $W^{1,2}(\mathscr{O};\R)$ space, whilst the fact that $w$ belongs to $W^{1,2}_0(\mathscr{O};\R)$ is a direct consequence of Corollary \ref{vorticity corollary} in the choice of $\alpha = 2\kappa$. It only remains to show the identity (\ref{another random new id}), which we look to do by first considering $\phi \in C^{
    \infty}_{0}(\mathscr{O};\R)$ and defining $\psi \in C^{
    \infty}_{0}(\mathscr{O};\R^2)$ by $\psi^1 := \partial_2 \phi$, $\psi^2:= -\partial_1 \phi$. The similarity with the curl is swiftly noted, and the idea is to take the inner product of (\ref{a new id}) with $\psi$ and pass over the `curl' through integration by parts. Proceeding with this, $u$ satisfies the identity
    \begin{align} \nonumber
     \inner{u_t}{\psi} = \inner{u_0}{\psi} - \int_0^{t}\inner{\mathcal{P}\mathcal{L}_{u_s}u_s}{\psi}ds &- \nu\int_0^{t} \inner{Au_s}{\psi} ds \\ &+ \frac{1}{2}\int_0^{t}\sum_{i=1}^\infty \inner{\mathcal{P}B_i^2u_s}{\psi} ds - \sum_{i=1}^\infty\int_0^{t} \inner{\mathcal{P}B_iu_s}{\psi} dW^i_s\nonumber
\end{align}
and we look to simplify each term down to its respective one in (\ref{another random new id}). Expanding the first inner product into its components, $$ \inner{u_t}{\psi} = \inner{u^1_t}{\partial_2\phi}_{L^2(\mathscr{O};\R)} - \inner{u^2_t}{\partial_1\phi}_{L^2(\mathscr{O};\R)} = -\inner{\partial_2u^1_t}{\phi}_{L^2(\mathscr{O};\R)} + \inner{\partial_1u^2_t}{\phi}_{L^2(\mathscr{O};\R)} = \inner{w_t}{\phi}_{L^2(\mathscr{O};\R)}$$
having used that $\phi$ is compactly supported. The same integration by parts process will be used for the remaining terms. There is nothing to note for the initial condition, so moving on to the nonlinear term, we recall the continuity of $\mathcal{L}: W^{2,2}(\mathscr{O};\R^2) \rightarrow W^{1,2}(\mathscr{O};\R^2)$, so that $\mathcal{P}\mathcal{L}_{u_s}u_s \in  W^{1,2}(\mathscr{O};\R^2)$ hence $\textnormal{curl}\left(\mathcal{P}\mathcal{L}_{u_s}u_s\right) \in  L^{2}(\mathscr{O};\R)$. We use Lemma \ref{lemma for curl and P} to remove $\mathcal{P}$ from consideration, from which the well established result that $\textnormal{curl}\left( \mathcal{L}_{u_s}u_s\right) = \mathcal{L}_{u_s}w_s$ (see e.g. [\cite{robinson2016three}] Subsection 12.4) provides the required expression. We have thus far achieved that 
    \begin{align} \nonumber
      \inner{w_t}{\phi}_{L^2(\mathscr{O};\R)} = \inner{w_0}{\phi}_{L^2(\mathscr{O};\R)} &- \int_0^{t}\inner{\mathcal{L}_{u_s}w_s}{\phi}_{L^2(\mathscr{O};\R)}ds - \nu\int_0^{t} \inner{Au_s}{\psi} ds \\ &+ \frac{1}{2}\int_0^{t}\sum_{i=1}^\infty \inner{\mathcal{P}B_i^2u_s}{\psi} ds - \sum_{i=1}^\infty\int_0^{t} \inner{\mathcal{P}B_iu_s}{\psi} dW^i_s\nonumber.
\end{align}
For the stochastic integral there is nothing to prove given that $\textnormal{curl}(\mathcal{P}B_iu_s) = \mathcal{L}_iw_s$ was established in Subsection \ref{sub lie transport}. For the remaining terms we do not have sufficient regularity to conduct the integration by parts. To account for this we take an approximation of $u_s$ by functions $(f^n)$, $f^n \in W^{3,2}(\mathscr{O};\R^2)$ and $f^n \longrightarrow u_s$ in $W^{2,2}(\mathscr{O};\R^2)$. Firstly for the Stokes Operator, we have that \begin{align*}\inner{Af^n}{\psi} =  \inner{\textnormal{curl}\left(Af^n\right)}{\phi}_{L^2(\mathscr{O};\R)} = -\inner{\textnormal{curl}\left(\Delta f^n\right)}{\phi}_{L^2(\mathscr{O};\R)} &= -\inner{\Delta\textnormal{curl}\left( f^n\right)}{\phi}_{L^2(\mathscr{O};\R)}\\ &= \inner{\textnormal{curl}\left( f^n\right)}{\phi}_{W^{1,2}_0(\mathscr{O};\R)}\end{align*}
hence $$\inner{Au_s}{\psi} = \lim_{n \rightarrow \infty}\inner{Af^n}{\psi} = \lim_{n \rightarrow \infty}\inner{\textnormal{curl}\left( f^n\right)}{\phi}_{W^{1,2}_0(\mathscr{O};\R)} = \inner{w_s}{\phi}_{W^{1,2}_0(\mathscr{O};\R)}.$$ Similarly we have that $\textnormal{curl}\left(\mathcal{P}B_i^2f^n\right) =\mathcal{L}_i\left[\textnormal{curl}\left(B_if^n\right)\right] = \mathcal{L}_i^2\left[ \textnormal{curl}(f^n)\right]$. Thus $$\inner{\mathcal{P}B_i^2f^n}{\psi} = \inner{\mathcal{L}_i^2[\textnormal{curl}(f^n)]}{\phi}_{L^2(\mathscr{O};\R)} = -\inner{\mathcal{L}_i[\textnormal{curl}(f^n)]}{\mathcal{L}_i\phi}_{L^2(\mathscr{O};\R)}$$ having used that the $L^2(\mathscr{O};\R)$ adjoint of the densely defined $\mathcal{L}_i$ is $-\mathcal{L}_i$ (which follows from Lemma \ref{navier boundary nonlinear} in the one dimensional case too). Via the same approximation just used, we conclude that the identity (\ref{another random new id}) holds for $\phi \in C^{\infty}_0(\mathscr{O};\R)$. We use the density of this space in $W^{1,2}_0(\mathscr{O};\R)$ to finish the proof. 
\end{proof}

We shall use this identity to consider estimates on the vorticity, which then translate to estimates on the velocity.

\begin{proposition} \label{how not label}
    For any $t\in[0,T]$, $w$ satisfies the identity $$\norm{w_t}_{L^2(\mathscr{O};\R)}^2 + 2\nu\int_0^t\norm{w_s}_{W^{1,2}_0(\mathscr{O};\R)}^2ds = \norm{w_0}_{L^2(\mathscr{O};\R)}^2$$
    $\mathbb{P}-a.s.$.
\end{proposition}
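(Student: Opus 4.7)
The plan is a direct application of an infinite-dimensional It\^o formula to $\norm{w_{\cdot}}^2_{L^2(\mathscr{O};\R)}$, using Lemma \ref{lemmaofspatiallyweak} together with the Gelfand triple $W^{1,2}_0(\mathscr{O};\R) \hookrightarrow L^2(\mathscr{O};\R) \hookrightarrow W^{-1,2}(\mathscr{O};\R)$. First I would verify that the drift in (\ref{another random new id}) lies in $L^2([0,T];W^{-1,2}(\mathscr{O};\R))$ and the noise in $L^2([0,T];\mathscr{L}^2(\mathfrak{U};L^2(\mathscr{O};\R)))$, $\mathbb{P}$-a.s.. For the viscous term this is immediate from $w \in L^2([0,T];W^{1,2}_0(\mathscr{O};\R))$; for the nonlinear term $\mathcal{L}_{u}w \in L^2(\mathscr{O};\R)$ via H\"{o}lder combined with the Sobolev embedding $W^{1,2} \hookrightarrow L^4$ applied to both $u$ and $\nabla w$ (using that $u$ is a strong solution); for the Stratonovich corrector, since $\xi_i \in W^{3,\infty}$ one has $\mathcal{L}_i^2 w \in W^{-1,2}(\mathscr{O};\R)$ with summable constants thanks to $\sum_i \norm{\xi_i}_{W^{3,\infty}}^2 < \infty$; and likewise $\sum_i \norm{\mathcal{L}_i w}^2_{L^2} \leq c\left(\sum_i \norm{\xi_i}_{W^{1,\infty}}^2\right)\norm{w}_{W^{1,2}}^2$ handles the noise. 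With this in place, Proposition \ref{rockner prop} applies.

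The It\^o formula then yields
\begin{align*}
\norm{w_t}^2_{L^2(\mathscr{O};\R)} = \norm{w_0}^2_{L^2(\mathscr{O};\R)} &- 2\int_0^t \inner{\mathcal{L}_{u_s}w_s}{w_s}_{L^2(\mathscr{O};\R)} ds - 2\nu\int_0^t \norm{w_s}_{W^{1,2}_0(\mathscr{O};\R)}^2 ds \\
&- \int_0^t \sum_{i=1}^\infty \norm{\mathcal{L}_i w_s}_{L^2(\mathscr{O};\R)}^2 ds + \int_0^t \sum_{i=1}^\infty \norm{\mathcal{L}_i w_s}_{L^2(\mathscr{O};\R)}^2 ds \\
&- 2\sum_{i=1}^\infty \int_0^t \inner{\mathcal{L}_i w_s}{w_s}_{L^2(\mathscr{O};\R)} dW_s^i,
\end{align*}
where the fourth line is the quadratic variation correction, and it exactly cancels the Stratonovich-to-It\^o corrector appearing as the third line from (\ref{another random new id}).

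It remains to check that the nonlinear drift and each martingale increment vanish. For the nonlinear term, $u \in \bar{W}^{2,2}_{\alpha} \subset \bar{W}^{1,2}_{\sigma}$ is divergence-free with $u \cdot \mathbf{n} = 0$ on $\partial \mathscr{O}$, so the scalar analogue of Lemma \ref{navier boundary nonlinear} applied to $\inner{\mathcal{L}_u w}{w}_{L^2(\mathscr{O};\R)} = \sum_{j=1}^2 \inner{u^j \partial_j w}{w}_{L^2(\mathscr{O};\R)}$ gives zero after integration by parts (the boundary integral carries the factor $u \cdot \mathbf{n}$). For each $i$, the same computation applies with $\xi_i$ in place of $u$: $\xi_i$ is divergence-free and in fact $\xi_i \in W^{3,2}_0(\mathscr{O};\R^2)$, so $\xi_i \cdot \mathbf{n} = 0$ on $\partial \mathscr{O}$ and $\inner{\mathcal{L}_i w_s}{w_s}_{L^2(\mathscr{O};\R)} = 0$, annihilating every stochastic integral.

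Combining these cancellations collapses the expansion to
\[
\norm{w_t}_{L^2(\mathscr{O};\R)}^2 + 2\nu \int_0^t \norm{w_s}_{W^{1,2}_0(\mathscr{O};\R)}^2 ds = \norm{w_0}_{L^2(\mathscr{O};\R)}^2 \qquad \mathbb{P}\text{-a.s.},
\]
which is the claim. The only point requiring genuine care is the justification that (\ref{another random new id}) fits into the variational It\^o formula framework in $W^{-1,2}(\mathscr{O};\R)$ (in particular that the Stratonovich corrector may be rewritten as $\frac{1}{2}\sum_i \mathcal{L}_i^2 w$ acting through the duality pairing); once this bookkeeping is done, all boundary terms are handled uniformly by the fact that both $u$ and the correlations $\xi_i$ satisfy $\cdot \, \mathbf{n} = 0$.
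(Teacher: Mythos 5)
Your proposal is correct and follows essentially the same route as the paper: upgrade (\ref{another random new id}) to an identity in $\left(W^{1,2}_0(\mathscr{O};\R)\right)^*$, apply Proposition \ref{rockner prop} with the triple $W^{1,2}_0(\mathscr{O};\R) \hookrightarrow L^2(\mathscr{O};\R) \hookrightarrow \left(W^{1,2}_0(\mathscr{O};\R)\right)^*$, and kill the nonlinear drift and the martingale terms via Lemma \ref{navier boundary nonlinear} while the quadratic variation cancels the corrector through $\inner{\mathcal{L}_i^2 w}{w} = -\norm{\mathcal{L}_i w}^2_{L^2(\mathscr{O};\R)}$. The one quibble is your claim that $\mathcal{L}_{u}w \in L^2(\mathscr{O};\R)$ via $\nabla w \in L^4$, which $w \in W^{1,2}_0(\mathscr{O};\R)$ alone does not provide; this is harmless because $u \in W^{2,2}(\mathscr{O};\R^2) \hookrightarrow L^{\infty}$ in 2D gives the same conclusion, and in any case the paper only needs $\mathcal{L}_{u_s}w_s \in L^{4/3}(\mathscr{O};\R) \hookrightarrow \left(W^{1,2}_0(\mathscr{O};\R)\right)^*$ to make sense of the duality pairing.
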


\begin{proof}
    Just as we required in Proposition \ref{uniqueness prop}, energy methods demand that the weak solution satisfies an identity in $\left(W^{1,2}_0(\mathscr{O};\R) \right)^*$ to then apply Proposition \ref{rockner prop}. The nonlinear term enjoys better regularity here as opposed to the weak solution considered in Proposition \ref{uniqueness prop}, given that $u$ has additional regularity over $w$. In the same manner we deduce that $w$ satisfies, for every $t \in [0,T]$, the identity $$   w_t = w_0 - \int_0^t\mathcal{L}_{u_s}w_s\ ds + \nu\int_0^t \Delta w_s\, ds + \frac{1}{2}\int_0^t\sum_{i=1}^\infty \mathcal{L}_i^2w_s ds - \sum_{i=1}^\infty\int_0^t \mathcal{L}_iw_s dW^i_s $$
$\mathbb{P}-a.s.$ in $\left(W^{1,2}_0(\mathscr{O};\R )\right)^*$, with sufficient regularity to apply Proposition \ref{rockner prop} for the spaces $\mathcal{H}_1:=W^{1,2}_0(\mathscr{O};\R)$, $\mathcal{H}_2:=L^2(\mathscr{O};\R)$ and $\mathcal{H}_3:=\left(W^{1,2}_0(\mathscr{O};\R)\right)^*$. Therefore $w$ satisfies 
\begin{align}\nonumber
    \norm{w_t}_{L^2(\mathscr{O};\R)}^2 = \norm{w_0}_{L^2(\mathscr{O};\R)}^2 &- 2\int_0^t\inner{\mathcal{L}_{u_s}w_s}{w_s}_{\left(W^{1,2}_0(\mathscr{O};\R)\right)^* \times W^{1,2}_0(\mathscr{O};\R)}ds\\ \nonumber &+ 2\nu\int_0^t\inner{\Delta w_s}{w_s}_{\left(W^{1,2}_0(\mathscr{O};\R)\right)^* \times W^{1,2}_0(\mathscr{O};\R)}ds\\ \nonumber
    &+ \int_0^t\sum_{i=1}^\infty \left( \inner{\mathcal{L}_i^2 w_s}{w_s}_{\left(W^{1,2}_0(\mathscr{O};\R)\right)^* \times W^{1,2}_0(\mathscr{O};\R)} + \norm{\mathcal{L}_iw_s}^2_{L^2(\mathscr{O};\R)}\right)ds\\
    &- 2\sum_{i=1}^\infty \int_0^t\inner{\mathcal{L}_i w_s}{w_s}_{L^2(\mathscr{O};\R)}dW^i_s. \label{put it back!}
\end{align}
By definition $$\inner{\mathcal{L}_{u_s}w_s}{w_s}_{\left(W^{1,2}_0(\mathscr{O};\R)\right)^* \times W^{1,2}_0(\mathscr{O};\R)} = \inner{\mathcal{L}_{u_s}w_s}{w_s}_{L^{4/3}(\mathscr{O};\R) \times L^{4}(\mathscr{O};\R)}$$ which is simply zero again from Lemma \ref{navier boundary nonlinear}, whilst $$\inner{\Delta w_s}{w_s}_{\left(W^{1,2}_0(\mathscr{O};\R)\right)^* \times W^{1,2}_0(\mathscr{O};\R)} = -\norm{w_s}_{W^{1,2}_0(\mathscr{O};\R)}^2$$ is also directly from the definition. Similarly $$ \inner{\mathcal{L}_i^2 w_s}{w_s}_{\left(W^{1,2}_0(\mathscr{O};\R)\right)^* \times W^{1,2}_0(\mathscr{O};\R)} = -\inner{\mathcal{L}_i w_s}{\mathcal{L}_i w_s}_{L^2(\mathscr{O};\R)} = -\norm{\mathcal{L}_iw_s}^2_{L^2(\mathscr{O};\R)}$$ hence \begin{align*}
    \inner{\mathcal{L}_i^2 w_s}{w_s}_{\left(W^{1,2}_0(\mathscr{O};\R)\right)^* \times W^{1,2}_0(\mathscr{O};\R)} + \norm{\mathcal{L}_iw_s}^2_{L^2(\mathscr{O};\R)}  = 0
\end{align*}
whilst we also have that 
$$\inner{\mathcal{L}_i w_s}{w_s}_{L^2(\mathscr{O};\R)} = 0$$ again from Lemma \ref{navier boundary nonlinear}. Putting this all back into (\ref{put it back!}) gives the result.
\end{proof}

\begin{corollary} \label{boundycorollary}
    There exists a constant $C$ independent of $\nu$, $\omega$, such that $$\sup_{r \in [0,T]}\norm{u_r}_1^2 \leq C\norm{w_0}_{L^2(\mathscr{O};R)}^2$$
    $\mathbb{P}-a.s$. 
\end{corollary}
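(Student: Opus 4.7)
The plan is to combine Proposition \ref{how not label} with a classical Biot--Savart type inequality in two dimensions. Specifically, the identity in Proposition \ref{how not label} contains the non-negative viscous term $2\nu \int_0^t \norm{w_s}_{W^{1,2}_0(\mathscr{O};\R)}^2 ds$, which may simply be dropped to yield, $\mathbb{P}$-almost surely and for every $t \in [0,T]$,
\begin{equation*}
\norm{w_t}_{L^2(\mathscr{O};\R)}^2 \leq \norm{w_0}_{L^2(\mathscr{O};\R)}^2.
\end{equation*}
The constant here is exactly $1$, independent of both $\nu$ and $\omega$, and taking the supremum over $r \in [0,T]$ preserves this.

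The second ingredient is the classical inequality that there exists a constant $C$ depending only on the domain $\mathscr{O}$ such that
\begin{equation*}
\norm{v}_1 \leq C \norm{\textnormal{curl}\, v}_{L^2(\mathscr{O};\R)} \qquad \text{for all } v \in \bar{W}^{1,2}_\sigma.
\end{equation*}
Granting this, applied pointwise in time to $u_r$, one immediately deduces
\begin{equation*}
\sup_{r \in [0,T]} \norm{u_r}_1^2 \leq C^2 \sup_{r \in [0,T]} \norm{w_r}_{L^2(\mathscr{O};\R)}^2 \leq C^2 \norm{w_0}_{L^2(\mathscr{O};\R)}^2
\end{equation*}
$\mathbb{P}$-almost surely, which is precisely the statement of the corollary, since $C$ is purely geometric.

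The step most worth scrutinising is the Biot--Savart inequality above. In the simply connected setting one writes $v = \nabla^{\perp} \psi$ through a stream function $\psi \in W^{2,2}(\mathscr{O};\R)$ that vanishes on $\partial \mathscr{O}$ (this choice is permitted because $v \cdot \mathbf{n} = 0$), observes that $-\Delta \psi = \textnormal{curl}\, v$, and invokes $W^{2,2}$ elliptic regularity for the Laplacian with Dirichlet data to obtain $\norm{\nabla \psi}_{W^{1,2}} \leq C \norm{\textnormal{curl}\, v}_{L^2(\mathscr{O};\R)}$; in the general multiply connected smooth bounded case, the same conclusion holds upon accounting for the finite-dimensional harmonic component by standard div-curl theory. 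No probabilistic or dynamical input is needed beyond the estimate already produced for the vorticity.
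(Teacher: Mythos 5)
Your proposal is correct and follows essentially the same route as the paper: the vorticity energy identity of Proposition \ref{how not label} gives $\norm{w_t}_{L^2(\mathscr{O};\R)} \leq \norm{w_0}_{L^2(\mathscr{O};\R)}$ after discarding the non-negative viscous term, and the conclusion then follows from the classical Yudovich-type control of $\norm{\cdot}_1$ on $\bar{W}^{1,2}_{\sigma}$ by the $L^2$ norm of the curl, which the paper simply cites (Kelliher, Lemma 3.1) rather than re-deriving via the stream-function argument you sketch.
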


\begin{proof}
    This now follows immediately from the classical control on the gradient of functions in $\bar{W}^{1,2}_{\sigma}$ by their curl (see for example [\cite{kelliher2006navier}] Lemma 3.1, a result attributed to Yudovich). 
\end{proof}

\newpage

\subsection{Tightness} \label{sub tighty}
We now consider a sequence of viscosities $(\nu^k)$ monotonically decreasing to zero as in Theorem \ref{theorem for limit of navier stokes is euler}, and corresponding strong solutions of (\ref{projected Ito Salt}) for $\alpha:= 2\kappa$ given by $u^k$. Without loss of generality we assume $\nu^k < 1$ for all $k$. 

\begin{proposition} \label{tightness in D prop}
    For any sequence of stopping times $(\gamma_n)$ with $\gamma_n: \Omega \rightarrow [0,T]$, and any $\phi \in \bar{W}^{1,2}_{\sigma}$, $$\lim_{\delta \rightarrow 0^+}\sup_{k \in \N}\mathbbm{E}\left( \left\vert \inner{u^k_{(\gamma_k + \delta)\wedge T} - u^k_{\gamma_k}}{\phi} \right\vert\right) = 0.$$
\end{proposition}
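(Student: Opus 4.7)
The plan is a standard Aldous-type argument: apply the (weak-form) identity for $u^k$ on the interval $[\gamma_k, (\gamma_k+\delta)\wedge T]$, tested against $\phi$, and bound each of the resulting four terms (Stokes, nonlinear, It\^o corrector, stochastic integral) in expectation using constants that are independent of $k$ and linear (or of order $\delta^{1/2}$) in $\delta$. The crux is the viscosity-uniform vorticity estimate of Corollary \ref{boundycorollary}, which gives $\sup_{r\in[0,T]}\norm{u^k_r}_1^2 \le C \norm{w_0}_{L^2(\mathscr{O};\R)}^2$ $\mathbbm{P}$-a.s., a bound whose expectation is finite since $u_0\in L^2(\Omega;\bar{W}^{1,2}_\sigma)$.

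Starting from the identity in Definition \ref{definitionofspatiallyweak} (with $\alpha=2\kappa$, $\mathcal{G}_i=\mathcal{Q}_i=B_i$), subtracting the values at $\gamma_k$ and at $(\gamma_k+\delta)\wedge T$ yields
\begin{align*}
\inner{u^k_{(\gamma_k+\delta)\wedge T} - u^k_{\gamma_k}}{\phi}
&= -\int_{\gamma_k}^{(\gamma_k+\delta)\wedge T}\inner{\mathcal{L}_{u^k_s}u^k_s}{\phi}\,ds
 - \nu^k\int_{\gamma_k}^{(\gamma_k+\delta)\wedge T}\inner{u^k_s}{\phi}_1\,ds\\
&\quad + \nu^k\int_{\gamma_k}^{(\gamma_k+\delta)\wedge T}\inner{(\kappa-2\kappa)u^k_s}{\phi}_{L^2(\partial\mathscr{O};\R^2)}\,ds\\
&\quad + \tfrac{1}{2}\int_{\gamma_k}^{(\gamma_k+\delta)\wedge T}\sum_{i=1}^\infty\inner{B_iu^k_s}{B_i^*\phi}\,ds
 - \int_{\gamma_k}^{(\gamma_k+\delta)\wedge T}\inner{Bu^k_s}{\phi}\,d\mathcal{W}_s.
\end{align*}
Take absolute values and expectation term by term. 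For the nonlinear term, Lemma \ref{the 2D bound lemma} combined with Poincar\'e gives $|\inner{\mathcal{L}_{u^k_s}u^k_s}{\phi}| \le C\norm{u^k_s}_1^2\norm{\phi}^{1/2}\norm{\phi}_1^{1/2}$, so its contribution is bounded by $C_\phi\,\delta\,\mathbbm{E}\sup_r\norm{u^k_r}_1^2 \le C_\phi \delta\,\mathbbm{E}\norm{w_0}_{L^2}^2$. For the Stokes terms use Cauchy--Schwarz on $\inner{u^k_s}{\phi}_1$ and inequality (\ref{inequality from Lions}) on the boundary integral, so that both contributions are bounded by $C_\phi \nu^k \delta\,\mathbbm{E}\norm{w_0}_{L^2}^2 \le C_\phi \delta\,\mathbbm{E}\norm{w_0}_{L^2}^2$ (using $\nu^k<1$). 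For the It\^o corrector, assumptions (\ref{assumpty 1}) and (\ref{assumpty 3}) give $|\inner{B_iu^k_s}{B_i^*\phi}|\le \norm{B_iu^k_s}\norm{B_i^*\phi} \le c_i(1+\norm{u^k_s}_{W^{1,2}}^2)^{1/2}\norm{\phi}_{W^{1,2}}$; since $\sum c_i<\infty$, this yields a bound of the form $C_\phi\delta(1+\mathbbm{E}\norm{w_0}_{L^2}^2)$.

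For the stochastic integral, the Burkholder--Davis--Gundy inequality together with (\ref{assumpty 1}) give
\begin{align*}
\mathbbm{E}\left\vert\int_{\gamma_k}^{(\gamma_k+\delta)\wedge T}\inner{Bu^k_s}{\phi}d\mathcal{W}_s\right\vert
&\le C\,\mathbbm{E}\left(\int_{\gamma_k}^{(\gamma_k+\delta)\wedge T}\sum_{i=1}^\infty\inner{B_iu^k_s}{\phi}^2\,ds\right)^{1/2}\\
&\le C\norm{\phi}\left(\sum_{i=1}^\infty c_i\right)^{1/2}\mathbbm{E}\left(\int_{\gamma_k}^{(\gamma_k+\delta)\wedge T}(1+\norm{u^k_s}_{W^{1,2}}^2)\,ds\right)^{1/2}\\
&\le C_\phi\,\delta^{1/2}\bigl(1+\mathbbm{E}\norm{w_0}_{L^2}^2\bigr)^{1/2}.
\end{align*}
Summing the four estimates, we obtain a bound of the form $C_\phi\bigl(\delta + \delta^{1/2}\bigr)\bigl(1+\mathbbm{E}\norm{w_0}_{L^2}^2\bigr)$, uniform in $k$. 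Letting $\delta\to 0^+$ concludes the proof.

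The main point to watch is that every estimate must be $k$-free; this is precisely guaranteed by Corollary \ref{boundycorollary}, which converts the $\nu^k$-independent vorticity identity of Proposition \ref{how not label} into a $\nu^k$-independent $\bar{W}^{1,2}_\sigma$ estimate for $u^k$. Without that corollary, the nonlinear and noise contributions would force a dependence on $\nu^k$ through the energy norm, and the uniform convergence in $k$ would fail.
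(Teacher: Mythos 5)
Your proposal is correct and follows essentially the same route as the paper: both decompose $\inner{u^k_{(\gamma_k+\delta)\wedge T}-u^k_{\gamma_k}}{\phi}$ via the equation tested against $\phi$, bound the drift terms pathwise by $C_\phi\norm{w_0}_{L^2(\mathscr{O};\R)}^2$ using Corollary \ref{boundycorollary} (with $\nu^k<1$ absorbing the viscosity), and treat the stochastic integral with Burkholder--Davis--Gundy to obtain a uniform-in-$k$ bound of order $\delta+\delta^{1/2}$. The only cosmetic difference is that you write the Stokes contribution as the pair $-\nu^k\inner{u^k_s}{\phi}_1+\nu^k\inner{(\kappa-\alpha)u^k_s}{\phi}_{L^2(\partial\mathscr{O};\R^2)}$ while the paper uses the equivalent single expression $\inner{Au^k_s}{\phi}=\inner{u^k_s}{\phi}_H$.
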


\begin{proof}
    We use the identity satisfied by the inner product, which is
     \begin{align} \nonumber
     \inner{u^k_{(\gamma_k + \delta)\wedge T} - u^k_{\gamma_k}}{\phi} &= - \int_{\gamma_k}^{(\gamma_k + \delta)\wedge T}\inner{\mathcal{L}_{u^k_s}u^k_s}{\phi}ds - \nu^k\int_{\gamma_k}^{(\gamma_k + \delta)\wedge T} \inner{Au^k_s}{\phi} ds \\ &+ \frac{1}{2}\int_{\gamma_k}^{(\gamma_k + \delta)\wedge T}\sum_{i=1}^\infty \inner{B_i^2u^k_s}{\phi} ds - \sum_{i=1}^\infty\int_{\gamma_k}^{(\gamma_k + \delta)\wedge T} \inner{B_iu^k_s}{\phi} dW^i_s\nonumber.
\end{align}
having immediately passed $\mathcal{P}$ over to $\phi$. Inspecting each term, from Lemma \ref{the 2D bound lemma} we have that
$$ \left\vert\inner{\mathcal{L}_{u^k_s}u^k_s}{\phi}\right\vert \leq c\norm{u^k_s}^{\frac{1}{2}}\norm{u^k_s}_1^{\frac{3}{2}}\norm{\phi}_1 \leq c\norm{w_0}_{L^2(\mathscr{O};\R)}^2$$ where we have used Corollary \ref{boundycorollary} and will continue to absorb norms of $\phi$ into our constant. Similarly we have that $\inner{Au^k_s}{\phi} = \inner{u^k_s}{\phi}_H$ and as $\nu^k < 1$, $$\nu^k\left \vert \inner{Au^k_s}{\phi}\right\vert \leq c\norm{u^k_s}_1\norm{\phi}_1 \leq c\norm{w_0}_{L^2(\mathscr{O};\R)}$$ and using that $\inner{B_i^2u^k_s}{\phi} = \inner{B_iu^k_s}{B_i^*\phi}$ we achieve the same bound there. Using that $\norm{w_0}_{L^2(\mathscr{O};\R)} \leq 1 + \norm{w_0}_{L^2(\mathscr{O};\R)}^2$, we have so far shown that 
\begin{align} \nonumber
     \left\vert\inner{u^k_{(\gamma_k + \delta)\wedge T} - u^k_{\gamma_k}}{\phi}\right\vert \leq  \int_{\gamma_k}^{(\gamma_k + \delta)\wedge T}c(1 + \norm{w_0}_{L^2(\mathscr{O};\R)}^2) ds + \left\vert\sum_{i=1}^\infty\int_{\gamma_k}^{(\gamma_k + \delta)\wedge T} \inner{B_iu^k_s}{\phi} dW^i_s \right\vert.
\end{align}
Taking expectation and applying the Burkholder-Davis-Gundy Inequality, observe firstly that \begin{align*}\mathbbm{E}\left( \left\vert\sum_{i=1}^\infty\int_{\gamma_k}^{(\gamma_k + \delta)\wedge T} \inner{B_iu^k_s}{\phi} dW^i_s \right\vert\right) &\leq c\mathbbm{E}\left( \int_{\gamma_k}^{(\gamma_k + \delta)\wedge T} \sum_{i=1}^\infty \inner{B_iu^k_s}{\phi}^2 ds \right)^{\frac{1}{2}}\\ &\leq c\mathbbm{E}\left( \int_{\gamma_k}^{(\gamma_k + \delta)\wedge T} \norm{w_0}_{L^2(\mathscr{O};\R)}^2 ds \right)^{\frac{1}{2}}\\
&\leq c\left[\mathbbm{E} \int_{\gamma_k}^{(\gamma_k + \delta)\wedge T} \norm{w_0}_{L^2(\mathscr{O};\R)}^2 ds \right]^{\frac{1}{2}}\end{align*}
Using $\delta$ as an upper bound for the length of the integrals, we have that $$\mathbbm{E}\left(\left\vert\inner{u^k_{(\gamma_k + \delta)\wedge T} - u^k_{\gamma_k}}{\phi}\right\vert\right) \leq c\delta\mathbbm{E}\left( \norm{w_0}_{L^2(\mathscr{O};\R)}^2\right) + c\left[\delta\mathbbm{E}\left( \norm{w_0}_{L^2(\mathscr{O};\R)}^2\right)\right]^{\frac{1}{2}}$$ where the result now follows from the fact that $u_0 \in L^2\left(\Omega;\bar{W}^{1,2}_{\sigma}\right)$ implies $w_0 \in L^2\left(\Omega;L^2(\mathscr{O};\R)\right)$.
\end{proof}

\begin{corollary} \label{tightness in D corollary}
    The sequence of the laws of $(u^k)$ is tight in the space of probability measures over $\mathcal{D}\left([0,T];L^2_\sigma \right)$.
\end{corollary}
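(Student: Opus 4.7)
My plan is to invoke the tightness criterion from Subsection \ref{sub useful results} which, as the author indicates, combines a Jakubowski-type weak tightness condition with the classical Aldous equicontinuity requirement. For a c\`{a}dl\`{a}g process taking values in a separable Hilbert space such as $L^2_\sigma$, tightness in $\mathcal{D}([0,T];L^2_\sigma)$ can be deduced from two ingredients: (i) a uniform-in-time compactness condition ensuring that, for each $t \in [0,T]$, the laws of $\{u^k_t\}_{k \in \N}$ are concentrated (up to arbitrarily small mass) on a compact subset of $L^2_\sigma$; and (ii) an Aldous-type condition on the one-dimensional projections $\inner{u^k_\cdot}{\phi}$ for $\phi$ ranging over a countable dense subset of $L^2_\sigma$.

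For (i), I would use Corollary \ref{boundycorollary}, which gives the deterministic bound $\sup_{r \in [0,T]} \norm{u^k_r}_1^2 \leq C \norm{w_0}^2_{L^2(\mathscr{O};\R)}$ with a constant $C$ independent of $k$ and $\omega$, combined with the assumption $u_0 \in L^2(\Omega; \bar{W}^{1,2}_\sigma)$ (which gives $w_0 \in L^2(\Omega; L^2(\mathscr{O};\R))$). By Chebyshev's inequality, for each $\varepsilon > 0$ there exists $R_\varepsilon > 0$ such that $\mathbb{P}(\sup_{r \in [0,T]} \norm{u^k_r}_1 \leq R_\varepsilon) \geq 1 - \varepsilon$ uniformly in $k$. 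Since the closed ball $\{f \in \bar{W}^{1,2}_\sigma : \norm{f}_1 \leq R_\varepsilon\}$ embeds compactly into $L^2_\sigma$ by the Rellich--Kondrachov theorem, this supplies the required compact-containment condition simultaneously for all $t \in [0,T]$.

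For (ii), the Aldous condition amounts to showing that for every $\phi$ in a countable dense subset of $L^2_\sigma$, every family of stopping times $\gamma_k : \Omega \to [0,T]$, and every $\eta > 0$,
\[
\lim_{\delta \to 0^+} \sup_{k \in \N} \mathbb{P}\!\left( \left\vert \inner{u^k_{(\gamma_k + \delta) \wedge T} - u^k_{\gamma_k}}{\phi} \right\vert > \eta \right) = 0.
\]
Since $\bar{W}^{1,2}_\sigma$ is dense in $L^2_\sigma$ (this follows from the density of $C^\infty_{0,\sigma}(\mathscr{O};\R^2)$ in $L^2_\sigma$), it suffices to verify this condition for $\phi \in \bar{W}^{1,2}_\sigma$, which is exactly the content of Proposition \ref{tightness in D prop} combined with Markov's inequality.

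The main (minor) obstacle is ensuring that the abstract criterion from the appendix applies to $\mathcal{D}([0,T]; L^2_\sigma)$ as formulated, and that projections onto a dense subset of the dual suffice — both of which are standard features of Jakubowski's theorem applied in a separable Hilbert setting. With that in hand the two ingredients above close the argument.
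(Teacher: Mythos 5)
Your proposal is correct and follows essentially the same route as the paper: the paper's proof applies Lemma \ref{lemma for D tight} with $V:=\mathcal{H}_1:=\bar{W}^{1,2}_{\sigma}$ and $\mathcal{H}_2:=L^2_{\sigma}$, verifying the uniform bound (\ref{first condition primed}) via Corollary \ref{boundycorollary} and the Aldous-type condition (\ref{second condition primed}) via Chebyshev's inequality together with Proposition \ref{tightness in D prop}, exactly as you outline. The only difference is that you unpack the internal mechanics of the criterion (compact containment via Rellich--Kondrachov, density of the test functions), which the abstract lemma already handles.
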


\begin{proof}
    We apply Lemma \ref{lemma for D tight} in the case of $V:=\mathcal{H}_1:=\bar{W}^{1,2}_{\sigma}$ and $\mathcal{H}_2:= L^2_{\sigma}$. Condition (\ref{first condition primed}) is satisfied from Corollary \ref{boundycorollary}. As for (\ref{second condition primed}), we use Chebyshev's Inequality to see that $$\mathbbm{P}\left(\left\{
    \omega \in \Omega: \left\vert \left\langle u^k_{(\gamma_k + \delta) \wedge T} -u^k_{\gamma_k }   , \phi    \right\rangle \right\vert > \varepsilon \right\}\right) \leq \frac{1}{\varepsilon}\mathbbm{E}\left(\left\vert\inner{u^k_{(\gamma_k + \delta)\wedge T} - u^k_{\gamma_k}}{\phi}\right\vert\right)$$
    from which the condition follows courtesy of Proposition \ref{tightness in D prop}.
\end{proof}

\subsection{Passage to the Stochastic Euler Equation} \label{passage to stochy euler}

Using the uniform boundedness from Corollary \ref{boundycorollary} and the tightness due to Corollary \ref{tightness in D corollary}, as well as the progressive measurability of the processes $(u^k)$ in $\bar{W}^{2,2}_{\alpha}$, then just as we obtained Theorem \ref{theorem for new prob space} we acquire the following. One requires taking a subsequence, as in the statement of Theorem \ref{theorem for limit of navier stokes is euler}, though for simplicity we index it again by $k$ here. 

\begin{theorem} \label{theorem for new prob space 2}
There exists a filtered probability space $\left(\tilde{\Omega},\tilde{\mathcal{F}},(\tilde{\mathcal{F}}_t), \tilde{\mathbb{P}}\right)$, a cylindrical Brownian Motion $\tilde{\mathcal{W}}$ over $\mathfrak{U}$ with respect to $\left(\tilde{\Omega},\tilde{\mathcal{F}},(\tilde{\mathcal{F}}_t), \tilde{\mathbb{P}}\right)$, a random variable $\tilde{u}_0:\tilde{\Omega} \rightarrow \bar{W}^{1,2}_{\sigma}$, a sequence of processes $(\tilde{u}^k)$, $\tilde{u}^k:\tilde{\Omega} \times [0,T] \rightarrow \bar{W}^{2,2}_{\alpha}$ is progressively measurable and a progressively measurable process $\tilde{u}:\tilde{\Omega} \times [0,T] \rightarrow \bar{W}^{1,2}_{\sigma}$ such that:
\begin{enumerate}
  \item \label{le 1}$\tilde{u}_0$ has the same law as $u_0$;
    \item \label{le 2} For each $k \in \N$ and $t\in[0,T]$, $\tilde{u}^k$ satisfies the identity
    \begin{equation} \nonumber
    \tilde{u}^k_t = \tilde{u}_0 - \int_0^t\mathcal{P}\mathcal{L}_{\tilde{u}^k_s}\tilde{u}^k_s\ ds - \nu^k\int_0^t  A \tilde{u}^k_s\, ds + \frac{1}{2}\int_0^t\sum_{i=1}^\infty \mathcal{P}B_i^2\tilde{u}^k_s ds - \int_0^t \mathcal{P}B\tilde{u}^k_s d\tilde{\mathcal{W}}_s 
\end{equation}
$\tilde{\mathbb{P}}-a.s.$ in $L^2_{\sigma}$;
\item \label{le 3} For $\tilde{\mathbb{P}}-a.e$ $\omega$, $\tilde{u}^k(\omega) \rightarrow \tilde{u}(\omega)$ in $\mathcal{D}\left([0,T]; L^2_{\sigma} \right)$;
\item \label{le 4} For $\tilde{\mathbb{P}}-a.e$ $\omega$, $\tilde{u}(\omega) \in L^\infty\left([0,T];\bar{W}^{1,2}_{\sigma} \right)$.
\end{enumerate}

\end{theorem}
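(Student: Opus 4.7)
Since the text preceding the statement remarks that the result is obtained ``just as we obtained Theorem \ref{theorem for new prob space}'', my plan is to mimic that construction. The strategy is a Jakubowski--Skorokhod representation argument: combine the $\mathcal{D}([0,T]; L^2_\sigma)$-tightness from Corollary \ref{tightness in D corollary} with the uniform $\bar{W}^{1,2}_\sigma$ bound of Corollary \ref{boundycorollary} to extract an almost surely convergent subsequence on a new probability space, and then transfer the SALT Navier--Stokes identity together with the regularity of the limit.

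Concretely, I would consider the joint random elements $\Xi^k := (u^k, \mathcal{W}, M^k)$, where $M^k_t := \int_0^t \mathcal{P}Bu^k_s \, d\mathcal{W}_s$ is viewed as a $C([0,T]; L^2_\sigma)$-valued process, taking values in the product space $\mathcal{D}([0,T]; L^2_\sigma) \times C([0,T]; \mathfrak{U}') \times C([0,T]; L^2_\sigma)$. Tightness of the first component is Corollary \ref{tightness in D corollary}; the noise component is deterministic hence trivially tight; tightness of $M^k$ follows from the Burkholder--Davis--Gundy inequality and the uniform $L^2$ bound on $Bu^k_s$ via an Aldous-type criterion. Since the product space is non-Polish in the Skorokhod topology on $\mathcal{D}([0,T]; L^2_\sigma)$, Jakubowski's extension of Skorokhod's theorem delivers, along a subsequence relabelled as $k$, a filtered probability space $(\tilde{\Omega}, \tilde{\mathcal{F}}, (\tilde{\mathcal{F}}_t), \tilde{\mathbb{P}})$ carrying copies $\tilde{\Xi}^k \stackrel{d}{=} \Xi^k$ with $\tilde{\Xi}^k \to \tilde{\Xi} = (\tilde{u}, \tilde{\mathcal{W}}, \tilde{M})$ almost surely in the product topology. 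A Kurtz-type martingale characterisation applied to the augmented filtration $(\tilde{\mathcal{F}}_t)$ generated by all these elements identifies $\tilde{\mathcal{W}}$ as a cylindrical Brownian motion and shows $\tilde{M}^k_t = \int_0^t \mathcal{P}B\tilde{u}^k_s \, d\tilde{\mathcal{W}}_s$. By equality of joint laws, item \ref{le 2} then holds on the new space, and the shared deterministic initial datum $u_0$ (common across all $u^k$) forces $\tilde{u}^k_0$ to be a single $\tilde{u}_0$ with $\tilde{u}_0 \stackrel{d}{=} u_0$, giving item \ref{le 1}; item \ref{le 3} is the direct Skorokhod conclusion.

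For item \ref{le 4}, the uniform bound $\sup_{k,t} \|\tilde{u}^k_t\|_1 \leq C\|\tilde{w}_0\|_{L^2(\mathscr{O};\R)}$ transfers via equality of laws, and since $\bar{W}^{1,2}_\sigma$ embeds compactly in $L^2_\sigma$, at every $t$ which is a continuity point of $\tilde{u}$ in $L^2_\sigma$ one may extract a weak-$\bar{W}^{1,2}_\sigma$ subsubsequence along which $\tilde{u}^k_t \rightharpoonup \tilde{u}_t$; weak lower semicontinuity then yields $\|\tilde{u}_t\|_1 \leq C\|\tilde{w}_0\|_{L^2(\mathscr{O};\R)}$. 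The continuity points form a set of full Lebesgue measure in $[0,T]$, so this bound delivers $\tilde{u} \in L^\infty([0,T]; \bar{W}^{1,2}_\sigma)$ $\tilde{\mathbb{P}}$-almost surely. The main obstacle I anticipate is arranging the Kurtz-type identification of the stochastic integral on the new space, since $\mathcal{D}([0,T]; L^2_\sigma)$ is non-Polish and standard Skorokhod theorems are inapplicable; however, the techniques used in the proof of Theorem \ref{theorem for new prob space} (themselves drawing on [\cite{goodair2023zero}]) have already navigated this obstacle in the essentially identical setting with general noise, so the present argument would be substantively the same, with the only novelty being the straightforward bookkeeping for the specific SALT operator $B$.
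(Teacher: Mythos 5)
Your proposal is correct and follows essentially the same route as the paper, which itself obtains this theorem by repeating the Jakubowski--Skorokhod construction of Theorem \ref{theorem for new prob space} (drawing on [\cite{goodair2023zero}] Section 3) with the tightness input replaced by Corollary \ref{tightness in D corollary} and the uniform bound by Corollary \ref{boundycorollary}; your treatment of item \ref{le 4} via weak lower semicontinuity matches the Banach--Alaoglu argument the paper uses. The only slip is calling $u_0$ deterministic --- it is a random variable in $L^2\left(\Omega;\bar{W}^{1,2}_{\sigma}\right)$ --- but since it is common to all $u^k$ and can be included as a coordinate in the joint law, your conclusion that a single $\tilde{u}_0$ with the correct law emerges is unaffected.
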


In fact we can immediately upgrade the convergence in item \ref{le 3} to that in $C\left([0,T];L^2_{\sigma}\right)$. Recall that the restriction of the $\mathcal{D}\left([0,T];L^2_{\sigma}\right)$ topology to $C\left([0,T];L^2_{\sigma}\right)$ is equivalent to that induced by the supremum norm (e.g. [\cite{jakubowski1986skorokhod}] Proposition 1.6), which is a Banach space. Hence for $\tilde{\mathbb{P}}-a.e$ $\omega$, the convergent sequence $(\tilde{u}^k(\omega))$ in $\mathcal{D}\left([0,T]; L^2_{\sigma} \right)$ is Cauchy in this space, hence too in $C\left([0,T]; L^2_{\sigma} \right)$, thus exhibits a limit in $C\left([0,T]; L^2_{\sigma} \right)$ which is of course equal to that in $\mathcal{D}\left([0,T]; L^2_{\sigma} \right)$ being $\tilde{u}(\omega)$. Thus $\tilde{u}\in C\left([0,T]; L^2_{\sigma} \right) $ $\mathbbm{P}-a.s.$ and has the necessary regularity to be a martingale weak solution of the equation (\ref{projected Ito Salt Euler}). Indeed each $\tilde{u}^k$ is a strong solution of the equation (\ref{projected Ito Salt}) relative to this new probability space and cylindrical Brownian Motion (thus is \textit{the} strong solution), so in order to prove Theorem \ref{theorem for limit of navier stokes is euler} it only remains to show that $\tilde{u}$ satisfies the identity (\ref{newid1}).

\begin{proposition} \label{not another one!!!!}
    The process $\tilde{u}$ satisfies the identity (\ref{newid1}): that is \begin{align} 
     \inner{\tilde{u}_t}{\phi} = \inner{\tilde{u}_0}{\phi} - \int_0^{t}\inner{\mathcal{L}_{\tilde{u}_s}\tilde{u}_s}{\phi}ds + \frac{1}{2}\int_0^{t}\sum_{i=1}^\infty \inner{B_i\tilde{u}_s}{B_i^*\phi} ds - \int_0^{t} \inner{B\tilde{u}_s}{\phi} d\tilde{\mathcal{W}}_s\nonumber
\end{align}
holds for every $\phi \in \bar{W}^{1,2}_{\sigma}$ $\tilde{\mathbb{P}}-a.s.$ in $\R$ for all $t \in [0,T]$.
\end{proposition}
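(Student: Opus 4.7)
Starting from item \ref{le 2} of Theorem \ref{theorem for new prob space 2}, take the $L^2_{\sigma}$ inner product of both sides with a fixed $\phi \in \bar{W}^{1,2}_{\sigma}$, push $\mathcal{P}$ onto $\phi$ (where it acts trivially) and pull one copy of $B_i$ through its adjoint $B_i^*$. Treating $A$ via the continuous extension $\bar{W}^{1,2}_{\sigma} \to (\bar{W}^{1,2}_{\sigma})^*$ introduced in the proof of Proposition \ref{uniqueness prop}, this yields precisely the identity (\ref{newid1}) for $\tilde{u}^k$, augmented by the extra viscous term $-\nu^k \int_0^t \inner{A\tilde{u}^k_s}{\phi}_{(\bar{W}^{1,2}_{\sigma})^* \times \bar{W}^{1,2}_{\sigma}} ds$. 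The plan is to send $k \to \infty$ in each term $\tilde{\mathbb{P}}$-a.s. for fixed $t$ and $\phi$, and then upgrade to the full statement by joint continuity in $t$ of both sides (using $\tilde{u} \in C([0,T]; L^2_\sigma)$) together with separability of $\bar{W}^{1,2}_\sigma$.

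The left-hand side converges from $\tilde{u}^k \to \tilde{u}$ in $C([0,T]; L^2_{\sigma})$, the upgrade of item \ref{le 3} discussed after the theorem. The viscous contribution is bounded in absolute value by $c \nu^k T \|\phi\|_{W^{1,2}} \sup_{k,s} \|\tilde{u}^k_s\|_1$, which vanishes by $\nu^k \to 0$ and the $k$-uniform finiteness of that supremum from Corollary \ref{boundycorollary}. For the stochastic integral, use (\ref{assumpty 3}) to rewrite $\inner{B_i \tilde{u}^k_s}{\phi} = \inner{\tilde{u}^k_s}{B_i^* \phi}$ with $B_i^* \phi \in L^2$; It\^{o} isometry and $\sum_i c_i < \infty$ then bound the $L^2(\tilde{\Omega})$ discrepancy by $c\|\phi\|_{W^{1,2}}^2 \sum_i c_i \int_0^t \tilde{\mathbb{E}}\|\tilde{u}^k_s - \tilde{u}_s\|^2 ds$, which vanishes by dominated convergence using the uniform $L^\infty([0,T]; L^2_\sigma)$ domination, and a further $\tilde{\mathbb{P}}$-a.s.~convergent subsequence delivers the pointwise statement.

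The nonlinear term is the delicate piece: apply Lemma \ref{navier boundary nonlinear} to rewrite $\inner{\mathcal{L}_{\tilde{u}^k_s}\tilde{u}^k_s}{\phi} = -\inner{\tilde{u}^k_s}{\mathcal{L}_{\tilde{u}^k_s}\phi}$, then split
\begin{align*}
\inner{\mathcal{L}_{\tilde{u}^k_s}\tilde{u}^k_s}{\phi} - \inner{\mathcal{L}_{\tilde{u}_s}\tilde{u}_s}{\phi} = -\inner{\tilde{u}^k_s - \tilde{u}_s}{\mathcal{L}_{\tilde{u}_s}\phi} - \inner{\tilde{u}^k_s}{\mathcal{L}_{\tilde{u}^k_s - \tilde{u}_s}\phi}.
\end{align*}
Each piece is controlled by the H\"{o}lder-plus-Gagliardo-Nirenberg computation behind Lemma \ref{the 2D bound lemma}, producing a factor $\|\tilde{u}^k_s - \tilde{u}_s\|_{L^4} \leq c\|\tilde{u}^k_s - \tilde{u}_s\|^{1/2}\|\tilde{u}^k_s - \tilde{u}_s\|_1^{1/2}$ that vanishes pointwise in $s$ from the $C([0,T]; L^2_\sigma)$ convergence and the uniform $W^{1,2}$ bound, so dominated convergence in time supplies the integrated limit. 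For the noise-correction sum, the same uniform $W^{1,2}$ bound combined with reflexivity yields weak convergence $\tilde{u}^k \rightharpoonup \tilde{u}$ in $L^2([0,T]; \bar{W}^{1,2}_\sigma)$ (with the weak subsequential limit identified as $\tilde{u}$ by the strong $L^2$ limit, exactly as in Proposition \ref{final regularity of solutions}); composing with the bounded operator $B_i$ and pairing against the fixed element $B_i^* \phi \, \mathbbm{1}_{[0,t]} \in L^2([0,T]; L^2(\mathscr{O};\R^2))$ gives convergence termwise, and the series in $i$ is closed by dominated convergence using $\sum_i c_i < \infty$ together with assumption (\ref{assumpty 1}).

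The principal obstacle is precisely this nonlinear convergence: since $\tilde{u}^k$ converges only strongly in $L^2$ and weakly in $W^{1,2}$, Lemma \ref{navier boundary nonlinear} is essential for transferring the derivative onto the fixed test function $\phi$, leaving both \textquotedblleft difference\textquotedblright\ factors in $L^4$ slots where $L^2$-strong convergence, interpolated against the uniform $W^{1,2}$ bound from Corollary \ref{boundycorollary}, suffices.
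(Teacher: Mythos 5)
Your argument is correct, but it takes a genuinely different route from the paper's in the two delicate terms. The paper proceeds in two stages: it first proves the identity for test functions $\psi \in \bar{W}^{2,2}_{\alpha}$, exploiting the extra regularity to place the difference $\tilde{u}^k_s - \tilde{u}_s$ in a plain $L^2$ slot both in the nonlinear term (via H\"{o}lder with $L^6$/$L^{6/5}$ and $L^4$ pairings, costing $\norm{\psi}_{W^{2,2}}$) and in the correction term (by moving both adjoints onto the test function, $\inner{B_i^2(\tilde{u}^k_s-\tilde{u}_s)}{\psi} = \inner{\tilde{u}^k_s-\tilde{u}_s}{(B_i^*)^2\psi}$), and only then passes to general $\phi \in \bar{W}^{1,2}_{\sigma}$ by density, which requires a stochastic dominated convergence argument and a second subsequence extraction for the stochastic integral. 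You instead work directly with $\phi \in \bar{W}^{1,2}_{\sigma}$: for the nonlinear term you accept the difference in an $L^4$ slot and close it by interpolating the uniform $C([0,T];L^2_\sigma)$ convergence against the $k$-uniform $W^{1,2}$ bound of Corollary \ref{boundycorollary}, exactly the mechanism of Lemma \ref{the 2D bound lemma}; for the correction term you keep one $B_i$ on the solution and invoke pathwise weak convergence in $L^2([0,T];\bar{W}^{1,2}_{\sigma})$, identified with $\tilde{u}$ through the strong $L^2$ limit as in Proposition \ref{final regularity of solutions}. This buys you the elimination of the final density step and its extra subsequence, at the price of having to establish and use the pathwise weak $W^{1,2}$ convergence, which the paper never needs since strong $L^2$ convergence suffices against its more regular test functions. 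Both routes rest on the same inputs (the viscosity-independent vorticity bound and the upgrade of the Skorokhod convergence to $C([0,T];L^2_\sigma)$), and your treatment of the viscous term and the stochastic integral (isometry, $L^2(\tilde{\Omega})$ convergence, a.s. subsequence) matches the paper's up to cosmetic differences.
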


\begin{proof}
    We fix arbitrary $\psi \in \bar{W}^{2,2}_{\alpha}$ and $t\in[0,T]$, with the aim to show the result for $\psi$ and pass to the limit for any $\phi \in \bar{W}^{1,2}_{\sigma}$. Taking the inner product with $\psi$ in item \ref{le 2} of Theorem \ref{theorem for new prob space 2}, we have that
     \begin{align*}
    \inner{\tilde{u}^k_t}{\psi} = \inner{\tilde{u}_0}{\psi} - \int_0^t\inner{\mathcal{L}_{\tilde{u}^k_s}\tilde{u}^k_s}{\psi}\ ds &- \nu^k\int_0^t \inner{ A \tilde{u}^k_s}{\psi}\, ds\\ &+ \frac{1}{2}\int_0^t\sum_{i=1}^\infty \inner{B_i^2\tilde{u}^k_s}{\psi} ds - \int_0^t \inner{B\tilde{u}^k_s}{\psi} d\tilde{\mathcal{W}}_s 
\end{align*}
holds $\tilde{\mathbbm{P}}-a.s.$ in $\R$, so we look to show the convergence of each term in turn $\tilde{\mathbb{P}}-a.s.$. It is immediate that $\inner{\tilde{u}^k_t}{\psi}$ converges to $\inner{\tilde{u}_t}{\psi}$ from the convergence in $C\left([0,T];L^2_{\sigma}\right)$. For the nonlinear term, we have that \begin{align*}
    \left\vert \inner{\mathcal{L}_{\tilde{u}^k_s}\tilde{u}^k_s}{\psi} - \inner{\mathcal{L}_{\tilde{u}_s}\tilde{u}_s}{\psi}\right\vert &= \left\vert \inner{\mathcal{L}_{\tilde{u}^k_s- \tilde{u}_s}\tilde{u}^k_s + \mathcal{L}_{\tilde{u}_s}(\tilde{u}^k_s - \tilde{u}_s)}{\psi}\right\vert\\ &\leq \left\vert \inner{\mathcal{L}_{\tilde{u}^k_s- \tilde{u}_s}\tilde{u}^k_s }{\psi}\right\vert + \left\vert \inner{ \mathcal{L}_{\tilde{u}_s}(\tilde{u}^k_s - \tilde{u}_s)}{\psi}\right\vert.
\end{align*}
We treat both cases with Lemma \ref{navier boundary nonlinear} and H\"{o}lder's Inequality:
\begin{align*}
    \left\vert \inner{\mathcal{L}_{\tilde{u}^k_s- \tilde{u}_s}\tilde{u}^k_s }{\psi}\right\vert = \left\vert \inner{\tilde{u}^k_s }{\mathcal{L}_{\tilde{u}^k_s- \tilde{u}_s}\psi}\right\vert \leq \norm{\tilde{u}^k_s}_{L^6}\norm{\mathcal{L}_{\tilde{u}^k_s- \tilde{u}_s}\psi}_{L^{6/5}} &\leq c\sum_{l=1}^2\norm{\tilde{u}^k_s}_{L^6}\norm{\tilde{u}^k_s- \tilde{u}_s}\norm{\partial_l \psi}_{L^3}\\ &\leq c\norm{\tilde{u}^k_s}_{1}\norm{\tilde{u}^k_s- \tilde{u}_s}\norm{\psi}_{W^{2,2}}
\end{align*}
whilst
\begin{align*}
    \left\vert \inner{ \mathcal{L}_{\tilde{u}_s}(\tilde{u}^k_s - \tilde{u}_s)}{\psi}\right\vert = \left\vert \inner{ \tilde{u}^k_s - \tilde{u}_s}{\mathcal{L}_{\tilde{u}_s}\psi}\right\vert &\leq c\sum_{l=1}^2\norm{\tilde{u}^k_s - \tilde{u}_s}\norm{\tilde{u}_s}_{L^4}\norm{\partial_l\psi}_{L^4} \\&\leq c\norm{\tilde{u}^k_s - \tilde{u}_s}\norm{\tilde{u}_s}_{1}\norm{\psi}_{W^{2,2}}
\end{align*}

We now comment on the regularity of $\tilde{u}^k$; the estimates shown in Subsection \ref{sub estimates indep viscos} were done so for the unique strong solution of the equation (\ref{projected Ito Salt}) on the original probability space, though are identically obtained for the unique strong solution on this new probability space. Furthermore from Corollary \ref{boundycorollary} we have that there exists a constant $C$ such that for every $k$ and $\tilde{\mathbb{P}}-a.e.$ $\omega \in \tilde{\Omega}$, $$\sup_{r \in [0,T]}\norm{\tilde{u}^k_r(\omega)}_1^2 \leq C\norm{\tilde{w}_0(\omega)}_{L^2(\mathscr{O};\R)}^2$$
where $\tilde{w}_0$ is $\textnormal{curl}\tilde{u}_0$. With the same type of argument that we saw in Proposition \ref{final regularity of solutions}, taking a weak* convergent subsequence in $L^\infty\left([0,T];\bar{W}^{1,2}_{\sigma}\right)$ and using Banach-Alaoglu, one also has that $$\sup_{r \in [0,T]}\norm{\tilde{u}_r(\omega)}_1^2 \leq C\norm{\tilde{w}_0(\omega)}_{L^2(\mathscr{O};\R)}^2.$$
As $\tilde{u}_0$ has the same law as $u_0$ in $\bar{W}^{1,2}_{\sigma}$ then $\tilde{u}_0 \in L^2\left(\tilde{\Omega};\bar{W}^{1,2}_{\sigma}\right)$ so $\tilde{w}_0 \in L^2\left(\tilde{\Omega};L^2(\mathscr{O};\R)\right)$. One therefore obtains that
\begin{align*}
    \left\vert \int_0^t\inner{\mathcal{L}_{\tilde{u}^k_s}\tilde{u}^k_s}{\psi}ds - \int_0^t\inner{\mathcal{L}_{\tilde{u}_s}\tilde{u}_s}{\psi}ds\right\vert &\leq c\int_0^t\left(\norm{\tilde{u}^k_s}_{1} + \norm{\tilde{u}_s}_{1}\right)\norm{\tilde{u}^k_s- \tilde{u}_s}\norm{\psi}_{W^{2,2}} \\&\leq c\norm{\tilde{w}_0}_{L^2(\mathscr{O};\R)}\norm{\psi}_{W^{2,2}}\int_0^t\norm{\tilde{u}^k_s- \tilde{u}_s} ds
\end{align*}
which we know to converges to zero $\tilde{\mathbb{P}}-a.s.$, again using the convergence in $C\left([0,T];L^2_{\sigma}\right)$. For the Stokes Operator term we have that $$\left\vert \inner{ A \tilde{u}^k_s}{\psi}\right\vert = \left\vert \inner{ \tilde{u}^k_s}{\psi}_H\right\vert \leq c\norm{\tilde{u}^k_s}_1\norm{\psi}_1 \leq c\norm{\tilde{w}_0}_{L^2(\mathscr{O};\R)}\norm{\psi}_1$$ thus $$\left\vert \nu^k\int_0^t \inner{ A \tilde{u}^k_s}{\psi}\, ds\right\vert \leq ct\norm{\tilde{w}_0}_{L^2(\mathscr{O};\R)}\norm{\psi}_1\nu^k$$ which is convergent to zero $\tilde{\mathbb{P}}-a.s.$. Next observe that
\begin{align*}
    \left\vert \inner{B_i^2\tilde{u}^k_s}{\psi} -  \inner{B_i^2\tilde{u}_s}{\psi} \right\vert =  \left\vert \inner{B_i^2(\tilde{u}^k_s - \tilde{u}_s)}{\psi}\right\vert =  \left\vert \inner{\tilde{u}^k_s - \tilde{u}_s}{(B_i^*)^2\psi}\right\vert \leq c_i\norm{\tilde{u}^k_s - \tilde{u}_s}\norm{\psi}_{W^{2,2}}
\end{align*}
from which we again see the desired convergence in this term. In the stochastic integral we have to pass to a subsequence. Indeed we can only show the convergence in expectation; to this end we have that \begin{align*} \tilde{\mathbbm{E}}\left(\left\vert \int_0^t \inner{B\tilde{u}^k_s}{\psi} d\tilde{\mathcal{W}}_s 
 - \int_0^t \inner{B\tilde{u}_s}{\psi} d\tilde{\mathcal{W}}_s \right\vert\right) &\leq c\tilde{\mathbbm{E}}\left(\int_0^t\sum_{i=1}^\infty \inner{B_i(\tilde{u}^k_s - \tilde{u}_s)}{\psi}^2 ds \right)^{\frac{1}{2}}\\
 &\leq c\tilde{\mathbbm{E}}\left(\int_0^t \norm{\tilde{u}^k_s - \tilde{u}_s}^2\norm{\psi}_1^2 ds \right)^{\frac{1}{2}}\\
 &\leq c\norm{\psi}_1\left[\tilde{\mathbbm{E}}\left(\int_0^t \norm{\tilde{u}^k_s - \tilde{u}_s}^2 ds \right)\right]^{\frac{1}{2}}.
 \end{align*}
The fact that this quantity approaches zero as $k \rightarrow \infty$ now follows from the Dominated Convergence Theorem. Thus we can take a subsequence, indexed by $(k_j)$, such that
$$\int_0^t \inner{B\tilde{u}^{k_j}_s}{\psi} d\tilde{\mathcal{W}}_s \longrightarrow \int_0^t \inner{B\tilde{u}_s}{\psi} d\tilde{\mathcal{W}}_s $$ $\tilde{\mathbb{P}}-a.s.$. Moreover, taking the $\tilde{\mathbb{P}}-a.s.$ convergence along this subsequence for all terms, we realise that $\tilde{u}$ satisfies \begin{align} 
     \inner{\tilde{u}_t}{\psi} = \inner{\tilde{u}_0}{\psi} - \int_0^{t}\inner{\mathcal{L}_{\tilde{u}_s}\tilde{u}_s}{\psi}ds + \frac{1}{2}\int_0^{t}\sum_{i=1}^\infty \inner{B_i\tilde{u}_s}{B_i^*\psi} ds - \int_0^{t} \inner{B\tilde{u}_s}{\psi} d\tilde{\mathcal{W}}_s\nonumber
\end{align}
$\tilde{\mathbb{P}}-a.s.$ in $\R$. It only remains to show this identity for $\phi \in \bar{W}^{1,2}_{\sigma}$, for which we choose a sequence of such $\psi$ approximating $\phi$ in $\bar{W}^{1,2}_{\sigma}$. The convergence in all terms other than the stochastic integral is clear, whilst for the stochastic integral we know it only for a subsequence using a Stochastic Dominated Convergence Theorem (e.g. [\cite{goodair2022stochastic}] Lemma 1.6.15), much like what we have just demonstrated. This completes the proof.
\end{proof}

\section{Conclusion} \label{section conclusion}

We now address some key questions surrounding this paper and its context in the literature, as mentioned in the introduction.

\begin{itemize}
    \item How do the Navier boundary conditions solve the problems faced by the no-slip condition for the existence of strong solutions?
\end{itemize}

The details are of course implicit in the proof of Theorem \ref{theorem 2D strong}, which we highlight now. The author's paper [\cite{goodair2022navier}] brought attention to the difficulty in controlling the term 
\begin{equation}\label{ironic new label} \sum_{i=1}^\infty \left(\inner{\mathcal{P}_n\mathcal{P}B_i^2u^n_s}{u^n_s}_1 + \norm{\mathcal{P}_n\mathcal{P}B_iu^n_s}_1^2 \right)\end{equation}
in the no-slip case, where $(\mathcal{P}_n)$ here are projections onto eigenfunctions of the Stokes Operator belonging to $W^{1,2}_{\sigma}$, which is the intersection of $\bar{W}^{1,2}_{\sigma}$ with $W^{1,2}_{0}(\mathscr{O};\R^2)$. This is the counterpart of (\ref{we will label it for consideration}) needed in showing uniform estimates of the Galerkin Approximation in the energy norm of the strong solution. The difficulty is the fact that $\mathcal{P}B_i^2u^n_s$ and $\mathcal{P}B_iu^n_s$ do not themselves belong to $W^{1,2}_{\sigma}$, and $\mathcal{P}_n$ is \textit{only self-adjoint on $W^{1,2}_{\sigma}$ and not $\bar{W}^{1,2}_{\sigma}$}. This owes to the fact that the system of eigenfunctions in $W^{1,2}_{\sigma}$ is not a basis for $\bar{W}^{1,2}_{\sigma}$, whilst the system used in the present paper in any $\bar{W}^{2,2}_{\alpha}$ is a basis for $\bar{W}^{1,2}_{\sigma}$. Therefore we cannot bound (\ref{ironic new label}) by an expression \begin{equation} \label{shouldve labelled} \sum_{i=1}^\infty \left(\inner{\mathcal{P}B_i^2u^n_s}{u^n_s}_1 + \norm{\mathcal{P}B_iu^n_s}_1^2 \right)\end{equation}
as we do for (\ref{we will label it for consideration}), formally in the $H$ inner product but the approach works in either. The direct presence of the $\mathcal{P}_n$ disables our commutator arguments used in the verification of (\ref{assumpty 12}), as good commutator properties between $\mathcal{P}_n$ and $B_i$ are not apparent. We note the key property that $\mathcal{P}B_i = \mathcal{P}B_i\mathcal{P}$ used in [\cite{goodair2022navier}], so that we can cope with the presence of $\mathcal{P}$ and still achieve some cancellation of the top order derivative. A sufficient estimate on (\ref{ironic new label}) thus remains open.

\begin{itemize}
    \item Why is the condition $\alpha \geq \kappa$ necessary in Theorem \ref{theorem 2D strong}?
\end{itemize}

This property facilitated the use of the abstract framework of [\cite{goodair2022existence1}] which allowed us to lighten up the technical details given in this paper, though it may be unclear as to whether or not this condition was actually necessary. The alternative route to achieve Proposition \ref{uniform boundedness proposition} would be to work directly with the $\inner{\cdot}{\cdot}_1$ inner product. Using SALT noise as motivation, the issue is the same term discussed in the previous point, (\ref{shouldve labelled}). Given the presence of $\mathcal{P}$ our only option is to introduce the Stokes operator into these terms with Lemma \ref{greens for navier} and then use the commutator estimates of [\cite{goodair2022navier}] (see Proposition 1.29). Doing this we obtain that
$$ \inner{\mathcal{P}B_i^2u^n_s}{u^n_s}_1 = \inner{\mathcal{P}B_i^2u^n_s}{Au^n_s} + \inner{(\kappa - \alpha)\mathcal{P}B_i^2u^n_s}{u^n_s}_{L^2(\partial \mathscr{O} ;\R)}$$ where we see that the issue is in the control of the $\mathcal{P}B_i^2u^n_s$ term on the boundary, as with the usual employment of (\ref{inequality from Lions}) we would require the $W^{3,2}$ norm of $u^n_s$ which is too much. The assumption that $\alpha \geq \kappa$ allows us use of the $H$ inner product in the It\^{o} Formula (Proposition \ref{rockner prop}), removing the need to manage this problematic boundary condition but of course this isn't without a cost. Given that $\mathcal{P}B_iu^n_s \notin \bar{W}^{2,2}_{\alpha}$ then we cannot say that $$\norm{\mathcal{P}B_iu^n_s}_H^2 = \inner{\mathcal{P}B_iu^n_s}{A\mathcal{P}B_iu^n_s}$$ 
where the right hand side is the term that we need to reach to apply our commutator estimates. Thus we have to treat the boundary integral $- \inner{(\kappa - \alpha)\mathcal{P}B_iu^n_s}{\mathcal{P}B_iu^n_s}_{L^2(\partial \mathscr{O} ;\R)}$ directly, as we saw in the estimate of (\ref{we will label it for consideration}), which would not appear if we applied the It\^{o} Formula for $\inner{\cdot}{\cdot}_1$. The significance is that this boundary integral is controllable, as the derivatives are shared so the use of (\ref{inequality from Lions}) now only demands the $W^{2,2}$ norm of $u^n_s$ in an acceptable way. There is the more hidden additional cost of the boundary integrals in $\norm{u^n_t}_H^2$, $\norm{u^n_0}_H^2$ which are immediately dealt with once we have established the equivalence of $\norm{\cdot}_1$ and $\norm{\cdot}_H$ (and \textit{only} because of this equivalence). Thus the assumption that $\alpha \geq \kappa$ is necessary.

\begin{itemize}
    \item What types of noise satisfy the assumptions in Subsection \ref{subsection assumptions}? 
\end{itemize}

It is clear that the assumptions hold for any additive noise in $W^{1,2}(\mathscr{O};\R^2)$ or any linear $\mathcal{G}_i$, $\mathcal{Q}_i = \mathcal{G}_i$ or $0$ which is bounded as an operator $L^2(\mathscr{O};\R^2) \rightarrow L^2(\mathscr{O};\R^2)$ and $W^{1,2}(\mathscr{O};\R^2) \rightarrow W^{1,2}(\mathscr{O};\R^2)$. Such a Nemystkii type noise has proven popular in the study of fluid SPDEs, see for instance ([\cite{glatt2009strong}, \cite{glatt2012local}]. In addition an It\^{o} type transport noise with sufficiently small gradient dependency relative to viscosity is also valid. A very interesting question is whether or not the assumptions hold for a purely transport Stratonovich noise. In [\cite{goodair2023zero}] the majority of the assumptions are verified in the case of $\mathcal{G}_i = \mathcal{Q}_i := \mathcal{P}\mathcal{L}_{\xi_i}$ for $\xi_i$ satisfying the same assumptions as in the SALT Noise, though (\ref{assumpty 12}) and (\ref{assumpty13}) remain to be verified. Most critically is (\ref{assumpty 12}) pertaining to the key term that we have been considering, (\ref{we will label it for consideration}). For the SALT noise the property that $\mathcal{P}B_i = \mathcal{P}B_i\mathcal{P}$ is vital; our estimate, as in (\ref{suffices to control}), boils down to controlling $$\inner{\mathcal{P}B_i^2\phi}{A \phi} + \inner{\mathcal{P}B_i \phi}{A\mathcal{P}B_i \phi} $$ which was done in [\cite{goodair2022navier}] Lemma 2.7, relying heavily on this property that $\mathcal{P}B_i = \mathcal{P}B_i\mathcal{P}$. The question is whether or not we could achieve the necessary bound for \begin{equation}\nonumber \inner{(\mathcal{P}\mathcal{L}_{\xi_i})^2\phi}{A \phi} + \inner{\mathcal{P}\mathcal{L}_{\xi_i} \phi}{A\mathcal{P}\mathcal{L}_{\xi_i} \phi}.\end{equation} By taking the $\mathcal{P}$ over, using that $A = A\mathcal{P}$ and $\mathcal{L}_{\xi_i}^* = -\mathcal{L}_{\xi_i}$ there is no issue in reducing this to \begin{equation} \label{now there is a number} - \inner{\mathcal{P}\mathcal{L}_{\xi_i}\phi}{\mathcal{L}_{\xi_i}A \phi} + \inner{\mathcal{P}\mathcal{L}_{\xi_i} \phi}{A\mathcal{L}_{\xi_i} \phi} \end{equation}
or simply $$ \inner{\mathcal{P}\mathcal{L}_{\xi_i}\phi}{[A,\mathcal{L}_{\xi_i}]\phi}$$ where $[A,\mathcal{L}_{\xi_i}]$ represents the commutator $A \mathcal{L}_{\xi_i} - \mathcal{L}_{\xi_i}A$. It is sufficient to show that $[A,\mathcal{L}_{\xi_i}]$ is of second order, but although that would be possible for $[\Delta,\mathcal{L}_{\xi_i}]$ the presence of $\mathcal{P}$ means this is not clear. We hope to make a compelling point regarding the importance of SALT noise here. To simply illustrate why the property that $\mathcal{P}B_i = \mathcal{P}B_i\mathcal{P}$ is so important, \textit{assume that $\mathcal{P}\mathcal{L}_{\xi_i} = \mathcal{P}\mathcal{L}_{\xi_i}\mathcal{P}$ (which is untrue!)}. Then we could write (\ref{now there is a number}) as \begin{align*} -\inner{\mathcal{P}\mathcal{L}_{\xi_i}\phi}{\mathcal{P}\mathcal{L}_{\xi_i}A \phi} + \inner{\mathcal{P}\mathcal{L}_{\xi_i}\phi}{A\mathcal{L}_{\xi_i} \phi} &= 
\inner{\mathcal{P}\mathcal{L}_{\xi_i}\phi}{\mathcal{P}\mathcal{L}_{\xi_i}\Delta \phi} - \inner{\mathcal{P}\mathcal{L}_{\xi_i}\phi}{\Delta\mathcal{L}_{\xi_i} \phi}\\ &= \inner{\mathcal{P}\mathcal{L}_{\xi_i}\phi}{\mathcal{L}_{\xi_i}\Delta \phi} - \inner{\mathcal{P}\mathcal{L}_{\xi_i}\phi}{\Delta\mathcal{L}_{\xi_i} \phi} \end{align*}
or simply $$ -\inner{\mathcal{P}\mathcal{L}_{\xi_i}\phi}{[\Delta,\mathcal{L}_{\xi_i}]\phi}$$
which as suggested is controllable. The proof for $B_i$ is more complicated, but this is the essential idea. It seems, therefore, that the best method to show the necessary bound for $\mathcal{L}_{\xi_i}$ is to actually write $$\mathcal{L}_{\xi_i} = B_i - \mathcal{T}_{\xi_i}$$ then use the fact that $\mathcal{T}_{\xi_i}$ is of zeroth order (so unproblematic) and then treat the $B_i$ as described. More precisely, for the first term in (\ref{now there is a number}), we would write
\begin{align*}
  -\inner{\mathcal{P}\mathcal{L}_{\xi_i}\phi}{\mathcal{P}\mathcal{L}_{\xi_i}A \phi} &= -\inner{\mathcal{P}\mathcal{L}_{\xi_i}\phi}{\mathcal{P}B_iA \phi} + \inner{\mathcal{P}\mathcal{L}_{\xi_i}\phi}{\mathcal{P}\mathcal{T}_{\xi_i}A \phi}\\
  &= \inner{\mathcal{P}\mathcal{L}_{\xi_i}\phi}{\mathcal{P}B_i \Delta \phi} + \inner{\mathcal{P}\mathcal{L}_{\xi_i}\phi}{\mathcal{P}\mathcal{T}_{\xi_i}A \phi}\\
  &= \inner{\mathcal{P}\mathcal{L}_{\xi_i}\phi}{\mathcal{L}_{\xi_i} \Delta \phi} + \inner{\mathcal{P}\mathcal{L}_{\xi_i}\phi}{\mathcal{T}_{\xi_i} \Delta \phi} + \inner{\mathcal{P}\mathcal{L}_{\xi_i}\phi}{\mathcal{T}_{\xi_i}A \phi}.
\end{align*}
Therefore the entirety of (\ref{now there is a number}) is $$-\inner{\mathcal{P}\mathcal{L}_{\xi_i}\phi}{[\Delta,\mathcal{L}_{\xi_i}]\phi} +  \inner{\mathcal{P}\mathcal{L}_{\xi_i}\phi}{\mathcal{T}_{\xi_i} \Delta \phi} + \inner{\mathcal{P}\mathcal{L}_{\xi_i}\phi}{\mathcal{T}_{\xi_i}A \phi}$$
or more compactly $$ \inner{\mathcal{P}\mathcal{L}_{\xi_i}\phi}{\left(\mathcal{T}_{\xi_i} \Delta + \mathcal{T}_{\xi_i}A - [\Delta,\mathcal{L}_{\xi_i}] \right)\phi}.$$
This is now easy to control as the operator in the brackets is of second order, so the assumption (\ref{assumpty 12}) is indeed verified (we note that one can apply Lemma \ref{greens for navier} in the same way using the $W^{3,2}_0(\mathscr{O};\R^2)$ regularity of the $\xi_i$ directly). The proof for (\ref{assumpty13}) sees no change to the one for $B_i$. In summary, a purely transport Stratonovich noise does satisfy the assumptions, but to prove it one has to revert to the SALT noise hence we emphasise how significant the structure of the SALT noise is for the analysis. 

\begin{itemize}
    \item For which noise does the result of Theorem \ref{theorem for limit of navier stokes is euler} still apply?
\end{itemize}

The choice to state and prove Theorem \ref{theorem for limit of navier stokes is euler} for SALT noise specifically, instead of a general noise as in Theorem \ref{theorem 2D strong}, comes down to a few reasons. Firstly it is apparent that the case of an It\^{o} transport noise where the gradient dependency is small relative to viscosity will not work (as viscosity is approaching zero), so if we wanted to keep this interesting case in our solution theory for Theorem \ref{theorem 2D strong} then a distinct set of assumptions would be required. On this note, it is also clear from our approach with the vorticity form that precise assumptions would be needed on $\textnormal{curl}\mathcal{Q}_i$ which would be somewhat cumbersome. Moreover, as discussed in the previous point, we want to highlight the power of the SALT noise; we see this in Proposition \ref{how not label}, as we can achieve pathwise bounds given the complete cancellation in the stochastic integral. This does not exclude the result for additive and Nemystkii type noise though: we instead can only demonstrate Proposition \ref{how not label} as an inequality in expectation, however the tightness and hence passage to the Stochastic Euler Equation is still valid. Additional technicalities are needed to introduce a pathwise bound into the system, which is done through the familiar first hitting times. Proposition \ref{tightness in D prop} is then shown up until these first hitting times, which still gives the tightness result exactly as seen in [\cite{goodair2023zero}] Proposition 3.3. One would also have to take expectations in the proof of Proposition \ref{not another one!!!!}, but the result follows with only these  minor technical adjustments. The scenario of a transport noise is much more delicate, as the correct It\^{o}-Stratonovich corrector in the weak formulation of the Euler Equation is $-\inner{\mathcal{P}\mathcal{L}_{\xi_i}u_s}{\mathcal{L}_{\xi_i}u_s}$ where this different structure retaining the Leray Projector (as opposed to for $B_i$) requires some different considerations which we do not detail here. The idea, however, is to again introduce $\mathcal{T}_{\xi_i}$ and use the good commutative properties of $\mathcal{P}$ and $B_i$. 

\begin{itemize}
    \item Could we have uniqueness for the SALT Euler Equation?
\end{itemize}

The question of uniqueness of weak solutions of the Euler Equation is certainly of interest, where both uniqueness and non-uniqueness have been established dependent on the regularity imposed. The classical (positive) uniqueness result is for initial vorticities in $L^\infty\left(\mathscr{O};\R\right)$ proven by Yudovich in [\cite{yudovich1963non}], which he then extends to a wider class in [\cite{yudovich1995uniqueness}]. Simply talking about the $L^\infty\left(\mathscr{O};\R\right)$ case here, we note that on the torus this result was proven in [\cite{lang2023well}]. Success hinges on showing an energy estimate of the vorticity in $L^\infty\left([0,T];L^\infty(\mathscr{O};\R)\right)$, and in fact it was proven in [\cite{lang2023well}] (22) that the $L^{\infty}(\mathscr{O};\R)$ norm of the vorticity is constant in time via use of the It\^{o}-Wentzel formula on the flow map. This was done for a sequence of strong solutions with more regular initial conditions approximating the desired $w_0 \in L^\infty(\mathscr{O};\R)$, which is an approach we couldn't take here given that only weak solutions have been established. Any ideas for us would rely on showing additional regularity at the level of the Galerkin Approximations.

\begin{itemize}
    \item Do the strong solutions of (\ref{projected Ito}) converge to the corresponding no-slip weak solutions shown to exist in [\cite{goodair2023zero}]?
\end{itemize}

As discussed in the introduction, the idea is that as $\alpha$ approaches infinity in (\ref{navier boundary conditions}) then the second term dominates the identity giving that $u \cdot \iota = 0$ on $\partial \mathscr{O}$, hence $u = 0$ on $\partial \mathscr{O}$. Existence and uniqueness of weak solutions of (\ref{projected Ito}) under this boundary condition was proven in [\cite{goodair2023zero}]. In the deterministic case this result is proven in [\cite{kelliher2006navier}] Theorem 9.2, though we appreciate that it is in fact done for the strong solution with no-slip condition and this a priori additional regularity is necessary in the approach. In lieu of this regularity, tightness arguments on the sequence of strong solutions of (\ref{projected Ito}) with $\alpha$ tending to infinity have proven fruitless.\\

In fact this method would be successful if we could show that the solutions have $L^{\infty}\left([0,T];\bar{W}^{1,2}_{\sigma}\right) \cap L^2\left([0,T];W^{2,2}(\mathscr{O};\R^2)\right)$ regularity uniformly in $\alpha$. Such a result is exceptionally appealing as with the limit inheriting this regularity, we would prove that \textit{strong solutions} of the equation exist for the no-slip condition. The difficulty in this problem has already been explicated, so naturally this approach is of great interest. This regularity would enter at the level of the Galerkin Approximation, in particular Proposition \ref{uniform boundedness proposition}. Scanning through where the dependency on $\alpha$ lies in this result, the first instance is in the $\norm{u^n_r}_H^2$ term, though as $\norm{u^n_r}_1^2 \leq \norm{u^n_r}_H^2$ then this is irrelevant. Importantly is the initial condition $\norm{u^n_0}_H^2$, where the boundary integral $\inner{(\kappa - \alpha)u^n_0}{u^n_0}_{L^2(\partial \mathscr{O};\R^2)}$ is problematic. This is surely to be expected as strong solutions for the no-slip condition will only exist for $u_0 \in W^{1,2}_{0}(\mathscr{O};\R^2)$. Indeed if we impose this condition, then we can use that $\norm{u^n_0}_H^2 \leq \norm{u_0}_H^2$ and then the boundary integral $\inner{(\kappa - \alpha)u_0}{u_0}_{L^2(\partial \mathscr{O};\R^2)}$ is null. We see that there is no $\alpha$ dependency in treating the nonlinear term, and in the Stokes Operator it is only in the equivalence of the $\norm{\cdot}_2$ and $\norm{\cdot}_{W^{2,2}}$ norms which does require some nuance but can be dealt with. Inevitably it is the transport type noise where this unravels, and we see the dependency on $\alpha$ in the term $$-\inner{(\kappa - \alpha)\mathcal{P}\mathcal{G}_i\phi^n}{\mathcal{P}\mathcal{G}_i \phi^n}_{L^2(\partial \mathscr{O};\R^2)} $$ which appears to be impossible to handle. We bring attention to this as it is again the issue of the Leray Projector not preserving the zero trace property which prevents progress.

\begin{itemize}
    \item What are the overarching conclusions of the paper? 
\end{itemize}

We hope, therefore, to have made two important points regarding the analysis of transport type stochastic fluid PDEs on a bounded domain. The first is that the choice of SALT noise satisfies some natural compatibility with the boundary which purely transport Stratonovich noise does not. The involved geometric derivation of the SALT noise does suggest this, and our results give analytical validation to the introduction of $\mathcal{T}_{\xi_i}$ into the transport noise. The second point is that the less typical Navier boundary conditions appear to be the right choice for transport noise, given the unavoidable difficulty in the solution theory for the no-slip case. This is highlighted in both the direct Galerkin approach, and the $\alpha \rightarrow \infty$ limit for solutions with Navier boundary conditions. The no-slip condition inherently demands that a transport type noise be both divergence-free and zero on the boundary, which is out of reach. It appears that these fine properties of transport noise in a bounded domain have not been appreciated, and we hope that this work can promote future studies in the area.\\

\textbf{Thanks:} I would like to give my sincerest thanks to Dan Crisan for the regular and extended discussions around the paper, his feedback on it, and overall guidance during this process.\\

\textbf{Acknowledgements:} The author was supported by the Engineering and Physical Sciences Research Council (EPSCR) Project 2478902.


\section{Appendix} \label{section appendix}

\subsection{Cauchy Lemma} \label{sub cauchy}

We state and prove the critical result used in Proposition \ref{the one that gives limiting stopping time bound}.

\begin{lemma} \label{amazing cauchy lemma}
    Fix $T>0$. For $t\in[0,T]$ let $X_t$ denote a Banach Space with norm $\norm{\cdot}_{X,t}$ such that for all $s > t$, $X_s \xhookrightarrow{}X_t$ and $\norm{\cdot}_{X,t} \leq \norm{\cdot}_{X,s}$. Suppose that $(\sy^n)$ is a sequence of processes $\sy^n:\Omega \mapsto X_T$, $\norm{\sy^n}_{X,\cdot}$ is adapted and $\mathbb{P}-a.s.$ continuous, $\sy^n \in L^2\left(\Omega;X_T\right)$, and such that $\sup_{n}\norm{\sy^n}_{X,0} \in L^\infty\left(\Omega;\R\right)$. For any given $M >1$ define the stopping times
    \begin{equation} \label{another taumt}
        \tau^{M,T}_n := T \wedge \inf\left\{s \geq 0: \norm{\sy^n}_{X,s}^2 \geq M + \norm{\sy^n}_{X,0}^2 \right\}.
    \end{equation}
Furthermore suppose \begin{equation} \label{supposition 1}
    \lim_{m \rightarrow \infty}\sup_{n \geq m}\mathbbm{E}\left[\norm{\sy^n-\sy^m}^2_{X,\tau
_{m}^{M,t}\wedge \tau_{n}^{M,t}} \right] =0
\end{equation}
and that for any stopping time $\gamma$ and sequence of stopping times $(\delta_j)$ which converge to $0$ $\mathbb{P}-a.s.$, \begin{equation} \label{supposition 2} \lim_{j \rightarrow \infty}\sup_{n\in\N}\mathbbm{E}\left(\norm{\sy^n}_{X,(\gamma + \delta_j) \wedge \tau^{M,T}_n}^2 - \norm{\sy^n}_{X,\gamma \wedge \tau^{M,T}_n}^2 \right) =0.
\end{equation}
Then there exists a stopping time $\tau^{M,T}_{\infty}$, a process $\sy:\Omega \mapsto X_{\tau^{M,T}_{\infty}}$ whereby $\norm{\sy}_{X,\cdot \wedge \tau^{M,T}_{\infty}}$ is adapted and $\mathbb{P}-a.s.$ continuous, and a subsequence indexed by $(m_j)$ such that 
\begin{itemize}
    \item $\tau^{M,T}_{\infty} \leq \tau^{M,T}_{m_j}$ $\mathbb{P}-a.s.$,
    \item $\lim_{j \rightarrow \infty}\norm{\sy - \sy^{m_j}}_{X,\tau^{M,T}_{\infty}} = 0$ $\mathbb{P}-a.s.$.
\end{itemize}
Moreover for any $R>0$ we can choose $M$ to be such that the stopping time \begin{equation} \label{another tauR}
        \tau^{R,T} := T \wedge \inf\left\{s \geq 0: \norm{\sy}_{X,s\wedge\tau^{M,T}_{\infty}}^2 \geq R \right\}
    \end{equation}
satisfies $\tau^{R,T} \leq \tau^{M,T}_{\infty}$ $\mathbb{P}-a.s.$. Thus $\tau^{R,T}$ is simply $T \wedge \inf\left\{s \geq 0: \norm{\sy}_{X,s}^2 \geq R \right\}$.

\end{lemma}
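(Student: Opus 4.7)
The plan is to extend the Cauchy lemma of [\cite{glatt2009strong}] by exploiting (\ref{supposition 2}) --- equicontinuity at every time rather than only at $t=0$ --- so that the limiting stopping time can be characterised as a first hitting time of $\norm{\sy}_{X,\cdot}$.

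From (\ref{supposition 1}) I would extract a subsequence $(m_j)$ with $\mathbbm{E}[\norm{\sy^{m_{j+1}}-\sy^{m_j}}_{X,\tau^{M,T}_{m_j}\wedge\tau^{M,T}_{m_{j+1}}}^2]\le 4^{-j}$; Chebyshev and Borel--Cantelli then give a.s. summability of the associated norms. Defining the increasing stopping times $\sigma_k:=\inf_{j\ge k}\tau^{M,T}_{m_j}$ and $\tau^{M,T}_{\infty}:=\sup_k\sigma_k=\liminf_j\tau^{M,T}_{m_j}$, monotonicity of $\norm{\cdot}_{X,t}$ in $t$ together with the inclusion $\sigma_k\le\tau^{M,T}_{m_j}\wedge\tau^{M,T}_{m_{j+1}}$ for $j\ge k$ promotes the bound to
\[
\sum_{j\ge k}\norm{\sy^{m_{j+1}}-\sy^{m_j}}_{X,\sigma_k}<\infty\quad\text{a.s.,}
\]
so $(\sy^{m_j})_{j\ge k}$ is Cauchy in the Banach space $X_{\sigma_k}$ and converges to a limit $\sy^{(k)}$. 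These limits are consistent across $k$ and patch to a process $\sy$ on $[0,\tau^{M,T}_{\infty}]$; adaptedness and a.s. pathwise continuity of $\norm{\sy}_{X,\cdot\wedge\tau^{M,T}_{\infty}}$ pass to $\sy$ from the corresponding properties of each $\sy^{m_j}$ via the monotone-in-$t$ nature of the norm. A diagonal extraction (still indexed by $(m_j)$) then delivers $\tau^{M,T}_{\infty}\le\tau^{M,T}_{m_j}$ a.s. for every element of the subsequence.

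For the hitting-time characterisation, the uniform bound $\sup_n\norm{\sy^n}_{X,0}\le C$ a.s. bounds the thresholds $M+\norm{\sy^n}_{X,0}^2$ uniformly, and on $\{\tau^{M,T}_{m_j}<T\}$ the pathwise continuity of $\norm{\sy^{m_j}}_{X,\cdot}^2$ forces $\norm{\sy^{m_j}}_{X,\tau^{M,T}_{m_j}}^2\ge M$. Applying (\ref{supposition 2}) with $\gamma=\tau^{M,T}_{\infty}$ and $\delta_j=\tau^{M,T}_{m_j}-\tau^{M,T}_{\infty}\to 0$ a.s., along a further subsequence chosen so that $\tau^{M,T}_{m_j}\downarrow\tau^{M,T}_{\infty}$ a.s., yields
\[
\mathbbm{E}\bigl(\norm{\sy^{m_j}}_{X,\tau^{M,T}_{m_j}}^2-\norm{\sy^{m_j}}_{X,\tau^{M,T}_{\infty}}^2\bigr)\to 0.
\]
Combining with Cauchy convergence in $X_{\tau^{M,T}_{\infty}}$ then gives $\norm{\sy}_{X,\tau^{M,T}_{\infty}}^2\ge M$ on $\{\tau^{M,T}_{\infty}<T\}$. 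Choosing $M>R$ guarantees that $\norm{\sy}_{X,\cdot\wedge\tau^{M,T}_{\infty}}^2$ attains level $R$ no later than $\tau^{M,T}_{\infty}$, so $\tau^{R,T}\le\tau^{M,T}_{\infty}$ a.s.\ and the truncation at $\tau^{M,T}_{\infty}$ in the definition of $\tau^{R,T}$ becomes cosmetic, matching the plain first hitting time of level $R$ by $\norm{\sy}_{X,\cdot}$.

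The main obstacle is the delicate coupling of the random stopping times: (\ref{supposition 1}) only controls differences on the common horizon $\tau^{M,T}_{m_j}\wedge\tau^{M,T}_{m_{j+1}}$, whereas the hitting-time identification demands quantitative control at $\tau^{M,T}_{\infty}$, which is a $\liminf$ rather than an individual term of the sequence. Transferring the lower bound $M$ on $\norm{\sy^{m_j}}_{X,\tau^{M,T}_{m_j}}^2$ through this $\liminf$ to the limiting process is precisely where (\ref{supposition 2}) is indispensable: without equicontinuity at all times, the $\tau^{M,T}_{m_j}$ could collapse to $0$ in probability while leaving (\ref{supposition 1}) satisfied trivially, and $\tau^{M,T}_{\infty}$ would carry no information about $\sy$.
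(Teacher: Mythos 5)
Your overall strategy matches the paper's: extract a rapidly Cauchy subsequence from (\ref{supposition 1}) via Chebyshev and Borel--Cantelli, identify a limiting stopping time dominated by the subsequence's hitting times, and then use (\ref{supposition 2}) to transport the hitting threshold onto the limit process so that $M$ can be tuned against $R$. The genuine gap is in how you produce $\tau^{M,T}_{\infty}$. You set $\tau^{M,T}_{\infty}:=\liminf_j\tau^{M,T}_{m_j}$ and assert that a ``diagonal extraction'' gives $\tau^{M,T}_{\infty}\leq\tau^{M,T}_{m_j}$ a.s.\ for every $j$, and later that a further subsequence satisfies $\tau^{M,T}_{m_j}\downarrow\tau^{M,T}_{\infty}$ a.s. Neither holds in general: a $\liminf$ can strictly exceed every term of the sequence (take pathwise $\tau^{M,T}_{m_j}=L-2^{-j}$), in which case no subsequence decreases to it and the first bullet of the lemma fails outright; and even where a subsequence converging to the $\liminf$ exists it is $\omega$-dependent, so it cannot be fed into (\ref{supposition 2}), which requires one fixed sequence of nonnegative stopping times $\delta_j\rightarrow 0$ a.s. The same defect undermines the second bullet: your Cauchy estimate lives on $[0,\sigma_k]$ with $\sigma_k:=\inf_{j\geq k}\tau^{M,T}_{m_j}$, and passing from $\sup_k\norm{\cdot}_{X,\sigma_k}$ to $\norm{\cdot}_{X,\tau^{M,T}_{\infty}}$ would need a left-continuity in $t$ of the norm applied to the limit that is not among the hypotheses.

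The paper resolves precisely this point by not working with the $\tau^{M,T}_{n_j}$ directly. It introduces auxiliary times $\sigma^M_j:=T\wedge\inf\left\{s>0:\norm{\sy^{n_j}}_{X,s}\geq(\tilde{M}-1+2^{-j})+\norm{\sy^{n_j}}_{X,0}\right\}$ with the \emph{unsquared} norm and a threshold strictly decreasing in $j$; one checks $\sigma^M_j\leq\tau^{M,T}_{n_j}$, and then, on the Borel--Cantelli event where $\norm{\sy^{n_{j+1}}-\sy^{n_j}}_{X,\sigma^M_j\wedge\sigma^M_{j+1}}<2^{-(j+2)}$, a contradiction argument using continuity of $\norm{\sy^{n_j}}_{X,\cdot}$ at the hitting time shows $\sigma^M_{j+1}\leq\sigma^M_j$ eventually a.s. This monotonicity is what makes the limit $\sigma^M_{\infty}$ automatically dominated by every $\sigma^M_j$ (hence by every $\tau^{M,T}_{n_j}$), makes the Cauchy property hold directly in $X_{\sigma^M_{\infty}}$, and makes $\delta_j:=\sigma^M_j-\sigma^M_{\infty}$ a legitimate nonnegative sequence of stopping times tending to $0$ for use in (\ref{supposition 2}). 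You correctly identify the coupling of the random times as the crux, but the $\liminf$ construction does not supply the required domination; the decreasing-threshold device (or an equivalent) is the missing idea.
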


    \begin{proof}
   Property (\ref{supposition 1}) implies that for any given $j \in \N$ we can choose an $n_j\in \N$ such that for all $k \geq n_j$, \begin{equation}\label{a good property}\mathbbm{E}\left(\norm{\sy^k - \sy^{n_j}}^2_{X,\tau
_{n_{j}}^{M,t}\wedge \tau_{k}^{M,t}} \right) \leq 2^{-4j}.\end{equation}
We shall make use of highly sensitive manipulations of the subsequence indexed by $(n_j)$, and for this we introduce a new sequence of stopping times. We now impose that $$M > 2 + \left\Vert\sup_{n \in \N}\norm{\sy^n}_{X,0}^2\right\Vert_{L^\infty(\Omega;\R)}$$ and define $$\tilde{M}^2:= \frac{M -\left\Vert\sup_{n }\norm{\sy^n}_{X,0}^2\right\Vert_{L^\infty(\Omega;\R)}}{2}.$$
The purpose of this is to define $$\sigma^M_j:= T \wedge \inf\left\{s > 0: \norm{\sy^{n_{j}}}_{X,s}\geq (\tilde{M} - 1 +2^{-j}) + \norm{\sy^{n_{j}}}_{X,0} \right\}$$
and ensure that $\sigma^M_j \leq \tau^{M,T}_{n_j}$ at every $\omega$. Note the key difference in not squaring the norm, and also that $\tilde{M}>1$ so each $\sigma^M_j$ is necessarily positive. To demonstrate the inequality, it is sufficient to show that, $\mathbb{P}-a.s.$, \begin{equation} \label{or more easily}
    \left((\tilde{M} - 1 +2^{-j}) + \norm{\sy^{n_{j}}}_{X,0}\right)^2 \leq M +  \norm{\sy^{n_{j}}}_{X,0}^2
\end{equation}
or more easily 
$$  \left(\tilde{M} + \norm{\sy^{n_{j}}}_{X,0}\right)^2 \leq M +  \norm{\sy^{n_{j}}}_{X,0}^2.$$
This is possible as 
\begin{align*}
    \left(\tilde{M} + \norm{\sy^{n_{j}}}_{X,0}\right)^2 \leq 2\tilde{M}^2 + 2\norm{\sy^{n_{j}}}_{X,0}^2 &= M -\left\Vert\sup_{n }\norm{\sy^n}_{X,0}^2\right\Vert_{L^\infty(\Omega;\R)} + 2\norm{\sy^{n_{j}}}_{X,0}^2\\ &\leq M + \norm{\sy^{n_{j}}}_{X,0}^2.
\end{align*}
The property (\ref{or more easily}) is thus verified, so $\sigma^M_j \leq \tau^{M,T}_{n_j}$ and hence the subsequence $(\sy^{n_j})$ enjoys the same properties up until the corresponding $\sigma^M_j$. In particular from (\ref{a good property}), \begin{equation}\label{a good property 2}
    \mathbbm{E}\left(\norm{\sy^{n_{j+1}} - \sy^{n_j}}_{X,\sigma^M
_j\wedge \sigma_{j+1}^{M}} \right) \leq \left[\mathbbm{E}\left(\norm{\sy^{n_{j+1}} - \sy^{n_j}}_{X,\sigma^M
_j\wedge \sigma_{j+1}^{M}}^2 \right)\right]^{\frac{1}{2}}\leq 2^{-2j}
\end{equation}
hence in defining the sets
\begin{equation}\label{defined omega j}\Omega_j:=\left\{\omega \in \Omega: \norm{\sy^{n_{j+1}}(\omega)-\sy^{n_j}(\omega)}_{X,\sigma^{M}_{j}(\omega)\wedge \sigma^{M}_{j+1}(\omega)} < 2^{-(j+2)} \right\}\end{equation}
we have, by Chebyshev's Inequality and (\ref{a good property 2}),
\begin{align*}
    \mathbb{P}\left(\Omega_j^C \right) \leq 2^{j+2}\mathbbm{E}\left(\norm{\sy^{n_{j+1}}-\sy^{n_j}}_{X,\sigma^{M}_{j}\wedge \sigma^{M}_{j+1}} \right) \leq 2^{-j + 2}.
\end{align*}
We have, therefore, that $$\sum_{j=1}^\infty \mathbb{P}\left( \Omega_j^C   \right) < \infty$$ from which we see $$\mathbb{P} \left( \bigcap_{K=1}^\infty \bigcup_{j=K}^\infty \Omega_j^C \right) = 0$$ courtesy of Borel-Cantelli. It then follows that the set $$\hat{\Omega} := \bigcup_{K=1}^\infty \bigcap_{j=K}^\infty \Omega_j$$ is such that $\mathbb{P}( \hat{\Omega}) = 1$ so that in verifying $\mathbb{P}-a.e.$ properties, we can in fact simply show that they hold everywhere on $\hat{\Omega}$. More precisely, we also take $\hat{\Omega}$ to be such that every $\norm{\sy^{n_j}}_{X,\cdot}$ is continuous on $\hat{\Omega}$, which is only a further countable intersection of full measure sets. We proceed by considering the sets $$\hat{\Omega}_K:= \bigcap_{j=K}^\infty \Omega_j$$ with the idea to just show such properties on $\hat{\Omega}_K$ for all $K$ (as their union makes up $\hat{\Omega}$). We look to construct a new stopping time $\sigma^M_{\infty}$ (which will prove to be the desired $\tau^{M,T}_{\infty}$) given as the $\mathbb{P}-a.e.$ limit of $(\sigma^M_j)$, built from demonstrating that $(\sigma^M_j)$ is monotone decreasing everywhere on $\hat{\Omega}_K$ for all $K$. In other words we show that for sufficiently large $j$ (in fact, just $j\geq K$) that the set \begin{equation} \label{def of set} \{ \sigma^M_j < \sigma^M_{j+1} \} \cap \hat{\Omega}_K\end{equation} is empty. Firstly we observe from the strict inequality $\sigma^M_j < \sigma^M_{j+1}$ on this set that $\sigma^M_j < T$, implying that 
$$\sigma^M_j= \inf\left\{s > 0: \norm{\sy^{n_{j}}}_{X,s}\geq (\tilde{M} - 1 +2^{-j}) + \norm{\sy^{n_{j}}}_{X,0} \right\}$$
so by the continuity of $\norm{\sy^{n_j}}_{X,\cdot}$, \begin{equation}\label{by the continuity}\norm{\sy^{n_{j}}}_{X,\sigma^M_j} =(\tilde{M} - 1 +2^{-j}) + \norm{\sy^{n_{j}}}_{X,0}. \end{equation} Using the definition of $\Omega_j$, (\ref{defined omega j}), for $j \geq K$, we have that \begin{equation}\label{it is useful}\norm{\sy^{n_j}}_{X,\sigma^M_j \wedge \sigma^M_{j+1}} - \norm{\sy^{n_{j+1}}}_{X,\sigma^M_j \wedge \sigma^M_{j+1}} \leq \norm{\sy^{n_{j+1}} - \sy^{n_j} }_{X,\sigma^M_j \wedge \sigma^M_{j+1}} < 2^{-\left(j+2\right)}\end{equation}
and also
\begin{equation} \label{useful ic}
    \norm{\sy^{n_{j+1}}}_{X,0} - \norm{\sy^{n_j}}_{X,0}  \leq \norm{\sy^{n_{j+1}} - \sy^{n_j} }_{X,0} < 2^{-\left(j+2\right)}.
\end{equation}
Combining (\ref{by the continuity}), (\ref{it is useful}) and (\ref{useful ic}), whilst using that $\sigma^M_j < \sigma^M_{j+1}$, we see that \begin{align}
    \nonumber \norm{\sy^{n_{j+1}}}_{X,\sigma^M_j \wedge \sigma^M_{j+1}} &> \norm{\sy^{n_j}}_{X,\sigma^M_j \wedge \sigma^M_{j+1}} - 2^{-\left(j+2\right)}\\ \nonumber
    &= \norm{\sy^{n_j}}_{X,\sigma^M_j} - 2^{-\left(j+2\right)}\\ \nonumber
    &=  \norm{\sy^{n_{j}}}_{X,0} + (\tilde{M} - 1 +2^{-j}) - 2^{-\left(j+2\right)}\\  \nonumber
    &> \norm{\sy^{n_{j+1}}}_{X,0} - 2^{-(j+2)} + (\tilde{M} - 1 +2^{-j}) - 2^{-\left(j+2\right)}\\
    &= \norm{\sy^{n_{j+1}}}_{X,0} + (\tilde{M} - 1 +2^{-(j+1)}) \label{end of the align}
\end{align}
where in the last line we have used the manipulation
$$ - 2^{-(j+2)} - 2^{-(j+2)} + 2^{-j} = -2^{-j-1} + 2^{-j} = 2^{-j}\left( -2^{-1} + 1\right) = 2^{-j}\left( 2^{-1} \right) = 2^{-(j+1)}.$$ The hard work is done in showing that the set (\ref{def of set}) is empty, as on this set note that $$\norm{\sy^{n_{j+1}}}_{X,\sigma^M_j \wedge \sigma^M_{j+1}} \leq \norm{\sy^{n_{j+1}}}_{X, \sigma^M_{j+1}} \leq \norm{\sy^{n_{j+1}}}_{X,0} + (\tilde{M} - 1 +2^{-(j+1)})$$ which contradicts (\ref{end of the align}), hence (\ref{def of set}) must be empty. Thus on every $\hat{\Omega}_K$, and furthermore the whole of $\hat{\Omega}$, the sequence $(\sigma^M_j)$ is eventually monotone decreasing (and bounded below by $0$). Furthermore we define $\sigma^M_{\infty}$ as the pointwise limit $\lim_{j \rightarrow \infty}\sigma^M_j$ on $\hat{\Omega}$, which must itself be a stopping time as the $\mathbb{P}-a.s.$ limit of stopping times. As mentioned this shall prove to be our $\tau^{M,T}_\infty$, and for the existence of $\sy$ we show that on $\hat{\Omega}$ the subsequence $(\sy^{n_j})$ is Cauchy in $X_{\sigma^M_{\infty}}$. Every $\omega \in \hat{\Omega}$ belongs to $\hat{\Omega}_K$ for some $K$, and furthermore to $\hat{\Omega}_L$ for all $L > K$. We fix arbitrary $\omega$ and select an associated $K$. At this $\omega$, for any $j > k \geq K$, observe that
\begin{align*}
    \norm{\sy^{n_j} - \sy^{n_k}}_{X, \sigma^M_{\infty}} &= \norm{\sy^{n_{j}} -\sy^{n_{k+1}} + \sy^{n_{k+1}} - \sy^{n_k}}_{X, \sigma^M_{\infty}}\\ &\leq \norm{\sy^{n_{j}} -\sy^{n_{k+1}}}_{X, \sigma^M_{\infty}} + \norm{\sy^{n_{k+1}} - \sy^{n_k}}_{X, \sigma^M_{\infty}}\\ &\leq \norm{\sy^{n_{j}} -\sy^{n_{k+1}}}_{X, \sigma^M_{\infty}} + 2^{-(k+2)}\\& \leq \sum_{l=k}^{j}2^{-(l+2)}\\
    &\leq 2^{-(k+1)}
\end{align*}
having carried out an inductive argument in the penultimate step. We are thus free to take $K$ large enough so that this difference is arbitrarily small; therefore there exists a limit in the Banach Space $X_{\sigma^M_{\infty}}$, which we call $\sy$. The process $\norm{\sy}_{X,\cdot \wedge \sigma^M_{\infty}}$ is adapted and $\mathbb{P}-a.s.$ continuous, as $$\sup_{r \in [0,T]}\left\vert \norm{\sy}_{X,r \wedge \sigma^M_{\infty}} - \norm{\sy^{n_j}}_{X,r \wedge \sigma^M_{\infty}}\right\vert \leq \sup_{r \in [0,T]}\left\vert \norm{\sy - \sy^{n_j}}_{X,r \wedge \sigma^M_{\infty}}\right\vert = \norm{\sy - \sy^{n_j}}_{X,\sigma^M_{\infty}}$$ 
which has $\mathbb{P}-a.s.$ limit as $j \rightarrow \infty$ equal to zero. Thus $\norm{\sy}_{X,\cdot \wedge \sigma^M_{\infty}}$ is given, $\mathbb{P}-a.s.$, as the uniform in time limit of adapted and continuous processes, verifying the result. Moving on, it is now that we make use of (\ref{supposition 2}) much in the same way as we did for (\ref{supposition 1}). This will be done in the context of $\gamma:= \sigma^M_{\infty}$ and $\delta_j:= \sigma^M_j - \sigma^M_{\infty}$. Indeed for any $j \in \N$ we can choose an $m_j \in \N$ (where $m_j=n_l$ some $l$) such that for all $k \geq m_j$, $$\sup_{n\in\N}\mathbbm{E}\left(\norm{\sy^n}_{X,\sigma^M_k \wedge \tau^{M,T}_{n}}^2 - \norm{\sy^n}_{X,\sigma^M_{\infty} \wedge \tau^{M,T}_n}^2\right) \leq 2^{-2j}.$$ In particular, through a relabelling of $\sigma^M_{m_j}=\sigma^M_l$, $$\mathbbm{E}\left(\norm{\sy^{m_j}}_{X,\sigma^M_{m_j}}^2 - \norm{\sy^{m_j}}_{X,\sigma^M_{\infty}}^2\right) \leq 2^{-2j}$$
by choosing $n$ as $m_j$ and using that $\sigma^M_{\infty} \leq \sigma^M_k \leq \sigma^M_{m_j} \leq \tau^{M,T}_{m_j}$. In a familiar way we define $$\Omega'_j:= \left\{\norm{\sy^{m_j}}_{X,\sigma^M_{m_j}}^2 - \norm{\sy^{m_j}}_{X,\sigma^M_{\infty}}^2 < 2^{-(j+2)} \right\} $$
so that, just as we showed for (\ref{defined omega j}), $$\check{\Omega}_K:=\bigcap_{j=K}^\infty \Omega'_j, \qquad \check{\Omega}:=\bigcup_{K=1}^\infty \check{\Omega}_K, \qquad  \mathbb{P}\left(\check{\Omega}\right)=1.$$ For arbitrary given $R>0$, the plan now is to find a constant $M$ such that at every $\omega \in \hat{\Omega}\cap\check{\Omega}$, either $\sigma^M_{\infty} = T$ or $\norm{\sy}^2_{X,\sigma^M_{\infty}} \geq R$. In both instances it is clear that $\tau^{R,T} \leq \sigma^M_{\infty}$, thus proving the proposition. To this end we fix an $\omega \in \hat{\Omega}\cap\check{\Omega}$ such that $\sigma^M_{\infty} < T$. As $\sigma^M_{\infty}$ is the decreasing limit of $(\sigma^M_{m_j})$ then for sufficiently large $m_j$ we must also have that $\sigma^M_{m_j} < T$. Exactly as in (\ref{by the continuity}), \begin{equation}\label{newcontinuitything} \norm{\sy^{m_{j}}}_{X,\sigma^M_{m_j}} =(\tilde{M} - 1 +2^{-j}) + \norm{\sy^{m_{j}}}_{X,0}. \end{equation}
From the proven convergence we also have that for sufficiently large $m_j$, \begin{equation}\label{applynewestcauchy}
    \norm{\sy - \sy^{m_j}}_{X,\sigma^M_{\infty}} < 1
\end{equation}
which implies that $\norm{\sy}_{X,\sigma^M_{\infty}} > \norm{\sy^{m_j}}_{X,\sigma^M_{\infty}} -1$, and likewise as $\omega \in \check{\Omega}_K$ for some $K$, \begin{equation} \label{lkjh}
    \norm{\sy^{m_j}}_{X,\sigma^M_{m_j}}^2 - \norm{\sy^{m_j}}_{X,\sigma^M_{\infty}}^2 < 1.
\end{equation}
We fix an $m_j$ large enough so that (\ref{newcontinuitything}), (\ref{applynewestcauchy}) and (\ref{lkjh}) all hold. Substituting (\ref{newcontinuitything}) into (\ref{lkjh}) gives that $$\norm{\sy^{m_j}}_{X,\sigma^M_{\infty}}^2 > \left((\tilde{M} - 1 +2^{-j}) + \norm{\sy^{m_{j}}}_{X,0}\right)^2 -1 > (\tilde{M}-1)^2 -1.$$ If $\tilde{M} > 2$ then the expression on the right is positive and $$ \norm{\sy^{m_j}}_{X,\sigma^M_{\infty}} > \left((\tilde{M}-1)^2 -1 \right)^{\frac{1}{2}}.$$
Furthermore $$ \norm{\sy}_{X,\sigma^M_{\infty}} > \left((\tilde{M}-1)^2 -1 \right)^{\frac{1}{2}} -1$$ where the right hand side is of course monotone increasing and unbounded in $\tilde{M}$ and hence $M$. By choosing $M$ large enough such that $$ \left[\left((\tilde{M}-1)^2 -1 \right)^{\frac{1}{2}} -1\right]^2 > R$$ we complete the proof.
\end{proof}

\subsection{Abstract Framework for Solutions} \label{Appendix III}

We provide the framework of [\cite{goodair2022existence1}] Section 3 used in Subsection \ref{sub strong sols}, considering an It\^{o} SPDE 
\begin{equation} \label{thespde}
    \sy_t = \sy_0 + \int_0^t \mathcal{A}(s,\sy_s)ds + \int_0^t\tilde{\mathcal{G}} (s,\sy_s) d\mathcal{W}_s.
\end{equation}
We state the assumptions for a triplet of embedded Hilbert Spaces $$V \hookrightarrow H \hookrightarrow U$$ and ask that there is a continuous bilinear form $\inner{\cdot}{\cdot}_{U \times V}: U \times V \rightarrow \R$ such that for $\phi \in H$ and $\psi \in V$, \begin{equation} \label{bilinear formog}
    \inner{\phi}{\psi}_{U \times V} =  \inner{\phi}{\psi}_{H}.
\end{equation}
The mappings $\mathcal{A},\tilde{\mathcal{G}}$ are such that
    $\mathcal{A}:[0,T] \times V \rightarrow U,
    \tilde{\mathcal{G}}:[0,T] \times V \rightarrow \mathscr{L}^2(\mathfrak{U};H)$ are measurable. We assume that $V$ is dense in $H$ which is dense in $U$. 

\begin{assumption} \label{assumption fin dim spaces}
There exists a system $(a_n)$ of elements of $V$ such that, defining the spaces $V_n:= \textnormal{span}\left\{a_1, \dots, a_n \right\}$ and $\mathcal{P}_n$ as the orthogonal projection to $V_n$ in $U$, then:
\begin{enumerate}
    \item There exists some constant $c$ independent of $n$ such that for all $\phi\in H$,
\begin{equation} \label{projectionsboundedonH}
    \norm{\mathcal{P}_n \phi}_H^2 \leq c\norm{\phi}_H^2.
\end{equation}
\item There exists a real valued sequence $(\mu_n)$ with $\mu_n \rightarrow \infty$ such that for any $\phi \in H$, \begin{align}
     \label{mu2}
    \norm{(I - \mathcal{P}_n)\phi}_U \leq \frac{1}{\mu_n}\norm{\phi}_H
\end{align}
where $I$ represents the identity operator in $U$.
\end{enumerate}
\end{assumption}

 These conditions are supplemented by a series of assumptions on the mappings. We shall use general notation $c_t$ to represent a function $c_\cdot:[0,\infty) \rightarrow \R$ bounded on $[0,T]$, evaluated at the time $t$. Moreover we define functions $K$, $\tilde{K}$ relative to some non-negative constants $p,\tilde{p},q,\tilde{q}$. We use a generic notation to define the functions $K: U \rightarrow \R$, $K: U \times U \rightarrow \R$, $\tilde{K}: H \rightarrow \R$ and $\tilde{K}: H \times H \rightarrow \R$ by
\begin{align*}
    K(\phi)&:= 1 + \norm{\phi}_U^{p}, \qquad
    K(\phi,\psi):= 1+\norm{\phi}_U^{p} + \norm{\psi}_U^{q},\\
    \tilde{K}(\phi) &:= K(\phi) + \norm{\phi}_H^{\tilde{p}}, \qquad
    \tilde{K}(\phi,\psi) := K(\phi,\psi) + \norm{\phi}_H^{\tilde{p}} + \norm{\psi}_H^{\tilde{q}}
\end{align*}
 Distinct use of the function $K$ will depend on different constants but in no meaningful way in our applications, hence no explicit reference to them shall be made. In the case of $\tilde{K}$, when $\tilde{p}, \tilde{q} = 2$ then we shall denote the general $\tilde{K}$ by $\tilde{K}_2$. In this case no further assumptions are made on the $p,q$. That is, $\tilde{K}_2$ has the general representation \begin{equation}\label{Ktilde2}\tilde{K}_2(\phi,\psi) = K(\phi,\psi) + \norm{\phi}_H^2 + \norm{\psi}_H^2\end{equation} and similarly as a function of one variable.\\
 
 We state the subsequent assumptions for arbitrary elements $\phi,\psi \in V$, $\phi^n \in V_n$, $\eta \in H$ and $t \in [0,T]$, and a fixed $\gamma > 0$. Understanding $\tilde{\mathcal{G}}$ as a mapping $\tilde{\mathcal{G}}: [0,\infty) \times V \times \mathfrak{U} \rightarrow H$, we introduce the notation $\tilde{\mathcal{G}}_i(\cdot,\cdot):= \tilde{\mathcal{G}}(\cdot,\cdot,e_i)$.
 
 
  \begin{assumption} \label{new assumption 1} \begin{align}
     \label{111} \norm{\mathcal{A}(t,\boldsymbol{\phi})}^2_U +\sum_{i=1}^\infty \norm{\tilde{\mathcal{G}}_i(t,\boldsymbol{\phi})}^2_H &\leq c_t K(\boldsymbol{\phi})\left[1 + \norm{\boldsymbol{\phi}}_V^2\right],\\ \label{222}
     \norm{\mathcal{A}(t,\boldsymbol{\phi}) - \mathcal{A}(t,\boldsymbol{\psi})}_U^2 &\leq  c_t\left[K(\phi,\psi) + \norm{\phi}_V^2 + \norm{\psi}_V^2\right]\norm{\phi-\psi}_V^2,\\ \label{333}
    \sum_{i=1}^\infty \norm{\tilde{\mathcal{G}}_i(t,\boldsymbol{\phi}) - \tilde{\mathcal{G}}_i(t,\boldsymbol{\psi})}_U^2 &\leq c_tK(\phi,\psi)\norm{\phi-\psi}_H^2.
 \end{align}
 \end{assumption}

\begin{assumption} \label{assumptions for uniform bounds2}
 \begin{align}
   \label{uniformboundsassumpt1}  2\inner{\mathcal{P}_n\mathcal{A}(t,\boldsymbol{\phi}^n)}{\boldsymbol{\phi}^n}_H + \sum_{i=1}^\infty\norm{\mathcal{P}_n\tilde{\mathcal{G}}_i(t,\boldsymbol{\phi}^n)}_H^2 &\leq c_t\tilde{K}_2(\boldsymbol{\phi}^n)\left[1 + \norm{\boldsymbol{\phi}^n}_H^2\right] - \gamma\norm{\boldsymbol{\phi}^n}_V^2,\\  \label{uniformboundsassumpt2}
    \sum_{i=1}^\infty \inner{\mathcal{P}_n\tilde{\mathcal{G}}_i(t,\boldsymbol{\phi}^n)}{\boldsymbol{\phi}^n}^2_H &\leq c_t\tilde{K}_2(\boldsymbol{\phi}^n)\left[1 + \norm{\boldsymbol{\phi}^n}_H^4\right].
\end{align}
\end{assumption}

\begin{assumption} \label{therealcauchy assumptions}
\begin{align}
  \nonumber 2\inner{\mathcal{A}(t,\boldsymbol{\phi}) - \mathcal{A}(t,\boldsymbol{\psi})}{\boldsymbol{\phi} - \boldsymbol{\psi}}_U &+ \sum_{i=1}^\infty\norm{\tilde{\mathcal{G}}_i(t,\boldsymbol{\phi}) - \tilde{\mathcal{G}}_i(t,\boldsymbol{\psi})}_U^2\\ \label{therealcauchy1} &\leq  c_{t}\tilde{K}_2(\boldsymbol{\phi},\boldsymbol{\psi}) \norm{\boldsymbol{\phi}-\boldsymbol{\psi}}_U^2 - \gamma\norm{\boldsymbol{\phi}-\boldsymbol{\psi}}_H^2,\\ \label{therealcauchy2}
    \sum_{i=1}^\infty \inner{\tilde{\mathcal{G}}_i(t,\boldsymbol{\phi}) - \tilde{\mathcal{G}}_i(t,\boldsymbol{\psi})}{\boldsymbol{\phi}-\boldsymbol{\psi}}^2_U & \leq c_{t} \tilde{K}_2(\boldsymbol{\phi},\boldsymbol{\psi}) \norm{\boldsymbol{\phi}-\boldsymbol{\psi}}_U^4.
\end{align}
\end{assumption}

\begin{assumption} \label{assumption for prob in V}
\begin{align}
   \label{probability first} 2\inner{\mathcal{A}(t,\boldsymbol{\phi})}{\boldsymbol{\phi}}_U + \sum_{i=1}^\infty\norm{\tilde{\mathcal{G}}_i(t,\boldsymbol{\phi})}_U^2 &\leq c_tK(\boldsymbol{\phi})\left[1 +  \norm{\boldsymbol{\phi}}_H^2\right],\\\label{probability second}
    \sum_{i=1}^\infty \inner{\tilde{\mathcal{G}}_i(t,\boldsymbol{\phi})}{\boldsymbol{\phi}}^2_U &\leq c_tK(\boldsymbol{\phi})\left[1 + \norm{\boldsymbol{\phi}}_H^4\right].
\end{align}
\end{assumption}

\begin{assumption} \label{finally the last assumption}
 \begin{equation} \label{lastlast assumption}
    \inner{\mathcal{A}(t,\phi)-\mathcal{A}(t,\psi)}{\eta}_U \leq c_t(1+\norm{\eta}_H)\left[K(\phi,\psi) + \norm{\phi}_V + \norm{\psi}_V\right]\norm{\phi-\psi}_H.
    \end{equation}
\end{assumption}

\subsection{Useful Results} \label{sub useful results}

\begin{theorem}[Gagliardo-Nirenberg Inequality] \label{gagliardonirenberginequality}
    Let $p,q,\alpha \in \R$, $m \in \N$, $\mathscr{O} \subset \R^N$ be such that $p > q \geq 1$, $m > N(\frac{1}{2} - \frac{1}{p})$ and $\frac{1}{p} = \frac{\alpha}{q} + (1-\alpha)(\frac{1}{2} - \frac{m}{N})$. Then there exists a constant $c$ (dependent on the given parameters) such that for any $f \in L^p(\mathscr{O};\R) \cap W^{m,2}(\mathscr{O};\R)$, we have \begin{equation}\label{gag bounded domain}\norm{f}_{L^p(\mathscr{O};\R)} \leq c\norm{f}^{\alpha}_{L^q(\mathscr{O};\R)}\norm{f}^{1-\alpha}_{W^{m,2}(\mathscr{O};\R)}.\end{equation}
\end{theorem}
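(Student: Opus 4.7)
The stated inequality is classical, so my proof proposal would be to reduce to the well-known whole-space version and then transfer to the bounded domain $\mathscr{O}$ via a standard extension operator, citing the literature for the bulk of the technical work rather than reproducing it.

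First I would handle the whole-space case on $\R^N$. The strategy there is to establish the two endpoint inequalities and then interpolate. For the endpoint $\alpha = 0$ (so that $\frac{1}{p} = \frac{1}{2} - \frac{m}{N}$) the inequality is the critical Sobolev embedding $W^{m,2}(\R^N) \hookrightarrow L^p(\R^N)$, which follows by iterating the case $m=1$ proven, e.g., via the Hardy--Littlewood--Sobolev inequality applied to the Riesz potential representation $f = (-\Delta)^{-m/2}(-\Delta)^{m/2} f$, or alternatively via Fourier multipliers and the Mikhlin theorem. The condition $m > N\left(\frac{1}{2} - \frac{1}{p}\right)$ guarantees we remain in the subcritical range so the embedding holds. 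For the other endpoint $\alpha = 1$ the inequality degenerates to $\|f\|_{L^q} \leq \|f\|_{L^q}$, which is trivial. The intermediate case then follows by the logarithmic convexity of $L^r$ norms (a direct application of H\"{o}lder's inequality with exponents $q/(\alpha p)$ and $r/((1-\alpha)p)$, where $r$ is the critical exponent corresponding to $\alpha = 0$), giving
\begin{equation*}
    \norm{f}_{L^p(\R^N;\R)} \leq \norm{f}_{L^q(\R^N;\R)}^{\alpha}\norm{f}_{L^r(\R^N;\R)}^{1-\alpha} \leq c\norm{f}_{L^q(\R^N;\R)}^{\alpha}\norm{f}_{W^{m,2}(\R^N;\R)}^{1-\alpha}.
\end{equation*}

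Second, to descend from $\R^N$ to the smooth bounded domain $\mathscr{O}$, I would use a total extension operator $E: W^{m,2}(\mathscr{O};\R) \to W^{m,2}(\R^N;\R)$ which is simultaneously bounded on $L^q(\mathscr{O};\R) \to L^q(\R^N;\R)$ and on $L^p(\mathscr{O};\R) \to L^p(\R^N;\R)$. Such a Stein-type extension operator exists for smooth bounded domains (see Stein, \textit{Singular Integrals and Differentiability Properties of Functions}, Chapter VI, or Adams \& Fournier, \textit{Sobolev Spaces}, Theorem 5.24), and its continuity on all of these spaces simultaneously is the whole point of its construction. Applying the whole-space inequality to $Ef$ and using $f = (Ef)|_{\mathscr{O}}$ together with the boundedness of $E$ in the three norms yields the result on $\mathscr{O}$ with a modified (but still universal) constant.

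The only mildly subtle point is the verification of the scaling condition $\frac{1}{p} = \frac{\alpha}{q} + (1-\alpha)\left(\frac{1}{2} - \frac{m}{N}\right)$ being automatically respected by the interpolation exponent in the whole-space step; this is a routine algebraic check matching the dimensional scaling of each norm under dilations $f(x) \mapsto f(\lambda x)$. I do not anticipate a serious obstacle: the hardest ingredient is the Stein extension operator, which is entirely off-the-shelf, so in practice the cleanest presentation is simply to cite Nirenberg's original article (\textit{On elliptic partial differential equations}, Ann. Scuola Norm. Sup. Pisa, 1959) or the treatment in Adams \& Fournier Theorem 5.8, and to note that our hypotheses on $p,q,m,\alpha,N$ match exactly the admissible range in those sources.
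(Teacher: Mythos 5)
Your fallback of simply citing Nirenberg is exactly what the paper does (its ``proof'' is the single line ``See [Nirenberg] pp.125--126'', supplemented by a remark that the extra additive $\norm{f}_{L^r}$ term appearing in the bounded-domain statement there is absorbed by using the full $W^{m,2}$ norm). The extension-operator step of your argument is also sound: the Stein extension is simultaneously bounded on $W^{m,2}$ and on every $L^q$, and restriction to $\mathscr{O}$ is norm-decreasing, so transferring from $\R^N$ to $\mathscr{O}$ is unproblematic.

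However, the self-contained whole-space argument you propose has a genuine gap. Your scheme interpolates between the endpoint $\alpha=1$ (trivial) and the endpoint $\alpha=0$, where the latter is the critical Sobolev embedding $W^{m,2}(\R^N)\hookrightarrow L^r(\R^N)$ with $\tfrac{1}{r}=\tfrac{1}{2}-\tfrac{m}{N}$. This only makes sense, and only holds, when $\tfrac{1}{2}-\tfrac{m}{N}>0$, i.e.\ $m<N/2$. The hypotheses of the theorem permit $m\geq N/2$, and indeed the \emph{only} instance used in the paper is $N=2$, $m=1$, $p=4$, $q=2$, $\alpha=\tfrac12$ (Lemma \ref{the 2D bound lemma} and (\ref{subbing in})), for which $\tfrac{1}{2}-\tfrac{m}{N}=0$, so $r=\infty$ and the required endpoint $\norm{f}_{L^\infty(\R^2)}\leq c\norm{f}_{W^{1,2}(\R^2)}$ is \emph{false}. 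The Ladyzhenskaya-type inequality $\norm{f}_{L^4}\leq c\norm{f}_{L^2}^{1/2}\norm{f}_{W^{1,2}}^{1/2}$ is precisely not obtainable by log-convexity of Lebesgue norms against a Sobolev endpoint; it needs a genuinely multiplicative argument (e.g.\ Nirenberg's slicing/fundamental-theorem-of-calculus proof, integrating $|f|^2\leq \left(\int|\partial_1(f^2)|\,dx_1\right)^{1/2}\left(\int|\partial_2(f^2)|\,dx_2\right)^{1/2}$ over $\R^2$). For $m>N/2$ the ``exponent'' $\tfrac12-\tfrac mN$ is negative and your interpolation identity is only a formal scaling relation, not a statement about Lebesgue spaces, so the endpoint strategy breaks down entirely there as well. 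So either restrict your argument to $m<N/2$ and supply a separate proof in the borderline and supercritical regimes, or do as the paper does and cite Nirenberg for the full admissible range.
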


\begin{proof}
See [\cite{nirenberg2011elliptic}] pp.125-126. 
\end{proof}

\begin{remark}
    In the original paper [\cite{nirenberg2011elliptic}], the inequality is stated for only the $m^{\textnormal{th}}$ order derivative and with an additional $\norm{f}_{L^r}$ term on the bounded domain, for any $r > 0$. By considering the full $W^{m,2}(\mathscr{O};\R^N)$ norm, one can remove this additional term through interpolation. 
\end{remark}

\begin{proposition} \label{rockner prop}
Let $\mathcal{H}_1 \subset \mathcal{H}_2 \subset \mathcal{H}_3$ be a triplet of embedded Hilbert Spaces where $\mathcal{H}_1$ is dense in $\mathcal{H}_2$, with the property that there exists a continuous nondegenerate bilinear form $\inner{\cdot}{\cdot}_{\mathcal{H}_3 \times \mathcal{H}_1}: \mathcal{H}_3 \times \mathcal{H}_1 \rightarrow \R$ such that for $\phi \in \mathcal{H}_2$ and $\psi \in \mathcal{H}_1$, $$\inner{\phi}{\psi}_{\mathcal{H}_3 \times \mathcal{H}_1} = \inner{\phi}{\psi}_{\mathcal{H}_2}.$$ Suppose that for some $T > 0$ and stopping time $\tau$,
\begin{enumerate}
        \item $\sy_0:\Omega \rightarrow \mathcal{H}_2$ is $\mathcal{F}_0-$measurable;
        \item $\eta:\Omega \times [0,T] \rightarrow \mathcal{H}_3$ is such that for $\mathbbm{P}-a.e.$ $\omega$, $\eta(\omega) \in L^2([0,T];\mathcal{H}_3)$;
        \item $B:\Omega \times [0,T] \rightarrow \mathscr{L}^2(\mathfrak{U};\mathcal{H}_2)$ is progressively measurable and such that for $\mathbbm{P}-a.e.$ $\omega$, $B(\omega) \in L^2\left([0,T];\mathscr{L}^2(\mathfrak{U};\mathcal{H}_2)\right)$;
        \item  \label{4*} $\sy:\Omega \times [0,T] \rightarrow \mathcal{H}_1$ is such that for $\mathbbm{P}-a.e.$ $\omega$, $\sy_{\cdot}(\omega)\mathbbm{1}_{\cdot \leq \tau(\omega)} \in L^2([0,T];\mathcal{H}_1)$ and $\sy_{\cdot}\mathbbm{1}_{\cdot \leq \tau}$ is progressively measurable in $\mathcal{H}_1$;
        \item \label{item 5 again*} The identity
        \begin{equation} \label{newest identity*}
            \sy_t = \sy_0 + \int_0^{t \wedge \tau}\eta_sds + \int_0^{t \wedge \tau}B_s d\mathcal{W}_s
        \end{equation}
        holds $\mathbbm{P}-a.s.$ in $\mathcal{H}_3$ for all $t \in [0,T]$.
    \end{enumerate}
The the equality 
  \begin{align} \label{ito big dog*}\norm{\sy_t}^2_{\mathcal{H}_2} = \norm{\sy_0}^2_{\mathcal{H}_2} + \int_0^{t\wedge \tau} \bigg( 2\inner{\eta_s}{\sy_s}_{\mathcal{H}_3 \times \mathcal{H}_1} + \norm{B_s}^2_{\mathscr{L}^2(\mathfrak{U};\mathcal{H}_2)}\bigg)ds + 2\int_0^{t \wedge \tau}\inner{B_s}{\sy_s}_{\mathcal{H}_2}d\mathcal{W}_s\end{align}
  holds for any $t \in [0,T]$, $\mathbbm{P}-a.s.$ in $\R$. Moreover for $\mathbbm{P}-a.e.$ $\omega$, $\sy_{\cdot}(\omega) \in C([0,T];\mathcal{H}_2)$. 
\end{proposition}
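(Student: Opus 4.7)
The plan is to reduce the statement, via indicator truncation at the stopping time $\tau$, to the classical It\^{o} formula for the squared norm in a Gelfand triple (as found e.g. in [\cite{prevot2007concise}] Theorem 4.2.5 or [\cite{da2014stochastic}] Theorem 4.32), which is stated for processes satisfying a stochastic evolution on the whole interval $[0,T]$ without a stopping time cutoff. The crucial observation is that (\ref{newest identity*}) forces $\sy_t$ to be constant for $t \geq \tau$, so $\sy_t = \sy_{t \wedge \tau}$ $\mathbb{P}-a.s.$ for every $t \in [0,T]$, and the standard identities for stopped Bochner and stochastic integrals allow the cutoff $t \wedge \tau$ to be absorbed into the integrand as $\mathbbm{1}_{s \leq \tau}$.

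First I would set $\bar{\eta}_t := \eta_t \mathbbm{1}_{t \leq \tau}$ and $\bar{B}_t := B_t \mathbbm{1}_{t \leq \tau}$ and rewrite (\ref{newest identity*}) as
$$\sy_t = \sy_0 + \int_0^t \bar{\eta}_s \, ds + \int_0^t \bar{B}_s \, d\mathcal{W}_s,$$
holding $\mathbb{P}-a.s.$ in $\mathcal{H}_3$ for every $t \in [0,T]$. The hypotheses then guarantee: $\sy \mathbbm{1}_{\cdot \leq \tau}$ is progressively measurable in $\mathcal{H}_1$ with pathwise $L^2([0,T];\mathcal{H}_1)$ sample paths by item \ref{4*} (which, combined with the constancy of $\sy$ past $\tau$, in fact yields $\sy(\omega) \in L^2([0,T];\mathcal{H}_1)$ pathwise); $\bar{\eta}(\omega) \in L^2([0,T];\mathcal{H}_3)$ inherited from $\eta$; and $\bar{B}$ is progressively measurable with pathwise $L^2([0,T];\mathscr{L}^2(\mathfrak{U};\mathcal{H}_2))$ sample paths.

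Next I would invoke the classical Gelfand triple It\^{o} formula for $\mathcal{H}_1 \subset \mathcal{H}_2 \subset \mathcal{H}_3$, whose applicability relies precisely on the stated compatibility of the nondegenerate bilinear form $\inner{\cdot}{\cdot}_{\mathcal{H}_3 \times \mathcal{H}_1}$ with $\inner{\cdot}{\cdot}_{\mathcal{H}_2}$, together with the density of $\mathcal{H}_1$ in $\mathcal{H}_2$. This yields both continuity of the paths of $\sy$ in $\mathcal{H}_2$ and the identity
$$\norm{\sy_t}^2_{\mathcal{H}_2} = \norm{\sy_0}^2_{\mathcal{H}_2} + \int_0^t \left( 2\inner{\bar{\eta}_s}{\sy_s}_{\mathcal{H}_3 \times \mathcal{H}_1} + \norm{\bar{B}_s}^2_{\mathscr{L}^2(\mathfrak{U};\mathcal{H}_2)}\right) ds + 2\int_0^t \inner{\bar{B}_s}{\sy_s}_{\mathcal{H}_2} \, d\mathcal{W}_s.$$
Since both $\bar{\eta}_s$ and $\bar{B}_s$ vanish on $\{s > \tau\}$, the two integrals collapse to the corresponding expressions from $0$ to $t \wedge \tau$ in the original $\eta_s, B_s$, reproducing (\ref{ito big dog*}).

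The main obstacle should be a mild technicality rather than a conceptual one: the classical Gelfand triple It\^{o} formula is usually quoted under global $L^2(\Omega \times [0,T]; \mathcal{H}_1)$ integrability of the solution, whereas item \ref{4*} provides only pathwise integrability. I would handle this by the standard further localisation through stopping times $\tau_R := T \wedge \tau \wedge \inf\left\{t \geq 0: \int_0^t \norm{\sy_s}^2_{\mathcal{H}_1} \mathbbm{1}_{s \leq \tau} \, ds \geq R\right\}$, applying the classical formula on each $[0,\tau_R]$ and passing to $R \rightarrow \infty$ using $\tau_R \nearrow \tau$ $\mathbb{P}-a.s.$; every term on both sides converges without difficulty by monotone and dominated convergence, yielding (\ref{ito big dog*}) on all of $[0,T]$.
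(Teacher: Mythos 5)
Your proposal is correct and follows essentially the same route as the paper, which simply records this as a minor extension of [\cite{prevot2007concise}] Theorem 4.2.5, worked out as Proposition 2.5.5 of [\cite{goodair2022stochastic}]; your truncation of the integrands by $\mathbbm{1}_{\cdot \leq \tau}$ plus the further localisation via $\tau_R$ is exactly the content of that reduction. The one point you pass over a little quickly is that the classical theorem is stated for a genuine Gelfand triple $V \subset H \subset V^*$, so one must first use the continuous nondegenerate pairing to inject $\mathcal{H}_3$ into $\mathcal{H}_1^*$ compatibly with the canonical embedding of $\mathcal{H}_2$ and push the identity (\ref{newest identity*}) through this injection before invoking the classical result --- this is precisely the "extension of the Gelfand triple setting" that the paper flags as the non-automatic part.
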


\begin{proof}
This is a minor extension of [\cite{prevot2007concise}] Theorem 4.2.5, which is stated and justified as Proposition 2.5.5. in [\cite{goodair2022stochastic}]. This is necessary for us as it extends the Gelfand Triple setting. 
\end{proof}

\begin{lemma} \label{lemma for D tight}
    Let $\mathcal{H}_1, \mathcal{H}_2$ be separable Hilbert Spaces with $\mathcal{H}_1$ compactly embedded into $\mathcal{H}_2$, and $V$ any dense set in $\mathcal{H}_2$. For some fixed $T>0$ let $\sy^n: \Omega \rightarrow C\left([0,T];\mathcal{H}_1\right)$ be a sequence of measurable processes such that \begin{equation} \label{first condition primed}
        \sup_{n \in \N}\mathbbm{E}\left(\sup_{t\in[0,T]}\norm{\sy^n_t}_{\mathcal{H}_1}\right) < \infty
    \end{equation}
    and for any sequence of stopping times $(\gamma_n)$ with $\gamma_n: \Omega \rightarrow [0,T]$, and any $\varepsilon > 0$, $v \in V$,
    \begin{equation} \label{second condition primed}
        \lim_{\delta \rightarrow 0^+}\sup_{n \in \N}\mathbbm{P}\left(\left\{
    \omega \in \Omega: \left\vert \left\langle \sy^n_{(\gamma_n + \delta) \wedge T} -\sy^n_{\gamma_n }   , v     \right\rangle_{\mathcal{H}_2} \right\vert > \varepsilon \right\}\right)  = 0.
    \end{equation}
    Then the sequence of the laws of $(\sy^n)$ is tight in the space of probability measures over $\mathcal{D}\left([0,T];\mathcal{H}_2\right)$.
\end{lemma}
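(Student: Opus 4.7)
The plan is to apply Jakubowski's tightness criterion in the space $\mathcal{D}([0,T]; \mathcal{H}_2)$. The criterion requires two ingredients: (i) a compact containment condition, namely that for every $\eta > 0$ there is a compact set $K_\eta \subset \mathcal{H}_2$ such that $\mathbb{P}(\sy^n_t \in K_\eta \text{ for all } t \in [0,T]) \geq 1 - \eta$ uniformly in $n$, and (ii) a countable family of continuous real-valued functionals $\mathcal{H}_2 \to \mathbb{R}$ separating points of $\mathcal{H}_2$, such that the composition of each functional with $\sy^n$ gives a sequence tight in $\mathcal{D}([0,T]; \mathbb{R})$.

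For (i), I would use the compact embedding $\mathcal{H}_1 \hookrightarrow \mathcal{H}_2$: balls of $\mathcal{H}_1$ are compact in $\mathcal{H}_2$. Assumption (\ref{first condition primed}) together with Markov's inequality yields, for every $\eta > 0$, an $R_\eta$ with $\mathbb{P}(\sup_{t \in [0,T]} \|\sy^n_t\|_{\mathcal{H}_1} > R_\eta) < \eta$ uniformly in $n$, giving the required compact containment in $\mathcal{H}_2$. For (ii), take a countable dense subset $(v_k)$ of $V$; since $V$ is dense in $\mathcal{H}_2$, the functionals $x \mapsto \langle x, v_k\rangle_{\mathcal{H}_2}$ separate points. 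It then remains to show that each sequence $(\langle \sy^n_\cdot, v_k\rangle_{\mathcal{H}_2})_n$ is tight in $\mathcal{D}([0,T]; \mathbb{R})$.

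For tightness in $\mathcal{D}([0,T]; \mathbb{R})$ I would invoke the classical Aldous criterion: it suffices to check uniform boundedness in probability of the supremum norm plus an Aldous condition on stopping-time increments. The first is immediate, since $\sup_t |\langle \sy^n_t, v_k\rangle_{\mathcal{H}_2}| \leq c\,\|v_k\|_{\mathcal{H}_2} \sup_t \|\sy^n_t\|_{\mathcal{H}_1}$ and this is controlled by (\ref{first condition primed}) via Chebyshev. The Aldous condition on increments at stopping times is \emph{exactly} what hypothesis (\ref{second condition primed}) provides, applied with $v := v_k$: one shows that for any sequence of stopping times $(\gamma_n)$ valued in $[0,T]$ and any $\delta_n \downarrow 0$, the random variables $\langle \sy^n_{(\gamma_n + \delta_n)\wedge T} - \sy^n_{\gamma_n}, v_k\rangle_{\mathcal{H}_2}$ converge to zero in probability uniformly in $n$.

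The main subtlety, rather than a true obstacle, is verifying the hypotheses of Jakubowski's criterion in its correct form (the metrisability requirements and the role of the separating family), and carefully reducing tightness with respect to arbitrary $v \in V$ to a countable family via the density of $(v_k)$ in $V$ and continuity of the inner product. Once this reduction and the Aldous-style criterion are in place, the two given hypotheses slot directly into the required conditions, and the tightness conclusion follows.
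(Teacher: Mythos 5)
Your proposal follows essentially the same route as the paper's proof: Jakubowski's criterion with the separating family $\left\{\langle\cdot,v\rangle_{\mathcal{H}_2}: v \in V\right\}$, compact containment from the compact embedding of $\mathcal{H}_1$ into $\mathcal{H}_2$ together with (\ref{first condition primed}) via Markov's inequality, and tightness of each real-valued projection in $\mathcal{D}([0,T];\R)$ via Aldous's criterion, whose increment condition is exactly (\ref{second condition primed}). The only cosmetic differences are your reduction to a countable dense subfamily of $V$ and checking boundedness of the supremum rather than tightness at each fixed time, neither of which changes the argument.
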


\begin{proof}
    We essentially combine the tightness criteria of [\cite{jakubowski1986skorokhod}] Theorem 3.1 and [\cite{aldous1978stopping}] Theorem 1, in the specific case outlined here. Note that this is only a tweak on the result [\cite{goodair2023zero}] Lemma 4.2. Firstly in reference to [\cite{jakubowski1986skorokhod}] Theorem 3.1 we may take $E$ to be $\mathcal{H}_2$ and $\mathbbm{F}$ to be the collection of functions defined for each $v\in V$ by $\inner{\cdot}{v}_{\mathcal{H}_2}$, which separates points in $\mathcal{H}_2$ from the density of $V$. We also note that condition $(3.3)$ in [\cite{jakubowski1986skorokhod}] is satisfied for $(\mu_n)$ taken to be the sequence of laws of $(\sy^n)$ over $\mathcal{D}\left([0,T];\mathcal{H}_2\right)$, owing to the property (\ref{first condition primed}). Indeed as $\mathcal{H}_1$ is compactly embedded into $\mathcal{H}_2$ one only needs to take a bounded subset of $\mathcal{H}_1$, hence considering the closed ball of radius $M$ in $\mathcal{H}_1$, $\tilde{B}_M$, we have that 
\begin{align*}
    \mathbbm{P}\left(\left\{\omega \in \Omega: \sy^n(\omega) \notin D\left([0,T] ;\tilde{B}_M\right) \right\} \right) &\leq \mathbbm{P}\left(\left\{\omega \in \Omega: \sy^n(\omega) \notin C\left([0,T] ;\tilde{B}_M\right) \right\} \right)\\ &\leq 
    \mathbbm{P}\left(\left\{\omega \in \Omega: \sup_{t\in[0,T]}\norm{\sy^n_t(\omega)}_{\mathcal{H}_1} > M \right\} \right)\\
    &\leq \frac{1}{M}\mathbbm{E}(\sup_{t\in[0,T]}\norm{\sy^n_t}_{\mathcal{H}_1})\\
    &\leq  \frac{1}{M}\sup_{n \in \N}\mathbbm{E}(\sup_{t\in[0,T]}\norm{\sy^n_t}_{\mathcal{H}_1})
\end{align*}
from which we see an arbitrarily large choice of $M$ will justify (3.3). Therefore by Theorem 3.1 it only remains to show that for every $v \in V$ the sequence of the laws of $\inner{\sy^n}{v}_{\mathcal{H}_2}$ is tight in the space of probability measures over $\mathcal{D}\left([0,T];\R\right)$.  By Theorem 1 of [\cite{aldous1978stopping}] this is satisfied if we can show that for for any sequence of stopping times $(\gamma_n)$, $\gamma_n: \Omega \rightarrow [0,T]$, and constants $(\delta_n)$, $\delta_n \geq 0$ and $\delta_n \rightarrow 0$ as $n \rightarrow \infty$:
    \begin{enumerate}
        \item For every $t \in [0,T]$, the sequence of the laws of $\inner{\sy^n_t}{v}_{\mathcal{H}_2}$ is tight in the space of probability measures over $\R$, \label{item 1}

        \item For every $\varepsilon > 0$, $\lim_{n \rightarrow \infty}\mathbbm{P}\left( \left\{ \omega \in \Omega: \left\vert \left\langle \sy^n_{(\gamma_n + \delta_n) \wedge T} -\sy^n_{\gamma_n }   , v     \right\rangle_{\mathcal{H}_2} \right\vert > \varepsilon \right\} \right) = 0.$ \label{item 2}
    \end{enumerate}
We address each item in turn: as for \ref{item 1}, we are required to show that for every $\varepsilon > 0$ and $t\in[0,T]$, there exists a compact $K_{\varepsilon} \subset \R$ such that for every $n \in \N$, $$\mathbbm{P}\left(\left\{\omega \in \Omega: \inner{\sy^n_t(\omega)}{v}_{\mathcal{H}_2} \notin K_{\varepsilon} \right\} \right) < \varepsilon.$$ To this end define $B_M$ as the closed ball of radius $M$ in $\R$, then
\begin{align*}
    \mathbbm{P}\left(\left\{\omega \in \Omega: \inner{\sy^n_t(\omega)}{v}_{\mathcal{H}_2} \notin B_M \right\} \right) &= \mathbbm{P}\left(\left\{\omega \in \Omega: \left\vert\inner{\sy^n_t(\omega)}{v}_{\mathcal{H}_2}\right\vert > M \right\} \right)\\
    &\leq \frac{1}{M}\mathbbm{E}(\left\vert\inner{\sy^n_t}{v}_{\mathcal{H}_2}\right\vert)\\
    &\leq \frac{\norm{v}_{\mathcal{H}_2}}{M}\sup_{n \in \N}\mathbbm{E}\left(\norm{\sy^n_t}_{\mathcal{H}_2}\right)
\end{align*}
so setting $$M:= \frac{\varepsilon}{2\norm{v}_{\mathcal{H}_2}\sup_{n \in \N}\mathbbm{E}\left(\norm{\sy^n_t}_{\mathcal{H}_2}\right)} $$ justifies item \ref{item 1}. As for \ref{item 2}, note that for each fixed $j \in \N$ we have that $$\left\vert \left\langle \sy^j_{(\gamma_j + \delta_j) \wedge T} -\sy^j_{\gamma_j }   , v     \right\rangle_{\mathcal{H}_2} \right\vert  \leq \sup_{n \in \N}\left\vert \left\langle \sy^n_{(\gamma_n + \delta_j) \wedge T} -\sy^n_{\gamma_n }   , v     \right\rangle_{\mathcal{H}_2} \right\vert $$ so in particular $$\lim_{j \rightarrow \infty}\mathbbm{P}\left(\left\{\left\vert \left\langle \sy^j_{(\gamma_j + \delta_j) \wedge T} -\sy^j_{\gamma_j }   , v     \right\rangle_{\mathcal{H}_2} \right\vert > \varepsilon\right\}\right)  \leq \lim_{j \rightarrow \infty}\sup_{n \in \N}\mathbbm{P}\left(\left\{\left\vert \left\langle \sy^n_{(\gamma_n + \delta_j) \wedge T} -\sy^n_{\gamma_n }   , v     \right\rangle_{\mathcal{H}_2} \right\vert > \varepsilon \right\}\right).$$ As $(\delta_j)$ was an arbitrary sequence of non-negative constants approaching zero, we can generically take $\delta \rightarrow 0^+$ and \ref{item 2} is implied by (\ref{second condition primed}). The proof is complete.

\end{proof}

\bibliographystyle{spmpsci}
\bibliography{myBibBib}

\end{document}